\newtheorem{theorem}{Theorem}[section]
\newtheorem{proposition}[theorem]{Proposition}
\newtheorem{lemma}[theorem]{Lemma}
\newtheorem{problem}[theorem]{Problem}
\newtheorem{example}[theorem]{Example}
\newtheorem{remark}[theorem]{Remark}
\newcommand{\Qed}{\rule{2.5mm}{3mm}}
\newenvironment{proof}{{\noindent \sc Proof.}}{\hfill $\Qed$ \\}
\renewcommand{\H}{\mathcal{H}}
\renewcommand{\P}{\mathcal{P}}
\newcommand{\W}{\mathcal{W}}
\newcommand{\B}{\mathcal{B}}
\renewcommand{\S}{\mathcal{S}}
\newcommand{\la}{\langle}
\newcommand{\ra}{\rangle}
\newcommand{\ZZ}{\mathbb{Z}}
\newcommand{\OO}{\mathcal{O}}
\newcommand{\mb}{\mathbf}
\newcounter{case}
\newenvironment{case}[1][\unskip]{\refstepcounter{case}\sc
\medskip \noindent Case \thecase\ #1.\ }{\unskip\upshape}
\renewcommand{\thecase}{\arabic{case}}
\newcounter{subcase}
\numberwithin{subcase}{case}
\def\PG{\hbox{\rm PG}}
\def\PGL{\hbox{\rm PGL}}
\def\GL{\hbox{\rm GL}}
\def\GF{\hbox{\rm GF}}
\def\SL{\hbox{\rm SL}}
\def\PSL{\hbox{\rm PSL}}
\def\di{\bigm|} \def\lg{\langle} \def\rg{\rangle}
\def\a{\alpha}    \def\e{\varepsilon}
\def\th{\theta} \def\ld{\lambda} 
\def\D{\S}  
 \def\O{\Omega} 
\def\Aut{\hbox{\rm Aut\,}}
\def\ow{\overline W}  
\def\di{\bigm|} \def\lg{\langle} \def\rg{\rangle}
\newcommand{\mm}[4]{
\left[ \begin{array}{cc}
#1 & #2 \\
#3 & #4
\end{array} \right]
}
\newenvironment{proofT}{{\noindent \sc Proof of Theorem~\ref{the:polynomial-degree-4}.}}{\hfill $\Qed$ \\}
\newenvironment{proofTT}{{\noindent \sc Proof of Theorem~\ref{the:main}.}}{\hfill $\Qed$ \\}
\begin{document}


\begin{center}
{\bf\large
HAMILTON CYCLES IN VERTEX-TRANSITVE \\
GRAPHS OF ORDER A PRODUCT OF TWO PRIMES}
\end{center}


\begin{center}
Shaofei Du{\small $^{a,}$}\footnotemark,
Klavdija Kutnar{\small $^{b,c,}$}\footnotemark \ and
Dragan Maru\v si\v c{\small $^{b,c,d,}$}\addtocounter{footnote}{0}\footnotemark$^{,*}$  \\

\medskip
{\it {\small
$^a$Capital Normal University, School of Mathematical Sciences,
Bejing 100048, People's Republic of China\\
$^b$University of Primorska, UP FAMNIT, Glagolja\v ska 8, 6000 Koper, Slovenia\\
$^c$University of Primorska, UP IAM, Muzejski trg 2, 6000 Koper, Slovenia\\
$^d$IMFM, Jadranska 19, 1000 Ljubljana, Slovenia\\
}}
\end{center}

\addtocounter{footnote}{-2}
\footnotetext{This work is supported in part  by the National Natural Science Foundation of China (11671276).}
\addtocounter{footnote}{1}
\footnotetext{This work is supported in part by the Slovenian Research Agency (research program P1-0285 and research projects
N1-0038, N1-0062, J1-6720, J1-6743, J1-7051, and J1-9110).}
\addtocounter{footnote}{1}
\footnotetext{
This work is supported in part by the Slovenian Research Agency
(I0-0035, research program P1-0285
and research projects N1-0038, N1-0062, J1-6720, and J1-9108),
and in part by H2020 Teaming InnoRenew CoE (grant no. 739574).

~*Corresponding author e-mail:~dragan.marusic@upr.si}

\vspace*{-15pt}

\begin{abstract}
A step forward is made in a long standing Lov\'{a}sz's problem
regarding hamiltonicity of vertex-transitive graphs
by showing that every connected vertex-transitive
graph of order a product of two primes, other than
the Petersen graph, contains a Hamilton cycle.
Essential tools used in the proof range from classical results on existence of Hamilton cycles, such as Chv\'atal's theorem and Jackson's theorem, to  certain results on polynomial representations of quadratic residues at primitive roots in finite fields.
\end{abstract}

\vspace*{-15pt}

\begin{quotation}
\noindent {\em Keywords: vertex-transitive graph, Hamilton cycle, automorphism group, orbital graph, finite field, polynomial.}\\
Math. Subj. Class.: 05C25, 05C45.
\end{quotation}


\section{Introduction}
\label{sec:intro}
\indent

The following  question asked by  Lov\'{a}sz \cite{L70} in 1970
tying together traversability and symmetry,
two seemingly unrelated graph-theoretic concepts,
remains unresolved after all these years.

\begin{problem}
\label{Lovasz}
{\rm \cite{L70}}
Does	every finite connected vertex-transitive  graph have a Hamilton	path?
\end{problem}

No	connected vertex-transitive graph without a Hamilton	path -- a simple path
containing all	vertices of the graph -- is known to exist. Moreover, only four
connected vertex-transitive  graphs	on at least three	vertices not having a
Hamilton	cycle	-- a simple cycle containing	 all	vertices of the graph -- are	
known	so	far: the Petersen graph, the Coxeter	graph,	and the two graphs	obtained from them	by replacing each vertex with a triangle \cite{TRG}.	
None of these four exceptional graphs is a Cayley graph, that is, a
vertex-transitive graph admitting a regular subgroup of automorphisms. This has led to a folklore conjecture that every connected Cayley graph possesses a Hamilton cycle.

Problem~\ref{Lovasz}, together with its Cayley graph variant, has spurred quite a bit of interest in the mathematical community,	resulting in a	number of papers affirming the existence of Hamilton	paths and in some cases even Hamilton cycles.

Such is the case for instance for connected vertex-transitive graphs of orders	
$kp, k \leq 6$, $10p, p\ge 11$, $p^j, j \leq 4$ and $2p^2$, where	$p$ is	a prime	
\cite{A79,C98,KM08,KMZ12,KS09,DM85,DM87,MP82,MP83}.
Furthermore, for all of these families, except for the graphs of orders $6p$ and $10p$
(and of course for the Petersen graph, truncation of the Petersen graph and the Coxeter graph), it is also known that they contain a Hamilton cycle, see \cite{A79,C98, KM09, KS09,DM85, DM87,MP82,MP83, T67}. With the obvious exception of the Petersen graph, Hamilton cycles are also known to exist in connected
vertex-transitive graphs whose automorphism groups contain a transitive subgroup with a cyclic commutator subgroup of prime-power order. This result proved in~\cite{DGMW98} uses results from a series of papers dealing with the same group-theoretic restrictions in the context of Cayley graphs \cite{D83,DM83,DW85}.

As	for Cayley graphs, most	of the results proved thus	far	 depend on	 various restrictions imposed on the corresponding Cayley groups (see \cite{A89,ACD10,AZ,CG96,DGMW98,D83,D85,
GWM11,GWM14,GKMM12,GKM09,GM07,KW85,KM09,KMMMS12,
DM83,DW85,W82,W86,WM15,WM18}).
Among them we would like to single out
Witte-Morris's proof that a connected Cayley	digraph of
an arbitrary $p$-group has a Hamilton cycle (see \cite{W86}),
Alspach's proof that connected Cayley graphs on dihedral groups of order divisible by $4$ are hamiltonian (see \cite{ACD10}), and
Ghaderpour and Witte-Morris's completion of the proof of existence of Hamilton cycles in Cayley graphs arising from nilpotent 	
and odd order groups with cyclic commutator subgroups 	
 (see \cite{GWM11,GWM14}).
And finally, with a combination of algebraic and topological
methods a Hamilton path and in some cases even a Hamilton cycle was proved to exist in
cubic Cayley graphs arising from $(2,s,3)$-generated
groups (see \cite{GKMM12,GKM09,GM07}).

Coming back to the general context of vertex-transitive  graphs, the main obstacle to making a substantial progress with regards to Problem~\ref{Lovasz} is a lack of structural results for such graphs. Despite the fact that, somewhat paradoxically, it is precisely the above problem that is responsible for much of the work directed towards obtaining such structural results by
opening up new research directions.
Such is the case for example with the so-called polycirculant
conjecture which states that every vertex-transitive graph has a
fixed-point-free automorphism of prime order
(see \cite{bcc15,seven,FKS81,DMMN07,
GX07,M81,M2,V,X}).
Such automorphisms have been of great practical use
in constructions of Hamilton cycles in vertex-transitive  graphs via the
so-called lifting cycle technique \cite{A89,DM83} thus far.
They are also an important ingredient in the proof
of the main theorem of this paper.

The aim of this paper is to make a
step forward in the above hamiltonicity problem
by giving a complete solution for
graphs of order $pq$, where $p$ and $q$ are primes.

\begin{theorem}
\label{the:main}
With the exception of the Petersen graph, a connected vertex-transitive graph of order $pq$, where $p$ and $q$ are primes,
contains a Hamilton cycle.
\end{theorem}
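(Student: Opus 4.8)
The plan is to reduce, using the classification of transitive groups whose degree is a product of two primes together with known facts about semiregular automorphisms of such graphs, to a short list of graph families, and then to establish hamiltonicity on each family with one of two engines: Jackson's theorem when the valency is large compared with the order, and the lifting cycle technique -- fine-tuned by Chv\'atal's theorem and by the results on polynomial representations of quadratic residues -- otherwise. First the easy reductions. Let $X$ be a connected vertex-transitive graph of order $pq$ that is not the Petersen graph. If $p=q$ the order is $p^2$ and the statement is already known, so assume $p\ne q$, say $q<p$; and if $\min\{p,q\}\le 5$ then $pq$ equals $2p'$, $3p'$ or $5p'$ for some prime $p'$ and the claim is known in these orders, so we may assume $7\le q<p$, whence $pq$ is odd and $p\ge 11$. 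A connected vertex-transitive graph of valency at most $2$ is a cycle, hence hamiltonian, and a connected vertex-transitive graph of valency $d\ge 2$ is $2$-connected, so Jackson's theorem produces a Hamilton cycle whenever $pq\le 3d$; we may therefore assume that the valency $d$ of $X$ satisfies $3\le d<pq/3$. Put $G=\Aut X$.

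Next, split according to whether $G$ admits an imprimitive transitive subgroup. Suppose first that $G$ is primitive. The primitive permutation groups of degree a product of two primes are known, as a consequence of the classification of finite simple groups: the affine type is excluded since $pq$ is not a prime power, and what remains is -- apart from a bounded list of groups of small degree -- the $2$-transitive groups of degree $pq$ (whose only orbital graph is $K_{pq}$), the symmetric and alternating groups acting on $2$-subsets (orbital graphs: the triangular graph $J(n,2)$ and the Kneser graph $K(n,2)$ with $\binom n2=pq$), and the groups with socle $\PSL(d,r)$ acting on subspaces of $\PG(d-1,r)$. For each resulting orbital graph hamiltonicity is then checked directly: the complete graphs are trivial, $J(n,2)=L(K_n)$ is hamiltonian for $n\ge 3$, $K(n,2)$ has valency $\binom{n-2}2\ge pq/3$ for $n\ge 6$ so Jackson's theorem applies (while $n=5$ gives precisely the Petersen graph), and in each remaining case the graph either has valency at least $pq/3$ (Jackson again), or carries a semiregular automorphism of prime order and is handled by the lifting technique below, or belongs to a bounded list disposed of directly, if necessary through a Chv\'atal-type degree condition or a connectivity-versus-independence-number argument. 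This leaves the Petersen graph as the unique non-hamiltonian graph with a primitive automorphism group in the range considered.

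Now suppose $G$ admits an imprimitive transitive subgroup, so that there is a partition $\B$ of $V(X)$ into blocks of prime size; the quotient $X_\B$ is then a connected vertex-transitive graph of prime order, hence a hamiltonian circulant over $\ZZ_p$ or $\ZZ_q$. Since every vertex-transitive graph of order $pq$ admits a semiregular automorphism, fix one, $\rho$, whose orbits coincide with the blocks of $\B$; this presents $X$ as a metacirculant-type graph in which the edges joining two blocks form a $\rho$-invariant set coded by a subset of $\ZZ_p$ (respectively $\ZZ_q$). Apply the lifting cycle technique: lift a Hamilton cycle of the quotient $X_\B$ to a union of $\rho$-orbits of arcs of $X$; the lift is a single cycle -- hence a Hamilton cycle of $X$ -- exactly when the total voltage accumulated along the lifted closed walk generates $\ZZ_p$ (respectively $\ZZ_q$). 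Exploiting the freedom in the choice of the Hamilton cycle in the quotient and in the matchings realising its edges, the existence of an admissible choice reduces to the statement that a certain polynomial of degree at most $4$ over $\GF(p)$ -- with coefficients built from the connection sets, from a primitive root, and from the quadratic residues governing the block-to-block matchings -- is not identically zero, equivalently attains a value coprime to $p$; this is exactly Theorem~\ref{the:polynomial-degree-4}. The few residual configurations -- the quotient a complete circulant, or a block-to-block connection set of very small cardinality -- are dealt with by ad hoc arguments, one of which applies Chv\'atal's theorem to an auxiliary graph on about $q$ vertices. Combining the primitive and imprimitive analyses establishes the theorem.

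I expect the decisive difficulty to lie in this last reduction and its accompanying number theory: forcing the total voltage of the lifted cycle to be a unit of $\ZZ_p$ uniformly over all $p$ and $q$ in range is precisely what compels the degree-$4$ polynomial analysis of quadratic residues evaluated at primitive roots, and making that analysis cover the small primes $q\in\{7,11,13,\dots\}$ as well as the generic case is where the real work sits. By comparison the primitive case, although lengthy and at times delicate for infinite families such as $\PSL(2,r)$ in actions of rank at least $3$, rests on classification results, Jackson's theorem, Chv\'atal-type conditions, the existence of a semiregular automorphism, and a finite amount of computation.
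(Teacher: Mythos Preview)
Your overall architecture is inverted relative to the paper, and this inversion hides the two families that carry essentially all of the new work.

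In the paper, the \emph{imprimitive} case is already finished in the literature: the connected vertex-transitive graphs of order $pq$ with an imprimitive subgroup are exactly the $(q,p)$-metacirculants and the Fermat graphs, and their hamiltonicity was established earlier (Alspach--Parsons for metacirculants, Maru\v{s}i\v{c} for Fermat graphs). There is no voltage or polynomial argument here; this is Proposition~\ref{thm:ham-imprimitive}. Your description of the imprimitive case as ``metacirculant-type'' with $\rho$-orbits equal to the blocks is already false for Fermat graphs, which are imprimitive with blocks of size $q$ but are explicitly not metacirculants. More seriously, your claim that the lifting obstruction reduces to Theorem~\ref{the:polynomial-degree-4} is not how that theorem is used: it has nothing to do with voltages of lifted cycles.

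The genuine content of the paper is the \emph{primitive} case, and your enumeration of primitive groups of degree $pq$ is wrong. The relevant list is Table~\ref{tab:groups}: $P\Omega^\epsilon(2d,2)$, $M_{22}$, $A_7$ on triples, $\PSL(2,61)$ on $A_5$, $\PSL(2,q^2)$ on cosets of $\PGL(2,q)$, $\PSL(2,p)$ on cosets of $D_{p-1}$, and $\PSL(2,13)$ on $A_4$. The triangular and Kneser graphs on $2$-subsets do not appear (and once you assume $q\ge 7$ the Petersen graph, which lives at $\binom{5}{2}=10$, is no longer even visible in your framework). The two infinite families $\PSL(2,q^2)$ and $\PSL(2,p)$ are the heart of the argument: for $\PSL(2,q^2)$ one analyses the orbit structure of the normaliser of a semiregular $q$-subgroup on the quotient and applies Jackson's theorem to the large orbit; for $\PSL(2,p)$ on $D_{p-1}$ one shows the quotient by a $((p+1)/2,p)$-semiregular element is a quasi-bicirculant, and Theorem~\ref{the:polynomial-degree-4} is used precisely here, to guarantee that a polynomial $x^4+kx^2+1$ takes a square value at some primitive root, which in turn produces the needed edge in the quotient so that a generalized Petersen subgraph (hence a Hamilton cycle or path) can be found. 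That is where the degree-$4$ polynomial enters, not in any imprimitive voltage computation.
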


Since vertex-transitive graphs of prime-square order are necessarily
Cayley graphs  of abelian groups and thus
hamiltonian \cite{DM83,DM85},
the primes $p$ and $q$ may be assumed to be distinct.

The proof of Theorem~\ref{the:main} depends heavily
on the classification of vertex-transitive graphs of order $pq$
from \cite[Theorem~2.1]{MS5}
(see also \cite{MS2,MS3,MS1,MS4}),
based on whether these graphs do or do not admit a particular imprimitive subgroup of automorphisms.
Three mutually disjoint classes are identified.
The first class consists of graphs admitting an imprimitive
subgroup of automorphisms with blocks of size $p$
(the larger of the two primes). The second class consists
of graphs which admit an
imprimitive subgroup of automorphisms with blocks of size $q$,
but no subgroups of automorphisms with blocks of size $p$.
Finally, the remaining graphs are characterized by the fact that every
transitive subgroup of automorphisms is primitive.
The first and the second
of these three classes are exhausted, respectively,
by the so-called metacirculants and Fermat graphs.
In short, a {\em metacirculant} is a graph with
a transitive cyclic or metacyclic subgroup, and a {\em Fermat graph}
is a particular $q$-fold cover of a complete graph $K_p$
associated with the action of
$\SL(2,p-1)$ on $\PG(1,p-1)$ for Fermat prime $p$
and a prime $q$ dividing $p-2$ (see \cite{MS2} and
Subsection~\ref{ssec:semiregular} for a detailed description).
The graphs in the third class arise as
generalized orbital graphs associated with one of the groups
listed in Table~\ref{tab:groups}.  For the purpose of this paper
we will refer to these graphs as {\em primitive graphs}.
We would like to remark that  vertex-transitive
graphs of order $pq$, more precisely those with a primitive
automorphism group and those with an imprimitive
automorphism group with blocks of size $q$,
were also characterized by
Praeger, Wang and Xu \cite{PWX,PX,WX}.
In fact, for the analysis of hamiltonian properties of the generalized
orbital graphs arising from Row 5 of
Table~\ref{tab:groups}, a description from \cite{PX} will be used
(see Section~\ref{sec:PGL}).

Hamiltonicity of metacirculants and Fermat graphs of order
$pq$ has already been established, respectively,
in \cite{AP82a,M81a} and \cite{DM92}.
For the completion of the proof of Theorem~\ref{the:main} we
need to prove the existence of Hamilton cycles for all the remaining
graphs, that is, for primitive graphs of order $pq$.
Here we will use an approach which combines a variety
of graph-theoretic and number-theoretic tools.
For example, we will use the fact that
the polycirculant conjecture has been settled for
certain vertex-transitive graphs. In particular, a vertex-transitive graph
of order $pq$, $p>q$, contains a fixed-point-free automorphism of
order $p$. This allows an application of the lifting
cycle technique providing the quotient graph with respect
to this automorphism of order $p$ admits a Hamilton cycle.
When appropriate and when such automorphism exists, however,
this technique will be applied
to the quotient graph relative to a semiregular
automorphism of order $q$.
The existence of such a Hamilton cycle in the quotient
is established using certain  classical theorems on
Hamilton cycles, such as the well-known
Chv\'atal's theorem \cite{C72} and
Jackson's theorem \cite{BJ78}, and also using some certain
properties of
finite fields. In particular, 
we obtain a novel result on polynomials of degree
$4$ over finite fields of prime order with regards to a polynomial
representation of quadratic residues at primitive roots,
thus  refining results from \cite{MV82}
(see Theorem~\ref{the:polynomial-degree-4}).
This result will be used in Section~\ref{sec:PSL}
in all those cases for which Chv\'atal's theorem
does not suffice to prove existence of a Hamilton cycle in the
corresponding quotient.

The proof of Theorem~\ref{the:main} is a lengthy
analysis, covered in Sections~\ref{sec:strategy},~\ref{sec:small},~\ref{sec:PGL} and~\ref{sec:PSL},
of hamiltonian properties
of all possible generalized orbital graphs arising from
group actions in Table~\ref{tab:groups}.
In the sections preceding
this analysis we fix the terminology and notation,
gather same useful results and
tools, and prove the above mentioned property
of polynomials of degree $4$ over finite fields.
To be more concrete, the outline of this paper is given
in the list of sections given below:

\renewcommand*\contentsname{}
\vspace*{-15pt}
\begin{center}
\begin{minipage}[t][9.5cm][b]{0,8\textwidth}
{\scriptsize
\tableofcontents
}
\end{minipage}
\end{center}


\newpage

\section{Terminology, notation and some useful results}
\label{sec:pre}
\noindent


\subsection{Basic definitions and notation}
\label{ssec:definition}
\noindent

Throughout this paper graphs are finite, simple and undirected,
and groups are finite, unless specified otherwise.
Furthermore, a {\em multigraph} is a generalization
of a graph in which we allow multiedges and loops.
Given a graph $X$ we let $V(X)$ and $E(X)$ be the
vertex set and the edge set of $X$, respectively.
For adjacent vertices $u,v \in V(X)$ we write
$u \sim v$ and denote the corresponding edge by $uv$.
The valency of a vertex $u\in V(X)$ is denoted by $val_X(v)$
(or $val(v)$ in short). If $X$ is regular then its valency
is denoted by $val(X)$.
Let $U$ and $W$ be disjoint subsets of $V(X)$.
The subgraph of $X$ induced by $U$ will be denoted by $X\la U \ra$.
Similarly, we let $X[U,W]$  denote the bipartite subgraph
of $X$ induced by the edges having one endvertex in $U$
and the other endvertex in $W$.

Given a transitive group $G$ acting on a set $V$,
we say that a partition $\B$ of $V$ is $G$-{\em invariant}
if the elements of $G$ permute the parts, the so-called
{\em blocks} of $\B$, setwise.
If the trivial partitions $\{V\}$ and $\{\{v\}: v \in V\}$ are the only
$G$-invariant partitions of $V$, then $G$ is {\em primitive},
and is {\em imprimitive} otherwise.

A graph $X$ is {\em vertex-transitive}
if its automorphism group, denoted by $\Aut X$, acts transitively on $V(X)$.
A vertex-transitive graph is said to be {\em primitive} if
every transitive subgroup of its automorphism group
is primitive, and is said to be
{\em imprimitive} otherwise.

A graph containing a Hamilton cycle will be sometimes referred as
a {\em hamiltonian} graph.


\subsection{Generalized orbital graphs}
\label{ssec:orbital}
\noindent

In this subsection we recall the orbital graph construction which
is used throughout the rest of the paper.
Orbital graphs can be constructed for any group
action but in view of the fact that 
any transitive action of a group $G$ is isomorphic to 
the action of $G$ on the coset space of
a subgroup of $G$, we give this construction in the context
of group actions on coset spaces.
(In Section~\ref{sec:PGL}) we will, however, consider orbital
graphs with respect to the action of 
$\PSL(2,q^2)\cong P\O ^{-}(4, q)$
on particular non-singular $1$-dimensional vector spaces
over $\GF(q)$.)

An action of a group $G$ on the coset space $G/H$ with respect to
a subgroup $H\le  G$ gives rise to an action of $G$ on $G/H\times G/H$.
Its orbits are called {\em orbitals}.
Of course, the diagonal $D=\{(x,x) \colon x\in G/H\}$ is always an orbital.
Its complement, $G/H\times G/H - D$ is an orbital if and only if $G$ is doubly transitive. Unless specified otherwise, we only consider simply transitive actions.

An orbital is said to be {\em self-paired} if it simultaneously contains or does not contain ordered pairs $(x,y)$ and $(y,x)$, for $x,y\in G/H$.
For an arbitrary union $\cal{O}$ of orbitals (not containing the diagonal $D$), the {\em generalized orbital (di)graph} $X(G/H,\cal{O})$
of the action of $G$ on $G/H$
with respect to $\cal{O}$ is a simple (di)graph with vertex set $G/H$ and
edge set $\cal{O}$. (For simplicity reasons we will refer to any such (di)graph as an orbital (di)graph of $G$.)
It is an (undirected) graph if and only if $\cal{O}$ coincides with its symmetric closure, that is, $\cal{O}$ has the property that $(x,y)\in\cal{O}$ implies $(y,x)\in\cal{O}$.
Further, the generalized orbital graph $X(G/H,\cal{O})$ is
said to be a {\em basic orbital graph} if $\cal{O}$ is
a single orbital or a union of a single orbital and its symmetric closure.

In terms of symmetry, the group $G$ acts transitively on the vertices of $X(G/H,\cal{O})$, and hence orbital (di)graphs are vertex-transitive. In fact, every vertex-transitive (di)graph can be constructed in this way.

The orbitals of the action of $G$ on $G/H$ are in 1-1 correspondence with the orbits of the action of $H$ on $G/H$, called {\em suborbits} of $G$.
A suborbit corresponding to a self-paired orbital is said to be
{\em self-paired}.
When presenting the (generalized) orbital (di)graph $X(G/H,\cal{O})$
with the corresponding (union) of suborbits $\S$ the (di)graph
$X(G/H,\cal{O})$ is denoted by $X(G,H,\S)$. 

In the example below the Petersen graph (the exceptional graph
from Theorem~\ref{the:main}) is described as a
basic orbital graph arising from the alternating group $A_5$.

\begin{example}
Let $G=A_5$ be the alternating group
and let $H=\la (1\ 2), (1\ 2\ 3)\ra$ be its subgroup.
In the action of $G$ on $G/H\times G/H$ there
are two non-diagonal orbitals, both self-paired.
The corresponding
nontrivial suborbits of $H$ are, respectively, of length $3$ and $6$,
giving the Petersen graph and its complement.
\end{example}


\subsection{Semiregular automorphisms and quotient (multi)graphs }
\label{ssec:semiregular}
\noindent

Let $m\geq 1$ and $n\geq 2$ be integers. An automorphism $\rho$
of a graph $X$ is called $(m,n)$-{\em semiregular}
(in short, {\em semiregular})
if as a permutation on $V(X)$ it has a cycle decomposition consisting
of $m$ cycles of length $n$.
If $m=1$ then $X$ is called a {\em circulant}; it is
in fact a Cayley graph of a cyclic group of order $n$.
Let $\P$ be the set of orbits of $\rho$, that is,
the orbits of the cyclic subgroup $\langle \rho \rangle$ generated by $\rho$.
Let $A, B \in \P$.
By $d(A)$ and $d(A,B)$ we denote the valency of
$X\la A\ra$ and $X[A,B]$, respectively.
(Note that the graph $X[A,B]$ is
regular.)
We let the {\em quotient graph corresponding to
$\P$} be the graph $X_\P$ whose vertex set
equals $\P$ with $A, B \in \P$ adjacent if there
exist vertices $a \in A$ and $b \in B$, such that $a \sim b$ in $X$.
We let the {\em quotient multigraph corresponding to
$\rho$} be the multigraph $X_\rho$ whose vertex set is $\P$ and in
which $A,B \in \P$ are joined
by $d(A,B)$ edges.
Note that the quotient graph $X_\P$
is precisely the underlying graph of $X_\rho$.

The question whether all vertex-transitive graphs admit a semiregular
automorphism  is a famous open problem in algebraic graph theory
(see, for example, \cite{seven,DMMN07,G1,G2,GX07,M81,M2,V,X}).

A graph $X$ admitting an $(m,n)$-semiregular
automorphism $\rho$ can be represented,
following the terminology established  in \cite{MMSF07},
by an $m\times m$ array of subsets of
$H=\la\rho\ra$ as well as with
the well-known Frucht's notation \cite{RF70}.
For the sake of completeness we include both definitions.
Let $\P=\{S_i\colon i\in\ZZ_m\}$ be the set of $m$ orbits
of $\rho$, let $u_i \in S_i$ and let
$S_{i,j}$ be defined by $S_{i,j} = \{t \in H \colon u_i \sim
\rho^t(u_j)\}$. Then the $m\times m$ array
$(S_{i,j})$ is called the {\em symbol} of
$X$ relative to $(\rho;u_0,\dots,u_{m-1})$.
To give a precise definition of Frucht's notation
let $S_i = \{v_i^j\ |\ j \in \ZZ_n\}$ where $v_i^0=u_i$
and $v_i^j = \rho^j(v_i^0)$.
Then $X$ may be represented in this notation by
emphasizing the $m$ orbits of $\rho$ in the following way.
The  $m$ orbits of $\rho$ are  represented by $m$ circles.
The symbol $n/R$, where $R \subseteq\ZZ_n\setminus \{0\}$,
inside the circle corresponding to
the orbit $S_i$ indicates that for each $j\in \ZZ_n$,
the vertex $v_i^j$ is adjacent to all the vertices $v_i^{j+r}$, where $r \in R$.
When $X\la S_i \ra$ is an independent set of vertices
we simply write $n$ inside its circle.
Finally, an arrow pointing from  the circle representing
the orbit $S_i$ to the circle representing the
orbit $S_k$, $k \ne i$, labeled by the set $T \subseteq \ZZ_n$
indicates that for each $j\in\ZZ_n$, the vertex $v_i^j\in S_i$
is adjacent to all the vertices $v_{k}^{j+t}$, where $t \in T$.
An example illustrating this notation is given in Figure~\ref{fig:ch}.

We end this subsection with two important examples of
graphs admitting semiregular automorphisms which
arise in the classification of vertex-transitive graphs
of order $pq$, see Section~\ref{sec:strategy}.
The first class consists of the so-called  metacirculant graphs,
already mentioned in the introduction as the class of
vertex-transitive graphs of order $pq$ admitting an
imprimitive subgroup of automorphisms with blocks of size $p$,
where $p$ is the largest of the two primes $p$ and $q$.
A formal definition, first given in \cite{AP82}, goes as follows.
An $(m,n)$-{\it metacirculant} is a graph of order $mn$ admitting
an $(m,n)$-semiregular automorphism $\rho$
and an automorphism $\sigma$ normalizing $\la\rho\ra$ which
cyclically permutes the orbits of $\rho$.
In particular, let the vertices of an $(m,n)$-metacirculant $X$
with respect to an $(m,n)$-semiregular automorphism $\rho$ be
denoted as in the previous paragraph where Frucht's notation is defined.
Then $X$ is uniquely
determined by the array $(m,n,\alpha,T_0,\ldots, T_\mu)$
where $\alpha\in\ZZ_n^*$, $\mu$ is the integer part of $m/2$,
the sets $T_i\subset \ZZ_n$ satisfy the following conditions
$$
0\notin T_0=-T_0, \ \alpha^mT_i=T_i \textrm{ for $0\le i\le \mu$,
and if $m$ is even, then $\alpha^\mu T_\mu=-T_\mu$},
$$
the edge set of $X$ is given by
$$
v_i^r\sim v_j^s \textrm{ if and only if } s-r\in \alpha^i T_{j-i},
$$
and the automorphism $\sigma$ of $X$ that normalizes
$\la\rho\ra$ is defined by
$\sigma(v_i^j)=v_{i+1}^{\alpha j}$,
where $i\in\ZZ_m$ and $j\in \ZZ_n$.
It can be easily seen that the semidirect
product $\la\rho\ra\rtimes\la\sigma\ra$ is a transitive subgroup of
the automorphism group of the metacirculant $X$.
For example, a metacirculant given by the array
$(2,5,2,\{\pm 1\},\{0\})$ is isomorphic to
the Petersen graph.

The second class consists of the so-called  Fermat graphs \cite{MS2},
mentioned in the introduction as the class of
vertex-transitive graphs of order $pq$ admitting an
imprimitive subgroup of automorphisms with blocks of size $q$
and no imprimitive subgroup of automorphisms with blocks of size $p$,
where $p$ is the largest of the two primes $p$ and $q$.
In particular, let $p=2^{2^s}+1$ be a Fermat prime and
let $q$ be a prime dividing $p-2$.
Let $w$ be a fixed generator of the multiplicative group
$\GF(p-1)^*=\GF(p-1)\setminus\{0\}$ of the Galois field $\GF(p-1)$.
For a symmetric subset $S$ of $\GF(q)^*=\GF(q)\setminus\{0\}$ and a
non-empty proper subset $T$ of $\GF(q)^*$ we let
the {\em Fermat graph} $F(p,q,S,T)$ be the graph
with vertex set $\PG(1,p-1)\times\GF(q)$ such that,
for each point $v$ of the projective line
$\PG(1,p-1)$ and each $r\in \GF(q)^*$,
the neighbors of $(\infty,r)$ are all the vertices of the form
\begin{eqnarray*}
 (\infty,r+s) \ (s\in S) \textrm{ and } (y,r+t) \ (y\in \GF(p-1), t\in T),
\end{eqnarray*}
and the neighbors of $(v,r)$, $v\ne \infty$,
are all the vertices of the form
\begin{eqnarray*}
 (v,r+s) \ (s\in S) \textrm{ and } (\infty,r-t) \ (t\in T)
\textrm{ and } (v+w^i,-r+t+2i) \ (i\in\GF(q), t\in T).
\end{eqnarray*}
From this list of adjacencies one can easily
see that $F(p,q,S,T)$ admits a $(p,q)$-semiregular automorphism,
and that the quotient graph with respect to this
automorphism is isomorphic to the complete
graph $K_p$. The smallest Fermat graph is the line
graph of the Petersen graph.
(Let us also mention that the class of connected Fermat graphs
is disjoint from the  class of metacirculants of order $pq$, see \cite{MS2}.)


\subsection{Some group-theoretic terminology}
\label{ssec:groups}
\noindent

For group-theoretic terms not defined here
we refer the reader to \cite{W66}.
The following classical groups appear  in the description of
primitive group actions, of degree
a product of two distinct primes,
which do not have  imprimitive subgroups
(see Table~\ref{tab:groups}):
\begin{enumerate}[(i)]
\itemsep=0pt
\item $P\Omega^\epsilon(2d,2)$:  the projective orthogonal group of a vector space of dimension $2d$ over a finite field $\GF(2)$,
\item $\PSL(n,q)$: the projective special linear group on $n$-dimensional vector space over finite field of order $q$,
\item $\PGL(n,q)$: the projective general linear group on $n$-dimensional vector space over finite field of order $q$,
\item $M_{22}$: the Mathieu group (a sporadic simple
group of order $443520$),
\item $A_n$: the alternating group of degree $n$,
\item $D_{2n}$: the dihedral group of order $2n$.
\end{enumerate}


\subsection{Useful number theory facts}
\label{ssec:numbers}
\noindent


For a prime power $r$ a finite field $\GF(r)$ of order $r$
will be denoted by $F_r$,
with the subscript $r$ being omitted whenever the order of the field
is clear from the context.
The set of nonzero
quadratic residues modulo $r$, that is, elements of $F^*=F\setminus\{0\}$
that are  congruent to a perfect square modulo $r$, will be denoted by
$S^*$. The elements of $S^*$ will be called {\em squares},
the elements of $F^*$ not belonging to $S^*$
will be called {\em non-squares},
and the set of all non-squares will be denoted by
$N^*$, that is, $N^*=F^*\setminus S^*$.

The following basic number-theoretic results
will be needed throughout the paper.

\begin{proposition}
\label{pro:-1}
{\rm \cite[Theorem~21.2]{S12}}
Let $F$ be a finite field of odd prime order $p$.
Then $-1\in S^*$ if $p\equiv {1\pmod 4}$,
and $-1\in N^*$ if $p\equiv {3\pmod 4}$.
\end{proposition}

\begin{proposition}
\label{pro:2}
{\rm \cite[Theorem~21.4]{S12}}
Let $F$ be a finite field of odd prime order $p$.
Then $2\in S^*$ if $p\equiv {1,7\pmod 8}$, and
$2\in N^*$ if $p\equiv {3,5\pmod 8}$.
\end{proposition}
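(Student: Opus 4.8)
The statement is the classical ``second supplement'' to the law of quadratic reciprocity, evaluating the Legendre symbol $\left(\frac{2}{p}\right)$, and the plan is to prove it by a quadratic Gauss sum argument, which is short and avoids the floor-function bookkeeping inherent in the elementary counting proofs. The first step is to reduce the assertion to a power computation inside $F$. Since $F^*$ is cyclic of order $p-1$, its set of squares $S^*$ is the unique subgroup of index $2$, and the map $a\mapsto a^{(p-1)/2}$ is a homomorphism from $F^*$ onto $\{\pm 1\}$ whose kernel, being a subgroup of size at most $(p-1)/2$ containing $S^*$, equals $S^*$; this is Euler's criterion, $a^{(p-1)/2}=1$ in $F$ iff $a\in S^*$. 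Thus it suffices to show that $2^{(p-1)/2}=1$ in $F$ when $p\equiv\pm 1\pmod 8$, and $2^{(p-1)/2}=-1$ in $F$ when $p\equiv\pm 3\pmod 8$.

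Next I would manufacture a square root of $2$ in a small extension of $F$. Because the group $(\ZZ/8\ZZ)^*$ has exponent $2$, we have $8\mid p^2-1$, so $F_{p^2}$ contains a primitive $8$th root of unity $\zeta$. Put $\tau=\zeta+\zeta^{-1}\in F_{p^2}$. Since $\zeta^2$ is a primitive $4$th root of unity, $\zeta^2+\zeta^{-2}=0$, and therefore
$$
\tau^2=\zeta^2+2+\zeta^{-2}=2 .
$$
In particular $\tau\neq 0$, since $2\neq 0$ in a field of odd characteristic, so one may divide by $\tau$.

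The key step is to apply the Frobenius endomorphism $x\mapsto x^p$ of $F_{p^2}$, which is additive, to obtain $\tau^p=\zeta^p+\zeta^{-p}$; this depends only on $p\bmod 8$. If $p\equiv\pm 1\pmod 8$, then $\{\zeta^p,\zeta^{-p}\}=\{\zeta,\zeta^{-1}\}$ and hence $\tau^p=\tau$. If $p\equiv\pm 3\pmod 8$, then, using $\zeta^4=-1$ (so that $\zeta^{\pm 3}=-\zeta^{\mp 1}$ and $\zeta^{\pm 5}=-\zeta^{\pm 1}$), one gets $\tau^p=\zeta^{\pm 3}+\zeta^{\mp 3}=-(\zeta+\zeta^{-1})=-\tau$. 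Dividing by $\tau$ yields $\tau^{p-1}=1$ in the first case and $\tau^{p-1}=-1$ in the second; since $\tau^{p-1}=(\tau^2)^{(p-1)/2}=2^{(p-1)/2}$, these are exactly the two values singled out above, and Euler's criterion then gives $2\in S^*$ respectively $2\in N^*$, as claimed.

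There is no genuine obstacle: the whole content is the identity $\tau^2=2$ together with the elementary four-case evaluation of $\tau^p$, and the only point needing a word of justification is the existence of $\zeta$, which is supplied by the exponent-$2$ remark on $(\ZZ/8\ZZ)^*$. For comparison, the alternative route via Gauss's Lemma writes $\left(\frac{2}{p}\right)=(-1)^n$ with $n=\#\{k:\ p/4<k\le (p-1)/2\}=\tfrac{p-1}{2}-\lfloor p/4\rfloor$ and then checks that $n$ is even precisely when $p\equiv\pm 1\pmod 8$; that argument is equally short but replaces the clean Frobenius computation by a slightly more error-prone floor evaluation across the four residue classes, which is why the Gauss sum version is the one I would record.
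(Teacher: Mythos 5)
Your argument is correct: the identity $\tau^2=2$ for $\tau=\zeta+\zeta^{-1}$ with $\zeta$ a primitive $8$th root of unity in $F_{p^2}$, the Frobenius computation of $\tau^p$ in the four residue classes, and Euler's criterion together give a complete and standard proof of the second supplementary law. The paper itself offers no proof of this proposition --- it is simply cited from Silverman's textbook --- so there is nothing to compare against; your write-up would serve as a self-contained justification of the cited fact.
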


\begin{proposition}
\label{pro:preseki}
{\rm \cite[p. 167]{MS3}}
Let $F$ be a finite field of odd prime order $p$. Then
$$
|S^*+1 \cap (-S^*)|=\left\{
\begin{array}{ll}
(p-5)/4 & p\equiv {1\pmod 4},\\
(p+1)/4 & p\equiv {3\pmod 4}.
\end{array}\right.
$$
In particular, if $p\equiv {1\pmod 4}$ then
$|S^*\cap S^*+1|=(p-5)/4$,
$|N^*\cap N^*+1| = (p-1)/4$, and
$|S^*\cap N^*\pm 1| = (p-1)/4$.
\end{proposition}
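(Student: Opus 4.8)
The plan is to compute $|(S^*+1)\cap(-S^*)|$ directly with quadratic character sums, and then to extract the ``in particular'' statement by elementary counting. Write $F=F_p$ and let $\chi$ be its quadratic (Legendre) character, so that $\chi(a)=1$ for $a\in S^*$, $\chi(a)=-1$ for $a\in N^*$, $\chi(0)=0$, and $\tfrac12(1+\chi(a))$ is the indicator of $S^*$ on $F^*$. The set $(S^*+1)\cap(-S^*)$ consists of those $y\in F$ with $y-1\in S^*$ and $-y\in S^*$; since $y-1$ vanishes only at $y=1$ and $-y$ only at $y=0$, I would write
$$
N:=|(S^*+1)\cap(-S^*)|=\sum_{y\in F\setminus\{0,1\}}\frac{1+\chi(y-1)}{2}\cdot\frac{1+\chi(-y)}{2}
=\frac14\sum_{y\neq 0,1}\Bigl(1+\chi(y-1)+\chi(-y)+\chi\bigl(y(1-y)\bigr)\Bigr),
$$
using $\chi(-y)\chi(y-1)=\chi\bigl(-y(y-1)\bigr)=\chi\bigl(y(1-y)\bigr)$.

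Next I would evaluate the four inner sums. The constant sum is $p-2$. Since $\sum_{y\in F}\chi(y-1)=\sum_{y\in F}\chi(-y)=0$ and the two deleted terms contribute $\chi(0)=0$ together with $\chi(-1)$, both linear sums equal $-\chi(-1)$. For the quadratic sum, $y(1-y)$ vanishes exactly at $y=0,1$, so it equals $\sum_{y\in F}\chi\bigl(y(1-y)\bigr)=\sum_{y}\chi(y)\chi(1-y)$, which is the Jacobi sum $J(\chi,\chi)=-\chi(-1)$; equivalently, completing the square as $y(1-y)=1/4-(y-1/2)^2$ and invoking the standard evaluation of the character sum of a quadratic with nonzero discriminant gives the same value. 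Hence $N=\tfrac14\bigl(p-2-3\chi(-1)\bigr)$, and Proposition~\ref{pro:-1} (which gives $\chi(-1)=1$ for $p\equiv1\pmod4$ and $\chi(-1)=-1$ for $p\equiv3\pmod4$) yields $N=(p-5)/4$ and $N=(p+1)/4$ in the two cases.

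For the ``in particular'' part, assume $p\equiv1\pmod4$. Then $-1\in S^*$ by Proposition~\ref{pro:-1}, so $-S^*=S^*$ and therefore $|S^*\cap(S^*+1)|=N=(p-5)/4$. To obtain the other three values I would set up the $3\times3$ table counting $y\in F$ by whether each of $y$ and $y-1$ is a nonzero square, a non-square, or zero. Its marginals are forced: each of $y$ and $y-1$ is a nonzero square for exactly $(p-1)/2$ values of $y$ and a non-square for exactly $(p-1)/2$ values; of the two occupied ``zero'' cells, $y=1$ lies in the (square, zero) cell and $y=0$ in the (zero, square) cell precisely because $-1\in S^*$. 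Feeding $|S^*\cap(S^*+1)|=(p-5)/4$ into these linear relations gives $|N^*\cap(N^*+1)|=|S^*\cap(N^*+1)|=|N^*\cap(S^*+1)|=(p-1)/4$. Finally, since $-1\in S^*$ the involution $y\mapsto-y$ maps $S^*$ onto $S^*$ and carries the condition $y+1\in N^*$ to $y-1\in N^*$, whence $|S^*\cap(N^*-1)|=|S^*\cap(N^*+1)|$, completing $|S^*\cap(N^*\pm1)|=(p-1)/4$.

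The only step with genuine content is the evaluation of $\sum_y\chi\bigl(y(1-y)\bigr)$, which is entirely classical (a Jacobi sum, or a point count on the conic $z^2=1/4-w^2$); everything else is bookkeeping, the one delicate point being the correct treatment of the boundary values $y=0$ and $y=1$, where the identity $\tfrac12(1+\chi(a))=\mathbf{1}_{S^*}(a)$ breaks down.
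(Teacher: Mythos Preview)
Your proof is correct. The paper does not actually supply its own proof of this proposition: it is quoted as a known fact from \cite[p.~167]{MS3} and used as a black box, so there is nothing to compare your argument against. Your character-sum computation is the standard route to such counts; the evaluation of $\sum_y\chi(y)\chi(1-y)=-\chi(-1)$ and the handling of the boundary points $y=0,1$ are both carried out correctly, and the $3\times3$ marginal argument together with the involution $y\mapsto -y$ cleanly yields the remaining three equalities in the ``in particular'' clause.
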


Using Proposition~\ref{pro:preseki} the following
result may be easily deduced.

\begin{proposition}
\label{num}
Let $F$ be a finite field of odd prime order $p$.
Then for any $k\in F^*$,
the equation $x^2+y^2=k$ has
$p- 1$ solutions if $p\equiv {1 \pmod  4}$,
and $p+ 1$ solutions if
$p\equiv  {3 \pmod  4}$.
\end{proposition}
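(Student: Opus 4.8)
The plan is to count solutions of $x^2+y^2=k$ by splitting according to whether the variables are zero, squares, or non-squares, and then to reduce everything to the cardinality $|S^*+1\cap(-S^*)|$ supplied by Proposition~\ref{pro:preseki}. First I would dispose of the degenerate solutions. If $x=0$ then $y^2=k$, which has $2$ solutions when $k\in S^*$ and none when $k\in N^*$; symmetrically for $y=0$. Since multiplication by $k^{-1}$ is a bijection preserving the square/non-square dichotomy, the total count is independent of which $k\in F^*$ we pick, so I may assume $k\in S^*$ (equivalently, after scaling, $k=1$); thus the degenerate solutions contribute exactly $4$ in the case $k\in S^*$. Actually, to keep the count uniform in $k$, I would instead observe directly: the number of $y$ with $y^2=k$ is $1+\chi(k)$ where $\chi$ is the quadratic character, and likewise for $x$, so the degenerate contribution is $2(1+\chi(k))$; but since the final answer must not depend on $k$, I will run the whole argument for a fixed $k$ and check consistency at the end — the cleanest route is simply to take $k=1$, prove the formula there, and invoke the scaling bijection.

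Next, the main term: count pairs $(x,y)$ with $x,y\in F^*$ and $x^2+y^2=1$. Write $a=x^2$ and $b=y^2$; then $a,b\in S^*$ and $a+b=1$, i.e. $b=1-a$ with $a\in S^*$ and $1-a\in S^*$. Each such ordered pair $(a,b)$ of squares lifts to exactly $4$ pairs $(x,y)$ (two choices of sign for each of $x,y$). So the number of nondegenerate solutions is $4\cdot|\{a\in S^*: 1-a\in S^*\}|$. Now $1-a\in S^*$ means $a-1\in -S^*$, i.e. $a\in (S^*+1)\cap(-S^*+1)$... more directly, $a\in S^*$ and $a\in 1-S^* = \{1-s: s\in S^*\}$. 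I would translate this into the set appearing in Proposition~\ref{pro:preseki}: the condition ``$a\in S^*$ and $1-a\in S^*$'' is equivalent, via the substitution $a = 1+t$ (so $t=a-1$), to ``$1+t\in S^*$ and $-t\in S^*$'', i.e. $t\in(S^*+1)\cap(-S^*)$ — but one must be careful about whether $t=0$ (i.e. $a=1$) or $t=-1$ (i.e. $a=0$) are excluded, since $S^*+1$ and $-S^*$ are being intersected as stated. The set $\{a\in S^*: 1-a\in S^*\}$ excludes $a=1$ (as $1-1=0\notin S^*$) and $a=0$ (as $0\notin S^*$), matching exactly the intersection $(S^*+1)\cap(-S^*)$ after the shift, so $|\{a\in S^*: 1-a\in S^*\}| = |(S^*+1)\cap(-S^*)|$, which is $(p-5)/4$ when $p\equiv 1\pmod 4$ and $(p+1)/4$ when $p\equiv 3\pmod 4$.

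Finally I would assemble the total. For $k\in S^*$: total $=4 + 4\cdot|(S^*+1)\cap(-S^*)|$. When $p\equiv 1\pmod 4$ this is $4+4\cdot(p-5)/4 = 4+(p-5)=p-1$. When $p\equiv 3\pmod 4$: here $-1\in N^*$ by Proposition~\ref{pro:-1}, so $k=-1\in N^*$, and one should really run the case $k\in N^*$; but $-S^* = N^*$ in this case, and redoing the nondegenerate count with $a+b=k$, $a,b\in S^*$ gives $4\cdot|(S^*)\cap(k-S^*)|$, which after scaling by $k^{-1}$ is again $4\cdot|(S^*+1)\cap(-S^*)| = 4\cdot(p+1)/4 = p+1$, and the degenerate solutions now contribute $0$ since neither $k$ nor (after scaling) $1$ need be a square — wait, $1$ is always a square, so I must be careful: the scaling bijection $x\mapsto x$, $(x,y)\mapsto(\lambda x,\lambda y)$ with $\lambda^2 = k$ only exists when $k\in S^*$. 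For $k\in N^*$ there is no such bijection, so I would instead scale by a fixed non-square: if $k,k'\in N^*$ then $k/k'\in S^*$, giving a bijection between the solution sets for $k$ and $k'$; hence the count is constant on $N^*$ too, and I may compute it for one convenient $k\in N^*$. Taking that representative, the degenerate solutions vanish (as $k\notin S^*$) and the nondegenerate count is $4\cdot|\{a\in S^*: k-a\in S^*\}|$; substituting $a = k\cdot(1+t)/\text{something}$... the cleanest is: $|\{(a,b)\in S^*\times S^*: a+b=k\}|$ for $k\in N^*$ equals, by Proposition~\ref{pro:preseki}'s remark structure, the same $(p+1)/4$ when $p\equiv 3\pmod 4$ (the only case where $N^*$ is relevant via $-1$), yielding $4\cdot(p+1)/4 = p+1$. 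So in all cases the formula holds. \textbf{The main obstacle} will be the bookkeeping around the two congruence classes of $p$ and the square/non-square status of $k$: getting the degenerate-solution count and the shift into $(S^*+1)\cap(-S^*)$ to line up correctly, and justifying that the answer is genuinely independent of $k$ within each of $S^*$ and $N^*$ (two separate scaling arguments). Once that is pinned down, everything else is a direct substitution into Proposition~\ref{pro:preseki}.
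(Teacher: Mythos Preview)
Your approach---count degenerate solutions, then reduce the nondegenerate count to $4\cdot|\{a\in S^*:k-a\in S^*\}|$ and link this to Proposition~\ref{pro:preseki}---is exactly what the paper intends by ``easily deduced.'' Two points, however, need attention.

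First, a substitution slip: with $a=1+t$ the condition $1+t\in S^*$ says $t\in S^*-1$, not $t\in S^*+1$. For $p\equiv 1\pmod 4$ this is harmless, since $-S^*=S^*$ and the map $t\mapsto -t$ shows $|(S^*-1)\cap(-S^*)|=|(S^*+1)\cap(-S^*)|$; but for $p\equiv 3\pmod 4$ the two cardinalities differ, and in fact $|\{a\in S^*:1-a\in S^*\}|=(p-3)/4$, not $(p+1)/4$. Your instinct to switch to $k\in N^*$ for $p\equiv 3$ rescues the computation (and your scaling $a=ka'$ there is correct: for $k\in N^*$ one lands on $\{a'\in N^*:1-a'\in N^*\}=(S^*+1)\cap(-S^*)$), so the end numbers are right.

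Second, and more seriously, your case analysis is incomplete. You establish the count for $k\in S^*$ when $p\equiv 1\pmod 4$ and for $k\in N^*$ when $p\equiv 3\pmod 4$, and you argue correctly that the count is constant on $S^*$ and constant on $N^*$ separately; but you never show these two constants agree, so $(p\equiv 1,\,k\in N^*)$ and $(p\equiv 3,\,k\in S^*)$ are left open. The sentence ``the final answer must not depend on $k$'' is precisely the statement to be proved. To close the gap you can either (i)~redo the nondegenerate count for one representative $k\in N^*$ when $p\equiv 1$ and one $k\in S^*$ when $p\equiv 3$ (the same bookkeeping with Proposition~\ref{pro:preseki} goes through), or (ii)~use the global identity $\sum_{k\in F}N(k)=p^2$ together with the easy value $N(0)$ to force the remaining constant. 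Either fix is short; without one the proof is not complete.
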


\begin{proposition}
\label{pro:AB}
Let $F$ be a finite field of odd prime order $p\equiv {1\pmod 4}$,
and let $A=S^*\cap S^*+1$ and
$B=S^*\cap S^*-1$.
Then $|(A\setminus B) \cup (B \setminus A)|\ge 2$, that is,
$|A\cup B|\ge |A|+2$.
\end{proposition}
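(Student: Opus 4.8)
The plan is to reduce the claim $|A\cup B|\ge |A|+2$ to showing that $B\setminus A$ has size at least $2$, which (since $A$ and $B$ have the same cardinality $(p-5)/4$ by Proposition~\ref{pro:preseki}) is equivalent to the symmetric-difference statement. First I would recall the explicit descriptions: $A=S^*\cap(S^*+1)$ consists of those squares $x$ for which $x-1$ is also a square, while $B=S^*\cap(S^*-1)$ consists of those squares $x$ for which $x+1$ is also a square. An element $x$ lies in $B\setminus A$ exactly when $x$ is a square, $x+1$ is a square, and $x-1$ is a non-square (with the implicit understanding that $x\ne 0$ and $x+1\ne 0$; note $x=1$ is allowed only if $0$ counts, which it does not, so we need $x-1\in N^*$ in the strict sense, forcing $x\neq 1$). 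So the task becomes: count (or lower-bound) the number of $x\in F^*$ with $x\in S^*$, $x+1\in S^*$, $x-1\in N^*$.

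The main tool will be a Jacobi-sum / character-sum count of consecutive quadratic-character patterns, or else a direct appeal to Proposition~\ref{pro:preseki} combined with inclusion–exclusion. Concretely, let $N(\epsilon_1,\epsilon_2,\epsilon_3)$ denote the number of $x\in F$ such that $x$, $x+1$, $x-1$ avoid $0$ and have quadratic-residue signs $\epsilon_1,\epsilon_2,\epsilon_3\in\{+,-\}$. Standard estimates give $N(\epsilon_1,\epsilon_2,\epsilon_3)=p/8+O(\sqrt p)$, and in fact for $p\equiv 1\pmod 4$ one can pin down these eight counts more precisely. The quantity I want is $|B\setminus A|=N(+,+,-)$. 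Rather than invoke the full character-sum machinery, I would try to extract this from Proposition~\ref{pro:preseki} alone: that proposition already yields $|S^*\cap(S^*+1)|$, $|S^*\cap(S^*-1)|$, and by symmetry (replacing $x$ by $-x$ and using $-1\in S^*$ since $p\equiv 1\pmod 4$) relates several of the two-term intersections. Writing $a=|A\cap B|$, $b=|B\setminus A|$, $c=|A\setminus B|$, we have $b=c$ and $a+b=|B|=(p-5)/4$; so it suffices to show $a\le (p-13)/4$, i.e. that $A\cap B$ is not almost all of $B$. The set $A\cap B$ consists of squares $x$ with both $x-1$ and $x+1$ squares — a three-term all-square pattern — and I would bound its size below $(p-5)/4$ by exhibiting at least two squares $x$ with $x+1$ square and $x-1$ non-square, or equivalently by a parity/counting contradiction: if $a=(p-5)/4$ then $b=0$, meaning \emph{every} square $x$ with $x+1$ a square also has $x-1$ a square, which would propagate along the "square chain" and force far too many squares among short intervals, contradicting Proposition~\ref{pro:preseki}'s exact value for, say, $|N^*\cap(N^*+1)|=(p-1)/4$.

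The hard part will be making the "propagation" argument fully rigorous without simply redoing a Gauss/Jacobi-sum computation; the cleanest route is probably to accept a short character-sum lemma computing $N(+,+,-)$ exactly (it will come out to roughly $(p-c)/8$ with a small constant depending on $p\bmod 8$ and possibly on the representation $p=u^2+v^2$), which is $\ge 2$ for all primes $p\equiv 1\pmod 4$ beyond a tiny list, and then to check the finitely many small primes ($p=5,13,\dots$) by hand. So the structure of the write-up is: (1) restate $A,B$ and reduce to $|B\setminus A|\ge 2$ via $|A|=|B|$; (2) identify $B\setminus A$ with a consecutive-residue pattern count $N(+,+,-)$; (3) evaluate or lower-bound that count using character sums or Proposition~\ref{pro:preseki}; (4) dispose of small $p$ directly. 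I expect step (3) to be the only real obstacle, and I would guard against off-by-one issues at $x=\pm1$ and $x=0$ throughout.
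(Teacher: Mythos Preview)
Your character-sum plan would work, but the paper's argument is far simpler and avoids both the analytic estimate and the small-prime case check. It runs as follows: there must exist some $s$ with $s-1,s\in S^*$ but $s+1\notin S^*$, for otherwise, starting from any pair of consecutive squares (and $|A|=(p-5)/4>0$ guarantees one exists for $p>5$), an obvious induction would force all subsequent field elements to be squares, which is absurd; such an $s$ lies in $A\setminus B$. Then, since $-1\in S^*$, the involution $x\mapsto -x$ carries $A\setminus B$ onto $B\setminus A$, so $-s\in B\setminus A$, and the pair $\{s,-s\}$ already witnesses the symmetric-difference bound. Note also a small slip in your reduction: because that involution gives $|A\setminus B|=|B\setminus A|$, the symmetric difference has size exactly $2\,|B\setminus A|$, so the stated bound $\ge 2$ is equivalent only to $|B\setminus A|\ge 1$, not $\ge 2$. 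The paper's proof accordingly produces just one element on each side, whereas your $N(+,+,-)$ computation targets the stronger (and unnecessary) inequality $|B\setminus A|\ge 2$.
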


\begin{proof}
First observe that there must exist three consecutive elements
$s-1, s, s+1$ of the field $F$
such that $s-1$ and $s$ are squares but $s+1$ is not.
Therefore  $s\in S^*\cap S^*+1$ but $s\notin S^*\cap S^*-1$,
and so $s\in A\setminus B$. But then, since $-1\in S^*$,
we have $-s\in B\setminus A$,
and thus $|(A\setminus B) \cup (B \setminus A)|\ge 2$.
\end{proof}

\begin{proposition}
\label{pro:ABn}
Let $F$ be a finite field of odd prime order $p\equiv {1\pmod 4}$,
and let $A=S^*\cap N^*+1$ and $B=S^*\cap N^*-1$.
Then $|(A\setminus B) \cup (B \setminus A)|\ge 2$, that is,
$|A\cup B|\ge |A|+2$.
\end{proposition}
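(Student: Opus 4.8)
The plan is to mimic the proof of Proposition~\ref{pro:AB}, replacing the role of three consecutive squares by a suitable configuration of squares and non-squares. Concretely, I want to produce an element $s$ lying in exactly one of $A=S^*\cap N^*+1$ and $B=S^*\cap N^*-1$, and then use $-1\in S^*$ (valid since $p\equiv 1\pmod 4$, by Proposition~\ref{pro:-1}) to produce a companion element $-s$ lying in exactly the other one; this gives $|(A\setminus B)\cup(B\setminus A)|\ge 2$, hence $|A\cup B|\ge|A|+2$.

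First I would look for three consecutive field elements $s-1,s,s+1$ with the pattern: $s-1\in N^*$, $s\in S^*$, $s+1\in S^*$. If such an $s$ exists, then $s\in S^*$ and $s-1\in N^*$ give $s\in S^*\cap N^*+1=A$; while $s\in S^*$ and $s+1\in S^*$ (so $s-1$ would need to be in $N^*$ for membership in $B=S^*\cap N^*-1$, but $s+1\in S^*$ means the relevant predecessor-type condition fails) — more carefully, $s\in B$ would require $s+1\in N^*$, which is false, so $s\notin B$. Hence $s\in A\setminus B$. Then, since $-1\in S^*$, multiplication by $-1$ preserves $S^*$ and swaps $S^*\leftrightarrow$ nothing but sends $s+1\mapsto -s-1=-(s+1)$ and $s-1\mapsto -(s-1)=-s+1$; so $-s\in S^*$, $-s-1=-(s+1)\in S^*$ forces $-s\notin A$ (membership in $A$ needs $-s-1\in N^*$), while $-s\in S^*$ and $-s+1=-(s-1)\in N^*$ gives $-s\in S^*\cap N^*-1=B$. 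Thus $-s\in B\setminus A$, and the two elements $s,-s$ are distinct (as $s\ne 0$, and $s=-s$ would force $2s=0$), completing the argument.

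The one thing that needs to be checked is the existence of a consecutive triple with the pattern $(N^*,S^*,S^*)$. This is where I expect the only real work to lie, though it is routine: the number of consecutive pairs $(t,t+1)$ with both in $S^*$ is $(p-5)/4$ by Proposition~\ref{pro:preseki}, which is positive for $p\ge 13$; for such a pair set $s=t+1$ so that $s,s+1\in S^*$. If additionally $s-1\in N^*$ we are done; if instead every such pair had $s-1\in S^*$, one can iterate (shift the window left) to build an arbitrarily long run of consecutive squares, which is impossible since $N^*$ is nonempty — so some pair in the window has a non-square immediately to its left. Alternatively, a clean counting argument: among all $p$ elements, exactly $(p-5)/4$ indices $s$ satisfy $s,s+1\in S^*$, and at most $|S^*\cap S^*-1|\le(p-5)/4$ of these also have $s-1\in S^*$; if these two counts are unequal we get an $s$ of the desired type immediately, and equality forces a periodic run, again contradicting nonemptiness of $N^*$. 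The small cases $p=5$ (and any other primes $\equiv 1\pmod 4$ below the threshold) are handled separately by direct inspection; for $p=5$, $S^*=\{1,4\}$, $N^*=\{2,3\}$, so $A=S^*\cap(N^*+1)=\{1,4\}\cap\{3,4\}=\{4\}$ and $B=S^*\cap(N^*-1)=\{1,4\}\cap\{1,2\}=\{1\}$, giving $|A\cup B|=2=|A|+1$ — wait, that is only $|A|+1$, so I would double-check whether the statement is intended for $p\ge 13$, or re-examine; most likely the intended reading excludes the degenerate small primes or the indices are taken over a set where the bound still holds, and I would align the statement's hypotheses accordingly in the final writeup.
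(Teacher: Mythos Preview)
Your approach is essentially the paper's: the paper takes the consecutive pattern $(S^*,S^*,N^*)$ for $(s-1,s,s+1)$, obtaining $s\in B\setminus A$ and then $-s\in A\setminus B$, whereas you take the mirrored pattern $(N^*,S^*,S^*)$, obtaining $s\in A\setminus B$ and $-s\in B\setminus A$; the two are equivalent under negation since $-1\in S^*$. The paper simply asserts the existence of such a triple without the justification you attempt, so your extra counting is more than the paper supplies.

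Your $p=5$ check is worth a comment. With $A=\{4\}$ and $B=\{1\}$ the symmetric-difference claim $|(A\setminus B)\cup(B\setminus A)|\ge 2$ \emph{does} hold (the sets are disjoint singletons), so the first formulation survives. What fails at $p=5$ is the ``that is'' rephrasing $|A\cup B|\ge|A|+2$: since $|A|=|B|$, the symmetric-difference bound only yields $|B\setminus A|\ge 1$, i.e.\ $|A\cup B|\ge|A|+1$, and the two formulations are genuinely inequivalent. The paper's proof (and yours) establishes only the former; in the paper's applications one always has $p\ge 13$, where the distinction is moot, but you are right that the stated equivalence is inexact at $p=5$.
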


\begin{proof}
First observe that there must exist three consecutive elements
$s-1, s, s+1$ of the field $F$
such that $s-1$ and $s$ are squares but $s+1$ is not.
Therefore  $s\in S^*\cap N^*-1$ but $s\notin S^*\cap N^*+1$,
and so $s\in B\setminus A$. But then, since $-1\in S^*$
we have $-s\in A\setminus B$,
and so $|(A\setminus B) \cup (B \setminus A)|\ge 2$.
\end{proof}

\begin{proposition}
\label{pro:edges}
Let $F$ be a finite field of odd prime order
$p\equiv {1\pmod 8} $, let $i=\sqrt{-1}\in F^*$, and let
$\mathcal{T}=\{x \in F^*\colon 1+x^2\in S^*\}$.
Then for each $x\in F^*$ at least one of the three elements
$x$, $ix$ and $ix^2$ belongs to $\mathcal{T}$.
\end{proposition}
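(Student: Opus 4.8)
The plan is to reinterpret membership in $\mathcal{T}$ as a statement about the quadratic character and then invoke the elementary fact that in a finite field the product of two non-squares is a square. First I would record what the three candidates assert. Writing $a = 1 + x^2$ and $b = 1 - x^2$ and using $i^2 = -1$, one computes $1 + (ix)^2 = b$ and $1 + (ix^2)^2 = 1 - x^4 = ab$. Hence $x \in \mathcal{T}$, $ix \in \mathcal{T}$ and $ix^2 \in \mathcal{T}$ are equivalent, respectively, to $a \in S^*$, $b \in S^*$ and $ab \in S^*$; also $i$ exists because $p \equiv 1 \pmod 4$ gives $-1 \in S^*$ (Proposition~\ref{pro:-1}), and $ix, ix^2 \in F^*$ whenever $x \in F^*$.

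Next I would handle the degenerate cases where $a$ or $b$ vanishes. If $a = 0$ then $x^2 = -1$, so $b = 2$; if $b = 0$ then $x^2 = 1$, so $a = 2$ (and $a = b = 0$ is impossible for $p > 2$). In either case the relevant value is $2$, which lies in $S^*$ precisely because $p \equiv 1 \pmod 8$ (Proposition~\ref{pro:2}), so one of $x$, $ix$ already belongs to $\mathcal{T}$. This is exactly where the hypothesis $p \equiv 1 \pmod 8$ --- rather than just $p \equiv 1 \pmod 4$ --- is essential: when $p \equiv 5 \pmod 8$ the element $x = i$ fails, since then $a = ab = 0$ and $b = 2 \in N^*$.

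In the remaining case $a, b \neq 0$, and hence $ab \neq 0$, I would argue by contradiction: if neither $a$ nor $b$ belongs to $S^*$, then both are non-squares, so their product $ab$ is a nonzero square, i.e. $ab \in S^*$. Thus at least one of $a$, $b$, $ab$ is a nonzero square, which is precisely the assertion that at least one of $x$, $ix$, $ix^2$ lies in $\mathcal{T}$. I do not expect a genuine obstacle here; the only care needed is the bookkeeping of the degenerate cases and checking the two identities $1 + (ix)^2 = 1 - x^2$ and $1 + (ix^2)^2 = (1 - x^2)(1 + x^2)$. The substantive content is simply that $N^*$ is not closed under multiplication.
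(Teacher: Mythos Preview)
Your proof is correct and follows essentially the same approach as the paper: both set $a=1+x^2$, $b=1-x^2$, observe that $1+(ix^2)^2=ab$, and conclude via the multiplicativity of the quadratic character. The paper's version is terser, simply asserting that if neither $x$ nor $ix$ lies in $\mathcal{T}$ then $a,b\in N^*$ and hence $ab\in S^*$; you are more explicit about the degenerate cases $a=0$ or $b=0$ and correctly identify them as the place where the hypothesis $p\equiv 1\pmod 8$ (rather than merely $p\equiv 1\pmod 4$) is actually used, a point the paper leaves implicit.
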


\begin{proof}
If neither $x$ nor $ix$ are contained in $\mathcal{T}$ then
both $1+x^2$ and $1+(ix)^2=1-x^2$ are non-squares, and consequently
$1-x^4=(1+x^2)(1-x^2)=1+(ix^2)^2$ is a square, and so
$ix^2$ belongs to $\mathcal{T}$.
\end{proof}


\subsection{Theorems about  existence of Hamilton cycles}
\label{ssec:ham}
\noindent

In this subsection we list three results about existence of Hamilton
cycles in particular graphs that will prove useful in the subsequent sections.
A recent result about the existence of
Hamilton paths in those generalized Petersen graphs which do
not have a Hamilton cycle
is also given as it will be needed in Subsection~\ref{subsec:89}.

\begin{proposition}
\label{pro:chvatal}
{\rm \cite{C72} (Chv\'atal's Theorem)}
Let $X$ be a graph of order $n$. For a given positive integer $i$ let
$S_i=\{ x\in V(X)\colon \mathrm{deg}(x) \le i\}$.
If for every $i < n/2$
$$
\textrm{either } |S_i|\le i-1 \ \textrm{ or } \ |S_{n-i-1}|\le n-i-1
$$
then $X$ contains a Hamilton cycle.
\end{proposition}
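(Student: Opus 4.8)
The plan is to prove Chv\'atal's Theorem by the classical edge-maximal counterexample argument, in the spirit of the Bondy--Chv\'atal closure. First I would record that the hypothesis is monotone under edge addition: adding an edge to $X$ decreases no vertex degree, so each set $S_i$ can only shrink, and hence for every $i$ each of the two alternatives $|S_i|\le i-1$ and $|S_{n-i-1}|\le n-i-1$ is preserved. Consequently it suffices to derive a contradiction from the assumption that $X$ is non-hamiltonian of order $n\ge 3$ and \emph{edge-maximal} among graphs satisfying the degree condition. Since $K_n$ is hamiltonian, $X$ has a pair of non-adjacent vertices; among all non-adjacent pairs I would choose $u,v$ maximizing $\deg(u)+\deg(v)$, and set $h=\deg(u)\le \deg(v)$.

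Next I would exploit edge-maximality: $X+uv$ has a Hamilton cycle, which is forced to use the new edge $uv$, so $X$ itself contains a Hamilton path $u=v_1,v_2,\dots,v_n=v$. The usual crossing trick then applies: if both $v_1v_{i+1}\in E(X)$ and $v_iv_n\in E(X)$ for some $i$, then $v_1v_2\cdots v_iv_nv_{n-1}\cdots v_{i+1}v_1$ is a Hamilton cycle of $X$, which is impossible; hence the index sets $A=\{\,i:v_1v_{i+1}\in E(X)\,\}$ and $B=\{\,i:v_iv_n\in E(X)\,\}$ are disjoint subsets of $\{1,\dots,n-1\}$, which gives $\deg(u)+\deg(v)=|A|+|B|\le n-1$, and in particular $h<n/2$ (and $h\ge 1$, since $u$ has the neighbour $v_2$).

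Then I would feed these facts back into the degree condition with the parameter $i=h$. For every vertex $w\ne v$ non-adjacent to $v$, maximality of the chosen pair forces $\deg(w)\le \deg(u)=h$; since $v$ has $n-1-\deg(v)\ge h$ non-neighbours (using $\deg(v)\le n-1-h$), all lying in $S_h$, this yields $|S_h|\ge h$. As $h<n/2$, the hypothesis then rules out the alternative $|S_h|\le h-1$, leaving $|S_{n-h-1}|\le n-h-1$. On the other hand, for every vertex $w\ne u$ non-adjacent to $u$, maximality gives $\deg(w)\le \deg(v)\le n-1-h$, and together with $u$ itself --- whose degree $h$ is at most $n-1-h$ because $2h\le n-1$ --- we obtain at least $(n-1-\deg(u))+1=n-h$ vertices of degree at most $n-h-1$, i.e.\ $|S_{n-h-1}|\ge n-h$, contradicting the previous line.

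The genuinely routine parts are the monotonicity remark and the crossing argument; the delicate point, and the place I expect to need the most care, is the twin bookkeeping in the last step: the single choice $i=h$ in the hypothesis must simultaneously be prevented from using its first alternative (via the bound $|S_h|\ge h$) and be contradicted in its second alternative (via $|S_{n-h-1}|\ge n-h$). Making sure both counts are valid exactly when $2h\le n-1$ is the crux, and it is precisely this asymmetric interplay that explains why Chv\'atal's mixed condition, rather than a single uniform degree bound, suffices.
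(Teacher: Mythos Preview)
Your argument is correct and is precisely the classical proof of Chv\'atal's theorem via an edge-maximal counterexample and the crossing trick on a Hamilton path. Note, however, that the paper does not prove this proposition at all: it is quoted from \cite{C72} as a ready-made tool (see Subsection~\ref{ssec:ham}), so there is no ``paper's own proof'' to compare with. Your write-up supplies exactly the standard proof one would find in Chv\'atal's original paper or in any graph-theory textbook, and the bookkeeping you flag as delicate --- checking that $|S_h|\ge h$ rules out the first alternative while $|S_{n-h-1}|\ge n-h$ contradicts the second, both using only $2h\le n-1$ --- is handled correctly.
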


\begin{proposition}
\label{pro:jack}
{\rm \cite[Theorem~6]{BJ78} (Jackson's Theorem)}
Every $2$-connected regular graph of order $n$ and valency at least $n/3$
contains a Hamilton cycle.
\end{proposition}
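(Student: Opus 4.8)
The plan is to argue by contradiction, using the classical longest‑cycle method. Suppose $X$ is a $2$‑connected $k$‑regular graph on $n \le 3k$ vertices with no Hamilton cycle. Fix a longest cycle $C$ and orient it; since $X$ is not hamiltonian, $R := V(X) \setminus V(C) \ne \emptyset$, and because $X$ is $2$‑connected some component $D$ of the subgraph induced by $R$ has at least two neighbours on $C$. Write $A = N_X(D) \cap V(C)$ for its attachment set, so $|A| \ge 2$, and for $v \in V(C)$ let $v^+, v^-$ be the successor and predecessor of $v$ along $C$.

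First I would record the non‑extendability facts forced by the maximality of $C$. No two vertices of $A$ are consecutive on $C$, since otherwise a path through $D$ joining them could replace a single edge of $C$ to give a longer cycle; consequently the successor set $A^+ = \{a^+ : a \in A\}$ is disjoint from $A$ and contains no neighbour of $D$; and the standard ``crossing'' rerouting argument shows that $A^+$ is an independent set and that $A^+ \cup \{w\}$ is independent for every $w \in V(D)$. These observations by themselves only bound the independence number of $X$, which (as the Petersen graph shows) is not sufficient, so the local picture has to be amplified.

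The heart of the argument is a Woodall‑type \emph{hopping} procedure applied to the pair $(C, D)$: starting from $Y_0 = A$ one repeatedly enlarges the set, adding a vertex $v \in V(C)$ whenever both $v^+$ and $v^-$ are already ``reachable'' through the current configuration by a rerouting that would either lengthen $C$ or else be absorbed, iterating until the set stabilises at some $Y \supseteq A$. Since $C$ is longest, no longer cycle is ever produced, and the limit set carries a rigid structure: writing $Y^+$ for the successor set, $I := Y^+ \cup V(D)$ is independent with $|Y^+| = |Y|$, consecutive members of $Y$ on $C$ are non‑adjacent, and the neighbourhoods of the vertices of $Y$, of $Y^+$, and of $V(D)$ are confined in a manner that can be read off segment by segment. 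One then estimates $n = |V(C)| + |V(D)|$ from below: each gap of $C$ between consecutive members of $Y$ has length at least two and so contributes its successor to $Y^+$ together with a further vertex, while for each $y \in Y$ and each $w \in V(D)$ the vertices of $C$ that $k$‑regularity forces to be non‑neighbours of $y$ and of $w$ can be charged to distinct gaps; accumulating these contributions yields $n \ge 3k + 1$, contradicting $n \le 3k$. Hence $X$ is hamiltonian.

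I expect the main obstacle to be exactly this final count, together with making the hopping lemma watertight: one must verify that the iteration preserves independence of $I$ and the ``no absorbing detour'' invariants at every stage, and then balance three competing quantities — $|V(D)|$, the number $|Y|$ of hopped vertices, and the combined length of the gaps of $C$ between consecutive members of $Y$ — with essentially no slack, since the threshold $3k$ is best possible up to the additive constant (the Petersen graph sits just past it at $n = 3k + 1$). In particular the degenerate case in which every gap has length exactly two, so that $A^+ = A^-$ and the successor/predecessor bookkeeping has to be re‑examined, and the extremal configuration $|A| = 2$ with $D$ a single vertex, will need separate treatment.
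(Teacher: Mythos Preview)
The paper does not prove this proposition at all: it is stated with a citation to \cite[Theorem~6]{BJ78} and used as a black box, so there is no ``paper's own proof'' to compare your attempt against. Your sketch is in fact a plausible outline of how Jackson's original argument proceeds --- longest cycle plus Woodall's hopping lemma, followed by a delicate counting of gaps versus degrees --- but as written it is an outline, not a proof: the crucial step where you ``accumulate these contributions'' to obtain $n \ge 3k+1$ is asserted rather than carried out, and you yourself flag that the hopping invariants and the degenerate gap configurations would need separate verification. For the purposes of this paper none of that is required; the correct move here is simply to cite the result, as the authors do.
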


The generalized Petersen graph $GP(n, k)$
is defined to have the vertex set
$V(GP(n,k)) = \{ u_i \colon i \in \ZZ_n \}\cup \{v_i \colon i \in \ZZ_n \}$,
and edge set
\begin{eqnarray}
\label{genpet}
  E(GP(n, k)) &=& \{u_iu_{i+1} \colon i \in \ZZ_n\}\cup
    \{ v_{i}v_{i+k}  \colon i \in \ZZ_n\}\cup
    \{  u_iv_i \colon i \in \ZZ_n\}.
\end{eqnarray}

\begin{proposition}
\label{pro:GPG}
{\rm \cite{BA83}}
The generalized Petersen graph $GP(n,k)$, $n\ge 2$ and $1\le k\le n-1$
contains a Hamilton cycle
if and only if it is neither $GP(n,2)\cong GP(n,n-2)\cong
GP(n,(n-1)/2)\cong GP(n,(n+1)/2)$ when $n\equiv {5\pmod 6}$
nor $GP(n,n/2)$ when $n\equiv {0\pmod 4}$ and $n\ge 8$.
\end{proposition}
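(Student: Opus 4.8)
The plan is to prove both directions of the stated equivalence. For the ``only if'' direction I must show the two listed families are non-hamiltonian. Consider first $X=GP(n,n/2)$ with $n\equiv0\pmod4$ and $n\ge8$. Here $v_{i}v_{i+n/2}$ and $v_{i+n/2}v_{i}$ coincide, so the inner edges form a perfect matching $M$ and each $v_i$ has only the two neighbours $u_i$ and $v_{i+n/2}$; hence a Hamilton cycle must contain every spoke and every edge of $M$, so at each $u_i$ exactly one incident outer edge is used and the used outer edges form one of the two perfect matchings of the outer $n$-cycle. Up to the rotation $u_i\mapsto u_{i+1}$ we may assume the ``even'' one, and then the cycle is forced: starting at $u_0$ we are driven through $u_0,v_0,v_{n/2},u_{n/2},u_{n/2+1},v_{n/2+1},v_1,u_1$ and back to $u_0$ (using twice that $n/2$ is even), a closed walk on only eight vertices, which is spanning only if $2n=8$; so no Hamilton cycle exists once $n\ge8$. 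For $X=GP(n,2)$ with $n\equiv5\pmod6$ (so $n$ is odd and the inner edges form a single $n$-cycle) one argues with the alternating structure of a hypothetical Hamilton cycle $C$: it decomposes into maximal arcs contained in the outer cycle and maximal arcs contained in the inner cycle, joined cyclically by spokes, with the same number $t$ of outer and inner arcs; writing down that the $u$-endpoints partition $\ZZ_n$ with outer step $1$ while the $v$-endpoints partition $\ZZ_n$ with inner step $2$, and that the whole thing must close into one cycle, one obtains divisibility relations on $n$ and $t$ incompatible with $n\equiv5\pmod6$. This is the delicate but elementary heart of the ``only if'' direction.

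For the ``if'' direction I would first cut down the range of $k$ using the standard isomorphisms $GP(n,k)\cong GP(n,n-k)$ and $GP(n,k)\cong GP(n,\ell)$ whenever $k\ell\equiv\pm1\pmod n$, and separate the cases $\gcd(n,k)=1$ and $\gcd(n,k)=d>1$ (in the latter the inner edges form $d$ cycles of length $n/d$). The values $k=1$ (the prism, where $u_0\cdots u_{n-1}v_{n-1}\cdots v_0u_0$ is a Hamilton cycle), $k=2$, $k=3$, and $k=n/2$ are then handled by direct constructions: for each of $k=2$ and $k=n/2$ one exhibits, for every residue of $n$ modulo a small period ($6$ and $4$ respectively), a Hamilton cycle built from a repeating block together with a bounded \emph{correction}, and it is precisely the residues $n\equiv5\pmod6$ (for $k=2$) and $n\equiv0\pmod4$, $n\ge8$ (for $k=n/2$) for which no such pattern can be completed; the other residues go through.

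The substance of the proof is the remaining range $k\ge4$, $k\ne n/2$. Here the strategy is an inductive gluing argument: one locates in $GP(n,k)$ a collection of bounded-size \emph{ladder} sections, equips each section with one of a few Hamilton-path configurations having prescribed boundary endpoints, and concatenates these configurations cyclically around the graph, closing up with one short ad hoc piece; the bookkeeping is organised by cases on $n\bmod 2k$ and the relevant parities, and in each case one must produce the block pattern and check that the concatenation is a single $2n$-cycle. I expect this uniform construction -- designing the boundary conditions so that the blocks fit and verifying that no case is missed -- to be the main obstacle; it is essentially the content of Alspach's original case analysis, and for the present paper only the statement of the proposition is used.
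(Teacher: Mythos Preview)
The paper does not prove this proposition at all: it is quoted from Alspach's 1983 classification \cite{BA83} and used as a black box in Subsections~\ref{subsec:67} and~\ref{subsec:89}. So there is no ``paper's own proof'' to compare your attempt against; your closing remark that ``for the present paper only the statement of the proposition is used'' is exactly right.

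As for the sketch itself, your treatment of $GP(n,n/2)$ with $n\equiv0\pmod4$ is essentially correct and complete. Your treatment of $GP(n,2)$ with $n\equiv5\pmod6$, however, is not a proof: you set up the arc decomposition and assert that ``one obtains divisibility relations on $n$ and $t$ incompatible with $n\equiv5\pmod6$'' without deriving any such relation. This is the genuine content of the non-hamiltonicity argument and cannot be waved through; Alspach's original proof (or the earlier Bondy argument it builds on) requires a careful count relating the arc lengths, the number of spokes used, and the residue of $n$ modulo~$6$. Similarly, your ``if'' direction is an honest outline of the shape of Alspach's case analysis but contains no actual constructions; as you acknowledge, the work is precisely in producing the block patterns and verifying closure in each residue class. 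For the purposes of this paper none of that is needed, but if you were asked to supply a self-contained proof, both of these gaps would have to be filled.
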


A graph is {\em Hamilton-connected} if every pair of vertices is joined by a Hamilton path, and is  {\em Hamilton-laceable} if it is bipartite and
every pair of vertices on opposite sides of the bipartition is joined by a Hamilton path. The following result on
existence of Hamilton paths in generalized Petersen graphs
without Hamilton cycles \cite{R13}, will be needed later on.

\begin{proposition}
\label{pro:GPG2}
{\rm \cite[Theorem~2.2]{R13}}
Label the vertices of $GP(n,2)$ as in (\ref{genpet}).
Then the following hold.
\begin{enumerate}[(i)]
\itemsep=0pt
\item $GP (n, 2)$ is Hamilton-connected
if and only if $n \equiv {1,3\pmod 6}$.
\item If $n \equiv {0 \pmod 6}$ and $x$ and $y$ are distinct vertices of
$GP(n, 2)$ so that, for any $i$ and $t$, $\{x, y\}$ is
neither of the pairs $\{u_i, u_{i+2}\}$ and
$\{u_i , u_{i+6t} \}$, then there is a Hamilton path joining $x$ and $y$
in $GP (n, 2)$.
\item If $n \equiv {2\pmod 6}$ and $\{x, y\} \ne \{v_i, v_{i+4+6t} \}$,
for any $i$ and $t$, then there is a Hamilton path joining $x$ and $y$ in
$GP(n, 2)$.
\item If $n \equiv {4\pmod 6}$ and $\{x, y\}$ is none of the pairs
$\{u_i, u_{i+2}\}$, $\{u_i, v_{i\pm 1}\}$, $\{u_i, v_{i+2+6t} \}$,
$\{v_i, v_{i+4+6t} \}$, for any i and t, then there is
a Hamilton path joining x and y in $GP (n, 2)$.
\item If $n \equiv {5\pmod 6}$, and $x$ and $y$ are not
adjacent and $\{x, y\} \ne \{v_i, v_{i+3+6t} \}$ for any $i$ and
$t$, then there is a Hamilton path joining
$x$ and $y$ in $GP (n, 2)$.
\end{enumerate}
\end{proposition}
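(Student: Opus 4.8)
The plan is to prove the statement by combining explicit constructions with an induction that increases $n$ in steps of $6$ — exploiting the period‑$6$ behaviour of $GP(n,2)$ already visible in Proposition~\ref{pro:GPG} — and then treating the exceptional pairs by a separate, structural argument. Throughout I would use the rotational symmetry of $GP(n,2)$ to normalise $i=0$, and keep in mind the two features that drive everything: the outer cycle $u_0u_1\cdots u_{n-1}$, and the inner graph, which is a single $n$-cycle when $n$ is odd but the disjoint union of two $(n/2)$-cycles (on even- and on odd-indexed $v$'s) when $n$ is even. This dichotomy is precisely why the even residues $0,2,4$ and the odd residues $1,3,5$ behave so differently in the statement. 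It is also worth recording at the outset that $GP(n,2)$ fails to be Hamiltonian exactly when $n\equiv 5\pmod{6}$, so for the other residues a Hamilton cycle is available as a starting point.

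For the positive direction I would set up a strengthened induction: for each residue $r$ modulo $6$, prove that between every admissible pair (i.e.\ not one of the listed exceptions) there is a Hamilton path, and moreover one that is \emph{flat} on some block of six consecutive columns, meaning it meets those columns in a fixed prescribed pattern using only their outer edges and spokes. One first verifies the base case — the smallest $n$ in the class for which the assertion is non-vacuous, such as $GP(5,2)$, which is the Petersen graph, for $r=5$, and $GP(6,2)$ for $r=0$ — by a finite enumeration, recording for each admissible pair a flat Hamilton path. Then, given a flat Hamilton path of $GP(n,2)$ between an admissible pair, one splices a six-column gadget into the flat block: deleting that block and inserting six fresh columns carrying a compatible six-column Hamilton fragment yields a Hamilton path of $GP(n+6,2)$ between the ``same'' pair, with indices beyond the splice shifted by $6$, and flat on a (possibly new) block. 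Two things then need to be checked, both bookkeeping but requiring care: that admissible pairs are sent to admissible pairs — this is exactly why the exceptional families carry the ``$+6t$'' periodicity — and that the spliced path is again flat, so that the induction propagates. The substance here is in designing the six slightly different gadgets and the small list of base cases.

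For the negative direction I would rule out each listed pair by a parity/cut obstruction tuned to the inner structure. For even $n$ the basic device is that the ``even part'' $\{u_{2j}\}\cup\{v_{2j}\}$ and the ``odd part'' $\{u_{2j+1}\}\cup\{v_{2j+1}\}$ of $GP(n,2)$ are joined only by outer edges, and each part is an $(n/2)$-cycle carrying one pendant vertex at every cycle vertex; hence a Hamilton path meets each part in a union of arcs, the pendant vertices $u_j$ can occur only as arc-endpoints, and counting how many pendants can be absorbed — against the number of arcs and the location of the two global endpoints — forces a congruence on $n$. For the pair $\{u_0,u_2\}$ this rules it out exactly for $n\equiv 0,4\pmod{6}$ while leaving $n\equiv 2\pmod{6}$ open (where a construction is supplied); analogous but more delicate versions, tracking which of the two inner cycles each endpoint ``feeds'', dispose of $\{u_i,u_{i+6t}\}$, $\{u_i,v_{i\pm1}\}$, $\{u_i,v_{i+2+6t}\}$ and $\{v_i,v_{i+4+6t}\}$. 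For odd $n\equiv 5\pmod{6}$, where the inner graph is a single cycle, non-adjacency of the endpoints is forced by Proposition~\ref{pro:GPG} (a Hamilton path between adjacent vertices would give a Hamilton cycle), and the remaining exception $\{v_i,v_{i+3+6t}\}$ is killed by a direct look at how a Hamilton path can run along that single inner cycle.

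The step I expect to be the main obstacle is the positive induction — specifically, guaranteeing that a flat block of the required type is always present in the inductive hypothesis and that splicing preserves flatness, uniformly across all six residue classes. Getting the gadgets and the ``flat'' normal form calibrated so that every admissible pair propagates, and so that no spurious obstruction is introduced at the splice, is where essentially all of the case analysis resides; by comparison the impossibility arguments, though fiddly, are short once the part-decomposition above is in hand.
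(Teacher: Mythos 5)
First, a point of comparison: the paper does not prove this proposition at all. It is quoted verbatim, with attribution, as Theorem~2.2 of Richter's paper \cite{R13}, and is used purely as an external tool in Subsection~\ref{subsec:89}. So there is no in-paper argument to measure your proposal against; the relevant benchmark is Richter's own proof, which does indeed proceed by the kind of machinery you describe (finitely many base cases plus insertion of six-column blocks, in the tradition of Alspach's classification underlying Proposition~\ref{pro:GPG}, together with counting arguments for the excluded pairs). Your observation about the inner structure of $GP(n,2)$ --- a single $n$-cycle for odd $n$, two $(n/2)$-cycles for even $n$ --- correctly identifies why the six residue classes behave differently.

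On the merits of the proposal itself: what you have written is a plan, not a proof, and the gap is exactly where you say it is. The existence half rests entirely on objects you never exhibit --- the six splicing gadgets, the ``flat block'' invariant, and the finite lists of base-case Hamilton paths --- and without them there is no way to check that every admissible pair propagates under the $+6$ step; note that for $n\equiv 0\pmod 6$ the admissible pairs include all pairs $\{v_i,v_j\}$, all pairs $\{u_i,v_j\}$, and the pairs $\{u_i,u_j\}$ with $j-i$ odd or $j-i\not\equiv 0,\pm 2\pmod 6$, so the invariant must be calibrated to cover endpoints of every type simultaneously, which is a substantial design task rather than bookkeeping. Likewise, the nonexistence half asserts that a pendant-versus-arc count on the two sun-shaped halves of $GP(n,2)$ ``forces a congruence on $n$'' separating $n\equiv 2$ from $n\equiv 0,4\pmod 6$ for the pair $\{u_i,u_{i+2}\}$, but no such count is performed, and this is the delicate point: a crude parity count of the crossing edges cannot see $n$ modulo $3$, so one must track where the crossings sit along the two inner $(n/2)$-cycles --- precisely the case analysis you have deferred. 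The one step that is complete as stated is the remark that for $n\equiv 5\pmod 6$ adjacent endpoints must be excluded, since a Hamilton path between adjacent vertices would close up to a Hamilton cycle, contradicting Proposition~\ref{pro:GPG}. In short, the architecture is sound and matches the standard approach, but essentially all of the mathematical content of the theorem remains to be supplied.
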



\section{Polynomials of degree $4$ over finite fields representing
quadratic residues}
\label{sec:polynomials}
\indent

In early eighties, motivated by a question posed by Alspach, Heinrich and Rosenfeld \cite{AHR80}
in the context of decompositions of complete symmetric digraphs,
Madden and V\'elez \cite{MV82} investigated polynomials that represent quadratic residues  at primitive roots. They proved that,
with finally many exceptions,
for any finite field $F$ of odd characteristic,
for every polynomial $f(x) \in F[x]$ of degree $r\ge 1$ not of the form
$\alpha g(x)^2$ or $\alpha x g(x)^2$, there exists a primitive root
$\beta$ such that $f(\beta)$ is a nonzero square in $F$.
It is the purpose of this section to refine their result for
polynomials of degree $4$.
This will then be used in Section~\ref{sec:PSL} in the constructions
of Hamilton cycles for some of the basic orbital graphs arising
from the action of $\PSL(2,p)$ on cosets of $D_{p-1}$.
This refinement, stated in the theorem below,
will be proved following a series of lemmas.

\begin{theorem}
\label{the:polynomial-degree-4}
Let $F$ be a finite field of prime order $p$, where
$p$ is an odd prime not given in Tables~\ref{tab:exceptions1}
and~\ref{tab:exceptions2}.
Then for every polynomial $f(x)\in F[x]$ of degree $4$
that has a nonzero constant term and
is not of the form $\alpha g(x)^2$
there exists a primitive root $\beta \in F$ such that $f(\beta)$ is
a square in $F$.
\end{theorem}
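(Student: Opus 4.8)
The plan is to follow the strategy of Madden and V\'elez \cite{MV82}, which reduces the existence of a primitive root $\beta$ with $f(\beta)$ a nonzero square to a character-sum estimate, and then to sharpen their bounds enough to make the quartic case explicit down to a short list of exceptional primes. First I would set up the standard counting device: let $\chi$ be the quadratic character of $F=\GF(p)$, extended by $\chi(0)=0$, and let $N$ denote the number of $\beta \in F^*$ that are primitive roots and satisfy $\chi(f(\beta))=1$. Writing the indicator of ``$\beta$ is a primitive root'' via the M\"obius-type formula over divisors $d$ of $p-1$ (a sum of multiplicative characters of order $d$), and the indicator of ``$f(\beta)$ is a nonzero square'' as $\tfrac12(1+\chi(f(\beta)))\cdot[f(\beta)\neq 0]$, one expands $N$ into a main term of size roughly $\tfrac12\,\varphi(p-1)$ plus a sum of mixed character sums of the shape $\sum_{\beta} \psi(\beta)\chi(f(\beta))$, where $\psi$ runs over nontrivial characters.

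The next step is to bound those mixed sums. Since $f$ has degree $4$ and is not of the form $\alpha g(x)^2$, the product $\psi(x)\chi(f(x))$ is not (a character composed with) a perfect power of a rational function, so Weil's bound applies and each sum is $O(\sqrt p)$; more precisely one gets a bound like $(\deg f - 1)\sqrt p = 3\sqrt p$, or a slightly better constant by a careful analysis of the number of distinct roots of $f$ and whether $\psi$ is quadratic. Summing over the $2^{\omega(p-1)}-1$ nontrivial squarefree divisor-characters (times a factor for the order-$2$ twist) yields $N \ge \tfrac12\varphi(p-1) - c\cdot 2^{\omega(p-1)}\sqrt p$ for an explicit constant $c$ (of size roughly $3$, possibly a bit more to absorb the hypothesis that $f(\beta)\ne 0$, which costs at most the $\le 4$ roots of $f$). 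Hence $N>0$ whenever $\varphi(p-1) > 2c\cdot 2^{\omega(p-1)}\sqrt p$.

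The third step is purely computational: one shows $\varphi(p-1) > 2c\cdot 2^{\omega(p-1)}\sqrt p$ for all $p$ beyond some bound, using the crude estimate $\varphi(p-1)\ge (p-1)/(e^{\gamma}\log\log(p-1) + O(1/\log\log(p-1)))$ against $2^{\omega(p-1)} \le d(p-1) \ll (p-1)^{o(1)}$, so that the left side grows essentially like $p$ while the right side is $p^{1/2+o(1)}$. This gives a threshold $p_0$; for $p \le p_0$ one checks the inequality directly (for those $p$ where it already holds one is done), and for the finitely many remaining small primes one either inspects every residue class pattern of $f$ or simply records the prime as a genuine exception --- which is exactly what Tables~\ref{tab:exceptions1} and~\ref{tab:exceptions2} are meant to collect. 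I expect the main obstacle to be precisely this quantitative endgame: getting the constant $c$ in the Weil bound small enough, and handling the $2^{\omega(p-1)}$ factor tightly enough, that the threshold $p_0$ is small and the list of exceptions is genuinely short rather than unwieldy. A secondary subtlety is the case analysis on the factorization type of $f$ over $F$ (irreducible quartic; product of two irreducible quadratics; quadratic times two linears; and the various repeated-root patterns, always subject to $f \ne \alpha g^2$ and $f(0)\ne 0$), since the precise form of the Weil estimate --- and whether a small extra exceptional prime sneaks in --- depends on how many distinct roots $f$ has in $\overline F$ and on their behaviour under the character twist. The series of lemmas promised before the theorem presumably dispatches these cases one at a time, and my proof would mirror that organization, proving the generic large-$p$ bound first and then sweeping up the residual small primes into the two tables.
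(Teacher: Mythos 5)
Your overall strategy -- detect primitive roots by characters, detect squares by the quadratic character, bound the mixed sums $\sum_\beta\psi(\beta)\chi(f(\beta))$ by Weil, and then sweep up small primes by computer -- is sound in principle and is indeed the engine underneath the result. But it is not the route the paper takes, and the difference is exactly at the step you flag as "the main obstacle". The paper does not redo any character-sum analysis at all: it imports Madden--V\'elez's Corollary~1 (Proposition~\ref{pro:corollary1}) as a black box. That corollary already packages the Weil estimates into a purely arithmetic criterion: one splits the radical of $p-1$ as a coprime product $st$, detects non-$q$-th-power-ness by inclusion--exclusion for the primes dividing $t$ (contributing the main-term factor $2\phi(t)/t$) and by characters only for the primes dividing $s$ (contributing an error term proportional to $rs\sqrt p$ rather than to $2^{\omega(p-1)}\sqrt p$). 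The entire content of Section~3 of the paper is then the elementary but delicate selection of $s$ and $t$: Lemma~\ref{lem:m>} handles sequences of prime divisors with $m\ge 2k(m)+2$, Proposition~\ref{pro:lemma5} shows the remaining sequences are severely constrained, and Lemma~\ref{lem:qm>128} and Proposition~\ref{pro:qm<128} finish the two subcases $q_m\ge 131$ and $q_m<131$, the latter producing the tables.

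The concrete gap in your version is quantitative, but it is not cosmetic: with the crude bound $N\ge\tfrac12\varphi(p-1)-c\,2^{\omega(p-1)}\sqrt p$, a prime with $p-1$ divisible by $2\cdot3\cdot5\cdot7\cdot11\cdot13$ already forces a threshold of several million, and for $\omega(p-1)=7$ it is worse; your "finitely many remaining small primes" is then far too large a set either to verify directly (one would have to test \emph{every} admissible quartic over each such field) or to record as exceptions without changing the theorem, whose exception set is precisely Tables~\ref{tab:exceptions1} and~\ref{tab:exceptions2}. So to actually prove the stated theorem you would need to replace the $2^{\omega(p-1)}$ factor by the adaptive $s$--$t$ splitting -- i.e.\ rediscover Proposition~\ref{pro:corollary1} -- and then carry out the combinatorial optimization over factorizations of $p-1$ that occupies the paper's lemmas. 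One further small point: Proposition~\ref{pro:corollary1} requires $f$ square-free, whereas the theorem only assumes $f\ne\alpha g(x)^2$ with $f(0)\ne 0$; your closing remark about factorization types is the right instinct here (a repeated factor can be stripped off without changing $\chi(f(\beta))$ away from its roots), and this reduction deserves to be made explicit in either approach.
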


The following result, proved in \cite{MV82},
is a basis of our argument and will be used throughout this section.

\begin{proposition}
\label{pro:corollary1}
{\rm \cite[Corollary~1]{MV82}}
Let $F$ be a finite field with $p^n$ elements. If $s$ and $t$ are
integers such that
\begin{enumerate}[(i)]
\itemsep=0pt
\item $s$ and $t$ are coprime,
\item a prime $q$ divides $p^n-1$ if and only if $q$ divides $st$, and
\item $2\phi(t)/t>1+(rs-2)p^{n/2}/(p^n-1)+(rs+2)/(p^n-1)$,
\end{enumerate}
then, given any polynomial $f(x)\in F[x]$ of degree $r$, square-free and
with nonzero constant term, there exists a primitive root $\gamma \in F$
such that $f(\gamma)$ is a nonzero square in $F$.
\end{proposition}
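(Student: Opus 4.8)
The plan is to prove this by the classical character-sum method: estimate the number $N$ of primitive roots $\gamma\in F$ at which $f$ takes a nonzero square value, and show $N>0$ under hypothesis (iii). Write $q=p^n$ and $e=q-1$, and let $\eta$ be the quadratic character of $F^*$, so that $\tfrac12(1+\eta(y))$ is the indicator of ``$y$ is a nonzero square'' whenever $y\ne 0$ (and equals $\tfrac12$ when $y=0$, since $\eta(0)=0$). The ``$\tfrac12$'' here is the source of the factor of $2$ that eventually multiplies $\phi(t)/t$ in (iii).

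The device for enforcing primitivity, and the place where $s$ and $t$ enter, is to split the prime divisors of $e$ according to (i)--(ii) and handle the two parts differently. Fix a primitive root $g$ of $F$. For the primes dividing $s$ I would use the substitution $\gamma=g\beta^{s}$: writing $\gamma=g^{k}$ we get $k\equiv 1\pmod s$, so $\ell\nmid k$ for every prime $\ell\mid s$, and hence every element of the coset $g(F^*)^{s}$ is automatically a non-$\ell$-th power, i.e. $s$-free; moreover, since $\gcd(s,t)=1$, the map $\beta\mapsto\beta^{s}$ permutes the $t$-part of $F^*$, so no information about the $t$-primes is lost. For the primes dividing $t$ I would use the exact Vinogradov sieve, writing the indicator of ``$g\beta^{s}$ is $t$-free'' as $\frac{\phi(t)}{t}\sum_{d\mid t}\frac{\mu(d)}{\phi(d)}\sum_{\mathrm{ord}(\chi)=d}\chi(g\beta^{s})$, whose leading $d=1$ term supplies the density factor $\phi(t)/t$. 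Combining $s$-freeness with $t$-freeness is exactly primitivity, so it suffices to find one admissible $\beta$.

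After the substitution, $N$ becomes (up to a bounded correction for the $\le r$ roots of $f$ and the deletion of $\beta=0$) the sum over $\beta\in F^*$ of the $t$-sieve indicator times $\tfrac12(1+\eta(f(g\beta^{s})))$. Expanding both indicators gives a main term proportional to $(q-1)\phi(t)/t$ together with mixed character sums $\sum_{\beta}\chi(\beta)\,\eta(f(g\beta^{s}))$. Here I would invoke Weil's bound: each such sum equals $\sum_\beta\psi(R(\beta))$ for a suitable character $\psi$ and $R(\beta)=\beta^{a}f(g\beta^{s})^{b}$, and is $O(rs\sqrt q)$ because $f(g\beta^{s})$ has degree $rs$ in $\beta$. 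The two hypotheses on $f$ are precisely what make these sums nondegenerate: a nonzero constant term means $f(0)\ne 0$, so in $R(\beta)$ the factor $\beta$ occurs to the exact power $a\not\equiv 0$, while square-freeness (together with $f(0)\ne 0$) keeps $f(g\beta^{s})$ square-free in $\beta$; thus $R$ is never a perfect power of the order of $\psi$ and $\psi\circ R$ is a nontrivial character of the function field.

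Assembling the estimates yields a lower bound of the shape $N\ge \frac{c}{2}\big[(q-1)(\tfrac{2\phi(t)}{t}-1)-(rs-2)\sqrt q-(rs+2)\big]$ with $c>0$, whose bracket is positive exactly under (iii); the constants $-2$ and $+2$ track the correction for the roots of $f$, the removal of $\beta=0$, and the value $\eta(f(0))=\pm1$. I expect the main obstacle to be the final two steps: establishing nondegeneracy of \emph{every} mixed sum (so that Weil's bound applies uniformly with the polynomial degree equal to $rs$), and then tallying the error contributions tightly enough that they collapse into the single clean expression of (iii) rather than a cruder estimate. I would isolate the nondegeneracy question first, as a purely algebraic lemma about $\beta^{a}f(g\beta^{s})^{b}$, and only afterwards carry out the arithmetic of weighing the collected Weil estimates against the main term.
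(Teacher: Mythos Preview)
The paper does not prove this proposition at all: it is quoted verbatim as \cite[Corollary~1]{MV82} and used as a black box (``The following result, proved in \cite{MV82}, is a basis of our argument\ldots''). So there is nothing in the paper to compare your argument against.

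That said, your outline is the standard character-sum proof and is essentially how Madden and V\'elez argue. A few places deserve tightening if you actually write it out. First, your remark that ``$\beta\mapsto\beta^{s}$ permutes the $t$-part of $F^{*}$'' is imprecise: the map is $\gcd(s,e)$-to-one onto the subgroup of $s$-th powers, not a permutation, and what you really use is only that every $\gamma=g\beta^{s}$ is automatically $\ell$-free for each prime $\ell\mid s$ (which you do say). The overcount by $\gcd(s,e)$ is harmless since you only need positivity. Second, the nondegeneracy of the Weil sums requires that $f(gx^{s})$ be square-free in $x$; square-freeness of $f$ together with $f(0)\ne 0$ gives this, but you should state explicitly that the substitution $x\mapsto gx^{s}$ sends a simple root $\alpha$ of $f$ to $\gcd(s,e)$ simple roots (or none) of $f(gx^{s})$, never to a repeated root. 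Third, the passage from the assembled error terms to the exact constants $rs-2$ and $rs+2$ in (iii) is delicate bookkeeping, and your parenthetical attribution of the $\pm 2$ is more of a mnemonic than an argument; in \cite{MV82} these constants come out of a careful count of how many of the mixed sums carry the full Weil bound versus a trivial one.

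Your plan is sound and would succeed, but since the paper simply cites the result, there is no ``paper's proof'' for you to have matched or diverged from.
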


Throughout this section let $p$ be an odd prime and
let $q_1=2, q_2,\ldots, q_m$ be the increasing sequence
of prime divisors of $p-1=q_1^{i_1}q_2^{i_2}\cdots q_m^{i_m}$.
As in \cite{MV82} we define the following functions with respect
to this sequence:
\begin{eqnarray}
\label{eq:1}
d(n,m)=2(1-\frac{1}{q_n})(1-\frac{1}{q_{n+1}}) \cdots (1-\frac{1}{q_m}),
\end{eqnarray}
\begin{eqnarray}
\label{eq:2}
c_r(n,m)=2r\sqrt{{(\frac{q_1q_2\cdots q_{n-1}}{q_nq_{n+1}\cdots q_m})}},
\end{eqnarray}
and $k(m)$ as the unique integer such that
$d(k(m)-1,m)\le 1<d(k(m),m)$. Hence $k(m)\ge 2$.
Analogously the functions $d$ and $c_r$ can be
defined for any positive integers $r\ge 1$,
$n<m$ and an arbitrary sequence $\{q_1,\ldots,q_m\}$ of primes.
The following lemma is a generalization of \cite[Lemma~3]{MV82}.

\begin{lemma}
\label{lem:m>}
Let $\{2=q_1,q_2,\ldots, q_m\}$ be a finite sequence of primes
satisfying $m\ge 2k(m)+2$, and let $r=4$. Then
\begin{eqnarray}
\label{eq:3}
d(k(m)+1,m)-c_r(k(m)+1,m)>1.
\end{eqnarray}
\end{lemma}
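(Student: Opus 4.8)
I will write $k=k(m)$ throughout. By the defining property of $k(m)$ we have $d(k,m)>1\ge d(k-1,m)$; also $d(1,m)=\prod_{j=2}^m(1-1/q_j)<1$ since $q_1=2$, so $k\ge 2$. Two simple facts will be used repeatedly: $q_j\ge p_j$ (the $j$-th prime) for all $j$, proved by induction; and, since $m\ge 2k+2>2k$, the primes $q_{k+1},\dots,q_{2k}$ all occur and $q_i<q_{k+i}$ for $1\le i\le k$, whence $q_1q_2\cdots q_k<q_{k+1}q_{k+2}\cdots q_{2k}$.

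The plan hinges on a single reduction. Since $d(k+1,m)=\frac{q_k}{q_k-1}\,d(k,m)>\frac{q_k}{q_k-1}=1+\frac1{q_k-1}$, it suffices to prove $c_4(k+1,m)\le\frac1{q_k-1}$; squaring, this is the \emph{key inequality} $q_{k+1}q_{k+2}\cdots q_m\ge 64\,(q_k-1)^2\,q_1q_2\cdots q_k$. Using the displayed product inequality above, $\frac{q_{k+1}\cdots q_m}{q_1\cdots q_k}>q_{2k+1}q_{2k+2}\cdots q_m>q_k^{m-2k}$, and since $q_k^3\ge 8(q_k-1)$ for every $q_k\ge 2$ we get $q_k^6\ge 64(q_k-1)^2$. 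Hence the key inequality — and with it the lemma — holds as soon as $m\ge 2k+6$; the same computation with the exponents $1,2,3$ in place of $6$ settles $m=2k+3,\,2k+4,\,2k+5$ whenever $q_k$ lies above the corresponding (small) threshold. What remains is $m=2k+2$ together with the small-$q_k$ residue of the three cases just mentioned.

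For this residue I would first bound $k$: the necessary condition $d(k-1,m)\le 1$ together with $q_j\ge p_j$ forces $\prod_{j=k-1}^m(1-1/p_j)\le\tfrac12$, and for $m\le 2k+5$ a direct check shows this fails once $k\ge 5$; hence $k\le 4$. When $q_k$ is large — which, among $k\le 4$, can only happen for $k=2$ — apply Bernoulli's inequality to the $m-k+1\ge k+3$ factors of $d(k,m)$, each at least $1-1/q_k$: this gives $d(k,m)$, and hence $d(k+1,m)$, bounded away from $1$ by an absolute amount, while $c_4(k+1,m)^2<64/(q_{2k+1}q_{2k+2})<64/q_k^2$ is small, so a suitable choice of the threshold makes the two estimates meet. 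When $q_k$ is bounded, only finitely many configurations survive: a would-be counterexample forces $c_4(k+1,m)>\frac1{q_k-1}$, hence a lower bound on $q_1\cdots q_k/(q_{k+1}\cdots q_m)$ and thus an upper bound on all the $q_j$; moreover $d(k+1,m)-c_4(k+1,m)$ is strictly increasing in each of $q_{k+1},\dots,q_m$ and strictly decreasing in each of $q_1,\dots,q_k$, so it suffices to verify $d(k+1,m)-c_4(k+1,m)>1$ on the finitely many configurations with largest admissible head and smallest admissible tail, a short computation.

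The main obstacle is this last step: making the reduction to finitely many configurations airtight (the monotonicity argument, and the explicit bound on the primes that can occur in a potential minimizer), pinning down the small set of triples $(k,m,q_k)$ that escape the analytic arguments, and choosing the threshold separating large from bounded $q_k$ so that the Bernoulli estimate and the finite verification overlap cleanly. The analytic part — the reduction to the key inequality and its proof for $m\ge 2k+6$ — is the clean skeleton; everything delicate is concentrated in handling $m$ close to $2k+2$.
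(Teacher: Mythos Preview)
Your reduction to the ``key inequality'' $q_{k+1}\cdots q_m\ge 64(q_k-1)^2\,q_1\cdots q_k$ via $d(k+1,m)>1+\tfrac1{q_k-1}$ is exactly what the paper does (their inequality~(4), with $w_1=2$ absorbed into the constant $128$). So the skeleton is the same; the difference lies only in how the key inequality is established.

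The paper first performs a monotonicity reduction you postpone to the end: it replaces the arbitrary sequence by the consecutive-prime sequence anchored at $w_{k}=q_k$, observes that this can only decrease $d(k+1,m)-c_4(k+1,m)$, and then proves the key inequality for such consecutive sequences---by induction on even $m\ge 8$ (the general $m\ge 7$ case) and by computer verification when $m=6$, the latter producing a short list of exceptional $w_2$ which are then checked directly against~(3). Your route is to stay with the general sequence, use the pairing bound $q_1\cdots q_k<q_{k+1}\cdots q_{2k}$ to dispatch $m\ge 2k+6$, then bound $k\le 4$ in the residue (your argument via $\prod_{j=k-1}^m(1-1/p_j)\le\tfrac12$ is correct---note that $q_j\ge p_j$ makes each factor $1-1/q_j\ge 1-1/p_j$, so the $p$-product is indeed $\le$ the $q$-product), and finish with Bernoulli for large $q_2$ plus a finite check. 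Both approaches work; the paper's earlier monotonicity step buys a single-parameter situation (only $w_2$ varies once $m$ is fixed) and a cleaner finite verification, whereas your approach trades that for a slightly longer but more elementary case analysis. Your identification of the finite endgame as the delicate point is accurate, and your plan for it (bound the primes in a would-be counterexample, invoke monotonicity in each $q_j$, then compute) is essentially the paper's reduction carried out in reverse order.
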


\begin{proof}
Since $2\le k(m)\le \frac m2-1$, we  have $m\ge 6.$
Let $\O $ be the increasing sequence of all prime numbers.
For a given  prime $q$, let
$
\mathcal{I}_q=\{ w_1=2, w_2, w_3, \ldots, w_{k(m)}=q, w_{k(m)+1},\ldots , w_m\}
$
be a subsequence of $\O$ not missing any prime in $\O$
from the interval $[w_2,w_m]$.
Also, let $d(n, m)'$ and $c_4(n, m)'$
be the corresponding values for $\mathcal{I}_q$
as defined by functions $d$ and $c_r$  in (\ref{eq:1}) and (\ref{eq:2}).
Further, let
$
\mathcal{J}_q=\{ q_1=2,q_2, q_3, \ldots, q_{k(m)}=q, q_{k(m)+1},\ldots , q_m\}
$
denote the subsequence of $\O$ from the statement of Lemma~\ref{lem:m>}.
Then one can easily see that
$d(k(m)+1, m)'\ge d(k(m)+1, m)$ and that $c_4(k(m)+1, m)'\le c_4(k(m)+1, m)$,
and so (\ref{eq:3}) holds for $\mathcal{J}_q$ if it holds for $\mathcal{I}_q$.
This shows that in what follows we can assume that
$\mathcal{J}_q = \mathcal{I}_q$.

Since
$$
d(k(m)+1,m)=(1+\frac 1{w_{k(m)}-1})d(k(m),m)>  1+\frac 1{w_{k(m)}-1},
$$
 (\ref{eq:3}) holds if
$$
1+\frac 1{w_{k(m)}-1}-2r{(\frac{w_1w_2\cdots w_{k(m)}}{w_{k(m)+1}w_{k(m)+2}\cdots w_m})}^{\frac 12}>1,
$$
which may be rewritten in the following form
\begin{eqnarray}
\label{eq:4}
w_2w_3\cdots w_{k(m)}(w_{k(m)}-1)^2< \frac 1{128} w_{k(m)+1}\cdots w_{m-1}w_{m},
\end{eqnarray}
in view of the fact that $r=4$ and $w_1=2$.

We divide the proof into two cases, depending on whether
$m\ge 7$ or $m=6$.

\begin{case}\end{case}
$m\ge 7$.

\medskip

\noindent
We shall in fact prove a more  general result:
\begin{eqnarray}
\label{eq:5}
w_2w_3\cdots w_{l}(w_{l}-1)^2< \frac 1{128} w_{l+1}\cdots w_{m-1}w_{m},
\end{eqnarray}
where $m\ge 7$ and $l\le \frac m2-1$ is any integer.
 If $w_m\ge 128$, then  (\ref{eq:5}) is clearly true.
So we only need to consider primes that are smaller than or equal to $127$.
If
\begin{eqnarray}
\label{eq:6}
(m-l)-(l-1+2)=m-2l-1\ge 2,
\end{eqnarray}
then  (\ref{eq:5}) holds provided  $w_{m-1}w_{m}>128$ holds.
Note that this is true if $w_m\ge 13$, which is the case since $m\ge 7$.
Next, note that for either $m$ being even and $l<\frac m2-2$  or $m$ being odd,
 (\ref{eq:6}) holds.
So we may assume that $m$ is even and that
$
l=m/2-1\ge 2.
$

Now we prove that (\ref{eq:5}) holds under this assumption
for any even integer $m\ge 8$ by  induction. Suppose first that $m=8$. Then $l=3$ and
(\ref{eq:5}) rewrites as
\begin{eqnarray}
\label{eq:7}
w_2w_3(w_3-1)^2< \frac 1{128} w_4w_{5}w_{6}w_7w_8.
\end{eqnarray}
A computer search  shows that  (\ref{eq:7}) holds
for all primes $w_8\le 127$.
Suppose now that  (\ref{eq:5}) is true for an even integer  $m\ge 8$.
Then   we have
$$\begin{array}{lll}
w_2w_3w_4\cdots w_{l}w_{l+1}(w_{l+1}-1)^2
=w_2(w_3\cdots w_{l}w_{l+1}(w_{l+1}-1)^2)
&< & w_2(w_{l+2}w_{l+3}\cdots w_mw_{m+1})\\
&< & (w_{l+2}w_{l+3}\cdots w_mw_{m+1})w_{m+2}.\end{array}$$
Therefore  (\ref{eq:5}) is true for all even integers  $m\ge 8$
and then for all integers $m\ge 7.$ Hence (\ref{eq:4}) holds,
and so does (\ref{eq:3}).

\begin{case}\end{case}
$m=6$.

\medskip

\noindent
Then  $k(m)=2$, and so (\ref{eq:5})  becomes
\begin{eqnarray}
\label{eq:8}
w_2(w_2-1)^2< \frac 1{128} w_3w_4w_{5}w_{6}.
\end{eqnarray}
A computer search shows that (\ref{eq:8}) does not
hold only  for
$$
w_{k(m)}=w_2\in \{11,13,17,19,23,29,31,37,41,43,53,59,61,67,71\}.
$$
An additional computer search shows that for $w_1=2$
(\ref{eq:3}) holds in each of these exceptional cases.
This completes the proof of Lemma~\ref{lem:m>}.
\end{proof}

\setcounter{case}{0}

The following result proved in \cite{MV82} will be needed
in the next lemma.

\begin{proposition}
\label{pro:lemma5}
{\rm \cite[Lemma~5]{MV82}}
Let $\{2=q_1,q_2,\ldots, q_m\}$ be a finite sequence of primes
satisfying $m\le 2k(m)+1$. Then $m\le 9$ and $q_{k(m)-1}\le 5$.
In fact the sequence must satisfy one of the following:
\begin{enumerate}[(i)]
\itemsep=0pt
\item $k(m)=4$, $q_{k(m)-1}=5$ and $m=9$,
\item $k(m)=3$, $q_{k(m)-1}=5$ and $m\le 7$,
\item $k(m)=3$, $q_{k(m)-1}=3$ and $m\le 7$, or
\item $k(m)=2$, $q_{k(m)-1}=2$ and $m\le 5$.
\end{enumerate}
\end{proposition}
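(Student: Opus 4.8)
The plan is to use the hypothesis $m\le 2k(m)+1$ first to force $k:=k(m)\le 4$, and then, for each of the three surviving values $k\in\{2,3,4\}$, to pin down exactly which $m$ and which value of $q_{k-1}$ can occur; the four alternatives (i)--(iv), together with the bounds $m\le 9$ and $q_{k-1}\le 5$, then drop out. Throughout I would write $\ell_1=2<\ell_2<\ell_3<\cdots$ for the increasing sequence of all primes, so that the increasing sequence of primes $\{2=q_1,q_2,\dots,q_m\}$ satisfies $q_j\ge\ell_j$, hence $1-\frac{1}{q_j}\ge 1-\frac{1}{\ell_j}$, for every $j$. Since $k\ge 2$ and, by definition of $k=k(m)$, $d(k-1,m)\le 1$, I get
\[
\prod_{j=k-1}^{m}\Bigl(1-\frac{1}{q_j}\Bigr)\le\frac12
\qquad\Longrightarrow\qquad
\prod_{j=k-1}^{m}\Bigl(1-\frac{1}{\ell_j}\Bigr)\le\frac12,
\]
while $m\le 2k+1$ says that this product ranges over at most $k+3$ indices $j\ge k-1$.

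The first, and main, step is to observe that these two facts are incompatible once $k\ge 5$. For $k\ge 5$ one has $\ell_{k-1}\ge 7$, and since consecutive odd primes differ by at least $2$, $\ell_j\ge\ell_{k-1}+2(j-k+1)$ for all $j\ge k-1$; hence, using $-\log(1-x)\le x/(1-x)$,
\[
-\log\prod_{j=k-1}^{2k+1}\Bigl(1-\frac{1}{\ell_j}\Bigr)\ \le\ \sum_{j=k-1}^{2k+1}\frac{1}{\ell_j-1}\ \le\ \sum_{i=0}^{k+2}\frac{1}{\ell_{k-1}-1+2i}.
\]
The rightmost quantity is $<\log 2$ for every $k\ge 5$: for the handful of small $k$ one checks this directly (for the very smallest values one replaces the crude spacing estimate by the genuine primes $7,11,13,\dots$, which makes the sum visibly smaller), while for $k$ large it is $O(k/\ell_{k-1})\to 0$ because $\ell_{k-1}$ grows faster than linearly in $k$. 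Therefore $\prod_{j=k-1}^{2k+1}(1-1/\ell_j)>\frac12$, and a fortiori $\prod_{j=k-1}^{m}(1-1/\ell_j)>\frac12$ for every $m\le 2k+1$, contradicting the display above. So $k\le 4$, and then the hypothesis gives $m\le 2k+1\le 9$ at once.

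It remains to identify the four cases by the same device, now with small explicit products. If $k=2$ then $q_{k-1}=q_1=2$ and $m\le 5$: this is (iv). If $k=3$ and $q_2\ge 7$, then $q_2,\dots,q_m$ are at most $m-1\le 6$ distinct primes, all $\ge 7$, so $\prod_{j=2}^{m}(1-1/q_j)$ is at least the product of the factors $1-1/\ell$ over $\{7,11,13,17,19,23\}$, which exceeds $\frac12$ --- contradicting $d(2,m)\le 1$; since $q_2>q_1=2$, it follows that $q_2\in\{3,5\}$, giving (iii) when $q_2=3$ and (ii) when $q_2=5$. If $k=4$: from $d(3,m)\le 1$ and $q_j\ge\ell_j$ we get $\prod_{j=3}^{m}(1-1/\ell_j)\le\frac12$, but a direct computation shows this product exceeds $\frac12$ for every $m\le 8$, so $m\ge 9$ and hence $m=9$; moreover, if $q_3\ge 7$ then $q_3,\dots,q_9$ are $7$ distinct primes $\ge 7$ with $\prod_{j=3}^{9}(1-1/q_j)$ at least the product of $1-1/\ell$ over $\{7,11,13,17,19,23,29\}>\frac12$, again contradicting $d(3,9)\le 1$, while $q_3>q_2>q_1=2$ forces $q_3\ge 5$; thus $q_3=5$, which is (i). In every surviving case $q_{k-1}\le 5$ and $m\le 9$, as required. (This is \cite[Lemma~5]{MV82}; the estimates involved are of the same flavour as, but easier than, those in the proof of Lemma~\ref{lem:m>}.)

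The step I expect to be the main obstacle is the uniform bound excluding $k\ge 5$: one must control the truncated product $\prod_{j=k-1}^{2k+1}(1-1/\ell_j)$ for all $k$ simultaneously, which calls either for a Mertens-type estimate valid in the large-$k$ range combined with a finite numerical check for small $k$, or for a sufficiently careful elementary inequality of the kind sketched above. Once $k\le 4$ is secured, the remaining work is routine bookkeeping, comparing short products of factors $1-1/\ell$ against $\frac12$.
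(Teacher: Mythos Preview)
The paper does not prove this proposition: it is quoted verbatim from \cite[Lemma~5]{MV82} and used as a black box in the proof of Theorem~\ref{the:polynomial-degree-4}. So there is no ``paper's own proof'' to compare against; your proposal is an independent proof of the cited lemma.

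Your argument is correct in substance and follows what is presumably the intended route in \cite{MV82}: use $d(k-1,m)\le 1$ together with $q_j\ge\ell_j$ (the $j$-th prime) and the bound $m\le 2k+1$ to force $k\le 4$, and then for each $k\in\{2,3,4\}$ compare short explicit products $\prod(1-1/\ell_j)$ with $\tfrac12$ to pin down $m$ and $q_{k-1}$. The case analyses for $k=2,3,4$ are clean and the numerical checks you indicate (e.g.\ $\prod_{\ell\in\{7,11,13,17,19,23\}}(1-1/\ell)>\tfrac12$, $\prod_{j=3}^{8}(1-1/\ell_j)>\tfrac12$, $\prod_{\ell\in\{7,\dots,29\}}(1-1/\ell)>\tfrac12$) all go through.

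The one place that is genuinely only a sketch is the exclusion of $k\ge 5$. Your crude spacing bound $\ell_j\ge\ell_{k-1}+2(j-k+1)$ gives $\sum_{i=0}^{k+2}\frac{1}{\ell_{k-1}-1+2i}$, and for $k=5$ this sum is about $0.715>\log 2$, so the crude bound alone does \emph{not} suffice there; you correctly note that one must substitute the actual primes for small $k$ (and indeed the genuine product $\prod_{j=4}^{11}(1-1/\ell_j)\approx 0.573>\tfrac12$). To make the argument airtight you should state explicitly the finite range of $k$ handled by direct computation and the threshold beyond which the asymptotic estimate (or, more simply, a Mertens-type bound $\prod_{p\le x}(1-1/p)\sim e^{-\gamma}/\log x$, which shows the product over primes in $[\ell_{k-1},\ell_{2k+1}]$ tends to $1$) takes over. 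Once that is done, the proof is complete.
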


\begin{lemma}
\label{lem:qm>128}
Let $\{2=q_1,q_2,\ldots, q_m\}$ be a finite sequence of primes
satisfying $m\le 2k(m)+1$, and let
$p-1=q_1^{i_1}q_2^{i_2}\cdots q_m^{i_m}$ with $q_m\ge 131$.
Then there exist $s$ and $t$ such that
\begin{enumerate}[(i)]
\itemsep=0pt
\item $s$ and $t$ are coprime,
\item a prime $q$ divides $p-1$ if and only if $q$ divides $st$, and
\item $2\phi(t)/t>1+(4s-2)\sqrt{p}/(p-1)+(4s+2)/(p-1)$.
\end{enumerate}
\end{lemma}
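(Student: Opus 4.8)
The plan is to choose $s$ and $t$ concretely in terms of the prime factorization of $p-1$ and verify the three conditions, mimicking the strategy of Madden–V\'elez \cite{MV82} but taking advantage of the extra hypotheses $m\le 2k(m)+1$ (which via Proposition~\ref{pro:lemma5} severely restricts the shape of the sequence $\{q_1,\dots,q_m\}$) and $q_m\ge 131$ (which makes $p$ large). The natural first move is to set $t=q_1q_2\cdots q_{k(m)-1}$, a product of a bounded number of small primes — by Proposition~\ref{pro:lemma5} we have $q_{k(m)-1}\le 5$ and $k(m)\le 4$, so $t$ divides $2\cdot 3\cdot 5=30$ — and $s=q_{k(m)}q_{k(m)+1}\cdots q_m$, the product of the remaining (larger) prime divisors of $p-1$, each taken to the first power. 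Then conditions (i) and (ii) are immediate: $s$ and $t$ are coprime by construction, and a prime divides $st$ exactly when it divides $p-1$.

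The substance is condition (iii). First I would bound the left-hand side from below: since $t\mid 30$, the quantity $2\phi(t)/t$ takes one of the finitely many values $2,\ 1,\ 2/3,\ 4/5,\ 8/15$ depending on which of the cases (i)–(iv) of Proposition~\ref{pro:lemma5} we are in; in each case $2\phi(t)/t = d(k(m),m)$-type expression is bounded below by a fixed constant strictly exceeding $1$ precisely when $k(m)\le 3$ (cases (ii)–(iv) give $2\phi(t)/t\ge 1+$something, while case (i) needs separate care since $2\phi(3\cdot 5)/15=\frac{16}{15}>1$ as well). So in every case $2\phi(t)/t\ge 1+\delta$ for an explicit $\delta>0$. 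For the right-hand side I would bound $s$ from above: $s\le (p-1)/\prod q_j^{i_j-1}\cdot(\text{small factor})$, and more crudely, since $s$ is a divisor of $p-1$ and the primes $q_{k(m)},\dots,q_m$ are distinct with $q_m\ge 131$, one gets $p-1\ge s\cdot q_m\ge 131\,s$, hence $s\le (p-1)/131$. Substituting, the right-hand side of (iii) is at most
$$
1+\frac{(4s-2)\sqrt p + (4s+2)}{p-1}\le 1+\frac{4s(\sqrt p + 1)}{p-1}\le 1+\frac{4(\sqrt p+1)}{131}\cdot\frac{p-1}{p-1}\cdot\frac{1}{1},
$$
which still grows with $p$, so this crude bound alone does not close the estimate — and that is the main obstacle. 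The fix is to also use the lower bound on $s$ relative to $p$: since the primes dividing $p-1$ other than those in $t$ all appear in $s$, and the cofactor $\prod q_j^{i_j-1}$ together with $t$ is bounded, we have $s\ge c(p-1)$ for an explicit constant $c$ (depending on $t$ and on a bound for the exponents, which can be absorbed), so in fact $s$ and $p$ are comparable; but then $q_m\le s/(\text{rest})$ and $q_m\ge 131$ forces $p$ itself to be large, and one plays the $\sqrt p$ in the numerator against the $p-1$ in the denominator: $(4s+2)\sqrt p/(p-1)\approx 4s/\sqrt p$, which is small once $p$ (equivalently $q_m\ge 131$) is large enough.

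The hard part, therefore, is the bookkeeping that turns "$q_m\ge 131$" into a usable lower bound on $\sqrt p$ while simultaneously controlling $s$ from above — the two bounds on $s$ point in opposite directions, and making them cohere requires being careful about the exponents $i_j$ in $p-1=\prod q_j^{i_j}$. I would handle this by treating $s$ as a free divisor parameter subject to $131\le s\le p-1$ and $p\ge 1+\text{(product of distinct primes including one $\ge131$)}$, and checking that the inequality
$$
2\phi(t)/t - 1 \;>\; \frac{(4s-2)\sqrt p + (4s+2)}{p-1}
$$
holds; since $2\phi(t)/t-1\ge\delta>0$ is fixed, it suffices to show the right-hand side is eventually below $\delta$, which follows because $s\le p-1$ gives right-hand side $\le (4(p-1)-2)\sqrt p/(p-1)+\cdots$ — too weak — so instead one uses $s\le (p-1)/q_m\le(p-1)/131$ to get right-hand side $\le \tfrac{4}{131}\sqrt p\cdot\tfrac{p-1}{p-1}+O(1/\sqrt p)$, and then observes this is still unbounded, so the genuinely correct route is the one via Lemma~\ref{lem:m>}'s companion estimates: one must argue that in the regime $m\le 2k(m)+1$ the number of distinct primes is so small that $s$, being a product of at most $m-k(m)+1\le k(m)+2\le 6$ primes, cannot be too large compared to $q_m$, whence $p-1\le s\cdot q_m^{\,i-1}\cdots$ is actually close to $q_m^{\,O(1)}$ and $\sqrt p/(p-1)\le q_m^{-1/2+o(1)}\le 131^{-1/2}\cdot(\text{const})$ is small — and that, finally, beats $\delta$. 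Concretely I expect to end by reducing to a finite check: after all reductions, condition (iii) fails only for a short explicit list of $(t,s)$, and for each one a direct computation (as in the proof of Lemma~\ref{lem:m>}) confirms it, completing the proof.
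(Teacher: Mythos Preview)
Your proposal contains a fundamental error: you have swapped the roles of $s$ and $t$. You set $t=q_1q_2\cdots q_{k(m)-1}$ (the small primes) and $s=q_{k(m)}\cdots q_m$ (the large primes). But $q_1=2$ always divides your $t$, so $2\phi(t)/t\le 2\cdot\tfrac12=1$ in every case; indeed your own list of possible values $1,\ 2/3,\ 4/5,\ 8/15$ confirms this. Since the right-hand side of (iii) strictly exceeds $1$, condition (iii) is \emph{never} satisfiable with this choice. Your subsequent claim that ``in every case $2\phi(t)/t\ge 1+\delta$'' is simply false, and the computation $2\phi(3\cdot 5)/15=16/15$ is irrelevant because your $t$ in case (i) is $2\cdot3\cdot5=30$, not $15$. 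The rest of the proposal, which struggles to control a large $s$ against $\sqrt p$, is a symptom of this wrong initial choice: with $s$ unbounded the error term $(4s-2)\sqrt p/(p-1)$ cannot be made small.

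The paper does the opposite, as does Madden--V\'elez: take $s$ to be the product of the first few primes (so $s\in\{2,6,10,30\}$ according to the four cases of Proposition~\ref{pro:lemma5}) and $t=q_{k(m)}\cdots q_m$ the product of the remaining primes, all of which are at least $5$ or $7$. Then $2\phi(t)/t=2\prod_{j\ge k(m)}(1-1/q_j)$ is a product of factors close to $1$, and using $q_m\ge 131$ together with explicit lower bounds for the other $q_j$ one gets $2\phi(t)/t-1\ge\alpha$ for an explicit $\alpha>0$ in each case. Since $s\le 30$ is now bounded, inequality (iii) becomes $\alpha>(4s-2)\sqrt p/(p-1)+(4s+2)/(p-1)$, which holds once $p$ exceeds an explicit threshold. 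For $p$ below that threshold, the divisibility $s\cdot q_m\mid p-1$ with $q_m\ge131$ forces $p-1\ge 131s$, which either already exceeds the threshold or leaves so little room for the remaining prime factors $q_{k(m)},\dots,q_{m-1}$ that one can sharpen $\alpha$ (fewer factors in $t$) and iterate. This bootstrapping closes every case without any residual finite check.
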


\begin{proof}
Since $m\le 2k(m)+1$ the four cases (i) - (iv) of Proposition~\ref{pro:lemma5}
need to be considered. In each case, as in \cite[Lemma 7]{MV82},
we will prescribe a choice for $s$ (which then determines $t$ uniquely)
and use the conditions in each
of these four cases to find
the lower bound $\alpha$ for the expression $(2\phi(t)t^{-1}-1)$,
that is, $(2\phi(t)t^{-1}-1)\ge \alpha$. We will then
be able to use the assumption $q_m\ge 131$ to show that
\begin{eqnarray}
\label{eq:alpha}
\alpha > \frac{(4s-2)\sqrt{p}+4s+2}{p-1}.
\end{eqnarray}
Suppose first that Proposition~\ref{pro:lemma5}(i) holds,
that is, $k(m)=4$, $q_{k(m)-1}=5$ and $m=9$.
Then $q_9\ge 131$.  Also, one can easily see that
such a sequence of primes must begin with
$q_1=2$, $q_2=3$ and $q_3=5$.
Let $s=2\cdot 3\cdot 5$ and $t=q_4q_5\cdots q_9$.
Then
$$
2\frac{\phi(t)}{t}-1\ge
2(1-\frac{1}{7})(1-\frac{1}{11})(1-\frac{1}{13})(1-\frac{1}{17})
(1-\frac{1}{19})(1-\frac{1}{131}) -1 \ge 0.27287.
$$
Thus $p$ satisfies  (\ref{eq:alpha}) with $\alpha=0.27287$
and $s=30$ if and only if $p> 187899$.
Suppose now that there is a prime $p\le  187899$
that satisfies the conditions of the case under analysis.
We know that $2\cdot 3\cdot 5\cdot q_9$ divides $p-1$ with
$q_9\ge 131$. However this requires
$q_4q_5q_6q_7q_8 < 187899/(2\cdot 3\cdot 5\cdot 131)\le 48$ which is clearly not possible, since $q_4q_5q_6q_7q_8\ge 7\cdot 11 \cdot 13 \cdot 17\cdot 19=323323$.

We now consider the other three cases of
Proposition~\ref{pro:lemma5}, that is, suppose that
Proposition~\ref{pro:lemma5}(ii), (iii) or (iv) holds.
In all three cases $k(m)\le 3$.
Since $p$ is an odd prime we know $q_1=2$, and we now consider the
various possibilities for $q_2$.
First, assume that $q_2=3$ (note that this is possible in the last two cases)
and therefore $m\le 7$. We set $s=2\cdot 3$ and $t=q_3q_4q_5q_6q_7$.
Thus
$$
2\frac{\phi(t)}{t}-1\ge
2(1-\frac{1}{5})(1-\frac{1}{7})(1-\frac{1}{11})(1-\frac{1}{13})(1-\frac{1}{131}) -1 \ge 0.14206.
$$
Now $p$ satisfies (\ref{eq:alpha}) with $\alpha=0.14206$
and $s=6$ if and only if $p\ge 24351$. If $p< 24351$
we see that $q_3q_4\cdots q_{m-1}<24351/(2\cdot 3\cdot 131)< 31$.
Since $q_i\ge 5$ for $i\in\{3,4,\ldots, m-1\}$  one can see that
either $m=3$ or $m=4$. In other words, either $t=q_3$ or $t=q_3q_4$, and thus
we can improve the value for $\alpha$ with
$$
2\frac{\phi(t)}{t}-1\ge
2(1-\frac{1}{5})(1-\frac{1}{131}) -1 \ge 0.58778.
$$
In this case $p$ satisfies (\ref{eq:alpha})
with $\alpha=0.58778$ if and only if $p> 1490$.
If $p\le 1490$ observe that the assumption that
$6q_m$ divides $p-1$ with $q_m\ge 131$ implies that $q_3<2$,
a contradiction.

We now use the same approach for the case $q_2=5$.
We choose $s=2\cdot 5$ and $t=q_3q_4\cdots q_m$.
Here we have
$$
2\frac{\phi(t)}{t}-1\ge
2(1-\frac{1}{7})(1-\frac{1}{11})(1-\frac{1}{13})(1-\frac{1}{17})(1-\frac{1}{131}) -1 \ge 0.34361.
$$
Hence $p$ satisfies (\ref{eq:alpha})
with $\alpha=0.34361$
if and only if
$p> 12475$. If, however, $p\le 12475$ then since $10q_m$ divides
$p-1$ we have that $q_3 < 10$, and so either $m=4$ and $q_3=7$
or $m=3$. In both cases we can improve the value for
$\alpha$ since $t=q_3$ or $t=q_3q_4$. In particular,
$$
2\frac{\phi(t)}{t}-1\ge
2(1-\frac{1}{7})(1-\frac{1}{131}) -1 \ge 0.70119956.
$$
\medskip
In this case $p$ satisfies (\ref{eq:alpha}) with $\alpha=0.70119956$
if and only if $p> 3057$.
If $p\le 3057$ observe that the assumption that
$10q_m$ divides $p-1$ with $q_m\ge 131$ implies that $q_3<2$,
a contradiction.

Finally we consider the case $q_2\ge 7$.
Then, by Proposition~\ref{pro:lemma5}, we have $k(m)=2$ and $m\le 5$.
Here we choose $s=2$ and use the same technique as above to complete
the proof. In particular, we have
$$
2\frac{\phi(t)}{t}-1\ge
2(1-\frac{1}{7})(1-\frac{1}{11})(1-\frac{1}{13})(1-\frac{1}{131}) -1 \ge 0.42758.
$$
In this case $p$ satisfies (\ref{eq:alpha}) with $\alpha=0.42758$
if and only if $p> 243$.
If $p\le 243$ observe that the assumption that
$2q_m$ divides $p-1$ with $q_m\ge 131$ implies that $q_3<2$,
a contradiction.

In summary we have seen that given any finite sequence of primes with
$q_m\ge 131$ we can choose $n$ in such a way that
when $s=q_1q_2\cdots q_n$ and $t=q_{n+1}q_{n+2}\cdots q_m$ we have
\begin{eqnarray}
\label{eq:alpha1}
\frac{2\phi(t)}{t} > 1 + \frac{(4s-2)\sqrt{st+1}}{st}+\frac{4s+2}{st},
\end{eqnarray}
completing the proof of Lemma~\ref{lem:qm>128}.
\end{proof}

In order to proceed with the proof of
Theorem~\ref{the:polynomial-degree-4} we now need to
identify all those sequences $\{2=q_1,q_2,\ldots,q_m\}$
with $q_m<131$ for which one cannot choose $s=q_1q_2\cdots q_n$
and $t=q_{n+1}q_{n+2}\cdots q_m$ so as to satisfy (\ref{eq:alpha1}).
Since Lemma~\ref{lem:m>} holds for each $q_m$
we can assume that for each of these sequences
Proposition~\ref{pro:lemma5} applies.
A computer search of these finitely many sequences yields the
exceptional sequences
which are listed in Tables~\ref{tab:exceptions1} and \ref{tab:exceptions2}.
For each of these exceptional sequences we fix $s=q_1q_2\cdots q_n$
and $t=q_{n+1}q_{n+2}\cdots q_m$, and we then search for a constant $k$
such that  $x>k$ implies the inequality
\begin{eqnarray}
\label{eq:k}
\frac{2\phi(t)}{t}>1+\frac{2(2s-1)\sqrt{x}}{x-1}+\frac{4s+2}{x-1}.
\end{eqnarray}
For each of these sequences Tables~\ref{tab:exceptions1} and \ref{tab:exceptions2}  give the smallest bound $k$ obtained in this way.
The third column of these tables indicates for which choice of $t$ the given
bound $k$ is obtained: Type 1 means that the bound $k$ was obtained
with $t=q_{m-1}q_m$,
Type 2 means that the bound was obtained with $t=q_m$, and
Type 3 means that the bound was obtained with $t=1$.
A computer search then identifies those primes that are
smaller than or equal to the bound $k$, as summarized
in the proposition below.

\begin{proposition}
\label{pro:qm<128}
Let $\{2=q_1,q_2,\ldots, q_m\}$ be a finite sequence of primes
satisfying $m\le 2k(m)+1$, and let
$p-1=q_1^{i_1}q_2^{i_2}\cdots q_m^{i_m}$ with $q_m< 131$.
If $p$ is not listed in Tables~\ref{tab:exceptions1} and \ref{tab:exceptions2}
then there exist $s$ and $t$ such that
\begin{enumerate}[(i)]
\itemsep=0pt
\item $s$ and $t$ are coprime,
\item a prime $q$ divides $p-1$ if and only if $q$ divides $st$, and
\item $2\phi(t)/t>1+(4s-2)\sqrt{p}/(p-1)+(4s+2)/(p-1)$.
\end{enumerate}
\end{proposition}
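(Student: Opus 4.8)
The plan is to reduce the claim to a finite computer verification. Since $m\le 2k(m)+1$, Proposition~\ref{pro:lemma5} applies, so that $k(m)\le 4$, $m\le 9$ and $q_{k(m)-1}\le 5$; in particular the sequence $\{2=q_1,q_2,\ldots,q_m\}$ begins with one of only a handful of short segments of small primes. Together with the hypothesis $q_m<131$ this forces every $q_i$ to lie among the (finitely many) primes below $131$ while $m\le 9$, so there are only finitely many admissible prime sequences $\{q_1,\ldots,q_m\}$, and these can be enumerated explicitly (the complementary range $m\ge 2k(m)+2$ having been dealt with via Lemma~\ref{lem:m>}).

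Now fix an admissible sequence and an index $n\in\{0,1,\ldots,m\}$, and set $s=q_1q_2\cdots q_n$ and $t=q_{n+1}q_{n+2}\cdots q_m$ (with $s=1$ if $n=0$ and $t=1$ if $n=m$). Conditions (i) and (ii) then hold automatically: $s$ and $t$ are products of disjoint sets of primes, hence coprime, and the primes dividing $st$ are exactly $q_1,\ldots,q_m$, which are the prime divisors of $p-1$. Condition (iii) depends only on $p$, $s$ and $t$ --- not on the exponents $i_1,\ldots,i_m$. Writing its right-hand side as
$$
h(x)=1+\frac{2(2s-1)\sqrt{x}+4s+2}{x-1},\qquad x>1,
$$
and substituting $u=\sqrt{x}$, one checks that $\dfrac{2(2s-1)u+4s+2}{u^2-1}$ has negative derivative for $u>1$, so that $h$ is strictly decreasing on $(1,\infty)$ with $h(x)\to 1$ as $x\to\infty$, whereas the left-hand side $2\phi(t)/t$ is a constant. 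For the choices $t=q_{m-1}q_m$, $t=q_m$ and $t=1$ one has $2\phi(t)/t>1$ --- here we use that $q_1=2$ divides $s$, so $t$ is odd or $t=1$ --- whence there is an explicit threshold $k=k(s,t)$ such that (iii), i.e.\ $2\phi(t)/t>h(p)$, holds for every prime $p>k$.

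It remains to carry out, for each admissible sequence, the computation of $k(s,t)$ for the three natural choices $t=q_{m-1}q_m$ (Type~1), $t=q_m$ (Type~2) and $t=1$ (Type~3), to keep the smallest resulting threshold, and then to test directly the finitely many primes $p$ with prime content $\{q_1,\ldots,q_m\}$ that lie below it. The primes for which \emph{no} admissible pair $(s,t)$ satisfies (iii) are precisely those recorded in Tables~\ref{tab:exceptions1} and~\ref{tab:exceptions2}; for every other prime $p$ with $q_m<131$, one of the three choices furnishes $s$ and $t$ meeting (i)--(iii), which proves the proposition. The bulk of the work --- and the only place a difficulty could arise --- is the computer enumeration of admissible sequences together with the per-prime verification and the bookkeeping of which $(s,t)$ gives the least threshold; the analytic content, namely the monotonicity of $h$ and the resulting existence of a threshold, is routine.
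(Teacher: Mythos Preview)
Your proposal is correct and follows essentially the same approach as the paper: reduce to finitely many prime sequences via Proposition~\ref{pro:lemma5} and the bound $q_m<131$, exploit the monotone decrease of the right-hand side of (iii) in $p$ to obtain an explicit threshold $k$ for each candidate pair $(s,t)$, and then check the finitely many primes below $k$ by computer, recording the failures in Tables~\ref{tab:exceptions1} and~\ref{tab:exceptions2}. The paper organizes the computation slightly differently --- it first filters out sequences already satisfying the $p$-free inequality~(\ref{eq:alpha1}) before passing to the threshold argument, and in a few cases (e.g.\ $t=385,455,805$) it uses choices of $t$ beyond your three ``Types'' to sharpen the threshold --- but these are efficiency optimizations rather than a different method, and your coarser search would yield the same list of exceptional primes.
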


{\small
\begin{table}[h!]
$$
\begin{array}{|c|c|c|c|c|}
\hline
\textrm{Sequence $\cal{T}$} & k  & \textrm{ Type } & p \le k & p\equiv{1\pmod 4}\le k\\
& & &\textrm{ with $\cal{T}$  }& \textrm{ with } {\cal T} ,  (p+1)/2 \textrm{ prime} \\
\hline
\hline
2  & 55 & 3 & 3,5,17& 5\\ \hline
2, 3, 5, 11
& 2458 & 1 &  331, 661, 991, 1321
& 661, 1321
\\ \hline
2, 3, 5, 43
& 1622 & 1 &  1291
 & no
\\ \hline
2, 3, 7, 17
& 1372 & 1  & no
 & no
\\ \hline
2, 3, 5, 7, 13
& 7040 & t=455 &  2731
& no
\\ \hline
2, 3, 43
& 460 & 1  & no
 & no
\\ \hline
2, 3, 31
& 496 & 1 &  373
 & no
\\ \hline
2, 3, 61
& 435 & 1 &  367
 & no
\\ \hline
2, 3, 5, 7, 23
& 5145 & t=805 &  4831
& no
\\ \hline
2, 3, 23
& 547 & 1 &  139, 277
& 277
\\ \hline
2, 3, 67
& 430 & 1  & no
 & no
\\ \hline
2, 3, 7, 13
& 1517 & 1 &  547, 1093
& 1093
\\ \hline
2, 3, 17
& 632 & 1 &  103, 307, 409, 613
& 613
\\ \hline
2, 3, 5, 13
& 2238 & 1 &  1171, 1951
 & no
\\ \hline
2, 3, 11
& 788 & 2 &  67, 199, 397, 727
& 397
\\ \hline
2, 7
& 99 & 2 &  29
 & no
\\ \hline
2, 3, 13
& 739 & 2 &  79, 157, 313
& 157, 313
\\ \hline
2, 3, 7
& 1023 & 2 &  43, 127, 337, 379, 673, 757, 883, 1009
& 673, 757
\\ \hline
2, 23
& 65 & 2 &  47
 & no
\\ \hline
2, 3, 5, 37
& 1656 & 1  & no
 & no
\\ \hline
2, 5
& 133 & 2 &  11, 41, 101
 & no
\\ \hline
2, 3, 5, 41
& 1632 & 1 &  1231
 & no
\\ \hline
2, 3, 59
& 437 & 1  & no
 & no
\\ \hline
2, 3, 53
& 444 & 1  & no
 & no
\\ \hline
2, 3, 7, 19
& 1327 & 1 & no
 & no
\\ \hline
2, 3, 5, 29
& 1727 & 1  & no
 & no
\\ \hline
2, 17
& 69 & 2 & no
 & no
\\ \hline
2, 11
& 78 & 2 &  23
 & no
\\ \hline
2, 3, 5, 19
& 1921 & 1 &  571
 & no
\\ \hline
2, 3, 41
& 464 & 1  & no
 & no
\\ \hline
\end{array}
$$
\caption{\label{tab:exceptions1} The list of sequences not satisfying (\ref{eq:alpha1}) I.
}
\end{table}
}
{\small
\begin{table}[h!]
$$
\begin{array}{|c|c|c|c|c|}
\hline
\textrm{Sequence $\cal{T}$} & k  & \textrm{ Type } & p \le k & p\equiv{1\pmod 4}\le k\\
& & &\textrm{ with $\cal{T}$  }& \textrm{ with } {\cal T} ,  (p+1)/2 \textrm{ prime} \\
\hline
\hline
2, 3, 5, 7, 11
& 8160 & t=385 &  2311, 4621
& 4621
\\ \hline
2, 3, 5
& 1432 & 2 &  31, 61, 151, 181, 241, 271,
& 61, 541, 1201
\\
 & & & 541, 601, 751, 811, 1201 &
\\ \hline
2, 3, 5, 47
& 1604 & 1  & no
 & no
\\ \hline
2, 3, 5, 31
& 1705 & 1  & no
 & no
\\ \hline
2, 3, 7, 23
& 1265 & 1 &  967
 & no
\\ \hline
2, 5, 17
& 180 & 1  & no
 & no
\\ \hline
2, 3, 11, 13
& 1130 & 1 &  859
 & no
\\ \hline
2, 13
& 74 & 2 &  53
 & no
\\ \hline
2, 5, 11
& 218 & 1  & no
 & no
\\ \hline
2, 5, 13
& 200 & 1 &  131
 & no
\\ \hline
2, 3, 37
& 475 & 1 &  223
 & no
\\ \hline
2, 3, 5, 7
& 3649 & 1 &  211, 421, 631, 1051, 1471, 2521, 3361
& 421
\\ \hline
2, 3, 5, 7, 19
& 36145 & 1 &  11971, 35911
 & no
\\ \hline
2, 3
& 384 & 2 &  7, 13, 19, 37, 73, 97, 109, 163, 193
& 13, 37, 73, 193
\\ \hline
2, 5, 7
& 315 & 1 &  71, 281
 & no
\\ \hline
2, 3, 5, 23
& 1819 & 1 &  691, 1381
& 1381
\\ \hline
2, 3, 47
& 453 & 1 &  283
 & no
\\ \hline
2, 3, 5, 7, 17
& 37400 & 1 &  3571, 10711, 14281, 17851
 & no
\\ \hline
2, 3, 29
& 506 & 1 &  349
 & no
\\ \hline
2, 3, 7, 11
& 1646 & 1 &  463
 & no
\\ \hline
2, 3, 5, 17
& 1995 & 1 &  1021, 1531
 & no
\\ \hline
2, 29
& 63 & 2 &  59
 & no
\\ \hline
2, 3, 19
& 596 & 1 &  229, 457
& 457
\\ \hline
2, 19
& 68 & 2 & no
 & no
\\ \hline
\end{array}
$$
\caption{\label{tab:exceptions2} The list of sequences not satisfying (\ref{eq:alpha1}) II.
}
\end{table}
}

We are now ready to prove Theorem~\ref{the:polynomial-degree-4}.

\medskip

\begin{proofT}
It follows by Proposition~\ref{pro:corollary1} that a
polynomial $f(x)$ represents
a nonzero square at some primitive root in $F$
if there exist $s$ and $t$ satisfying the following three conditions:
\begin{enumerate}[(i)]
\itemsep=0pt
\item $s$ and $t$ are coprime,
\item a prime $q$ divides $p-1$ if and only if $q$ divides $st$, and
\item $2\phi(t)/t>1+(4s-2)\sqrt{p}/(p-1)+(4s+2)/(p-1)$.
\end{enumerate}
Our goal is therefore to
show that such $s$ and $t$ exist for  all odd primes $p$ that are not listed in
Tables~\ref{tab:exceptions1} and~\ref{tab:exceptions2}.

Let $\{q_1=2,q_2,\ldots, q_m\}$ be an increasing sequence of prime
divisors of $p-1$.
If $m\le 2k(m)+1$
then Lemma~\ref{lem:qm>128} applies for $q_m\ge 131$, and
 Proposition~\ref{pro:qm<128} applies for $q_m < 131$.

Suppose now that $m\ge 2k(m)+2$. Then, by Lemma~\ref{lem:m>}, we have
$$
d(k(m)+1,m) > 1 + c_4(k(m)+1,m).
$$
If we let $s=q_1q_2\cdots q_{k(m)}$ and $t=q_{k(m)+1}\cdots q_m$
we have
$2\phi(t)/t=d(k(m)+1,m)$, and
\begin{eqnarray*}
c_4(k(m)+1,m)&=&8\cdot \sqrt{\frac{q_1q_2\cdots q_{k(m)}}{q_{k(m)+1}q_{k(m)+2}\cdots q_m}}\\
&=&\frac{8s}{\sqrt{q_1q_2\cdots q_m}}\ge \frac{8s}{\sqrt{p-1}}
\end{eqnarray*}
Since $s$ is even and $4(p-1)\ge 4s\ge 3$ we may apply
\cite[Lemma~6]{MV82} to see that
$$
\frac{(4s-2)\sqrt{p}}{p-1}\le \frac{4s}{\sqrt{p-1}}.
$$
It follows that
\begin{eqnarray*}
\frac{2\phi(t)}{t}&=&d(k(m)+1,m)\ge 1+c_4(k(m)+1,m)\ge 1+\frac{8s}{\sqrt{p-1}}\\
&\ge &1+\frac{(4s-2)\sqrt{p}}{p-1}+\frac{4s+2}{p-1}.
\end{eqnarray*}
(Note that the last inequality hold since $p\ge 7$.)
\end{proofT}

In the search of Hamilton cycles in graphs arising from the action
of $\PSL(2,p)$ on the cosets of $D_{p-1}$
the following result about particular polynomials over finite fields
of prime order $p$, where $p$ is one of the primes listed in the
last column of Tables~\ref{tab:exceptions1} and \ref{tab:exceptions2},
obtained with a computer search,
will be needed.

\begin{proposition}
\label{pro:small}
Let $F$ be a finite field of odd prime order $p$, and let $k\in F$.
If
\begin{eqnarray*}
p&\in&\{5,13,37,61,73,157,193,277,313,397,421,457,541,\\
&& 613,661,673,757,1093,1201,1321,1381,4621\}
\end{eqnarray*}
then there exists a primitive root $\beta$ of $F$ such that
$f(\beta)=\beta^4+k\beta^2+1$ is a square in $F$ except
when
\begin{eqnarray*}
(p,k)&\in&\{(5,4), (13,1),(13,4),(13,5),(13,6),(13,7),(13,10),\\
&&(37,3),
(37,28), (37,29),(61,18),(61,37),(61,40)\}.
\end{eqnarray*}
Amongst these exceptions only  for
$(p,k) \in \{(13,1),(37,28),(61,18)\}$
there exists $\xi\in S^*\cap S^*+1$
such that $k=2(1-2\xi)$. In particular,
$\xi=10$ for $(p,k)=(13,1)$,
$\xi=12$ for $(p,k)=(37,28)$,
and $\xi=57$ for $(p,k)=(61,18)$.
Moreover,
amongst these exceptions only  for
$(p,k) \in \{(13,1),(37,28),(61,18)\}$
there exists $\bar{\xi}\in S^*\cap S^*+1$
such that $k=- 2(1-2\bar{\xi})$. In particular,
$\bar{\xi}=4$ for $(p,k)=(13,1)$,
$\bar{\xi}=26$ for $(p,k)=(37,28)$,
and $\bar{\xi}=5$ for $(p,k)=(61,18)$.

\end{proposition}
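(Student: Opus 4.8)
The plan is to prove Proposition~\ref{pro:small} by a direct finite computation, since the statement concerns only an explicitly given finite list of primes $p$ and, for each, a bounded family of polynomials $f(x)=x^4+kx^2+1$ indexed by $k\in F_p$. First I would fix $p$ from the list and enumerate the primitive roots of $F_p$: compute a generator $g$ of the cyclic group $F_p^*$, then the primitive roots are exactly $g^j$ with $\gcd(j,p-1)=1$. For each $k\in F_p$ evaluate $f(\beta)=\beta^4+k\beta^2+1$ over all primitive roots $\beta$ and test whether any value is a nonzero square, using the Legendre symbol (equivalently $f(\beta)^{(p-1)/2}=1$). The claim is that this search succeeds for every $k$ except the thirteen listed pairs $(p,k)$, so the first task is simply to confirm that the exceptional set is exactly as stated.

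Next I would deal with the two refined assertions about the exceptions. For each exceptional pair $(p,k)\in\{(5,4),(13,1),(13,4),\dots,(61,40)\}$ I would loop over $\xi\in F_p$, check whether $\xi\in S^*$ and $\xi-1\in S^*$ (i.e.\ $\xi\in S^*\cap S^*{+}1$), and whether $k=2(1-2\xi)$; similarly test $k=-2(1-2\bar\xi)$ with $\bar\xi\in S^*\cap S^*{+}1$. The assertion is that a solution exists only for $(p,k)\in\{(13,1),(37,28),(61,18)\}$, and in those cases the solution is unique with the values $\xi=10,12,57$ and $\bar\xi=4,26,5$ respectively. One can double-check these by hand: e.g.\ for $p=13$, $\xi=10$ gives $2(1-20)=-38\equiv 1\pmod{13}=k$, and $10=3^2$, $9=3^2$ are both squares mod $13$; for the $\bar\xi$ side, $\bar\xi=4$ gives $-2(1-8)=14\equiv 1\pmod{13}$, and $4,3$ — here $3\equiv 4^2\cdot 3$... one checks $3$ is a QR mod $13$ since $4^2=16\equiv 3$, so indeed $\bar\xi=4\in S^*\cap S^*{+}1$. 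The remaining two primes $p=37,61$ are verified the same way.

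The proof therefore amounts to organizing and certifying this computer search. I would present it as follows: state that a computation (over at most $\sum_p (p-1)\cdot p$ evaluations, a trivial amount) verifies the three claims, and include a short table or the explicit exceptional data as a certificate. Since the primes in the list are all small (the largest is $4621$), the search space is entirely negligible, and the calculation is reproducible. For publication one would typically supplement this with a brief remark that the relevant code is available, or simply reproduce the exceptional pairs (already done in the statement) so a referee can re-verify each line independently with a few modular arithmetic checks.

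The main obstacle here is not mathematical depth but bookkeeping reliability: ensuring the primitive-root enumeration is correct for every listed prime and that no exceptional pair is missed or spuriously included. A secondary subtlety is the precise formulation of the conditions $k=\pm 2(1-2\xi)$ with $\xi$ ranging over $S^*\cap S^*{+}1$ — one must be careful to use the \emph{same} set $S^*\cap S^*{+}1$ in both the $\xi$ and $\bar\xi$ statements (not, say, $S^*\cap S^*{-}1$), and to note that the uniqueness claim is part of what the search establishes. Beyond that, the result is a self-contained finite verification with nothing that requires a genuine argument, which is exactly why it is stated as a proposition obtained "with a computer search'' rather than proved analytically.
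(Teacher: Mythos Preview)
Your proposal is correct and matches the paper's own treatment: the paper states explicitly that this result is ``obtained with a computer search'' and gives no further proof. Your description of how to organize and certify that search is perfectly adequate, and indeed more detailed than what the paper itself provides.
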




\section{Vertex-transitive graphs of order $pq$: explaining the strategy}
\label{sec:strategy}
\noindent

The goal of this paper is to prove that the Petersen
graph is the only connected vertex-transitive graph of order a product of two primes without a Hamilton cycle.
Recall that vertex-transitive graphs of prime-squared order are necessarily Cayley graphs of abelian groups.
The existence of Hamilton cycles in such graphs was proved
by the third author in 1983  \cite{DM83}.
If one of the two primes is equal to $2$
then the graphs are of order twice a prime, and the existence
of Hamilton cycles in such graphs
(with the exception of the Petersen graph)
was proved by Alspach back in 1979 \cite{A79}.

\begin{proposition}
\label{thm:p2,2p}
{\rm \cite{A79,DM83}}
Let $p$ be a prime.
With the exception of the Petersen graph,
every connected vertex-transitive graph
of order $qp$, where
$q\in\{2,p\}$, contains a
Hamilton cycle.
\end{proposition}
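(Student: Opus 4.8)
The plan is to reduce this statement to two results already available in the literature, one for each admissible value of $q$, so that essentially no new argument is required.

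For $q=2$ the graph has order $2p$, and here I would simply invoke Alspach's theorem from \cite{A79}, which asserts precisely that every connected vertex-transitive graph of order $2p$ other than the Petersen graph contains a Hamilton cycle. The content behind that result is the classification of vertex-transitive graphs of order $2p$ (connected circulants, certain metacirculants, and the Petersen graph), Turner's theorem that connected circulants of prime order are hamiltonian, and a direct analysis of the relevant metacirculant families; for the present purpose all of this is taken as a black box. (The degenerate case $p=2$, giving order $4$, is subsumed by the next case as well.)

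For $q=p$ the graph $X$ has order $p^2$. The first step is the structural observation, recalled in Section~\ref{sec:strategy}, that a connected vertex-transitive graph of order $p^2$ is necessarily a Cayley graph of an abelian group: a transitive permutation group of degree $p^2$ has a transitive Sylow $p$-subgroup, and a transitive $p$-group of degree $p^2$ is well known to contain a transitive — hence regular — abelian subgroup, which exhibits $X$ as an abelian Cayley graph. The second step is then to apply the theorem of the third author \cite{DM83}, by which every connected Cayley graph on an abelian group of order at least $3$ is hamiltonian. Assembling the two cases yields the proposition, the Petersen graph (which arises only for $q=2$, $p=5$) being the sole exception.

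There is no genuine obstacle at the level of this proposition, since both ingredients are cited wholesale; the work is entirely in the two references. If one insisted on a self-contained treatment, the substantial inputs would be Alspach's case analysis isolating the Petersen graph as the unique non-hamiltonian example of order $2p$, and, for order $p^2$, the hamiltonicity of arbitrary connected abelian Cayley graphs, whose standard proof constructs a Hamilton cycle inductively by a rotational gluing argument across cyclic direct factors.
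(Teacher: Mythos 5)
Your proposal is correct and coincides with the paper's treatment: the proposition is established exactly by citing Alspach's theorem for order $2p$ and by combining the fact that vertex-transitive graphs of order $p^2$ are Cayley graphs of abelian groups with the hamiltonicity of connected abelian Cayley graphs from \cite{DM83}. The extra detail you supply (the Sylow $p$-subgroup argument for the $p^2$ case) is consistent with, and merely expands on, what the paper takes as known.
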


In pursuing our goal we
therefore only need to consider vertex-transitive graphs
whose order is a product of two different odd primes $p$
and $q$ ($p>q$).
As mentioned in the introduction there are three disjoint classes
of such graphs.
Recall that the first class consists of graphs admitting an imprimitive
subgroup of automorphisms with blocks of size $p$
- it coincides with the class of $(q,p)$-metacirculants defined in Subsection~\ref{ssec:semiregular}.
The second class consists of graphs admitting an imprimitive
subgroup of automorphisms with blocks of size $q$ and no
imprimitive subgroup of automorphisms with blocks of size $p$ - it coincides with the class of
Fermat graphs defined in Subsection~\ref{ssec:semiregular}.
Finally, the third class consists of vertex-transitive
graphs with no imprimitive subgroup of automorphisms.
Following \cite[Theorem~2.1]{MS5} the theorem below
gives a complete classification
of connected vertex-transitive graphs of order $pq$.
We would like to remark, however, that there is an additional
family of primitive graphs of order $91=7\cdot 13$ that was not
covered neither in \cite{MS5} nor in \cite{PX}.
This is due to a missing case in Liebeck - Saxl's table
\cite{LS85} of primitive group actions of degree $mp$, $m<p$.
This missing case consists of primitive groups
of degree $91=7\cdot 13$
with socle $\PSL(2,13)$ acting on cosets of $A_4$.
In the classification theorem below this missing case is included in Row 7 of Table~\ref{tab:groups}.

\begin{theorem}
\label{the:main1}
{\rm \cite[Theorem~2.1]{MS5}}
A connected vertex-transitive graph of order $pq$, where
$p$ and $q$ are odd primes and $p>q$,
must be one of the following:
\begin{enumerate}[(i)]
\itemsep=0pt
\item a metacirculant,
\item a Fermat graph,
\item a generalized orbital graph
associated with one of the groups in Table~\ref{tab:groups}.
\end{enumerate}
\end{theorem}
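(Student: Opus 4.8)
The plan is essentially to recall the argument behind \cite[Theorem~2.1]{MS5} and then to supply the one primitive action that was inadvertently omitted there, since with that single addition the statement is exactly the content of \cite{MS1,MS2,MS3,MS4} assembled in \cite{MS5}. First I would recall the trichotomy on which everything rests. Let $X$ be a connected vertex-transitive graph of order $pq$ with $p>q$ odd primes and $G=\Aut X$. Since a nontrivial block of a transitive action on $pq$ points has size $p$ or $q$, exactly one of the following holds: some transitive subgroup of $G$ is imprimitive with blocks of size $p$; no such subgroup exists but some transitive subgroup is imprimitive with blocks of size $q$; or every transitive subgroup of $G$ is primitive. Under the convention ``blocks of size $p$ first'' these three situations are disjoint and correspond to cases (i), (ii), (iii).

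For the first case I would recall, as in \cite{MS1,MS4}, that a transitive imprimitive subgroup with blocks of size $p$ yields a metacyclic transitive subgroup $\langle\rho\rangle\rtimes\langle\sigma\rangle$ with $\rho$ a $(q,p)$-semiregular automorphism and $\sigma$ normalising $\langle\rho\rangle$ and cyclically permuting its $q$ orbits, whence $X$ is a $(q,p)$-metacirculant in the sense of Subsection~\ref{ssec:semiregular}. For the second case I would invoke the analysis of \cite{MS2}: if $X$ has a transitive imprimitive subgroup with blocks of size $q$ but none with blocks of size $p$, then $p$ is forced to be a Fermat prime, $q$ divides $p-2$, the quotient on the $p$ blocks is $K_p$ with the natural $\SL(2,p-1)$-action on $\PG(1,p-1)$, and $X=F(p,q,S,T)$ for a symmetric $S\subseteq\GF(q)^*$ and a nonempty proper $T\subseteq\GF(q)^*$. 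Finally, when every transitive subgroup of $G$ is primitive, one appeals to the O'Nan--Scott theorem together with the Liebeck--Saxl classification \cite{LS85} of primitive permutation groups of degree $mp$ with $m<p$: going through the short list of admissible socles and discarding those possessing an imprimitive transitive overgroup in the relevant symmetric group leaves precisely the actions recorded in Table~\ref{tab:groups}, and then $X$ is a generalized orbital graph of one of them.

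The one genuinely new point, and the step I expect to be the main obstacle, is that the Liebeck--Saxl table omits one line: the primitive groups of degree $91=7\cdot 13$ with socle $\PSL(2,13)$ acting on the cosets of a subgroup isomorphic to $A_4$ (the degree is right since $|\PSL(2,13)|=1092=12\cdot 91$ and $|A_4|=12$). To repair the classification I would verify directly that this action is primitive, i.e. that $A_4$ is maximal in $\PSL(2,13)$ — which holds by Dickson's description of the subgroups of $\PSL(2,p)$, since $\PSL(2,13)$ contains no $S_4$ (as $13\not\equiv\pm1\pmod 8$) and no $A_5$ (as $13\not\equiv\pm1\pmod 5$), while $A_4$ lies in no dihedral subgroup; that none of its transitive overgroups in the symmetric group on $91$ points is imprimitive; and that it does give generalized orbital graphs not accounted for by the other rows. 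Recording this action as Row~7 of Table~\ref{tab:groups} makes the three-way analysis exhaustive, and the theorem follows.
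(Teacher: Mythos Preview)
Your proposal is correct and aligns with the paper's treatment: the paper does not give an independent proof of this theorem but simply cites \cite[Theorem~2.1]{MS5} and adds the remark that the Liebeck--Saxl table \cite{LS85} omits the degree-$91$ action of $\PSL(2,13)$ on cosets of $A_4$, which must therefore be inserted as Row~7 of Table~\ref{tab:groups}. You have supplied more detail than the paper does---recalling the trichotomy, the reductions to metacirculants and Fermat graphs, and verifying via Dickson's subgroup list that $A_4$ is maximal in $\PSL(2,13)$---but the approach is the same.
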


\begin{table}[h!]
\begin{center}
\begin{tabular}{|c|c||c|c|c|}
\hline
 &      &               &                   &                      \\
Row & $soc\ G$   &   $(p,q)$   &    $action$    &   $comment$     \\
 &      &               &                   &               \\ \hline\hline
1 & $P\Omega^\epsilon(2d,2)$  &  $(2^d-\epsilon,2^{d-1}+\epsilon)$
& singular &  $\epsilon=+1:$ $d$ Fermat prime \\
& & & $1$-spaces & $\epsilon=-1:$ $d-1$ Mersenne prime\\ \hline
2 & $M_{22}$  &  $(11,7)$  & see Atlas &        \\ \hline
3 & $A_7$   &  $(7,5)$  &  triples &      \\ \hline
4 & $\PSL(2,61)$ & $(61,31)$ & cosets of  &   \\
&            &      &        $A_5$              &   \\ \hline
5 & $\PSL(2,q^2)$ & $(\frac{q^2+1}{2},q)$ & cosets of & $q \geq 5$  \\
&            &      &      $\PGL(2,q)$ &   \\ \hline
6 & $\PSL(2,p)$  &  $(p,\frac{p+1}{2})$ & cosets of  & $p \equiv1\,(mod\,4)$    \\
&            &      &         $D_{p-1}$    & $p \geq 13$   \\ \hline
7 & $\PSL(2,13)$ & $(13,7)$ & cosets of & missing in \cite{LS85}\\
&  & $ $ &  $A_4$ & \\ \hline
\hline
\end{tabular}
\caption{\small \label{tab:groups} Primitive groups of degree $pq$ without imprimitive
subgroups and with non-isomorphic genera\-lized orbital graphs.}
\end{center}
\end{table}

The existence of Hamilton cycles in graphs given in
Theorem~\ref{the:main1}(i) and (ii) was proved,
respectively, in \cite{AP82a} and \cite{DM92}.
For the sake of completeness, let us briefly explain
the corresponding construction methods.

In \cite{AP82a} the existence of Hamilton cycles was
proved for all $(m,n)$-metacirculants with $m$ odd,
and not only for metacirculants of order
a product of two odd primes.
Let, however, $X$ be a metacirculant defined by
the array
$(q,p,\alpha,T_0,\ldots,T_\mu)$ as in Subsection~\ref{ssec:semiregular}.
If $\gcd(c,q)=1$, where $c=a/\gcd(a,q)$
and $a$ is the order of $\alpha\in \ZZ_p^*$,
then it follows by \cite{AP82} that $X$
is a Cayley graph of
the group $\la\rho,\sigma^c\ra=\la\rho\ra\rtimes\la\sigma^c\ra$.
Thus in this case the result about existence
of Hamilton cycles in Cayley graphs of semidirect products of a prime order cyclic group with an
abelian group, proved in \cite{D83,DM83}, can be applied.
When $\gcd(c,q)\ne 1$ one can use the fact that
the quotient $X_\P$ with respect to
the set of orbits $\P$ of $\rho$,  is a circulant
of order $q$   with
symbol $\{\pm i \colon 1\le \mu, \textrm{ and } T_i\ne \emptyset\}$.
A Hamilton cycle in $X$ is then
constructed as a lift of a particular
Hamilton cycle in $X_\P$
(see \cite{AP82a} for details).

Hamilton cycles in Fermat graphs were constructed in \cite{DM92}
in the following way. Let $X=F(p,q,S,T)$ be a Fermat graph
with $p=2^{2^s}+1$ being a Fermat prime and $q$
a prime dividing $p-2$ (see Subsection~\ref{ssec:semiregular}
for exact definition).
Since $p-2=2^{2^s}-1$ is divisible by $3$ this number
is a prime if and only if $(p,q)=(5,3)$, implying
that $X$ is isomorphic to the line graph of the Petersen
graph which clearly admits a Hamilton cycle.
It can therefore be assumed that $q< p-2$.
The quotient graph
$X_\mathcal{Q}$ with respect to
the set of orbits $\mathcal{Q}$
of a $(p,q)$-semiregular automorphism in $X$ is
isomorphic to the complete graph $K_p$.
Then in $X_\mathcal{Q}$
 a particular $(p-1)$-cycle which leaves out the
vertex $\{(\infty,r)\colon r\in F_q\}$ is
constructed in such a way that it lifts
to a cycle $C$ in the original graph $X$ leaving out
only vertices of the form $(\infty,r)$, $r\in F_q$.
One can then show that each of these
missing vertices is adjacent to two
neighboring vertices on $C$.
The cycle $C$ can therefore be extended
to a Hamilton cycle in $X$
(see \cite{DM92} for details).

Combining  the above results from \cite{AP82a,DM92}
with Proposition~\ref{thm:p2,2p}  we have a complete
solution of the hamiltonicity problem for vertex-transitive
graphs of order a product of two primes in the imprimitive case.

\begin{proposition}
\label{thm:ham-imprimitive}
With the exception of the Petersen graph,
every connected vertex-transitive graph
of order $qp$, where
$p$ and $q$ are primes, with an imprimitive subgroup of automorphisms
contains a  Hamilton cycle.
\end{proposition}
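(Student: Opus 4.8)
The final statement to prove is Proposition~\ref{thm:ham-imprimitive}, which asserts that, with the exception of the Petersen graph, every connected vertex-transitive graph of order $qp$ (for primes $p, q$) admitting an imprimitive subgroup of automorphisms contains a Hamilton cycle. The plan is to assemble this as an immediate corollary of results already collected in the excerpt, partitioning the graphs according to the two cases that can arise.

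First I would dispose of the degenerate cases in which one of the primes equals $2$ or the two primes coincide. If the order is $p^2$ or $qp$ with $q = 2$, then Proposition~\ref{thm:p2,2p} applies directly: such a graph (other than the Petersen graph) is hamiltonian. So from now on I may assume $p$ and $q$ are distinct odd primes with, say, $p > q$. Next, invoke the structural classification in Theorem~\ref{the:main1}: any connected vertex-transitive graph of order $pq$ is a metacirculant, a Fermat graph, or a generalized orbital graph attached to one of the primitive group actions in Table~\ref{tab:groups}. The third family consists of \emph{primitive} graphs, so a graph in the imprimitive case must fall into the first or second family.

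The key step is then to recall that hamiltonicity is already known for both of these families. For metacirculants of order $pq$ this was established in \cite{AP82a} (indeed for all $(m,n)$-metacirculants with $m$ odd), via the case analysis sketched in the excerpt: when $\gcd(c,q)=1$ the graph is a Cayley graph of a semidirect product of a cyclic group of prime order with an abelian group, so the results of \cite{D83,DM83} apply, and otherwise a Hamilton cycle is obtained by lifting a Hamilton cycle of the circulant quotient $X_\P$ of order $q$. For Fermat graphs of order $pq$ hamiltonicity was proved in \cite{DM92}: the quotient by the $(p,q)$-semiregular automorphism is $K_p$, and a carefully chosen $(p-1)$-cycle in $K_p$ lifts to a cycle $C$ in $X$ missing only the vertices $(\infty, r)$, each of which is adjacent to two consecutive vertices of $C$, so $C$ extends to a Hamilton cycle. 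Since the only exceptional graph arising anywhere in this discussion is the Petersen graph (which is the metacirculant given by $(2,5,2,\{\pm 1\},\{0\})$), combining these facts with Proposition~\ref{thm:p2,2p} yields the claim.

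There is essentially no obstacle here: the proof is a bookkeeping assembly of Proposition~\ref{thm:p2,2p}, Theorem~\ref{the:main1}, and the cited hamiltonicity results for metacirculants and Fermat graphs. The only point requiring a moment's care is making sure the trichotomy of Theorem~\ref{the:main1} is being applied in the right regime (odd primes $p > q$), and that one correctly identifies "imprimitive graph" with "metacirculant or Fermat graph" — which is exactly the dichotomy recorded in Section~\ref{sec:strategy}. Thus the statement follows without further work.
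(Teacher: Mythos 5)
Your proof is correct and follows essentially the same route as the paper: the paper also derives this proposition by combining Proposition~\ref{thm:p2,2p} for the cases $q\in\{2,p\}$ with the classification of Theorem~\ref{the:main1} (imprimitive means metacirculant or Fermat graph) and the hamiltonicity results of \cite{AP82a} and \cite{DM92}. Nothing further is needed.
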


In view of Proposition~\ref{thm:ham-imprimitive} it follows
that Theorem~\ref{the:main} will be proved and our goal
will be achieved if we manage to show that every primitive graph
of order $pq$ contains a Hamilton cycle.
More precisely, we need to show that graphs
arising from primitive group actions given in Table~\ref{tab:groups}
have a Hamilton cycle.
The existence of Hamilton cycles needs to be proved
for all connected generalized orbital graphs
arising from these actions (see Subsection~\ref{ssec:orbital}).
Recall that a generalized orbital graph is a union of
basic orbital graphs. Since the above actions are primitive
and hence the corresponding basic orbital graphs are connected,
it suffices to prove the existence of Hamilton cycles solely in
basic orbital graphs of these actions.
This is done in the next three sections.

Graphs arising from the actions in the first four rows and the last row of
Table~\ref{tab:groups} are considered in Section~\ref{sec:small}.
Existence of Hamilton cycles in these graphs is proved
using Jackson's theorem
(see Proposition~\ref{pro:jack}) and in some cases combined also with
an ad hoc computer based search.

Graphs arising from the actions in Rows 5 and 6 are considered, respectively,
in Sections~\ref{sec:PGL} and \ref{sec:PSL}.
The method used in the proof of existence of Hamilton cycles
in these graphs is for the most part based on the
so-called lifting cycle technique \cite{A89,KM09,DM83}.
Lifts of Hamilton cycles from quotient graphs
which themselves have a Hamilton cycle
are always possible, for example,  when the quotienting is done relative to
a semiregular automorphism of prime order and when the corresponding quotient
multigraph has two adjacent orbits joined
by a  double edge contained in
a Hamilton cycle. This double edge gives us the possibility to conveniently
``change direction" so as to get a walk in the quotient that lifts to a full cycle above (see Example~\ref{ex:lifting}).
We remark that by \cite{M81} a vertex-transitive graph of order $pq$,
$q<p$ primes, contains a $(q,p)$-semiregular automorphism.
Consequently the lifting cycle technique can be
applied to graphs arising from Rows 5 and 6 of
Table~\ref{tab:groups} provided appropriate Hamilton cycles
can be found in the corresponding quotients.
Note, however, that in graphs arising from Row 5 of Table~\ref{tab:groups}
the quotienting is done with respect to a $(p,q)$-semiregular
automorphism (see Section~\ref{sec:PGL}).
As one would expect it is precisely
the existence of such Hamilton cycles in the quotients that represent
the hardest obstacle one needs to overcome in order to
assure the existence of  Hamilton cycles in the graphs in question.
In this respect many specific tools will be applied, most notably the
classical Chv\'atal's theorem \cite{C72}, and results on polynomials from Section~\ref{sec:polynomials}.

In the example below we illustrate this method on one of the basic
orbital graphs of valency $6$
arising from Row 6 of Table~\ref{tab:groups}.

\begin{example}
\label{ex:lifting}
{\rm \cite[Example~3.2]{KM09}
The orbital graph $X$ arising from the action
of $\textrm{PSL}(2,13)$ on cosets of $D_{12}$
with respect to a union ($\S_7^+\cup S_7^-$ from Example~\ref{ex:smallest}) of a suborbit
of size $3$ and its paired suborbit
contains a $(7,13)$-semiregular automorphism $\rho$,
and it can be nicely represented
in Frucht's notation as shown in Figure~\ref{fig:ch}.
Since the quotient graph $X_\rho$ has a Hamilton cycle
containing a double edge and since $13$ is a prime,
this cycle lifts to a Hamilton
cycle in the original graph $X$ (see Figure~\ref{fig:ch}).
}
\end{example}

\begin{figure}[h!]
\begin{footnotesize}
\begin{center}
 \includegraphics[width=0.50\hsize]{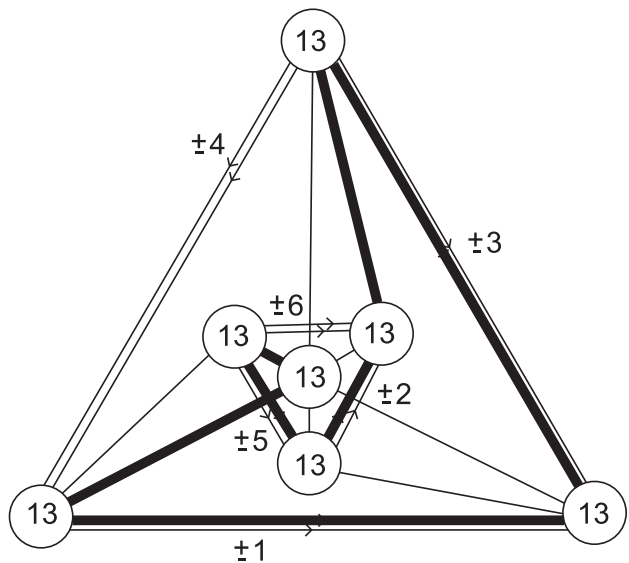}
\caption{\label{fig:ch}\footnotesize
A vertex-transitive graph arising from the
action of $\textrm{PSL}(2,13)$ on cosets of $D_{12}$
given in Frucht's notation with respect to
the $(7,13)$-semiregular automorphism $\rho$ where undirected lines carry label $0$.
Edges in bold show a Hamilton cycle.}
\end{center}
\end{footnotesize}
\end{figure}

\begin{example}
\label{ex:O4}
{\rm
The so-called odd graph $O_4$
is a basic orbital graph of valency $4$ arising from the action
of the alternating group $A_7$ acting on $3$-subsets
of $\{1,\ldots, 7\}$ where two $3$-subsets are adjacent
if they are disjoint. The odd graph $O_4$ has a $(5,7)$-semiregular
automorphism as well as $(7,5)$-semiregular automorphism.
The later is used to construct a Hamilton cycle using
the lifting cycle technique. The corresponding quotient multigraph
contains a $7$-cycle with a double edge (see Figure~\ref{fig:O4}),
and thus $O_4$ is hamiltonian.
In the below matrix we also give the symbol of $O_4$ with respect to
the orbits $S_i=\{v_i^j\colon j\in\mathbb{Z}_{5}\}$, $i\in\mathbb{Z}_7$,
of a $(7,5)$-semiregular automorphism:
$$
\left[
\begin{array}{ccccccc}
\emptyset &  \{0\} & \emptyset & \emptyset & \{0\} & \emptyset & \{0,4\}\\
\{0\} & \emptyset  & \{0,4\} & \emptyset  & \emptyset  & \emptyset & \{2\}\\
\emptyset & \{0,1\} & \emptyset  & \{0,3\} & \emptyset  & \emptyset  & \emptyset \\
\emptyset & \emptyset & \{0,2\} & \emptyset & \{4\} & \{0\} &\emptyset\\
\{0\} & \emptyset & \emptyset & \{1\} & \emptyset & \{0,2\} & \emptyset\\
\emptyset & \emptyset & \emptyset & \{0\} & \{0,3\} & \emptyset & \{1\} \\
\{0,1\} & \{3\} & \emptyset & \emptyset & \emptyset & \{4\} & \emptyset\\
\end{array}\right].
$$
}
\end{example}

\begin{figure}[h!]
\begin{footnotesize}
\begin{center}
 \includegraphics[width=0.45\hsize]{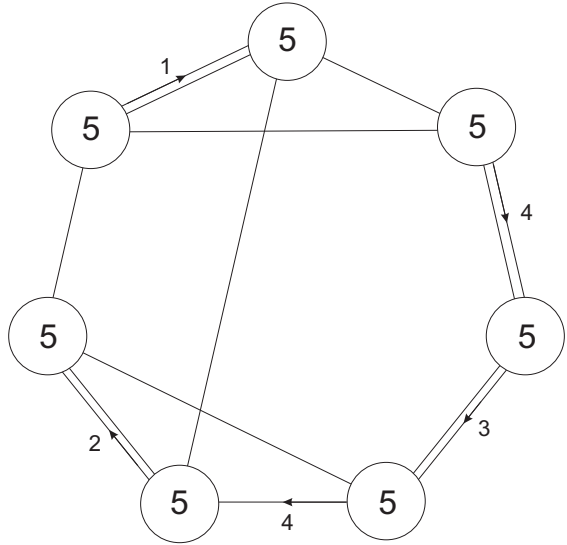}
\caption{\label{fig:O4}\footnotesize
The odd graph $O_4$ given in Frucht's notation with respect to
a $(7,5)$-semiregular automorphism $\rho$ where
undirected lines carry label $0$.}
\end{center}
\end{footnotesize}
\end{figure}

It will be useful to introduce the following terminology.
Let $X$ be a graph that admits an $(m,n)$-semiregular automorphism $\rho$. Let $\P = \{S_1, S_2, \ldots , S_m\}$
be the set of orbits of $\rho$, and let $\pi : X \to X_\P$
be the corresponding projection of $X$ to its quotient $X_\P$.
For a (possibly closed) path
$W = S_{i_1}S_{i_2}\ldots S_{i_k}$ in $X_\P$  we let
the {\em lift} of $W$ be
the set of all paths in $X$ that project to $W$.
The proof of following lemma is straightforward and is
just a reformulation of \cite[Lemma~5]{MP82}.


\begin{lemma}
\label{lem:cyclelift}
Let $X$ be a graph admitting
an $(m,p)$-semiregular automorphism $\rho$, where $p$ is a prime.
Let $C$ be a cycle of length $k$ in the quotient graph $X_\P$,
where $\P$ is the set of orbits of $\rho$.
Then, the lift of $C$ either contains a cycle of length
$kp$ or it consists of $p$ disjoint $k$-cycles.
In the latter case we have $d(S,S') = 1$ for every edge $SS'$ of $C$.
\end{lemma}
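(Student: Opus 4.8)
The plan is to introduce coordinates on the orbits that diagonalise the action of $\langle\rho\rangle$ on the edges lying over $C$, and then to read off the behaviour of a lifted closed walk from a single number, its \emph{net rotation} in $\ZZ_p$. Write $C=S_{i_0}S_{i_1}\cdots S_{i_{k-1}}S_{i_0}$ (the $S_{i_j}$ are pairwise distinct since $X_\P$ is a simple graph), and, as in Frucht's notation, fix base points so that $S_{i_j}=\{v_{i_j}^0,\dots,v_{i_j}^{p-1}\}$ with $\rho(v_{i_j}^a)=v_{i_j}^{a+1}$, superscripts taken modulo $p$.

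The first step is a structural observation about the edges over an edge of $C$. For each edge $S_{i_j}S_{i_{j+1}}$ of $C$ the bipartite graph $X[S_{i_j},S_{i_{j+1}}]$ is regular of valency $d_j:=d(S_{i_j},S_{i_{j+1}})\ge 1$ and is invariant under $\langle\rho\rangle$, which acts as $\ZZ_p$ by translation on each of its two sides; hence there is a set $D_j\subseteq\ZZ_p$ with $|D_j|=d_j$ such that $v_{i_j}^a\sim v_{i_{j+1}}^b$ if and only if $b-a\in D_j$. Consequently a closed walk in $X$ projecting onto one traversal of $C$ and beginning at $v_{i_0}^a$ is specified by a choice $(c_0,\dots,c_{k-1})\in D_0\times\cdots\times D_{k-1}$: it passes through $v_{i_j}^{a_j}$ with $a_j=a+c_0+\cdots+c_{j-1}$, and after $\ell$ complete laps it sits at $v_{i_0}^{a+\ell t}$, where $t=c_0+\cdots+c_{k-1}\in\ZZ_p$ is the net rotation of that choice.

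Then I would split into two cases. If some choice has $t\neq 0$, then since $p$ is prime $t$ generates $\ZZ_p$, so $\ell t\equiv 0\pmod p$ first occurs at $\ell=p$; the resulting walk therefore closes up only after $p$ laps and visits the vertices $v_{i_j}^{a_j+\ell t}$ for $0\le j<k$ and $0\le\ell<p$, which are pairwise distinct (for fixed $j$ the superscript runs over all of $\ZZ_p$), giving a $kp$-cycle contained in the lift of $C$: the first alternative. Otherwise every choice in $D_0\times\cdots\times D_{k-1}$ has net rotation $0$; fixing all coordinates but the $j$-th and varying it over $D_j$ then forces $|D_j|=1$, that is $d(S_{i_j},S_{i_{j+1}})=1$ for every edge of $C$, which is exactly the condition asserted in the second alternative. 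In that situation the subgraph of $X$ consisting of all edges lying over $C$ is $2$-regular on the $kp$ vertices $\bigcup_j S_{i_j}$, hence a disjoint union of cycles, and the net-rotation computation shows each of them closes after a single lap; so the lift of $C$ is precisely a set of $p$ pairwise disjoint $k$-cycles.

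The argument is entirely elementary — it is essentially a repackaging of \cite[Lemma~5]{MP82} — and I expect no genuine obstacle. The only two points needing a line of care are the claim that a $\langle\rho\rangle$-invariant regular bipartite graph between two $\langle\rho\rangle$-orbits is ``diagonal'' (so that the sets $D_j$ exist), and the distinctness of the $kp$ vertices produced in the first case, which is exactly where primality of $p$ is used.
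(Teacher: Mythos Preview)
Your argument is correct and is exactly the standard ``net rotation in $\ZZ_p$'' computation that underlies \cite[Lemma~5]{MP82}; the paper does not spell out a proof at all but simply declares the lemma a straightforward reformulation of that result, so you have supplied precisely the details the paper omits. One small wording slip: the walk you build from a choice $(c_0,\dots,c_{k-1})$ is not a \emph{closed} walk in $X$ after one traversal unless $t=0$ --- it is a walk in $X$ projecting to the closed walk $C$ --- but your subsequent analysis makes clear you understand this, and nothing in the argument depends on the misnomer.
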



\section{Graphs arising from certain small rank/degree group actions}
\label{sec:small}
\indent

We deal here with the first four rows and the last row
of Table~\ref{tab:groups} of which the first three rows
correspond to
groups of rank $3$ and $4$.


In the first proposition we show the existence of Hamilton cycles in
the graphs  arising from the first three rows of Table~\ref{tab:groups}.
With the exception of one of the graphs arising from the
action of $M_{22}$ (Row 2 of Table~\ref{tab:groups})
and of the odd graph $O_4$ (Row 3 of
Table~\ref{tab:groups}, see Example~\ref{ex:O4})
for which the Hamilton cycle is constructed
via the lifting Hamilton cycle technique,
in all other cases the hamiltonicity is proved using
Proposition~\ref{pro:jack}.

\begin{proposition}
\label{the:small}
Vertex-transitive graphs arising from
primitive groups in Rows 1-3 of Table~\ref{tab:groups}
are hamiltonian.
\end{proposition}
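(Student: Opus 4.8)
The statement covers three rows of Table~\ref{tab:groups}: $P\Omega^\epsilon(2d,2)$ acting on singular $1$-spaces (Row~1), the sporadic group $M_{22}$ of degree $77$ (Row~2), and $A_7$ acting on $3$-subsets of a $7$-set (Row~3). For each row there are only finitely many \emph{isomorphism types} of basic orbital graph once $d$ (respectively the ambient group) is fixed, so the whole task is to establish hamiltonicity group-family by group-family. The main tool will be Jackson's theorem (Proposition~\ref{pro:jack}): a $2$-connected regular graph of order $n$ and valency at least $n/3$ is hamiltonian. Since all the graphs in question are connected vertex-transitive, hence $2$-connected (a connected vertex-transitive graph is $2$-connected as soon as it is not a cycle, and none of ours is a cycle), and regular, the only thing to check is the valency bound $\mathrm{val}(X)\ge |V(X)|/3$. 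So the bulk of the proof is a rank/subdegree computation: list the subdegrees (suborbit lengths) of each action, observe that every nontrivial suborbit --- and hence every basic orbital graph, possibly after pairing a suborbit with its mate --- has length at least a third of the degree, and invoke Jackson. Where that fails, fall back to an explicit construction.

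\textbf{Row by row.} For Row~1, $G$ has socle $P\Omega^\epsilon(2d,2)$ acting on singular $1$-spaces; the degree is $(2^d-\epsilon)(2^{d-1}+\epsilon)$, and the permutation rank is $3$, so there are exactly two nontrivial suborbits whose lengths are classical and well known (they are the number of singular $1$-spaces perpendicular, respectively non-perpendicular, to a fixed one). A direct count shows both nontrivial suborbit lengths are comfortably larger than one third of the degree for all admissible $d$ (the smallest cases, $d=2,3$, give degrees like $15=5\cdot 3$ and $35=7\cdot 5$ and their complements, which one checks by hand or by computer), so Jackson's theorem applies to every basic orbital graph. For Row~3, $A_7$ on $3$-subsets has degree $35$, rank $4$, with nontrivial subdegrees $4$, $12$, $18$ (disjoint, meeting in one point, meeting in two points). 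The suborbits of length $12$ and $18$ give basic orbital graphs of valency $\ge 35/3$, handled by Jackson; the suborbit of length $4$ is \emph{self-paired} and gives the odd graph $O_4$, whose valency $4<35/3$, so Jackson fails --- but $O_4$ is dealt with separately by the lifting-cycle technique via its $(7,5)$-semiregular automorphism, exactly as spelled out in Example~\ref{ex:O4}. For Row~2, $M_{22}$ of degree $77=11\cdot 7$ has rank $3$ in its primitive action on the cosets corresponding to the entry "see Atlas''; the two nontrivial subdegrees are $16$ and $60$. The valency-$60$ basic orbital graph clears $77/3$ and is hamiltonian by Jackson, while the valency-$16$ graph falls below the threshold and is handled by an explicit lifting-cycle / computer construction, as announced in the paragraph preceding the proposition.

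\textbf{The main obstacle.} Applying Jackson's theorem is essentially bookkeeping; the real work is in the handful of small-valency exceptions where it does not apply --- the valency-$16$ $M_{22}$-graph and the odd graph $O_4$. For $O_4$ the construction is the one already given in Example~\ref{ex:O4}: take the $(7,5)$-semiregular automorphism $\rho$, write down the $7\times 7$ symbol, exhibit in the quotient multigraph $X_\rho$ a $7$-cycle carrying a double edge, and lift it using Lemma~\ref{lem:cyclelift} (the double edge lets one "change direction'' so the lift is a single $35$-cycle rather than five $7$-cycles). For the $M_{22}$-graph of valency $16$ one argues analogously: it too admits an $(11,7)$- or $(7,11)$-semiregular automorphism by \cite{M81}, and one produces by computer a suitable Hamilton cycle in the quotient graph containing a double edge, which then lifts (the relevant prime $11$ or $7$ being prime, Lemma~\ref{lem:cyclelift} applies). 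So the proof structure is: (1) record the subdegrees of each of the three actions; (2) for every basic orbital graph whose valency is at least a third of the order, invoke $2$-connectivity plus Proposition~\ref{pro:jack}; (3) for the two low-valency exceptions, produce the Hamilton cycle by the lifting-cycle technique of Lemma~\ref{lem:cyclelift}, referring to Example~\ref{ex:O4} for $O_4$ and to an explicit (computer-assisted) quotient cycle for the $M_{22}$ case. I expect step~(3), specifically pinning down the quotient-cycle-with-a-double-edge for the valency-$16$ $M_{22}$-graph, to be where the genuine content lies; everything else is a finite verification.
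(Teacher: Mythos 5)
Your proposal matches the paper's proof essentially step for step: Jackson's theorem (after checking the valency exceeds a third of the order) handles every basic orbital graph from Rows 1--3 except the valency-$16$ graph of $M_{22}$ and the odd graph $O_4$, and those two are settled by lifting a Hamilton cycle with a double edge from the quotient modulo a semiregular automorphism of prime order (a $(7,11)$-semiregular one giving $K_7$ for $M_{22}$, and the $(7,5)$-semiregular one of Example~\ref{ex:O4} for $O_4$). The only quibble is your parenthetical about $d=2$ giving degree $15$, which is not an admissible case since the parameters $(2^d-\epsilon,2^{d-1}+\epsilon)$ fail to be two distinct odd primes there; this does not affect the argument.
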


\begin{proof}
Consider first the rank 3 action of the orthogonal group
$P\Omega^{\epsilon}(2d,2)$ on singular 1-spaces, for
$\epsilon\in\{+1,-1\}$,
in Row~1 of Table~\ref{tab:groups}. The primes $p$ and $q$ are as follows:
  $p=2^d-1$  and $q=2^{d-1} +1$ for $P\Omega^+(2d,2)$
and  $p=2^d+1$  and $q=2^{d-1} -1$ for $P\Omega^-(2d,2)$.
In the first case, the valencies of the two graphs in question are
$(2^{d-1} -1)(2^{d-1} +2) = q^2-q-2$ and $2^{2d-2} = q^2-2q+1$.
Similarly, in the second case the valencies of the two graphs are
$(2^{d-1} +1)(2^{d-2} -2) = q^2+q-2$ and $2^{2d-2} = q^2+2q+1$.
It is straightforward to see that for each of these graphs the valency exceeds
one third of the corresponding order, and so, by Proposition~\ref{pro:jack}, the result follows.

Consider now the action of $M_{22}$ of rank $3$ and degree $77$ given in Row~2 of Table~\ref{tab:groups}.
The corresponding nontrivial suborbits are of cardinalities  $16$ and $60$. Again Proposition~\ref{pro:jack} applies to the graph of valency $60$.
As for the graph of valency $16$ we observe that the quotiening
relative to a
$(7,11)$-semiregular automorphism gives
rise to the quotient graph isomorphic to
the complete graph $K_7$ and contains multiple edges.
An appropriate Hamilton cycle is then chosen and
Lemma~\ref{lem:cyclelift} applied  to obtain a Hamilton cycle in
the original graph.

Finally, consider $A_7$ acting on triples in
$\{1,2,\ldots,7\}$ from Row~3 of Table~\ref{tab:groups}.
This action gives rise to six different graphs associated with
three nontrivial suborbits of cardinalities 4, 12 and 18.
Clearly, in view of Proposition~\ref{pro:jack},
there is a Hamilton cycle in each of these graphs
with the exception of the odd graph $O_4$ of valency $4$.
As for the Hamilton cycle in $O_4$ it is constructed in
Example~\ref{ex:O4}. We would like to remark, however, that the
hamiltonicity of $O_4$ was first
proved by Balaban in \cite{B72}.
\end{proof}

The generalized orbital graphs arising from the action of $\PSL(2,61)$ on
the cosets of its maximal subgroup isomorphic to $A_5$
(Row 4 of  Table~\ref{tab:groups})
are of order $1891=31\cdot 61$. This action has $40$
nontrivial suborbits, $32$ of which are self-paired
and $8$ are not self-paired.
More precisely, one suborbit is of length $6$,
one of length $10$, two of length $12$, four of length $20$,
five of length $30$ and $27$ of length $60$. Of the latter
$8$ are non-self-paired.
In Figure~\ref{fig:61}
the graph of valency $6$ is shown together with a Hamilton
cycle in the quotient graph with respect to a semiregular
automorphism of order $61$ that, by Lemma~\ref{lem:cyclelift},
lifts to a full Hamilton cycle in the graph itself.
In a similar manner Hamilton cycles are constructed
in the remaining orbital graphs. Also, hamiltonicity of all of
these $36$ basic orbital graphs has been checked using
Magma~\cite{Mag}. Hence the following proposition holds.

\begin{proposition}
\label{pro:conder}
Vertex-transitive graphs of order $1891=31\cdot 61$
arising from the action of $\PSL(2,61)$ on $A_5$ given in Row 4 of
Table~\ref{tab:groups} are hamiltonian.
\end{proposition}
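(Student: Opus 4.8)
The plan is to reduce the statement to a finite computation by means of a semiregular automorphism of order $61$. Since $61$ divides $|\PSL(2,61)|=113460$ but does not divide $|A_5|=60$, an element $\rho\in\PSL(2,61)$ of order $61$ has no fixed point on the $1891$ cosets of $A_5$, and hence acts on them as a $(31,61)$-semiregular permutation (this also follows from \cite{M81}); as $\rho$ lies in $\PSL(2,61)$, which acts by automorphisms on all the orbital graphs of this action, $\rho$ is an automorphism of every one of them. Because the action of $\PSL(2,61)$ on $\PSL(2,61)/A_5$ is primitive, each basic orbital graph is connected and every generalized orbital graph is a union of basic ones, so it suffices to exhibit a Hamilton cycle in each of the $36$ basic orbital graphs; these have valencies $6$, $10$, $12$ (twice), $20$ (four times), $30$ (five times), $60$ (nineteen times) and $120$ (four times), the last four arising from the eight non-self-paired suborbits of length $60$ paired with their mates. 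Note that none of these valencies reaches $1891/3$, so Jackson's theorem (Proposition~\ref{pro:jack}) is of no use here and the lifting cycle technique must carry the entire argument.

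First I would, for each such graph $X$, pass to the quotient multigraph $X_\rho$ on the $31$ orbits of $\rho$. Since $61$ is prime, Lemma~\ref{lem:cyclelift} tells us that a Hamilton cycle of the quotient graph $X_\P$, at least one of whose edges $SS'$ satisfies $d(S,S')\ge 2$, must lift to a single cycle of length $31\cdot 61=1891$, that is, to a Hamilton cycle of $X$. So the task becomes: display, in each $X_\rho$, a Hamilton cycle through a multiple edge. For the denser quotients --- valencies $30$, $60$ and $120$ --- the quotient multigraph is a near-complete multigraph on $31$ vertices with many multiple edges, and such a cycle is found at once; the sparser cases, most strikingly the valency-$6$ graph illustrated in Figure~\ref{fig:61}, are settled by an explicit search exactly in the spirit of Example~\ref{ex:lifting}, where a within-orbit circulant edge can additionally be exploited to ``change direction'' when no convenient multiple edge happens to lie on an available Hamilton cycle.

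The main obstacle is precisely the valency-$6$ graph: its quotient on $31$ vertices has at most six quotient-neighbours per orbit, so one genuinely has to check that among its (comparatively few) Hamilton cycles there is one using a double edge --- were every edge of every such cycle simple, Lemma~\ref{lem:cyclelift} would yield only $61$ disjoint $31$-cycles. This is a finite verification best carried out by computer. As an independent and exhaustive safeguard, hamiltonicity of all $36$ basic orbital graphs can be confirmed directly with \textsc{Magma}~\cite{Mag}, which both corroborates the lifting constructions and disposes of any individual graph for which running the uniform argument by hand would be cumbersome.
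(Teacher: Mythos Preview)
Your proposal is correct and follows essentially the same approach as the paper: both use a $(31,61)$-semiregular automorphism of order $61$, pass to the quotient multigraph on $31$ vertices, find a Hamilton cycle through a multiple edge so that Lemma~\ref{lem:cyclelift} yields a Hamilton cycle in $X$, single out the valency-$6$ graph as the delicate case (indeed the one depicted in Figure~\ref{fig:61}), and confirm all $36$ basic orbital graphs by a \textsc{Magma} check. Your enumeration of the suborbit lengths and the resulting $36$ basic orbital graphs matches the paper's count exactly.
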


\begin{figure}[h!]
\begin{footnotesize}
\begin{center}
 \includegraphics[width=1.0\hsize]{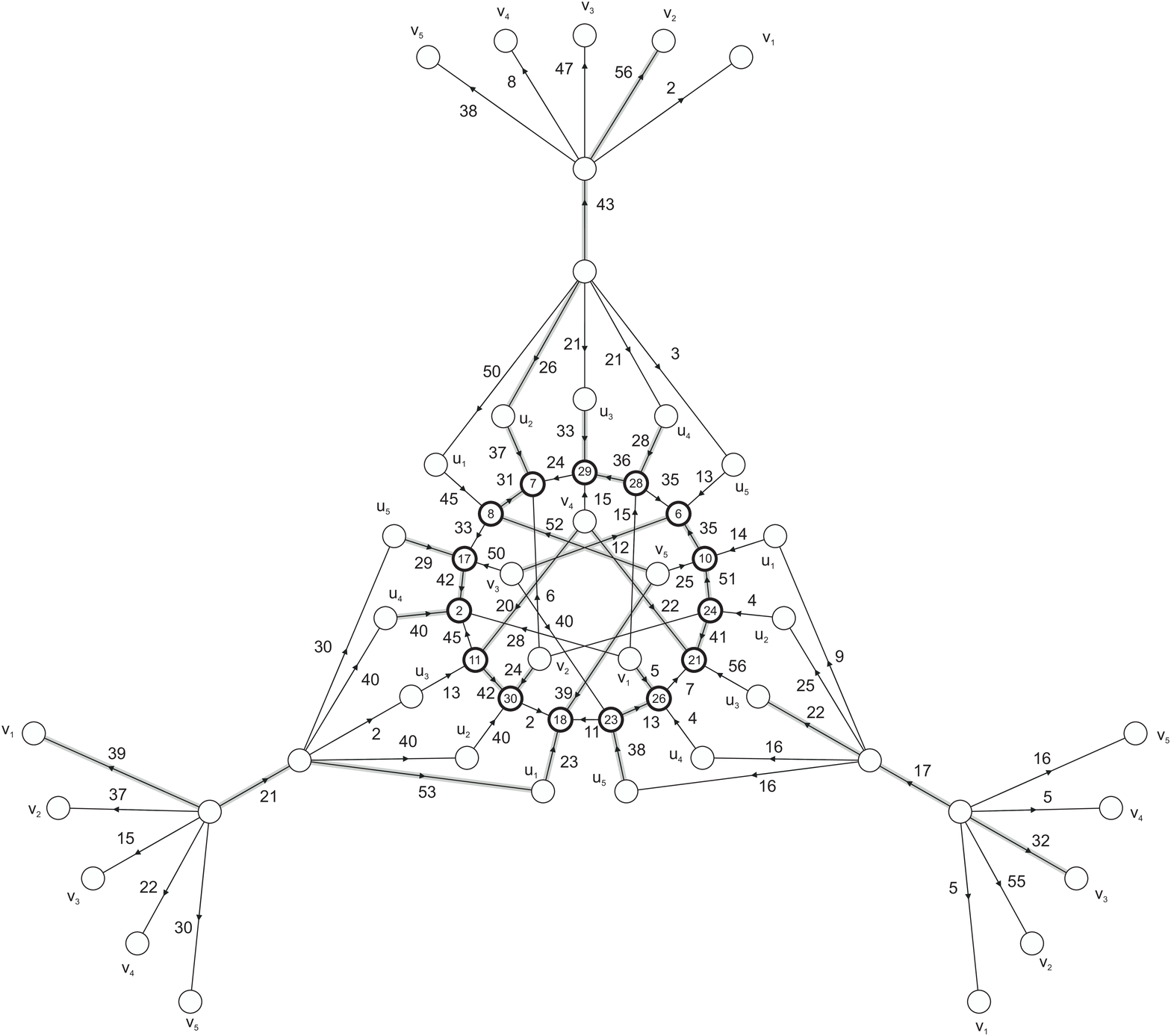}
\caption{\label{fig:61}\footnotesize
The  basic orbital graph $X$ arising from the action of
$\PSL(2,61)$ on the cosets of $A_5$ of valency $6$
given in Frucht's notation where labels inside circle are omitted
if there are no edges inside orbits and where we only give
the label $a$ inside the circle if the corresponding orbit
induces a cycle with jumps $a$ (circles in bold).
A Hamilton cycle in $X_\rho$ that lifts to a Hamilton cycle in $X$
is presented with bold grey lines.
}
\end{center}
\end{footnotesize}
\end{figure}

We end this section with generalized orbital graphs
of order $91=7\cdot 13$
arising from Row 7 in Table~\ref{tab:groups}.
It was proved in \cite{MS94} that the subdegrees of this action are
$1$, $4$, $4$, $4$, $6$,
$12$, $12$, $12$, $12$,  $12$, $12$.
With the exception of two suborbits of length $12$ all other suborbits
are self-paired. The corresponding basic orbital graphs are therefore
of the following valencies:
three of valency $4$, one of valency $6$, four of valency
$12$ and one of valency $24$.
Using Magma~\cite{Mag} it can be checked
that up to isomorphism there are in fact only
two graphs of valency $4$ and only three graphs of valency $12$.
For each of these seven graphs we find a semiregular automorphism
whose quotient contains a Hamilton
cycle that lifts to a Hamilton cycle in the original graph.
In all cases, with the exception of one of the graphs of valency $4$
where this automorphism is $(13,7)$-semiregular, the semiregular
automorphism giving a Hamilton cycle in the quotient graph
is of order $13$. In Figure~\ref{fig:missing}
the quotient graphs with a Hamilton cycle that lifts are
presented for both graphs of valency $4$.

\begin{proposition}
\label{pro:psl213}
Vertex-transitive graphs of order $91=7\cdot 13$
arising from the action of $\PSL(2,13)$ on $A_4$
given in Row 7 of Table~\ref{tab:groups} are hamiltonian.
\end{proposition}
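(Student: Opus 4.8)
The plan is to deal with all the basic orbital graphs of order $91 = 7\cdot 13$ arising from the action of $\PSL(2,13)$ on cosets of $A_4$ by combining Jackson's theorem with the lifting cycle technique, exactly as in the preceding propositions of this section. First I would recall that, by the subdegree information from \cite{MS94}, the basic orbital graphs fall into valencies $4$ (three of them, but only two up to isomorphism), $6$ (one), $12$ (four, but only three up to isomorphism), and $24$ (one). The graph of valency $24$ has valency exceeding $91/3$, so Proposition~\ref{pro:jack} applies directly once one checks it is connected (which it is, since the action is primitive) and regular, giving a Hamilton cycle with no further work.

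For the remaining graphs, of valencies $4$, $6$, and $12$, Jackson's bound $n/3 \approx 30.3$ is not met, so I would instead pass to a quotient. By \cite{M81} the graph admits a $(7,13)$-semiregular automorphism, and $\PSL(2,13)$ also contains elements of order $7$, so a $(13,7)$-semiregular automorphism is available as well. For each of the seven relevant graphs (up to isomorphism) I would exhibit an explicit semiregular automorphism $\rho$ of prime order $n\in\{7,13\}$ such that the quotient graph $X_\P$, where $\P$ is the orbit set of $\rho$, contains a Hamilton cycle, and moreover such that the corresponding quotient multigraph $X_\rho$ has that Hamilton cycle passing through a double (or higher multiplicity) edge. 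Lemma~\ref{lem:cyclelift} then forbids the degenerate possibility that the lift splits into $n$ disjoint short cycles — that would force $d(S,S')=1$ on every edge of the cycle, contradicting the presence of a multiple edge — so the lift is a single $kn$-cycle, i.e.\ a Hamilton cycle of $X$. Concretely, the natural choice is the quotient of order $13$ (so $\rho$ has order $7$) in all but one of the valency-$4$ graphs, where a $(13,7)$-semiregular automorphism is used instead; this is precisely what Figure~\ref{fig:missing} records.

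The verification of the quotient data is a finite, mechanical task: for each graph one computes the suborbit, identifies a semiregular element of the right order inside $\PSL(2,13)$, writes the quotient multigraph on $7$ (or $13$) vertices, locates a Hamilton cycle in it through a multiple edge, and invokes Lemma~\ref{lem:cyclelift}. I would carry this out with Magma~\cite{Mag}, which also allows a direct independent check of hamiltonicity of all the basic orbital graphs; I would state that such a computation has been performed. The only genuine subtlety — and the place where one must be slightly careful rather than purely routine — is confirming in the two sporadic valency-$4$ cases that the chosen quotient really does contain a Hamilton cycle traversing a multiedge; the pictures in Figure~\ref{fig:missing} are included precisely to make this transparent, so in effect the ``hard part'' has been reduced to inspecting two small quotient graphs. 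With all valency classes handled — valency $24$ by Jackson, valencies $4$, $6$, $12$ by lifting — every basic orbital graph from Row~7 is hamiltonian, which is the assertion of the proposition.
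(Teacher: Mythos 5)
Your overall strategy---the lifting cycle technique applied to quotients modulo semiregular automorphisms of prime order, backed by a Magma verification---is the same as the paper's. There is, however, one concrete error. You dispose of the valency-$24$ graph by Proposition~\ref{pro:jack} on the grounds that its valency exceeds $91/3$. It does not: $91/3 \approx 30.33 > 24$, so Jackson's theorem applies to none of the basic orbital graphs in Row~7, the valency-$24$ one included. The paper accordingly treats all seven graphs (two of valency $4$, one of valency $6$, three of valency $12$, and the one of valency $24$) uniformly, by exhibiting for each a semiregular automorphism whose quotient contains a Hamilton cycle through a multiple edge and invoking Lemma~\ref{lem:cyclelift}. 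Your valency-$24$ case must therefore be folded back into the lifting argument (or covered by the direct Magma hamiltonicity check you mention as a fallback); as written, that case is not actually proved. Note also that after removing the valency-$24$ graph your ``remaining'' list should contain six graphs, not seven.

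A smaller point: your description of which quotient is used for which graph is internally inconsistent and reversed relative to the paper. You state that the default is ``the quotient of order $13$ (so $\rho$ has order $7$)'' with one valency-$4$ exception using ``a $(13,7)$-semiregular automorphism instead''---but a $(13,7)$-semiregular automorphism is precisely an order-$7$ automorphism with quotient of order $13$, so your exception coincides with your default. In the paper the default is an automorphism of order $13$ (that is, $(7,13)$-semiregular, quotient of order $7$), and the single exceptional valency-$4$ graph is the one handled via the $(13,7)$-semiregular automorphism, as Figure~\ref{fig:missing} shows. This slip does not undermine the method, since the correct choice would emerge from the computation, but the sentence as written cannot be right.
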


\begin{figure}[h!]
\begin{footnotesize}
\begin{center}
 \includegraphics[width=0.45\hsize]{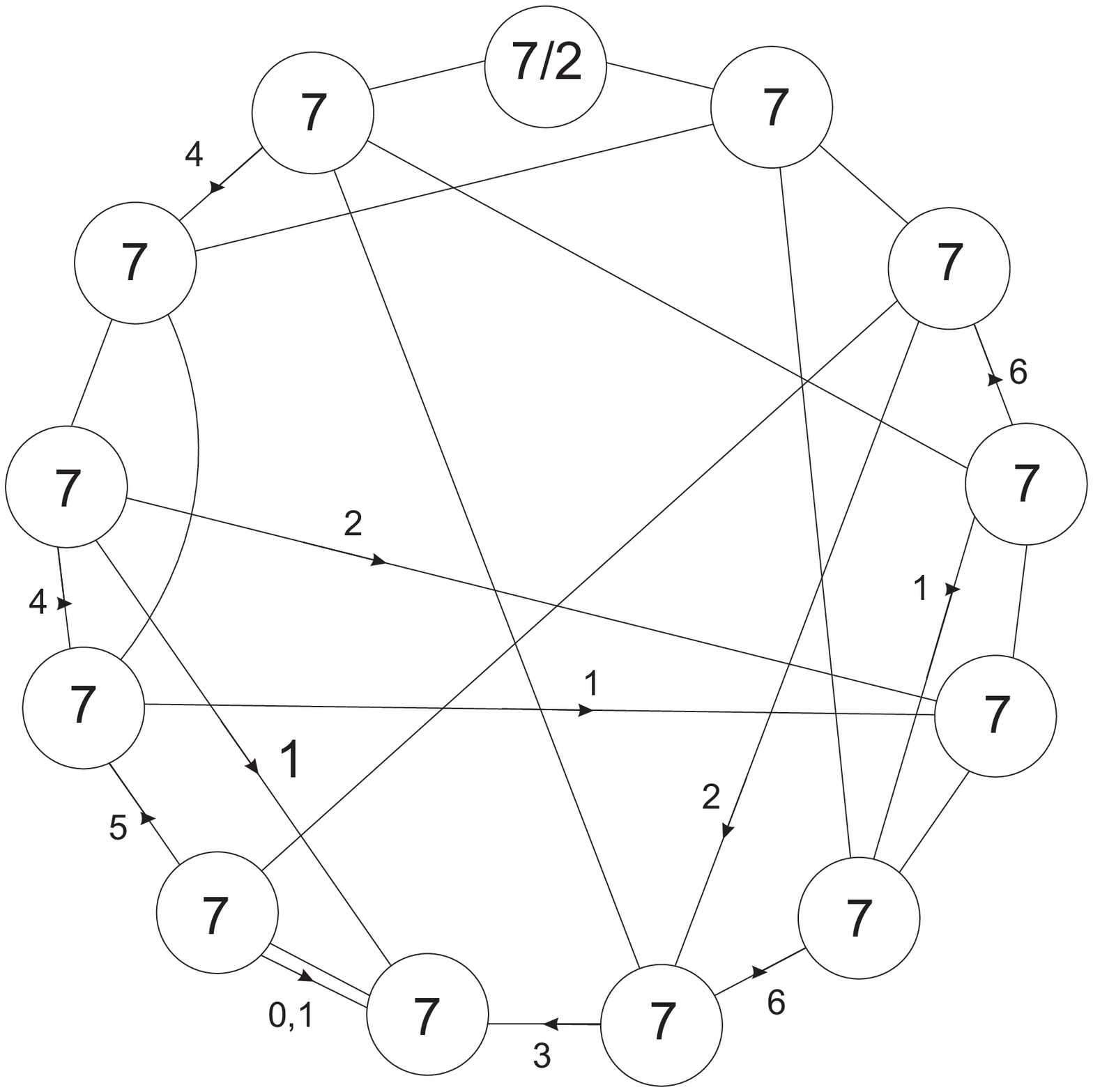}
 \includegraphics[width=0.45\hsize]{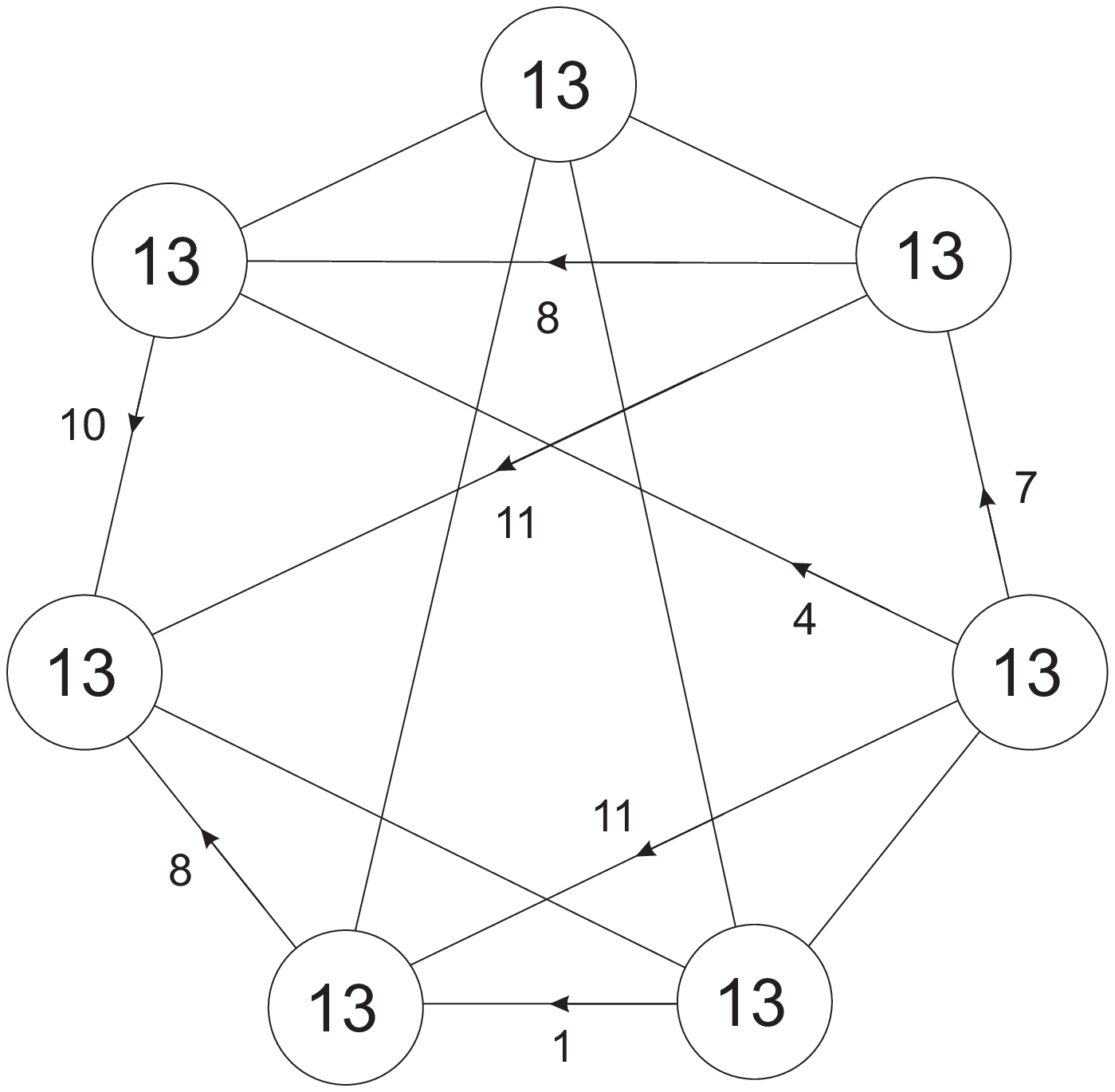}
\caption{\label{fig:missing}\footnotesize
The two vertex-transitive graphs arising from the
action of $\textrm{PSL}(2,13)$ on cosets of $A_4$ of valency $4$
given in Frucht's notation with respect to, respectively,
the $(13,7)$-semiregular automorphism, and
the $(7,13)$-semiregular automorphism.
Undirected lines carry label $0$.}
\end{center}
\end{footnotesize}
\end{figure}


\section{Actions of $\PSL(2,q^2)$}
\label{sec:PGL}
\indent

In this section the existence of Hamilton cycles
in basic orbital graphs arising
from the group action $\PSL(2,q^2)$
on the cosets of $\PGL(2,q)$ given in Row~5 of Table~\ref{tab:groups}
is considered. The following group-theoretic result due to
Manning will
be needed in this respect.

\begin{proposition}
\label{fix}
{\rm \cite[Theorem~3.6']{W66}}
Let $G$ be a transitive group on $\O$ and let $H=G_\a$
for some $\a\in \O$.
Suppose that $K\le G$ and at least one $G$-conjugate of
$K$ is contained in $H$. Suppose further that the set of
$G$-conjugates of $K$ which are contained in $H$ form $t$
conjugacy classes of $H$ with representatives $K_1$, $K_2$, $\cdots,$ $K_t$.
Then $K$ fixes $\sum_{i=1}^{t}|N_G(K_i):N_H(K_i)|$ points of $\O$.
\end{proposition}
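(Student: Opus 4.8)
The plan is to pass to the coset model of the action and reduce the fixed-point count to a statement about which $G$-conjugates of $K$ sit inside $H$. Since $G$ is transitive on $\O$ with $H=G_\a$, the assignment $gH\mapsto g\a$ is a $G$-equivariant bijection between the left coset space $G/H$ and $\O$, and the stabilizer of $g\a$ is $gHg^{-1}$. Hence $K$ fixes $g\a$ precisely when $K\le gHg^{-1}$, i.e.\ when $g^{-1}Kg\le H$; and replacing $g$ by $gh$ with $h\in H$ merely conjugates $g^{-1}Kg$ by $h^{-1}$, so this condition depends only on the coset $gH$. Therefore the number of fixed points of $K$ equals the number of cosets $gH$ for which $g^{-1}Kg$ is a $G$-conjugate of $K$ contained in $H$.

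Next I would partition these cosets according to the $H$-conjugacy class of the subgroup $g^{-1}Kg$. By hypothesis every such subgroup is $H$-conjugate to exactly one of $K_1,\dots,K_t$, giving a partition into $t$ parts indexed by $i$; the part indexed by $i$ is nonempty since it contains $g_iH$, where I fix once and for all an element $g_i\in G$ with $g_i^{-1}Kg_i=K_i$ (such $g_i$ exists because $K_i$ is a $G$-conjugate of $K$). The key claim is that the cosets $gH$ lying in part $i$ are exactly the left cosets of $H$ contained in the single double coset $N_G(K)\,g_i\,H$: if $g^{-1}Kg=h^{-1}K_ih$ for some $h\in H$, then $(gh^{-1})^{-1}K(gh^{-1})=K_i$, so $gh^{-1}$ lies in the set $\{x\in G:x^{-1}Kx=K_i\}$, which is precisely the left coset $N_G(K)g_i$; conversely every element of $N_G(K)g_iH$ conjugates $K$ into $H$ with image $H$-conjugate to $K_i$. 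Moreover the double cosets $N_G(K)g_iH$ for distinct $i$ are pairwise disjoint, again because $K_1,\dots,K_t$ represent distinct $H$-classes.

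Finally I would count, for each $i$, the number of left cosets of $H$ inside $N_G(K)g_iH$. By the standard double-coset size formula this number is $|N_G(K):N_G(K)\cap g_iHg_i^{-1}|$, and conjugating the intersection by $g_i^{-1}$ identifies it with $g_i^{-1}N_G(K)g_i\cap H=N_G(K_i)\cap H=N_H(K_i)$. Since $N_G(K)$ and $N_G(K_i)$ are conjugate, hence of equal order, part $i$ contributes exactly $|N_G(K_i):N_H(K_i)|$ cosets, and summing over $i=1,\dots,t$ yields the asserted total $\sum_{i=1}^{t}|N_G(K_i):N_H(K_i)|$.

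The only real care needed is the bookkeeping with one-sided cosets and the fixed conjugation convention: one must verify that $\{x\in G:x^{-1}Kx=K_i\}$ is genuinely a single left coset of $N_G(K)$, that the double cosets $N_G(K)g_iH$ attached to different classes are disjoint, and that $N_G(K)\cap g_iHg_i^{-1}$ transports correctly to $N_H(K_i)$ under conjugation by $g_i^{-1}$. Once these conventions are pinned down there is no analytic or combinatorial difficulty --- the argument is pure double-coset arithmetic.
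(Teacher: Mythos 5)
Your argument is correct: the reduction to counting cosets $gH$ with $g^{-1}Kg\le H$, the partition of these into the double cosets $N_G(K)g_iH$ indexed by the $H$-classes of $G$-conjugates of $K$ inside $H$, and the count $|N_G(K):N_G(K)\cap g_iHg_i^{-1}|=|N_G(K_i):N_H(K_i)|$ all check out (the only nitpick is that $\{x: x^{-1}Kx=K_i\}=N_G(K)g_i$ is a \emph{right} coset of $N_G(K)$ in the usual convention, which does not affect anything). The paper gives no proof of this proposition --- it is quoted as Theorem~3.6$'$ of Wielandt --- and your double-coset argument is essentially the standard proof of that result.
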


Let $G=\PSL(2,q^2)$, where $q\ge 5$ is an odd prime.
Then $G$ has two conjugacy classes of subgroups
isomorphic to  $\PGL(2,q)$, with the corresponding representatives
$H$ and $H'$. Since each element
in $\PGL(2,q^2)$ interchanges
these two classes, it suffices to consider the action of
$G$ on the set $\mathcal{H}$ of right cosets of $H$ in $G$.
The degree of this action is $pq$, where $p=(q^2+1)/2$.
Further, let $P$ be a subgroup of $H'$ of order $q$,
that is, a subgroup of $G$ of order $q$
which has trivial intersection with $H$.
We have the following result.

\begin{lemma}
\label{orbit}
The action of $P$ on $\mathcal{H}$
is semiregular. Furthermore, the action of its normalizer
$N_G(P)$ on $\mathcal{H}$ has $\frac{q+1}2$ orbits of length $q$ and
one orbit of length $\frac{q^2(q-1)}2$.
\end{lemma}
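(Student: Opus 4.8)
\medskip

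The plan is to work throughout in the natural action of $G=\PSL(2,q^2)$ on the $q^2+1$ points of $\PG(1,q^2)$, using the standard description of the two conjugacy classes of subgroups isomorphic to $\PGL(2,q)$ in $G$: they are the setwise stabilisers of the Baer sublines of $\PG(1,q^2)$, on which $G$ has exactly two orbits, each of size $\frac{q(q^2+1)}{2}=pq$. Thus I would identify $\mathcal{H}=G/H$ with one of these two orbits, so that $H=\mathrm{Stab}_G(\ell_0)$ for a fixed Baer subline $\ell_0$, while the class of $H'$ consists of the stabilisers of the Baer sublines in the other orbit. (Equivalently one may use $G\cong P\Omega^{-}(4,q)$ and take $\mathcal{H}$ to be an orbit of non-singular $1$-spaces; note that every generator $u$ of $P$ is a transvection, fixing a unique point of $\PG(1,q^2)$ and acting freely on the remaining $q^2$.) Two preliminary facts will drive everything: $(\mathrm{a})$ $G$ has exactly two conjugacy classes of subgroups of order $q$; and $(\mathrm{b})$ conjugation by an element of $\PGL(2,q^2)\setminus G$ interchanges these two classes and simultaneously interchanges the $G$-classes of $H$ and $H'$.

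For $(\mathrm{a})$ and $(\mathrm{b})$: the $G$-centraliser of a nontrivial unipotent element is the whole (elementary abelian) Sylow $q$-subgroup $S$ of order $q^2$ containing it, so every subgroup of order $q$ lies in a unique Sylow $q$-subgroup; since there are $q^2+1$ Sylow $q$-subgroups, the $G$-classes of order-$q$ subgroups are exactly the orbits of $N_G(S)$ on the $q+1$ subgroups of order $q$ of $S$. Identifying $S$ with the additive group of $\GF(q^2)$, these are the $q+1$ one-dimensional $\GF(q)$-subspaces, on which $N_G(S)/S$ (cyclic of order $\frac{q^2-1}{2}$) acts by multiplication by the squares of $\GF(q^2)^*$; as $\GF(q)^*\le(\GF(q^2)^*)^2$, this action has image of index $2$ in $\GF(q^2)^*/\GF(q)^*$, hence exactly two orbits, each of length $\frac{q+1}{2}$ — which gives $(\mathrm{a})$, with each class of size $(q^2+1)\frac{q+1}{2}$. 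For $(\mathrm{b})$, since $\PGL(2,q)$ is self-normalising in $\PGL(2,q^2)$ (its unique shortest orbit on $\PG(1,q^2)$ is the Baer subline it stabilises), conjugation by $\delta\in\PGL(2,q^2)\setminus G$ carries $H$ to a $G$-conjugate of $H'$, and on $S$ it acts by multiplication by a non-square of $\GF(q^2)^*$, interchanging the two orbits above and hence the two $G$-classes. As the order-$q$ subgroups of $H$ (all $H$-conjugate, being its Sylow $q$-subgroups) form one of these two classes and those of $H'$ the other, $P\le H'$ lies in the class disjoint from $H$; in particular no $G$-conjugate of $P$ is contained in $H$, so $P$ fixes no point of $\mathcal{H}$, and being of prime order $q$ it is semiregular with $p=\frac{q^2+1}{2}$ orbits of length $q$. (Alternatively the fixed-point count can be read off from Manning's Proposition~\ref{fix}.)

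For the second assertion I would first compute $|N|$, where $N:=N_G(P)$. With $S=C_G(u)=C_G(P)$ one has $N\le N_G(S)=:B$, the Borel of order $\frac{q^2(q^2-1)}{2}$, and $N=N_B(P)=S\rtimes\mathrm{Stab}_{B/S}(P)$; the stabiliser in $B/S$ of the $\GF(q)$-line $P$ has order $\frac{(q^2-1)/2}{(q+1)/2}=q-1$, so $|N|=q^2(q-1)$. Now the $N$-orbits on $\mathcal{H}=G/H$ are in bijection with the $H$-orbits on the conjugacy class $\mathcal{K}$ of $P$ (via the identification of $G/N$ with $\mathcal{K}$ by conjugation), an $H$-orbit of length $s$ corresponding to an $N$-orbit of length $\frac{qs}{q+1}$. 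Each $L\in\mathcal{K}$ fixes a unique point $\pi(L)\in\PG(1,q^2)$, and $N_H(L)\le\mathrm{Stab}_H(\pi(L))$; recall that $H$ has two orbits on $\PG(1,q^2)$, namely $\ell_0$ (length $q+1$) and its complement (length $q^2-q$). If $\pi(L)\in\ell_0$, then $\mathrm{Stab}_H(\pi(L))$ is a Borel of $H$, which acts on the Sylow $q$-subgroup above $\pi(L)$ — again the additive group of $\GF(q^2)$ — by $\GF(q)^*$-scaling, hence fixes each of its $q+1$ order-$q$ subgroups; so the $L\in\mathcal{K}$ with $\pi(L)\in\ell_0$ split into $\frac{q+1}{2}$ $H$-orbits of length $q+1$, each producing an $N$-orbit of length $q$. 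If $\pi(L)\notin\ell_0$, then $\mathrm{Stab}_H(\pi(L))$ is a non-split torus of $H$ of order $q+1$, acting on the $q+1$ order-$q$ subgroups above $\pi(L)$ by multiplication by the norm-one subgroup of $\GF(q^2)^*$, with exactly two orbits of length $\frac{q+1}{2}$; these must coincide with the intersections of the two $G$-classes with that Sylow $q$-subgroup, so $\mathrm{Stab}_H(\pi(L))$ is transitive on the members of $\mathcal{K}$ above $\pi(L)$, the $L\in\mathcal{K}$ with $\pi(L)\notin\ell_0$ form a single $H$-orbit of length $\frac{q(q^2-1)}{2}$, and this produces one $N$-orbit of length $\frac{q^2(q-1)}{2}$. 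Since $\frac{q+1}{2}\cdot q+\frac{q^2(q-1)}{2}=pq$, these orbits exhaust $\mathcal{H}$, proving the lemma.

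The step I expect to be the main obstacle is the geometric bookkeeping in the last paragraph: pinning down the two kinds of point stabiliser $\mathrm{Stab}_H(\pi(L))$ together with their actions on the transvection subgroups lying above a point, and — in the non-split case — forcing transitivity on the members of $\mathcal{K}$ above $\pi(L)$ by matching orbit lengths with the two $G$-classes of order-$q$ subgroups. The analogous care is needed in establishing $(\mathrm{b})$, i.e. checking that the outer diagonal element genuinely interchanges both pairs of classes.
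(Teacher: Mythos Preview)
Your proof is correct and takes a genuinely different route from the paper's.

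The paper argues as follows. Semiregularity of $P$ is immediate from the hypothesis that $P\le H'$ has trivial intersection with $H$ (hence with every conjugate of $H$ it could fix). For the $N$-orbits, the paper works inside $N\cong\ZZ_q^2\rtimes\ZZ_{q-1}$ and classifies which subgroups of $N$ can occur as point stabilisers on $\mathcal{H}$. Using Manning's fixed-point formula (Proposition~\ref{fix}), it shows that each of the $\frac{q+1}{2}$ subgroups $K\cong\ZZ_q\rtimes\ZZ_{q-1}$ of $N$ that are $G$-conjugate into $H$ fixes exactly one coset, giving $\frac{q+1}{2}$ orbits of length $q$; then it rules out all other stabilisers except $\ZZ_2$ and $\ZZ_1$, and a short counting argument forces exactly one orbit with stabiliser $\ZZ_2$, of length $\frac{q^2(q-1)}{2}$.

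Your approach instead identifies $\mathcal{H}$ with a $G$-orbit of Baer sublines of $\PG(1,q^2)$ and, for the orbit structure, exploits the double-coset duality: $N$-orbits on $G/H$ correspond to $H$-orbits on the conjugacy class $\mathcal{K}$ of $P$, with the length conversion $s\mapsto \frac{qs}{q+1}$. You then compute the $H$-orbits on $\mathcal{K}$ geometrically by looking at the fixed point $\pi(L)$ of each $L\in\mathcal{K}$ and splitting into the two cases $\pi(L)\in\ell_0$ and $\pi(L)\notin\ell_0$. This is clean and conceptual; the analysis of how the Borel and the non-split torus of $H$ act on the $\GF(q)$-lines inside a Sylow $q$-subgroup of $G$ is exactly right, and your matching of the two $T$-orbits of size $\frac{q+1}{2}$ with the two $G$-classes is the crucial step. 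What your approach buys is a transparent geometric explanation of where the numbers $\frac{q+1}{2}$ and $\frac{q^2(q-1)}{2}$ come from, at the cost of importing more background on $\PG(1,q^2)$ and the subgroup structure of $\PGL(2,q)$. What the paper's approach buys is self-containment: it needs only the coset action, the subgroup lattice of $N$, and Manning's formula, with no projective geometry at all.
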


\begin{proof}
We first prove that the action of $P$ on $\mathcal{H}$ is semiregular.
Suppose on the contrary that there exists $g\in G$ such that
$HgP=Hg$. Then $HgPg^{-1}=H$, and so $gPg^{-1}\le H$.
But this contradicts the choice of $P$. Hence $P$ is semiregular
on $\mathcal{H}$.

We now compute the orbits  of the normalizer
$N=N_G(P)\cong \ZZ_q^2\rtimes \ZZ_{q-1}$ of $P$ in $G$ in
its action on $\mathcal{H}$,
by analyzing subgroups of $H$  conjugate in $G$
to subgroups of $N$.
(Note that there is only one conjugacy class of subgroups in $G$
isomorphic to $N$.)
A subgroup of $N$ is isomorphic to one of the following groups
$\ZZ_q^2\rtimes \ZZ_{q-1}$, $\ZZ_q$, $\ZZ_q\rtimes \ZZ_{q-1}$,
$\ZZ_q\rtimes \ZZ_l$, where $2\le l<q-1$, and
$\ZZ_l$, where $l$ divides $q-1$.

Obviously, $N$ cannot fix a coset in $\mathcal{H}$ for
otherwise there would exists $g\in G$ such that $HgN=Hg$,
and so $gNg^{-1}\le H$ which is impossible since $|N|=q^2(q-1)$
and $|H|=(q-1)q(q+1)$.  Also, as $P$ is semiregular on $\mathcal{H}$
no subgroup
of $N$ isomorphic to $\ZZ_q$ fixes a coset in $\mathcal{H}$.

The group $N$ contains $q+1$ maximal subgroups isomorphic to
$\ZZ_q\rtimes \ZZ_{q-1}$, which form $q+1$ different conjugacy classes
in $N$, but are divided into two equal size classes in $G$, one containing subgroups of $H$
 and the other containing subgroups of $H'$.
Each of these two conjugacy classes
contains  $\frac{q+1}2$ subgroups
of $N$.
Let $K$ be such a subgroup of $H$ isomorphic to $\ZZ_q\rtimes \ZZ_{q-1}$.
Since $N_G(K)=N_H(K)=K$, it follows, by
Proposition~\ref{fix}, that $K$ fixes only the coset $H$.
Hence any subgroup of $N$ conjugate to $K$ in $G$ fixes
one coset of $\mathcal{H}$, and the corresponding
orbit of $N$ on $\mathcal{H}$ is of length $|N|/|K|=q$.
Since $N$ admits $\frac{q+1}2$ subgroups conjugate to $K$ in $G$, which
form $\frac{q+1}2$ different conjugacy classes inside $N$,
we can conclude that $N$ has $\frac{q+1}2$ orbits of length $q$.

A subgroup $K_1 \le K$ of $H$ isomorphic to
$\ZZ_q\rtimes \ZZ_l$, where $2\le l<q-1$,
has the same fixed cosets as $K$ (and so it is
a subgroup of a coset stabilizer). Consequently
$N$  does not have orbits of
length $q\cdot \frac {q-1}l$ for $2\le l<q-1$.
Further, for any subgroup $K_2\le K$ of $H$ isomorphic to
$\ZZ_l$, where $l$ divides $q-1$ and $l\ne 2$,
the fact that $|N_G(K_2):N_H(K_2)|=|
D_{q^2-1}:D_{2(q-1)}|=\frac {q+1}2$,
implies that $K_2$ fixes $\frac{q+1}2$ cosets. These cosets are clearly
contained in the above $\frac{q+1}2$ orbits of $N$ of length $q$,
and consequently $N$  does not have orbits of length $\frac {q-1}l.$

We have therefore show that the only other possible stabilizers are $\ZZ_2$
and $\ZZ_1$.
Since $|\mathcal{H}|=q(q^2+1)/2$ and since
the length of an orbit of $N$
on $\mathcal{H}$ with coset stabilizer isomorphic to $\ZZ_2$ or to $\ZZ_1$
equals, respectively, $\frac{q^2(q-1)}{2}$ and $q^2(q-1)$, we have
\begin{eqnarray}
\label{eq:qq}
\frac{q(q^2+1)}{2} = q\frac{q+1}{2} + a\frac{q^2(q-1)}{2} + bq^2(q-1),
\end{eqnarray}
where $a$ is the number of orbits of $N$ on $\mathcal{H}$
with coset stabilizer isomorphic to $\ZZ_2$ and
$b$ is the number of orbits of $N$ on $\mathcal{H}$ on
which $N$ acts regularly.  The equation (\ref{eq:qq})
simplifies to
\begin{eqnarray*}
q^2= q + aq(q-1) + 2bq(q-1),
\end{eqnarray*}
which clearly has $a=1$ and $b=0$
as the only possible solution.
This completes the proof of Lemma~\ref{orbit}.
\end{proof}

Lemma~\ref{orbit} will play an essential part in our
construction of Hamilton cycles in
basic orbital graphs arising from the action of
$\PSL(2,q^2)$ on cosets of
$\PGL(2,q)$ given in Row~5 of Table~\ref{tab:groups}.
The strategy goes as follows.
Let $X$ be such an orbital graph.
By Lemma~\ref{orbit},
the action of the normalizer $N=N_G(P)$
on the quotient graph $X_\P$ with respect to
the orbits $\P$ of a semiregular subgroup
$P$ consists of one large orbit of length $q(q-1)/2$ and
$(q+1)/2$ isolated vertices.
We will show the existence of a Hamilton cycle in $X$
by first showing that the subgraph of $X_\P$ induced
on the large orbit has at most two connected
components and that each component
contains a Hamilton cycle with double edges in the
corresponding quotient multigraph.
If the component is just one then
its Hamilton cycle is modified to a Hamilton cycle in
$X_\P$ by choosing in an arbitrary manner
$(q+1)/2$ edges and replacing them by $2$-paths having as central vertices the $(q+1)/2$ isolated vertices of $N$ in $X_\P$. By Lemma~\ref{lem:cyclelift},
this cycle lifts to a Hamilton cycle in $X$.
Such $2$-paths indeed exist because
every isolated vertex has to be
adjacent to every vertex in the large orbit
(see Lemma~\ref{mathching}).
If the subgraph of $X_\P$ induced on the large orbit
has two components with corresponding
Hamilton cycles $C_0$ and $C_1$,
then a Hamilton cycle in $X$
is constructed by first constructing a Hamilton
cycle in $X_\P$ in the following way.
We use two isolated vertices
to modify these two cycles $C_0$ and $C_1$ into
a cycle of length $q^2(q-1)/2+2$ by replacing
an edge in $C_0$ and an edge in $C_1$
by two $2$-paths each having one endvertex
in $C_0$ and the other in $C_1$, whereas the central
vertices are the above two isolated
vertices.  In order to produce the desired Hamilton
cycle in $X_\P$ the remaining isolated vertices are attached to this cycle in the same manner as in
the case of one component. By Lemma~\ref{lem:cyclelift},
this cycle lifts to a Hamilton cycle in $X$. Formal proofs are given in Propositions~\ref{pro:0} and~\ref{pro:ne0}.

It follows from the previous paragraph  that we only need to prove
that the subgraph of $X_\P$
induced on the large orbit of $N$ contains a Hamilton cycle with
at least one double edge in the corresponding multigraph
or two components each of which contains a Hamilton
cycle with double edges in the corresponding multigraph.
For this purpose we now proceed with the analysis of the structure
of basic orbital graphs (and corresponding suborbits)
arising from the action of
$\PSL(2,q^2)$ on cosets of
$\PGL(2,q)$ given in Row~5 of Table~\ref{tab:groups}.
We apply the approach taken in \cite{PX} where the computation
of suborbits is done using the fact that
$\PSL(2,q^2)\cong P\O ^{-}(4, q)$ and that the
action of  $\PSL(2,q^2)$ on the cosets of $\PGL(2,q)$
is equivalent to the induced action of $P\O ^{-}(4, q)$
on nonsingular  $1$-dimensional vector subspaces.
For the sake of completeness, we give a more detailed description
of this action together with a short explanation of
the isomorphism $\PSL(2,q^2)\cong P\O ^{-}(4, q)$
(see \cite[p.45]{KL} for details).

Let $F_{q^2}=F_q(\a)$, where $\a^2=\th$ for $F_q^*=\lg \th \rg $, and let
$\phi\in \Aut(F_{q^2})$ be the Frobenius automorphism of $F_{q^2}$
defined by the rule $\phi(a)=a^q$, $a\in F_{q^2}$.
(Note that $\phi$ is an involution.)
Let $W=\lg \mathbf{w}_1, \mathbf{w}_2\rg=F_{q^2}^2$ be a
natural $\SL(2,q^2)$-module. Then  $\SL(2,q^2)$ acts on $W$ in
a natural way. In particular, the action of
$g=\mm{a}{b}{c}{d}\in \SL(2,q^2)$ on $W$ is given by
\begin{eqnarray*}
\mathbf{w}_1g &=& a\mathbf{w}_1+c\mathbf{w}_2,\\
\mathbf{w}_2g &=& c\mathbf{w}_1+d\mathbf{w}_2.
\end{eqnarray*}
Let $\ow$ be an  $\SL(2,q^2)$-module with the underlying space $W$ and
the action of $\SL(2,q^2)$ defined by the rule $\mathbf{w}*g=\mathbf{w}g^\phi$,
where $g=(a_{ij})\in \SL(2,g^2)$ and $g^\phi=(\phi(a_{ij})_{ij})=(a_{ij}^q)$.
One can now see that
$\cdot \colon W\otimes \ow \times \SL(2,q^2) \to W\otimes \ow$
defined by the rule
$$
(\mathbf{w}\otimes \mathbf{w}')\cdot g=
\mathbf{w}g\otimes \mathbf{w}' * g=
\mathbf{w}g\otimes \mathbf{w}'g^\phi
$$
is an action of $\SL(2,q^2)$ on the $4$-dimensional space
$W\otimes \ow$ (that is, on a
tensor product of $W$ and $\ow$).
The kernal of this action  equals
$Z(\SL(2,q^2))$, and thus
this is in fact a $4$-dimensional representation  of
$G=\PSL(2,q^2)$ (an embedding of $G$ into $\GL(4,q^2)$).
Further, the set
$\B=\{\mathbf{v}_1,\mathbf{v}_2,\mathbf{v}_3,\mathbf{v}_4\}$, where
\begin{eqnarray*}
\mathbf{v}_1&=&\mathbf{w}_1\otimes \mathbf{w}_1,\\
\mathbf{v}_2&=&\mathbf{w}_2\otimes \mathbf{w}_2, \\
\mathbf{v}_3&=&\mathbf{w}_1\otimes \mathbf{w}_2+\mathbf{w}_2\otimes
\mathbf{w}_1,\\
\mathbf{v}_4&=&\a (\mathbf{w}_1\otimes \mathbf{w}_2-\mathbf{w}_2\otimes \mathbf{w}_1),
\end{eqnarray*}
is a basis for $W\otimes \ow$ over $F_{q^2}$.

Since $G$ fixes the $4$-dimensional space
$V=\mathrm{span}_{F_q}(\B)$ over $F_q$ it can
be viewed as a subgroup of $\GL(4,q)$.
A non-degenerate symplectic
form $f$ of $W$ and $\ow $  defined by
$f(\mathbf{w}_1,\mathbf{w}_2)=
-f(\mathbf{w}_2, \mathbf{w}_1)=1$ and
$f(\mathbf{w}_1, \mathbf{w}_1)=f(\mathbf{w}_2, \mathbf{w}_2)=0$
is fixed by $\SL(2, q^2)$.
It follows that
$G$ fixes  a non-degenerate symmetric bilinear
form  of $W\otimes \ow $ defined by the rule
$$
(\mathbf{w}_1'\otimes \mathbf{w}_2',
\mathbf{w}_1''\otimes \mathbf{w}_2'')=
f(\mathbf{w}_1', \mathbf{w}_1'')f(\mathbf{w}_2', \mathbf{w}_2'').
$$
Then  we have
$$
((\mathbf{v}_i,\mathbf{v}_j))_{4\times 4}=
\left(
\begin{array}
{cccc}
0 &1 &0 &0\\
1&0&0&0\\
0&0& -2&0\\
0&0&0&2\th
\end{array}\right),
$$
and so for $\mathbf{x}=\sum_{i=1}^4x_iv_i\in V$ and $\mathbf{y}=\sum_{i=1}^4y_iv_i\in V$  the symmetric form $(\mathbf{x},\mathbf{y})$ and the associated   quadratic form
$\mathbf{Q}$ are given by the rules
$$
(\mathbf{x},\mathbf{y})=x_2y_1+x_1y_2-2x_3y_3
+2\th x_4y_4 \textrm{ and  }
\mathbf{Q}(\mathbf{x})=\frac 12(\mathbf{x},\mathbf{x})=x_1x_2-x_3^2+\th  x_4^2.
$$
By computation it follows that
$\mathbf{Q}$ has $q^2+1$ singular $1$-dimensional subspaces of $V$. As for the remaining $q(q^2+1)$
nonsingular 1-dimensional
subspaces,
$G$ has two orbits $\{\lg \mathbf{v}\rg \di \mathbf{Q}(\mathbf{v})=1, \mathbf{v}\in V\}$ and  $\{ \lg \mathbf{v}\rg \di \mathbf{Q}(\mathbf{v})\in F_q^*\setminus S^*, \mathbf{v}\in V \}$ which are interchanged
by a diagonal automorphism of $G$.
Let $\O$ be the first of these two orbits.  Then the action
of $G$ on $\mathcal{H}$ is equivalent to the action of $G$ on $\O$.
By comparing their orders,  we get $\PSL(2,q^2)\cong P\O ^{-}(4, q)$. The following result characterizing suborbits of
the action of $G$ on the cosets of $\PGL(2,q)$
in the context  of the action of
$P\O ^{-}(4, q)$ on $\O$ was proved in \cite{PX}.

\begin{proposition}
\label{pro:PX}
{\rm\cite[Lemma~4.1]{PX}}
For  any $\lg \mb{v}\rg \in \O$, the nontrivial suborbits of the action of $G$ on $\O$
(that is, the orbits of $G_{\lg \mb{v}\rg}$) are the sets
$\D_{\pm \ld}=\{\lg \mb{x}\rg \in \O\di (\mb{x},\mb{v})=\pm 2\ld \}, $     where $\ld \in F_q$, and
\begin{enumerate}[(i)]
\itemsep=0pt

\item $|\D_0|=\frac{q(q\mp 1)}2$  for $q\equiv \pm {1\pmod 4};$

 \item $|\D_{\pm 1}|=q^2-1;$

\item $|\D_{\pm \ld}|=q(q+1)$ with  $\ld ^2-1\in N^*;$

 \item  $|\D_{\pm \ld}|=q(q-1)$ with  $\ld ^2-1\in
 S^*$.
 \end{enumerate}
Moreover, all the suborbits are self-paired.
\end{proposition}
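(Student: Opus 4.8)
The plan is to fix $\lg \mb{v}\rg \in \O$ with $\mb{Q}(\mb{v})=1$ and to analyse the orbits of $H=G_{\lg\mb{v}\rg}$ on $\O$; since the action of $G$ on $\O$ is equivalent to its action on the cosets of $\PGL(2,q)$, we have $H\cong\PGL(2,q)$. The key observation is that a vector $\mb{x}$ with $\lg\mb{x}\rg\in\O$ is determined up to sign once we require $\mb{Q}(\mb{x})=1$ (the only scalars $c$ with $c^{2}=1$ being $\pm1$), so the quantity $(\mb{x},\mb{v})^{2}$ is a well-defined function on $\O$, and it is $H$-invariant because $h\mb{v}=\pm\mb{v}$ for all $h\in H$. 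Writing $(\mb{x},\mb{v})=2\ld$ and splitting $\mb{x}=\ld\mb{v}+\mb{y}$ with $\mb{y}\in\lg\mb{v}\rg^{\perp}$, one finds $(\mb{y},\mb{v})=0$ and $\mb{Q}(\mb{y})=1-\ld^{2}$; hence $\lg\mb{x}\rg=\lg\mb{v}\rg$ precisely when $\mb{y}=0$, and otherwise $\lg\mb{x}\rg$ lies in exactly one $\D_{\pm\ld}$. Thus the sets $\D_{\pm\ld}$, with $\ld$ ranging over $F_q$ modulo sign (equivalently $\ld^{2}$ over $\{0\}\cup S^{*}$, so $(q+1)/2$ of them), form an $H$-invariant partition of $\O\setminus\{\lg\mb{v}\rg\}$, and they fall into the four announced types according to whether $\ld=0$, $\ld^{2}-1=0$, $\ld^{2}-1\in S^{*}$, or $\ld^{2}-1\in N^{*}$.

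Next I would compute $|\D_{\pm\ld}|$. For $\ld\neq0$ the assignment $\lg\mb{x}\rg\mapsto\mb{y}=\mb{x}-\ld\mb{v}$, taken with the representative having $(\mb{x},\mb{v})=2\ld$, is a bijection from $\D_{\pm\ld}$ onto $\{\mb{y}\in\lg\mb{v}\rg^{\perp}\colon \mb{Q}(\mb{y})=1-\ld^{2},\ \mb{y}\neq0\}$, while for $\ld=0$ both $\pm\mb{y}$ are admissible, so $|\D_{0}|=\tfrac12\,|\{\mb{y}\in\lg\mb{v}\rg^{\perp}\colon\mb{Q}(\mb{y})=1\}|$. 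Choosing coordinates so that $\mb{v}=\mb{v}_{1}+\mb{v}_{2}$, the form restricted to $\lg\mb{v}\rg^{\perp}$ becomes $-t^{2}-x_{3}^{2}+\th x_{4}^{2}$, and counting solutions of $\th x_{4}^{2}-(t^{2}+x_{3}^{2})=k$ reduces, for each value of $x_{4}$, to counting solutions of $t^{2}+x_{3}^{2}=\th x_{4}^{2}-k$; here one invokes Proposition~\ref{num}, together with the fact that $\th$ (a generator of $F_q^{*}$) is a non-square, so that $\th x_{4}^{2}=k$ is solvable exactly when $k\in N^{*}$, and Proposition~\ref{pro:-1} to keep track of the residue of $q$ modulo $4$. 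A short case analysis on whether $k=1-\ld^{2}$ is $0$, a square, or a non-square then yields the cardinalities $q(q\mp1)/2$, $q^{2}-1$, $q(q-1)$ and $q(q+1)$; that these sum to $|\O|-1=q(q^{2}+1)/2-1$ is a convenient consistency check.

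It then remains to show that each $\D_{\pm\ld}$ is a single $H$-orbit and that every suborbit is self-paired. For transitivity I would work inside the nondegenerate $3$-space $\lg\mb{v}\rg^{\perp}$: $H$ acts faithfully there (an element of the kernel fixes $\lg\mb{v}\rg^{\perp}$ pointwise and acts as $\pm1$ on $\lg\mb{v}\rg$; the $+1$ case is the identity, the $-1$ case a reflection of determinant $-1$, impossible in $G$), and by order considerations it induces a group of order $|\PGL(2,q)|=|SO_{3}(q)|$ that contains $\Omega_{3}(q)\cong\PSL(2,q)$. An orbit--stabiliser computation (the stabiliser of a vector of nonzero norm $k$ being an $SO_{2}$, meeting $\Omega_{3}(q)$ in the corresponding $\Omega_{2}$, of orders $q\mp1$ and $(q\mp1)/2$) shows that $\Omega_{3}(q)$, hence $H$, is already transitive on the vectors of each fixed nonzero norm; for the isotropic set, occurring when $\ld=\pm1$, one uses in addition that $H$ strictly contains $\Omega_{3}(q)$. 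Via the bijection of the previous paragraph this makes each $\D_{\pm\ld}$ a single $H$-orbit. For self-pairing, given $\lg\mb{x}\rg\in\D_{\pm\ld}$ the reflection in the anisotropic vector $\mb{v}-\mb{x}$ (or in $\mb{v}+\mb{x}$ when $\ld=1$) is an involution interchanging $\lg\mb{v}\rg$ and $\lg\mb{x}\rg$; it has determinant $-1$, but composing it with a suitable reflection $r_{\mb{w}}$, $\mb{w}\in\lg\mb{v},\mb{x}\rg^{\perp}$ (which fixes both $\mb{v}$ and $\mb{x}$), one can arrange determinant $1$ and trivial spinor norm, producing an element of $G=P\O^{-}(4,q)$ swapping $\lg\mb{v}\rg$ and $\lg\mb{x}\rg$; hence the suborbit is self-paired.

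The step I expect to be the main obstacle is the last one: passing from isometries of $(V,\mb{Q})$, or of $\lg\mb{v}\rg^{\perp}$, to elements of the specific group $G=P\O^{-}(4,q)\cong\PSL(2,q^{2})$ rather than of the full orthogonal group $O^{-}(4,q)$. This amounts to determinant and spinor--norm bookkeeping, but it is exactly where the various index--$2$ subtleties ($O$ versus $SO$, and $SO$ versus $\Omega$) enter, and it must be carried out uniformly over the anisotropic cases and the isotropic case $\ld=\pm1$.
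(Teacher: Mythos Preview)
The paper does not give its own proof of this proposition: it is quoted verbatim as \cite[Lemma~4.1]{PX} and used as a black box, with no argument supplied in the present paper. So there is nothing to compare your proposal against here; your write-up is an independent proof of a result the authors are content to cite.

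That said, your outline is essentially the standard orthogonal-geometry argument one finds in \cite{PX}: parametrise $\lg\mb{x}\rg$ by $(\ld,\mb{y})$ with $\mb{y}\in\lg\mb{v}\rg^{\perp}$ and $\mb{Q}(\mb{y})=1-\ld^{2}$, count vectors of a given norm in the $3$-dimensional orthogonal space $\lg\mb{v}\rg^{\perp}$ of minus type, and then invoke Witt-type transitivity for the stabiliser. Your counting and your bijections are set up correctly, and you have correctly located the one genuinely delicate point, namely that all the isometries you write down (reflections, and the transitivity on norm-$k$ vectors) must be realised inside $G=P\O^{-}(4,q)$ rather than merely in $O^{-}(4,q)$. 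Two places deserve a little more care. First, for transitivity you should check explicitly that the image of $H$ in the isometry group of $\lg\mb{v}\rg^{\perp}$ is exactly $SO_{3}(q)$ (the orders match, but you need faithfulness and that the image lies in $SO$ rather than in $O\setminus SO$); once that is done, $SO_{3}(q)$ is transitive on nonzero vectors of any fixed norm, including the isotropic ones, so no separate argument for $\ld=\pm1$ is needed. Second, for self-pairing your reflection trick is fine, but note that $\lg\mb{v},\mb{x}\rg^{\perp}$ is $2$-dimensional of minus type, so the spinor norms of reflections $r_{\mb{w}}$ with $\mb{w}$ there cover both classes in $F_q^{*}/(F_q^{*})^{2}$; this is precisely what lets you adjust the spinor norm of your swapping involution to land in $\O^{-}(4,q)$.
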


Let $X=X(G,H,\D_\ld)$  be  the basic orbital graph associated
with $\D_\ld $, and
 take
$$
\rho=\left[
 \begin{array}{cc} 1&1\\0&1\end{array}\right] \in G.
$$
(For simplicity reasons we refer
to the elements of $G$ as matrices; this should cause no confusion.) Clearly, $\rho$ is of order $q$.
For $k\in F_q$ we have
\begin{eqnarray*}
\mb{v}_1\rho^k&=&\mb{v}_1,\\
\mb{v}_2\rho^k&=&k^2\mb{v}_1+\mb{v}_2+k\mb{v}_3,\\
\mb{v}_3\rho^k&=&2k\mb{v}_1+\mb{v}_3,\\
\mb{v}_4\rho^k&=&\mb{v}_4,
\end{eqnarray*}
and so $\rho^k$ maps the vector $x=\sum_{i=1}^4x_i\mb{v}_i\in V$ to
$$
\mb{x}\rho^k=(x_1+k^2x_2+2kx_3)\mb{v}_1+x_2\mb{v}_2+(kx_2+x_3)\mb{v}_3+x_4\mb{v}_4.
$$
Identifying $\mb{x}$ with $(x_1, x_2, x_3, x_4)$ we have
$$
\mb{x}\rho^k=(x_1+k^2x_2+2kx_3, x_2, kx_2+x_3, x_4).
$$
One can check that for $k\ne 0$ we have $\lg \mb{x}\rho^k\rg \ne \lg \mb{x}\rg $, and thus $\rho$ is
$(p,q)$-semiregular. Let $P=\la\rho\ra$, and let
$\P$ be the set of orbits of $P$. These orbits
will be referred to as blocks.
The set $\O$  decomposes into two
subsets
each of which is a union of blocks from $\P$:
\begin{enumerate}[]
\itemsep=0pt
\item $\mathcal{I}=\lg (0,0,x_3,x_4)\rg P =\{ \lg (2kx_3, 0, x_3, x_4)\rg \di k\in F_q\},$ where  $-x_3^2+\th x_4^2=1$.

\item $\mathcal{L}=\lg (x_1, x_2, 0, x_4)\rg P =\{ \lg (x_1+k^2x_2, x_2, kx_2, x_4)\rg \di k\in F_q\},$ where   $x_2\ne 0$ and $x_1x_2+\th x_4^2=1$.
\end{enumerate}
Note that the subset $\mathcal{I}$ contains
$\frac{q(q+1)}2$ vertices which form  $\frac{q+1}2$ blocks,  and  the subset
$\mathcal{L}$ contains $\frac{q^2(q-1)}2$ vertices which form $\frac{q(q-1)}2$ blocks.
By $\mathcal{I}_P$ and $\mathcal{L}_P$, we denote, respectively, the set of blocks in $\mathcal{I}$ and  $\mathcal{L}$; that is,
$\P=\mathcal{I}_P\cup\mathcal{L}_P$.

\begin{remark}
{\rm Note that
$$
N=N_G(P)=\lg \left[
 \begin{array}{cc}
a&b\\0&a^{-1}
\end{array}\right]\di  a\in \lg \a\rg , b\in F_{q^2}\rg,
$$
where $\la\alpha\ra$ denotes the multiplicative
group generated by $\alpha$.
One may check directly  that $\mathcal{I}_P$ consists precisely of
 the orbits of $N$ of length $q$  and  that $\mathcal{L}$ is
the orbit of $N$ of length  $\frac{q^2(q-1)}2$.}
 \end{remark}

In the next lemma we observe that
$X\la\mathcal{L}\ra$ and
$X\la\mathcal{L}\ra_\P$
are  vertex-transitive
and show that the bipartite
subgraph of $X_\P$ induced by
$\mathcal{I}_P$ and $\mathcal{L}_P$
is a complete bipartite graph.

\begin{lemma}
\label{mathching} With the above notation, the following hold:
\begin{enumerate}
\itemsep=0pt
\item[(i)]  The induced subgraph $X\la\mathcal{L}\ra$ and the quotient graph
$X\la\mathcal{L}\ra_\P$ are both vertex-transitive.
\item[(ii)]   For
$\la \mb{x}\ra P\in\mathcal{I}_P$ and $\la \mb{y}\ra P\in\mathcal{L}_P$
we have
$$
d(\la \mb{x}\ra P, \la \mb{y}\ra P)=\left\{
\begin{array}{ll}
1,&\ld =0\\
2,&\ld\ne 0
\end{array}\right..
$$
\end{enumerate}
\end{lemma}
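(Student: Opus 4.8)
The plan is to handle the two assertions separately: part (i) by a short symmetry argument, and part (ii) by a direct computation in the coordinates already set up.

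For part (i) I would argue as follows. By the Remark preceding the statement, $\mathcal{L}$ is a single orbit of $N=N_G(P)$. Since $X=X(G,H,\D_\ld)$ is an orbital graph of $G$, the group $G$ — and hence its subgroup $N$ — acts on $X$ by graph automorphisms; as $N$ stabilises $\mathcal{L}$ setwise, the restriction $N|_{\mathcal{L}}$ is a transitive subgroup of $\Aut X\la\mathcal{L}\ra$, so $X\la\mathcal{L}\ra$ is vertex-transitive. For the quotient, note that $N$ normalises $P=\la\rho\ra$ and therefore permutes the $P$-orbits lying in $\mathcal{L}$, i.e.\ acts on the vertex set $\mathcal{L}_P$ of $X\la\mathcal{L}\ra_\P$, and this action is transitive since $N$ is already transitive on $\mathcal{L}$. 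Finally, if $A,B\in\mathcal{L}_P$ are adjacent in the quotient, witnessed by $a\in A$ and $b\in B$ with $a\sim b$, then $a^n\sim b^n$ for every $n\in N$ shows that $N$ acts by automorphisms of $X\la\mathcal{L}\ra_\P$, which is thus vertex-transitive as well.

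For part (ii), fix $A\in\mathcal{I}_P$ and $B\in\mathcal{L}_P$. The bipartite graph $X[A,B]$ is regular and $|A|=|B|=q$, so $d(A,B)$ equals the number of neighbours in $B$ of any single vertex of $A$, and I would compute it from a convenient representative. Every block of $\mathcal{I}_P$ contains a vertex $\la\mb{x}_0\ra$ with $\mb{x}_0=(0,0,c,d)$ and $-c^2+\th d^2=1$, and here $c\neq 0$, since otherwise $\th=d^{-2}$ would be a square, contradicting that $\th$ generates $F_q^*$. The vertices of $B$ are the $\la\mb{y}_k\ra$, $k\in F_q$, with $\mb{y}_k=(y_1+k^2y_2,\,y_2,\,ky_2,\,y_4)$, $y_2\neq 0$ and $y_1y_2+\th y_4^2=1$. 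One checks at once that $\mb{Q}(\mb{x}_0)=\mb{Q}(\mb{y}_k)=1$ — the latter value being independent of $k$ because $\mb{Q}$ is $\rho$-invariant — so by Proposition~\ref{pro:PX} the vertices $\la\mb{x}_0\ra$ and $\la\mb{y}_k\ra$ are adjacent in $X$ precisely when $(\mb{x}_0,\mb{y}_k)\in\{2\ld,-2\ld\}$. Substituting into the bilinear form yields
\[
(\mb{x}_0,\mb{y}_k)=-2c\,y_2\,k+2\th d\,y_4,
\]
a nonconstant affine function of $k$ because $cy_2\neq 0$. Hence each of the equations $(\mb{x}_0,\mb{y}_k)=2\ld$ and $(\mb{x}_0,\mb{y}_k)=-2\ld$ has exactly one solution $k\in F_q$, and the two solutions coincide exactly when $\ld=0$; this gives $d(A,B)=1$ when $\ld=0$ and $d(A,B)=2$ when $\ld\neq 0$.

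I do not expect a genuine obstacle here; the work is essentially bookkeeping. The points requiring care are: choosing the representatives of the vertices of $A$ and $B$ so that $\mb{Q}$ equals $1$, which is exactly what makes the adjacency criterion of Proposition~\ref{pro:PX} apply verbatim to $(\mb{x}_0,\mb{y}_k)$; the non-degeneracies $c\neq 0$ and $y_2\neq 0$, which guarantee that the affine equation in $k$ has exactly one root; and checking that the roots really do produce edges — there is no vertex coincidence because $A\subseteq\mathcal{I}$ and $B\subseteq\mathcal{L}$ are disjoint, and $\D_{\pm\ld}$ is a genuine suborbit for every $\ld\in F_q$ by Proposition~\ref{pro:PX}. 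As a by-product, $d(A,B)\geq 1$ for all $A\in\mathcal{I}_P$ and $B\in\mathcal{L}_P$, which is precisely the statement that the bipartite subgraph of $X_\P$ between $\mathcal{I}_P$ and $\mathcal{L}_P$ is complete bipartite, as used in the surrounding argument for Theorem~\ref{the:main}.
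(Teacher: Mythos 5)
Your proposal is correct and follows essentially the same route as the paper: part (i) via the transitivity of $N=N_G(P)$ on $\mathcal{L}$ from Lemma~\ref{orbit}, and part (ii) by computing $(\mb{x},\mb{y}\rho^k)=-2x_3ky_2+2\th x_4y_4$ and counting solutions of $(\mb{x},\mb{y}\rho^k)=\pm 2\ld$ in $k$, which is exactly the paper's calculation. The only additions are welcome bits of bookkeeping the paper leaves implicit (the justification that $x_3\ne 0$ because $\th$ is a non-square, and the check that the representatives lie in $\O$ so that Proposition~\ref{pro:PX} applies).
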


\begin{proof}
By Lemma~\ref{orbit},  $N$ is transitive on $\mathcal{L}$, and so the induced subgraph $X\la\mathcal{L}\ra$ and the
quotient graph $X\la\mathcal{L}\ra_\P$  are
  both vertex transitive, and thus (i) holds.

To prove (ii), 
take two arbitrary vertices  $\lg \mb{x}\rg =\lg (0, 0,x_3, x_4)\rg \in \mathcal{I}$ and   $\lg \mb{y}\rg =\lg (y_1, y_2, 0, y_4)\rg \in \mathcal{L}$. Then  $y_2\ne 0$ and $x_3\ne 0$, and $\lg \mb{x}\rg
  \thicksim \lg \mb{y}\rho^k\rg $  if and only if
$$(\mb{x},\mb{y}\rho^k)=((0,0,x_3,x_4),
(y_1+k^2y_2,y_2,ky_2, y_4))=\pm 2\ld ,$$ that is, if and only if
\begin{eqnarray}
\label{eq:II}
-2x_3ky_2+2\th x_4y_4=\pm 2\ld.
\end{eqnarray}
From (\ref{eq:II})
we get that  $k=\frac{\th x_4y_4\mp \ld }{x_3y_2}$ and so
for given $\lg \mb{x}\rg $  and $\lg \mb{y}\rg$ we have
a unique solution for $k$  if $\ld =0$
and   two solutions if $\ld \ne 0$.
It follows that for
$\la \mb{x}\ra P\in\mathcal{I}_P$ and $\la \mb{y}\ra P\in\mathcal{L}_P$
we have $d(\la \mb{x}\ra P, \la \mb{y}\ra P)=1$ or 2, depending on whether $\ld =0$ or $\ld \ne 0$,
 completing part (ii) of
Lemma~\ref{mathching}.
\end{proof}

In what follows, we divide the proof into two cases
depending on whether $\ld =0$
or $\ld \ne 0$.


\subsection{Case  $\D_0$ }
\noindent

Let
$$
\e=\left\{\begin{array}{ll}
2, & q\equiv {1,3\pmod 8}\\
0, & q\equiv {5,7\pmod 8}
\end{array}\right..
$$

The following lemma gives us
the number of edges inside a block
and between two blocks from
$\mathcal{L}_P$ for the orbital graph $X(G,H,\S_0)$.

\begin{lemma}
\label{0}
Let $X=X(G,H,\S_0)$. Then for $\la \mb{x}\ra \in\mathcal{L}$ the following hold:
\begin{enumerate}[(i)]
\itemsep=0pt
\item $d(\la \mb{x}\ra P)=\e$,
\item $d(\la \mb{x} \ra P, \la \mb{y} \ra P)=1$ for $\frac{q+1}2$ blocks $\la \mb{y}\ra P\in\mathcal{L}_P$,
\item $d(\la \mb{x} \ra P, \la \mb{y} \ra P)=2$ for $\frac 14(q^2-3q-2(\e+1))$ blocks $\la \mb{y}\ra P\in\mathcal{L}_P$ if $q\equiv  {1\pmod 4}$, and
for $\frac 14(q^2-q-2(\e+1))$ blocks $\la \mb{y}\ra P\in\mathcal{L}_P$ if $q\equiv  {3\pmod 4}$.
\end{enumerate}
\end{lemma}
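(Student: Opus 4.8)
The plan is to work directly with the explicit coordinates set up before the lemma. Recall $\mathcal{L}$ consists of vertices $\lg \mb{x}\rg = \lg (x_1,x_2,0,x_4)\rg$ with $x_2\ne 0$ and $x_1x_2+\th x_4^2=1$, and the block containing $\lg\mb{x}\rg$ is $\lg\mb{x}\rg P = \{\lg(x_1+k^2x_2,\,x_2,\,kx_2,\,x_4)\rg : k\in F_q\}$. Two vertices $\lg\mb{u}\rg,\lg\mb{w}\rg\in\O$ are adjacent in $X=X(G,H,\S_0)$ precisely when $(\mb{u},\mb{w})=0$, using the bilinear form $(\mb{x},\mb{y})=x_2y_1+x_1y_2-2x_3y_3+2\th x_4y_4$ and keeping in mind that the condition is on the $1$-spaces, so one must be careful about scalar multiples. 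Since $\la\rho\ra$ is semiregular of prime order $q$, Lemma~\ref{lem:cyclelift}-style counting applies: $d(A)$ and $d(A,B)$ count, for a fixed representative $\lg\mb{x}\rg$ of $A$, the number of $k\in F_q$ (resp. the number of elements of $B$) adjacent to it, and these numbers are independent of the chosen representative because $N$ (or at least $P$) acts appropriately; part (i) of Lemma~\ref{mathching} already guarantees $X\la\mathcal{L}\ra$ and $X\la\mathcal{L}\ra_\P$ are vertex-transitive, which legitimizes fixing one vertex throughout.

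For part (i): fix $\lg\mb{x}\rg=\lg(x_1,x_2,0,x_4)\rg\in\mathcal{L}$ and compute $(\mb{x},\mb{x}\rho^k)$ for $k\ne 0$. Using the displayed formula $\mb{x}\rho^k=(x_1+k^2x_2+2kx_3,\,x_2,\,kx_2+x_3,\,x_4)$ with $x_3=0$, I get $(\mb{x},\mb{x}\rho^k) = x_2(x_1+k^2x_2) + x_1x_2 - 0 + 2\th x_4^2 = 2(x_1x_2+\th x_4^2) + k^2x_2^2 = 2 + k^2x_2^2$. Wait — one must allow scalar multiples, so actually $\lg\mb{x}\rg\sim\lg\mb{x}\rho^k\rg$ iff $(\mb{x},c\,\mb{x}\rho^k)=0$ for some $c\in F_q^*$, which is the same as $(\mb{x},\mb{x}\rho^k)=0$. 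So $d(\lg\mb{x}\rg P)$ counts the $k\in F_q^*$ with $k^2x_2^2=-2$, i.e. $k^2=-2x_2^{-2}$. This has a solution iff $-2\in S^*$ (as $x_2^{-2}$ is a square), and then it has exactly two solutions $\pm k_0$. Now $-2\in S^*$ iff $-1$ and $2$ have the same quadratic character, which by Propositions~\ref{pro:-1} and~\ref{pro:2} happens exactly when $q\equiv 1,3\pmod 8$; in that case $d=2$, otherwise $d=0$. This matches the definition of $\e$, giving (i).

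For parts (ii) and (iii): take $\lg\mb{x}\rg\in\mathcal{L}$ fixed and let $\lg\mb{y}\rg=\lg(y_1,y_2,0,y_4)\rg$ range over representatives of the other blocks of $\mathcal{L}_P$; for each block I must count the number of $k\in F_q$ with $(\mb{x},\mb{y}\rho^k)=0$. Computing as above, $(\mb{x},\mb{y}\rho^k) = x_2(y_1+k^2y_2) + x_1y_2 + 2\th x_4y_4$, a quadratic in $k$ with leading coefficient $x_2y_2\ne 0$; so for each block $\lg\mb{y}\rg P$ the number of adjacent vertices $d(\lg\mb{x}\rg P,\lg\mb{y}\rg P)\in\{0,1,2\}$ according to the discriminant of this quadratic being a non-square, zero, or square. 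The total number of edges from $\lg\mb{x}\rg$ into $\mathcal{L}$ is $|\S_0|$ minus the edges into $\mathcal{I}$; by Proposition~\ref{pro:PX}, $|\S_0|=\frac{q(q-1)}2$ if $q\equiv 1\pmod 4$ and $\frac{q(q+1)}2$ if $q\equiv 3\pmod 4$, and by Lemma~\ref{mathching}(ii) each of the $\frac{q+1}2$ blocks of $\mathcal{I}_P$ contributes exactly one edge (since $\ld=0$), so $\lg\mb{x}\rg$ has $\frac{q+1}2$ neighbours in $\mathcal{I}$. Subtracting, the number of neighbours in $\mathcal{L}$ equals $\frac{q(q-1)}2-\frac{q+1}2 = \frac{q^2-q-q-1}2 = \frac{(q-1)^2}2 -\tfrac12$... so I will instead argue via a double-count: let $n_1$ and $n_2$ be the number of blocks of $\mathcal{L}_P$ (other than $\lg\mb{x}\rg P$ itself) contributing $1$ resp. $2$ edges, and let $a=d(\lg\mb{x}\rg P)=\e$ be the intra-block count. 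Then $n_1 + 2n_2 + a = (\text{neighbours of }\lg\mb{x}\rg\text{ in }\mathcal{L})$, while separately $n_1$ is computed directly: $d(\lg\mb{x}\rg P,\lg\mb{y}\rg P)=1$ means the quadratic in $k$ has a repeated root, i.e. discriminant zero, which is a single linear condition on the block, and a short count shows this occurs for exactly $\frac{q+1}2$ blocks (the same kind of "tangency" count that produced $|\D_{\pm 1}|=q^2-1$ in Proposition~\ref{pro:PX}); that gives (ii). Then (iii) follows by solving $n_1+2n_2+\e = \#\{\text{neighbours in }\mathcal{L}\}$ for $n_2$, substituting $n_1=\frac{q+1}2$, the value of $\e$, and $\#\{\text{neighbours in }\mathcal{L}\} = |\S_0| - \frac{q+1}2$ with $|\S_0|$ read off from Proposition~\ref{pro:PX} according to $q\bmod 4$.

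I expect the main obstacle to be the clean bookkeeping of scalar multiples and the tangency count for (ii): one has to verify carefully that "$d(A,B)=1$" corresponds exactly to a discriminant-zero condition that is satisfied by precisely $\frac{q+1}2$ blocks, rather than over- or under-counting blocks whose representative happens to have $y_2=0$ (there are none in $\mathcal{L}$) or blocks coinciding with $\lg\mb{x}\rg P$. A secondary subtlety is confirming that the quadratic $x_2(y_1+k^2y_2)+x_1y_2+2\th x_4y_4$ genuinely has degree $2$ in $k$ for every relevant block (true since $x_2y_2\ne0$), so that no block contributes via a linear — rather than quadratic — equation; once these points are nailed down, parts (iii) is pure arithmetic.
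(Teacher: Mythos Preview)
Your plan is essentially the same as the paper's proof: fix a representative in $\mathcal{L}$, compute $(\mb{x},\mb{y}\rho^k)$ as a quadratic $x_2y_2\,k^2 + C$ in $k$ (no linear term since $x_3=0$), read off (i) from $k^2=-2x_2^{-2}$ via Propositions~\ref{pro:-1} and~\ref{pro:2}, identify the $d=1$ blocks as those with $C=0$, and obtain (iii) by the same valency bookkeeping you describe. The one place your outline is genuinely incomplete is the ``short count'' for (ii): the paper carries it out by taking $\mb{x}=(1,1,0,0)$, so that $C=0$ becomes $y_1+y_2=0$ and, together with $y_1y_2+\th y_4^2=1$, reduces to $\th y_4^2=1+y_2^2$; since $\th$ is a non-square this binary form is anisotropic and has exactly $q+1$ solutions $(y_2,y_4)$ (Proposition~\ref{num}), hence $\frac{q+1}{2}$ one-dimensional spaces $\lg\mb{y}\rg$, none of which coincides with $\lg\mb{x}\rg$ and all of which have $y_2\ne 0$.
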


\begin{proof}
Fix a vertex $\lg \mb{x}\rg =\lg (1, 1, 0, 0)\rg \in \mathcal{L}$.
For any  $\lg \mb{y}\rg =\lg (y_1, y_2, 0, y_4)\rg \in\mathcal{L}$, where $y_2\ne 0$, we have $\lg
 \mb{x}\rg \thicksim \lg \mb{y}\rg \rho^k$  if and only if
$y_1+(k^2+1)y_2=0$, and therefore, since $y_1y_2+\th y_4^2=1$,
if and only if
\begin{eqnarray}
\label{eq:IV}
k^2=-y_2^{-2}+\th (y_2^{-1}y_4)^2-1.
\end{eqnarray}
It follows from (\ref{eq:IV})
that $\lg \mb{x}\rg $ is adjacent to one vertex
in the block $\lg \mb{y}\rg P\in\mathcal{L}_P$ if  $k=0$ and to two vertices in this block if $k\ne 0$.
Clearly, $k=0$ if and only if
\begin{eqnarray}
\label{eq:V} \th y_4^2=1+y_2^2.\end{eqnarray}
Proposition~\ref{num} implies that
(\ref{eq:V})  has $q+1$ solutions for $(y_2,y_4)$, and therefore
since $\la \mb{y}\ra=\la -\mb{y}\ra$
we have a total of $\frac{q+1}2$ choices for $\lg \mb{y}\rg $.
This implies that
 $d(\lg \mb{x}\rg P,\la \mb{y}\ra P)=1$ for $\frac{q+1}2$ blocks
$\la \mb{y}\ra P\in\mathcal{L}_P$,
proving part (ii).

To prove part (i), take $\mb{y}=\pm \mb{x}=\pm (1,1,0,0)$. Then, by (\ref{eq:IV}), there are edges inside the block $\lg \mb{x}\rg P$  if and only if $k^2=-2$.  This equation has solutions
 if and only if $q\equiv {1,3\pmod 8}$ (see Propositions~\ref{pro:-1} and~\ref{pro:2}),
and thus the induced subgraph
$X\la \lg \mb{x}\rg P\ra$ is a $q$-cycle for  $q\equiv {1,3\pmod 8}$ and a totally disconnected graph
$qK_1$ if $q\equiv {5,7\pmod 8}.$

Finally, to prove part (iii) let $m$ be the number of blocks
$\la \mb{y}\ra P\in\mathcal{L}_P$ for which $d(\lg \mb{x}\rg P, \la \mb{y}\ra P)=2$.
Suppose first that $q\equiv {1\pmod 4}$.
Then, combining together the facts that
$X$ is of  valency $\frac 12q(q- 1)$, that $d(\la \mb{x}\ra P)=\e$ and
that  $\lg \mb{x}\rg $ is adjacent to $\frac 12(q+1)$ vertices
in the set $\mathcal{I}$
and to exactly one vertex from $\frac{q+1}2$ blocks in
$\mathcal{L}_P$, we have
  $$ m=\frac 12(\frac 12q(q- 1)-\frac{q+1}2-\frac{q+1}2-\e)=\frac 14(q^2-3q-2(1+\e)).$$

Suppose now that $q\equiv {3\pmod 4}$.
Then, replacing the valency of $X$
in the above computation
with $\frac 12q(q+ 1)$ we obtain, as desired, that
  $$ m=\frac 14(q^2-q-2(1+\e)).$$
\end{proof}

We are now ready to prove
existence of a Hamilton cycle in $X(G,H,\S_0)$.

\begin{proposition}
\label{pro:0}
The graph $X=X(G,H,\S_0)$ is hamiltonian.
\end{proposition}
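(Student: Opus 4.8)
The plan is to show that the quotient graph $X\la\mathcal{L}\ra_\P$, which is vertex-transitive of order $\frac{q(q-1)}{2}$ by Lemma~\ref{mathching}(i), together with the $\frac{q+1}{2}$ isolated vertices $\mathcal{I}_P$, can be assembled into a Hamilton cycle of $X_\P$ that lifts. Following the strategy outlined before Proposition~\ref{pro:0}, it suffices to prove that the subgraph of $X_\P$ induced on the large orbit $\mathcal{L}_P$ either contains a Hamilton cycle with at least one double edge in the quotient multigraph, or splits into two components each carrying such a Hamilton cycle. Given that, one attaches the $\frac{q+1}{2}$ vertices of $\mathcal{I}_P$ by subdividing arbitrarily chosen edges into $2$-paths through these vertices -- legitimate because by Lemma~\ref{mathching}(ii) every vertex of $\mathcal{I}_P$ is adjacent to every vertex of $\mathcal{L}_P$ in $X_\P$ when $\ld=0$ -- and applies Lemma~\ref{lem:cyclelift}, using the surviving double edge (which persists through the subdivision) to guarantee that the lift is a single $p$-cycle rather than $q$ disjoint cycles.

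First I would estimate valencies in the graph $Y:=X\la\mathcal{L}\ra_\P$ using Lemma~\ref{0}. Each vertex of $Y$ has degree equal to the number of blocks $\la\mb{y}\ra P\in\mathcal{L}_P$ with $d(\la\mb{x}\ra P,\la\mb{y}\ra P)\ge 1$, which is at least the count in Lemma~\ref{0}(iii) plus the $\frac{q+1}{2}$ blocks from part (ii); this is on the order of $\frac{q^2}{4}$, i.e.\ roughly half of $|\mathcal{L}_P|=\frac{q(q-1)}{2}$. So Chv\'atal's theorem (Proposition~\ref{pro:chvatal}) applied to $Y$ -- or to each connected component of $Y$ if it is disconnected -- should yield a Hamilton cycle, once one checks the degree condition holds for the relevant orders; since $Y$ is regular by vertex-transitivity, Jackson's theorem (Proposition~\ref{pro:jack}) is even cleaner provided $Y$ is $2$-connected and its valency is at least one third of its order, which the count above comfortably gives for all but perhaps a handful of small $q$. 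Those small cases ($q=5,7$, say) I would dispose of by direct inspection or the computer-assisted check already invoked elsewhere in the paper.

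Next I need the double edge. By Lemma~\ref{0}(iii) there are many blocks $\la\mb{y}\ra P$ with $d(\la\mb{x}\ra P,\la\mb{y}\ra P)=2$, so $X\la\mathcal{L}\ra_\rho$ (the quotient \emph{multi}graph) has double edges in abundance. The point to check is that the Hamilton cycle produced in $Y$ can be chosen to traverse at least one such double edge; since a Hamilton cycle has $\frac{q(q-1)}{2}$ edges and the double-edge pairs form a spanning subgraph of positive valency $\sim\frac{q^2}{4}$, a short counting or a minor modification of the Hamilton cycle (swapping one edge) ensures this. Alternatively, if $Y$ turns out to be disconnected, one shows (again via the valency bound, which forces any component to have more than half the vertices of $Y$, hence at most two components, each of order $>\frac{q(q-1)}{4}$) that each component is Hamiltonian with a double edge, and then proceeds along the two-component branch of the strategy, using two vertices of $\mathcal{I}_P$ to splice $C_0$ and $C_1$ together and the remaining isolated vertices as above.

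The main obstacle I expect is twofold: first, pinning down exactly when $Y$ is connected versus two-component (this is governed by the parity conditions in Lemma~\ref{0} and by whether $-1$ and $2$ are squares mod $q$, i.e.\ by $q\bmod 8$), and second, verifying the Chv\'atal/Jackson hypotheses near the lower threshold -- that is, ruling out a genuine small-$q$ counterexample. Both are finite, explicit checks rather than conceptual difficulties: the connectivity analysis reduces to understanding the orbit structure of $N$ acting on $\mathcal{L}_P$ (the remark after Lemma~\ref{orbit} shows $N$ is transitive there, which already strongly constrains the component structure), and the threshold cases are handled by the ad hoc computer search the paper uses throughout. Assembling these pieces -- Hamilton cycle in $Y$ with a double edge, subdivision through $\mathcal{I}_P$, lift via Lemma~\ref{lem:cyclelift} -- completes the proof that $X(G,H,\S_0)$ is hamiltonian.
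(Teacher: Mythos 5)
Your overall architecture is the one the paper uses -- Hamilton cycle on the large orbit $\mathcal{L}_P$, at most two components forced by a valency bound, splicing in the $\frac{q+1}{2}$ isolated vertices as midpoints of $2$-paths, and lifting via Lemma~\ref{lem:cyclelift}. But there is a genuine gap at the step you yourself flag as needing only ``a short counting or a minor modification'': guaranteeing that the Hamilton cycle of $X\la\mathcal{L}\ra_\P$ traverses a double edge of the multigraph. This matters precisely in the case $\lambda=0$, because by Lemma~\ref{mathching}(ii) the edges between $\mathcal{I}_P$ and $\mathcal{L}_P$ are \emph{single} edges, so the spliced-in $2$-paths contribute nothing and the double edge must come from inside $\mathcal{L}_P$. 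Counting does not give it: by Lemma~\ref{0}(ii) the single-edge subgraph of $X\la\mathcal{L}\ra_\P$ is itself $\frac{q+1}{2}$-regular, far more than $2$-regular, so nothing a priori prevents a Hamilton cycle produced by Jackson's theorem from living entirely inside it; and ``swapping one edge'' to force a prescribed edge onto a Hamilton cycle is not a routine operation. The paper's fix is to make the problem disappear: it deletes all single edges between blocks of $\mathcal{L}_P$, forming $X\la\mathcal{L}\ra'_\P$, whose valency is exactly the count in Lemma~\ref{0}(iii) and still satisfies Jackson's $n/3$ bound for $q\ge 7$ (with $q=5$ giving a $2$-regular graph, hence a cycle or two $5$-cycles). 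Every edge of the resulting Hamilton cycle is then automatically a double edge, and since the cycle has $\frac{q(q-1)}{2}$ edges while only $\frac{q+1}{2}$ of them get subdivided, double edges survive into the Hamilton cycle of $X_\P$. You observe that the double-edge subgraph has valency about $\frac{q^2}{4}$ but never take the decisive step of running Jackson on \emph{that} subgraph rather than on all of $X\la\mathcal{L}\ra_\P$.

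Two smaller points. First, Chv\'atal's theorem is not usable here: for a regular graph it collapses to Dirac's condition $val \ge n/2$, and the valency of $X\la\mathcal{L}\ra_\P$ is $\frac{1}{4}(q^2-q-2\e)$ against order $\frac{1}{2}(q^2-q)$, falling just short when $\e=2$; Jackson's theorem is the correct tool, as the paper uses. Second, the valency bound $val \ge n/3$ forces each component to have \emph{more than a third} (not ``more than half'') of the vertices, which is what yields at most two components; your phrasing would instead force connectivity outright, which is not established and is exactly why the paper carries the two-component branch.
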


\begin{proof}
Let $X\la\mathcal{L}\ra'$ be the graph obtained from $X\la\mathcal{L}\ra$
by deleting the edges  between  any two blocks
 $B_1,B_2\in\mathcal{L}_P$ for which $d(B_1,B_2)=1$ (see Lemma~\ref{0}(ii)). By Lemma~\ref{mathching}, $X\la \mathcal{L} \ra_\P$ is vertex-transitive,
and consequently
one can see that also $X\la\mathcal{L}\ra'_\P$ is vertex-transitive.

If $q\equiv  {1\pmod 4}$ then
Proposition~\ref{pro:PX} and Lemma~\ref{0}(iii) combined together
imply that $X\la\mathcal{L}\ra'_\P$ is of valency
$m=\frac 14(q^2-3q-2(1+\e))$.
If, however, $q\equiv  {3\pmod 4}$ then
Proposition~\ref{pro:PX} and Lemma~\ref{0}(iii) combined together
imply that $X\la\mathcal{L}\ra'_\P$ is of valency
$m=\frac 14(q^2-q-2(1+\e))$.
If $q=5$ then $\e=0$ and $m=\frac 14(q^2-3q-2(1+\e))=2$.
If $q\ge 7$ then using the facts
that $q^2-7q-6(1+\e)\ge 0$ for $q\equiv  {1\pmod 4}$ and that
$q^2-q-6(1+\e)\ge 0$ for $q\equiv   {3\pmod 4}$
one can see that
$$
m=\frac 14(q^2-(2\pm 1)q-2(1+\e))\ge \frac13\frac{q(q-1)}2=\frac{1}{3}|\mathcal{L}_P|.
$$

Suppose first that  $X\la \mathcal{L} \ra_\P'$ is connected.  If $q=5$, then
$X\la\mathcal{L}\ra'_\P$ is just a  cycle $C$. For $q\ge 7$, by
 Proposition~\ref{pro:jack},  $X\la\mathcal{L}\ra'_\P$ admits a Hamilton cycle, say  $C$ again.
Clearly $C$ is also a Hamilton cycle of $X\la\mathcal{L}\ra_\P$.
Form  $C$   a Hamilton cycle in $X_\P$
can  be constructed by choosing arbitrarily
$(q+1)/2$ edges and replacing them by $2$-paths having as
central vertices the $(q+1)/2$ isolated vertices of $N$ in $X_\P$.
 By Lemma~\ref{lem:cyclelift},  this lifts to a Hamilton cycle
in $X$.

Next, suppose that  $X\la \mathcal{L} \ra_\P'$ is disconnected.
For $q=5$, since $X\la \mathcal{L} \ra_\P'$ is a vertex transitive  graph of order 10, it must be a union of two 5-cycles.
For $q\ge 7$, since  $m\ge \frac{1}{3}|\mathcal{L}_P|$,
it follows that $X\la \mathcal{L} \ra_\P'$  has just two components.
By Proposition~\ref{pro:jack},
each component  admits a Hamilton cycle.
Take a respective Hamilton  path for each component, say $\mathcal{U}=U_1U_2\cdots  U_l$, and
$\mathcal{U}'=U'_1 U'_2\cdots, U'_l$, where $l=\frac{q(q-1)}4.$
Choose any two  isolated vertices $W_1$ and $W_2$
and construct the cycle
$\mathcal{D}=W_1\mathcal{U}W_2\mathcal{U}'W_1$. Choose  arbitrarily $(q+1)/2-2$ edges in $\mathcal{U} \cup \mathcal{U}'$
 and replace them by $2$-paths having as
central vertices  the remaining $(q+1)/2-2$ isolated vertices. Then we get a Hamilton cycle in $X_\P$, which,
by Lemma~\ref{lem:cyclelift},
lifts to a Hamilton cycle in $X$.
\end{proof}


\subsection{Case   $\D_\ld $ with $\ld \ne 0$}
 \noindent

%
%
%


\begin{proposition}
 \label{pro:ne0}
The graph $X=X(G,H,\S_{\pm \ld})$, where $\ld\ne 0$, is hamiltonian.
\end{proposition}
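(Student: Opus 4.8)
The strategy mirrors the case $\Delta_0$ handled in Proposition~\ref{pro:0}: I will show that the subgraph $X\la\mathcal{L}\ra_\P$ (or a suitable subgraph of it keeping only double edges) either is connected and of valency at least $|\mathcal{L}_P|/3$, so that Jackson's theorem (Proposition~\ref{pro:jack}) yields a Hamilton cycle in it, or it splits into exactly two components each satisfying the same valency bound and hence each hamiltonian. Then, exactly as in the outline preceding Proposition~\ref{pro:0}, the $(q+1)/2$ isolated vertices $\mathcal{I}_P$ of $N$ in $X_\P$ are spliced into this cycle (or these two cycles) via $2$-paths; this is legitimate here because, by Lemma~\ref{mathching}(ii), for $\ld\ne 0$ every vertex of $\mathcal{I}_P$ is joined to every vertex of $\mathcal{L}_P$ by a \emph{double} edge, so there is ample room to reroute. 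By Lemma~\ref{lem:cyclelift} the resulting Hamilton cycle of $X_\P$ — all of whose edges we will have arranged to carry multiplicity $\ge 2$ in $X_\rho$, or at least one such edge suffices — lifts to a Hamilton cycle in $X$, since $\rho$ is $(p,q)$-semiregular with $p$ prime.

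\textbf{Key steps in order.} First I would fix the representative vertex $\lg\mb{x}\rg=\lg(1,1,0,0)\rg\in\mathcal{L}$ and, imitating the proof of Lemma~\ref{0}, solve the adjacency condition $(\mb{x},\mb{y}\rho^k)=\pm 2\ld$ for $\lg\mb{y}\rg=\lg(y_1,y_2,0,y_4)\rg\in\mathcal{L}$: this gives a quadratic in $k$ of the form $k^2 = (\text{expression in }y_2,y_4,\ld)$, and counting how often that expression is $0$ (single edge between the two blocks), a nonzero square (double edge), or a non-square (no edge) reduces, via Proposition~\ref{num} and Proposition~\ref{pro:preseki}, to counting points on a conic over $F_q$. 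This produces the analogue of Lemma~\ref{0}: the number of blocks $\lg\mb{y}\rg P\in\mathcal{L}_P$ with $d(\lg\mb{x}\rg P,\lg\mb{y}\rg P)=2$ is a quadratic polynomial in $q$ of leading term $q^2/4$, hence $\ge |\mathcal{L}_P|/3 = q(q-1)/6$ for all $q\ge 5$ (a handful of tiny cases $q=5,7,\ldots$ checked directly or by computer, as is done elsewhere in the paper). Second, I would argue the connectivity dichotomy: since $X\la\mathcal{L}\ra_\P$ is vertex-transitive (Lemma~\ref{mathching}(i)) and has valency exceeding $|\mathcal{L}_P|/3$, a standard argument forces at most two components; if there are two, each is vertex-transitive of valency $>$ one third of its order, hence 2-connected and regular, so Jackson's theorem applies to each. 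Third, assemble the Hamilton cycle exactly as in Proposition~\ref{pro:0}, using one or two of the isolated vertices to merge components when needed and the rest to absorb the remaining $\mathcal{I}_P$ vertices through double-edge $2$-paths, then invoke Lemma~\ref{lem:cyclelift}.

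\textbf{Main obstacle.} The delicate part is the edge-count, i.e.\ the precise analogue of Lemma~\ref{0}(iii) for $\ld\ne 0$: the quadratic $k^2 = -y_2^{-2}+\th(y_2^{-1}y_4)^2 - 1 \pm 2\ld y_2^{-1}(\cdots)$ — whatever its exact shape turns out to be after simplification — determines a conic whose number of $F_q$-points, and the split of the right-hand side among $\{0\}$, $S^*$, $N^*$, must be counted carefully, with the answer depending on $q\bmod 4$, on $q\bmod 8$ (through the diagonal-loop term, as $\e$ did before), and on whether $\ld^2-1$ is a square or a non-square (which is precisely the case-split in Proposition~\ref{pro:PX}(iii)--(iv) governing $|\D_{\pm\ld}|=q(q\pm1)$). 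Coordinating these congruence conditions so that the double-edge count is uniformly $\ge q(q-1)/6$, and dispatching the finitely many small primes $q$ where the asymptotic bound is not yet effective, is where the real work lies; everything downstream is the same routing-and-lifting boilerplate already used for $\D_0$.
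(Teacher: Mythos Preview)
Your overall strategy is correct and matches the paper's: bound the valency of $X\la\mathcal{L}\ra_\P$ below by $|\mathcal{L}_P|/3$, invoke Jackson's theorem on each of its (at most two) components, splice in the $(q+1)/2$ isolated vertices of $\mathcal{I}_P$ via $2$-paths, and lift via Lemma~\ref{lem:cyclelift}. You are also right that the crucial difference from the case $\ld=0$ is Lemma~\ref{mathching}(ii): for $\ld\ne 0$ every $\mathcal{I}_P$--$\mathcal{L}_P$ edge is already a double edge, so the Hamilton cycle in $X_\P$ automatically carries a multiedge and there is no need to restrict to a ``double-edge subgraph'' of $X\la\mathcal{L}\ra_\P$ as in the proof of Proposition~\ref{pro:0}.

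Where you diverge from the paper is in what you call the ``main obstacle.'' You plan to carry out a precise analogue of Lemma~\ref{0}(iii), counting via conics and Propositions~\ref{num} and~\ref{pro:preseki} exactly how many blocks $\lg\mb{y}\rg P$ satisfy $d(\lg\mb{x}\rg P,\lg\mb{y}\rg P)=2$, with a case split on $q\bmod 4$, $q\bmod 8$, and the quadratic character of $\ld^2-1$. The paper bypasses all of this with a single crude observation: since the adjacency condition $y_1+(k^2+1)y_2=\pm 2\ld$ gives \emph{two} quadratics in $k$ (one for each sign), there are at most four solutions for $k$, hence $d(\lg\mb{x}\rg P,\lg\mb{y}\rg P)\le 4$ for every pair of blocks in $\mathcal{L}_P$ (including $\lg\mb{x}\rg P$ itself). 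Subtracting the $q+1$ edges from $\lg\mb{x}\rg$ into $\mathcal{I}$ and the at most $4$ intra-block edges from the valency $q^2-j$ of $X$ (where $j\in\{1,q,-q\}$) and dividing by $4$ gives
\[
m \;\ge\; \tfrac{1}{4}\bigl((q^2-j)-(q+1)-4\bigr),
\]
which one checks is at least $q(q-1)/6$ for all $q\ge 5$ (the case $j=q$ not arising when $q=5$). So the ``delicate edge-count'' you anticipate is simply not needed; your plan would work, but it does substantially more than the situation requires.
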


\begin{proof}
As in the proof of Lemma~\ref{0}, fix a vertex $\lg \mb{x}\rg =\lg (1, 1, 0, 0)\rg \in \mathcal{L}$.  For any  $\lg \mb{y}\rg =\lg (y_1, y_2, 0, y_4)\rg \in\mathcal{L}$, $y_2\ne 0$,  we have $\mb{y}\rho^k=(y_1+k^2y_2,y_2,ky_2, y_4)$, and so  $\lg \mb{x}\rg \thicksim \lg \mb{y}\rho^k\rg $  if and only if
$y_1+(k^2+1)y_2=\pm 2\ld$,
which implies, since $y_1y_2+\th y_4^2=1$, that
$$
k^2=\pm 2\ld y_2^{-1}-y_2^{-2}+\th (y_2^{-1}y_4)^2-1.
$$
It follows that there are at most four solutions for $k$. Hence
each vertex in $\mathcal{L}$ is adjacent to at most four vertices in
the same block from $\mathcal{L}_P$
(including the block containing this vertex).

Let $m$ be the  valency of $X\la \mathcal{L}\ra_\P$.
Since, by Proposition~\ref{pro:PX}, the valency of $X$ is, respectively,
$q^2-1$,  $q^2-q$ and  $q^2+q$,
we get that  $m\ge \frac13|\mathcal{L}_P|=\frac13\frac{q(q-1)}2$ provided
$$ m\ge \frac 14((q^2-j)-(q+1)-4)\ge \frac 14(q^2-q-j-5)\ge \frac13\frac{q(q-1)}2,
$$
where $j\in\{1, q, -q\}$ for $q\ge 7$  and   $j\in \{1,-q\}$ for $q=5$.  
One can check that this inequality
holds for all $q\ge 5$.
We can therefore conclude that $X\la\mathcal{L}\ra_\P$,
which is vertex-transitive by Lemma~\ref{mathching},
has at most two connected components.
The rest of the argument follows word by word from the argument given in the proof of Proposition~\ref{pro:0}, since, by Lemma~\ref{mathching},  $d(\la \mb{x}\ra P, \la \mb{y}\ra P)=2$,  for any $\la \mb{x}\ra P\in\mathcal{I}_P$ and $\la \mb{y}\ra P\in\mathcal{L}_P$.
\end{proof}

\section{Actions of $\PSL(2,p)$}
\label{sec:PSL}
\noindent

In this section the existence of Hamilton cycles
in basic orbital graphs arising
from the group action in Row~6 of Table~\ref{tab:groups}
is considered.
Let us remark that
subdegrees of all primitive permutation representations of $\PSL(2,k)$
were calculated in \cite{T}. This thesis is quite unavailable, but
some extractions appeared in \cite{FI}.

Observe first that in order for $q = (p+1)/2$
to be a prime we must have $p\equiv {1\pmod 4}$,
and that by Row 6 of Table~\ref{tab:groups} we have $p\ge 13$.
Throughout this section let $F = F_p$ be a finite field of order $p$,
and let $F^*$, $S^*$ and $N^*$ be defined as in
Subsection~\ref{ssec:numbers}, that is:
$F^* = F \setminus \{0\}$, $S^*=\{s^2\colon s\in F^*\}$
and $N^*=F^*\setminus S^*$.

In the description of the graphs arising from the action of
$G = \PSL(2,p)$ on the set
${\cal H}$ of right cosets of $H = D_{p-1}$ we follow \cite{MS3}.
(For further details as well as all the proofs see \cite{MS3}.)
For simplicity reasons we refer
to the elements of $G$ as matrices; this should cause no confusion.
We may choose  $H$ to consist of all the matrices of the form
$$
\mm{x}{0}{0}{x^{-1}} \ (x\in F^*) \ \textrm{ and } \ \mm{0}{-x}{x^{-1}}{0} \ (x\in F^*).
$$
Note that, since $p\ge 13$, $H$ is a dihedral subgroup $D_{p-1}$.
Further, let
$$g=\mm{a}{b}{c}{d}\in G
$$
be fixed. Then each element of the right
coset $Hg$ is either of the form
$$
\mm{ax}{bx}{cx^{-1}}{dx^{-1}} \ \textrm{ or of the form } \
\mm{cx}{dx}{-ax^{-1}}{-bx^{-1}} \ (x\in F^*).
$$
Moreover, a typical element of the left coset $gH$ is either of the form
$$
\mm{ax}{bx^{-1}}{cx}{dx^{-1}} \ \textrm{ or of the form } \
\mm{bx^{-1}}{-ax}{dx^{-1}}{-cx} \ (x\in F^*).
$$
The computation and description of the suborbits of $G$ acting by
right multiplication on the set $\H$ of the right cosets of $H$ in $G$
depends heavily on the concise description of the elements of $\H$.
If $g$ satisfies $ab\ne 0$,
define $\xi(g) = ad$ and $\eta(g) = a^{-1}b$, and call
$\chi(g) = (\xi(g),\eta(g))$ the {\it character} of $g$.
The following proposition, proved in \cite[Lemmas~2.1 and 2.2]{MS3},
recalls basic properties of characters.

\begin{proposition}
{\rm \cite[Lemmas~2.1 and 2.2]{MS3}}
\label{pro:characters}
Let $\chi(g)=(\xi,\eta)$.
\begin{enumerate}[(i)]
\itemsep=0pt
\item If $abcd\ne 0$ and $g'\in Hg$ then either $\chi(g')=(\xi,\eta)$
or $\chi(g')=(1-\xi,\xi\eta/(\xi-1))$.
\item If $ab\ne 0$ and $g'\in gH$ then either
$\chi(g')=(\xi,y\eta)$ or $\chi(g')=(1-\xi,-y\eta^{-1})$ for some $y\in S^*$.
\end{enumerate}
\end{proposition}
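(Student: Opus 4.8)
The plan is to prove both parts by a direct $2\times2$ matrix computation, using only the explicit descriptions of the cosets $Hg$ and $gH$ recorded in the two displays just above, together with two elementary observations. First, every element of $\PSL(2,p)$ has a matrix representative with $ad-bc=1$, and the character is insensitive to replacing $g$ by $-g$ (indeed $\xi(-g)=(-a)(-d)=ad$ and $\eta(-g)=(-a)^{-1}(-b)=a^{-1}b$), so $\chi$ is well defined on $\PSL(2,p)$ whenever it is defined at all; I will use the relation $bc=ad-1=\xi-1$ freely. Second, $\{x^{2}\colon x\in F^{*}\}=\{x^{-2}\colon x\in F^{*}\}=S^{*}$. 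Throughout, whenever I write $\chi(g')=(g'_{11}g'_{22},\,(g'_{11})^{-1}g'_{12})$ I must check that the entries $g'_{11},g'_{12}$ are nonzero; this is precisely what the hypotheses ($abcd\ne0$ in (i), $ab\ne0$ in (ii)) guarantee for every representative appearing below.

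For part (i), an element $g'\in Hg$ has one of the two shapes $\mm{ax}{bx}{cx^{-1}}{dx^{-1}}$ or $\mm{cx}{dx}{-ax^{-1}}{-bx^{-1}}$ with $x\in F^{*}$. In the first case $\xi(g')=(ax)(dx^{-1})=ad=\xi$ and $\eta(g')=(ax)^{-1}(bx)=a^{-1}b=\eta$, giving $\chi(g')=(\xi,\eta)$. In the second case $\xi(g')=(cx)(-bx^{-1})=-bc=1-ad=1-\xi$, where $\xi\notin\{0,1\}$ automatically since $abcd\ne0$ forbids $ad=0$ and $ad=1$; moreover $\eta(g')=(cx)^{-1}(dx)=c^{-1}d$, and substituting $b=a\eta$, $d=a^{-1}\xi$ and $c=b^{-1}(\xi-1)=(a\eta)^{-1}(\xi-1)$ yields $\eta(g')=\xi\eta/(\xi-1)$, as claimed.

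For part (ii), an element $g'\in gH$ has one of the shapes $\mm{ax}{bx^{-1}}{cx}{dx^{-1}}$ or $\mm{bx^{-1}}{-ax}{dx^{-1}}{-cx}$ with $x\in F^{*}$. In the first case $\xi(g')=(ax)(dx^{-1})=ad=\xi$ and $\eta(g')=(ax)^{-1}(bx^{-1})=a^{-1}b\,x^{-2}=\eta x^{-2}$; since $x^{-2}$ ranges over $S^{*}$ as $x$ ranges over $F^{*}$, the characters arising here are exactly the pairs $(\xi,y\eta)$ with $y\in S^{*}$. In the second case $\xi(g')=(bx^{-1})(-cx)=-bc=1-\xi$ and $\eta(g')=(bx^{-1})^{-1}(-ax)=-a b^{-1}x^{2}=-\eta^{-1}x^{2}$; again $x^{2}$ ranges over $S^{*}$, so these characters are exactly the pairs $(1-\xi,-y\eta^{-1})$ with $y\in S^{*}$. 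Together the two cases give the claim.

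There is no real obstacle here: the argument is pure bookkeeping with $2\times2$ matrices. The only points requiring a little care are (a) checking that the character is defined at each representative, which is exactly what the nonvanishing hypotheses ensure; (b) using $ad-bc=1$ at the right moments to rewrite $-bc$ as $1-\xi$ and to solve for $c$; and (c) noting that $x\mapsto x^{\pm2}$ is onto $S^{*}$, which is what produces the full family of squares $y$ in part (ii) rather than a single scalar.
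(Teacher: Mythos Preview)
Your proof is correct and is precisely the direct matrix computation one would expect; the paper itself does not prove this proposition but merely cites it from \cite{MS3}, and your argument is the natural verification using the explicit coset representatives displayed just before the statement together with the determinant relation $ad-bc=1$.
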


Let $\approx$ be the equivalence relation on $F \times F^*$ defined by
\begin{eqnarray}
\label{eq:relation}
(\xi,\eta) \approx (1-\xi,\frac{\xi \eta}{\xi- 1}) \textrm{ for } \xi \neq 0,1.
\end{eqnarray}
There is then a natural identification of the sets ${\cal H}$
and $(F\times F^*)/_{\approx} \cup \{\infty\}$
where $\infty$ corresponds to $H$ and $(\xi,\eta)$ corresponds
to the coset $Hg$
satisfying $\chi(g) = (\xi,\eta)$. This identification will be used throughout the rest of this section.
Note that the identification is direct with
$(F^*\setminus \{1\}\times F^*)/_{\approx} \cup \{\infty\} \cup \{0,1\}\times F^*$.
The symbol $\infty$ corresponds to the subgroup $H$ and
$\{0,1\}\times F^*$ represents all those cosets which contain at least one
matrix with exactly one of the entries equal  to
zero. In summary, all matrices which have two entries
equal to zero
are in the subgroup $H$, and
all matrices with exactly one of the entries equal  to
zero belong to
$2(p-1)$ cosets of $H$ with typical representatives
$$
\mm{1}{y}{0}{1} \ \textrm{ and } \ \mm{1}{y}{-y^{-1}}{0}  \ (y\in F^*)
$$
with respective characters $(1,y)$ and $(0,y)$.
Finally, all remaining cosets in $\H$ contain matrices with no entry equal to zero,
where we have to bring in the equivalence relation $\approx$ on characters
defined by (\ref{eq:relation}).

For each  $\xi \in F$ define the
following subsets of $\H$ (in fact subsets of
$(F\times F^*)/_{\approx} \cup \{\infty\}$ if $\xi\ne 0,1$; these subsets
will be used to represent subsets of $\H$
throughout this section, which should cause no confusion:
\begin{eqnarray*}
{\cal S}_{\xi}^+ &=& \{(\xi,\eta)\colon\eta \in S^* \},\\
{\cal S}_{\xi}^- &=& \{(\xi,\eta)\colon\eta \in N^* \},\\
{\cal S}_{\xi} &=& {\cal S}_{\xi}^+ \cup {\cal S}_{\xi}^-.
\end{eqnarray*}
Observe that (because of the equivalence relation $\approx$)
the sets $\{{\cal S}_{\xi}^+$, ${\cal S}_{\xi}^- \}$ and
$\{{\cal S}_{1-\xi}^+$, ${\cal S}_{1-\xi}^- \}$ coincide for $\xi \neq 0,1$.
Moreover, since $(\frac{1}{2},\eta) \approx (\frac{1}{2},-\eta)$, it
follows that the cardinality of ${\cal S}_{\xi}$ is $p-1$ except
for $\xi = \frac{1}{2}$ when the cardinality is $\frac{p-1}{2}$.
Similarly, the cardinalities of ${\cal S}_{\xi}^ + $
and  ${\cal S}_{\xi}^-$ are $\frac{p-1}{2}$ except for
$\xi = \frac{1}{2}$ when the cardinalities are $\frac{p-1}{4}$.
The following result proved in \cite{MS3}
determines all the suborbits of the action of $G$ on
${\cal H}$. The suborbits given in the theorem
are summarized in Table~\ref{tab:cases}.

\begin{theorem}
\label{th:dihed}
{\rm \cite[Theorem]{MS3}}
The action of $G$ on ${\cal H}$ has
\begin{enumerate}[(i)]
\itemsep=0pt
\item $\frac{p+7}{4}$ suborbits of length $p-1$, all of them self-paired.
These are ${\cal S}_0 ^+ \cup {\cal S}_1 ^+$,
${\cal S}_0 ^- \cup {\cal S} _1 ^-$ and ${\cal S}_{\xi}$ for all those $\xi\ne\frac{1}{2}$ which satisfy $\xi^{-1}-1 \in N^*$.

\item  $\frac{p-5}{2}$ suborbits of length $\frac{p-1}{2}$, namely
${\cal S}_\xi^+$ and ${\cal S}_\xi^-$, where
$\xi\ne\frac{1}{2}$ and $\xi^{-1}-1 \in S^*$.
Among them the self-paired suborbits correspond to all those $\xi$
for which both $\xi$ and $\xi-1$ belong to $S^*$ and so their
number is $\frac{p-9}{4}$ if $p\equiv {1\pmod 8}$ and
$\frac{p-5}{4}$ if $p\equiv {5 \pmod 8}$.

\item  2 suborbits of length $\frac{p-1}{4}$, namely
${\cal S}_{\frac{1}{2}}^+$ and  ${\cal S}_{\frac{1}{2}}^-$ which
are self-paired if and only if $p\equiv {1\pmod 8}$.
\end{enumerate}
\end{theorem}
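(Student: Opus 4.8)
The plan is to read off the suborbits as the orbits of $H$ acting by right multiplication on $\H$, working throughout with the identification of $\H$ with $(F\times F^{*})/_{\approx}\cup\{\infty\}$ (which already absorbs Proposition~\ref{pro:characters}(i)). The coset $\infty=H$ is fixed, giving the trivial suborbit, so the work is with cosets of character $(\xi,\eta)$. By Proposition~\ref{pro:characters}(ii), right multiplication by an element of $H$ sends a character $(\xi,\eta)$ either to $(\xi,y\eta)$ or to $(1-\xi,-y\eta^{-1})$ with $y\in S^{*}$; rewriting the latter through the equivalence~(\ref{eq:relation}) turns it into $(\xi,\tfrac{1-\xi}{\xi}\eta^{-1}y)$, and since $\eta^{-1}S^{*}=\eta S^{*}$, iterating these moves shows that for $\xi\neq 0,1,\tfrac12$ the $H$-orbit of $(\xi,\eta)$ is exactly $\{(\xi,\eta')\colon\eta'\in \eta S^{*}\cup\tfrac{1-\xi}{\xi}\eta S^{*}\}$. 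So everything is governed by $\tfrac{1-\xi}{\xi}=\xi^{-1}-1$ modulo $S^{*}$: if $\xi^{-1}-1\in N^{*}$ the two cosets of $S^{*}$ differ and the orbit is all of ${\cal S}_{\xi}$ (length $p-1$), whereas if $\xi^{-1}-1\in S^{*}$ the orbit is the single half ${\cal S}_{\xi}^{+}$ or ${\cal S}_{\xi}^{-}$ (length $\tfrac{p-1}{2}$). The same bookkeeping, now using $(\tfrac12,\eta)\approx(\tfrac12,-\eta)$ and the fact that $-1\in S^{*}$ by Proposition~\ref{pro:-1}, shows that ${\cal S}_{\tfrac12}^{+},{\cal S}_{\tfrac12}^{-}$ are single orbits of length $\tfrac{p-1}{4}$ and that for $\xi\in\{0,1\}$ right multiplication links $(0,y)$ with $(1,-y^{-1})$, so the orbit is ${\cal S}_{0}^{+}\cup{\cal S}_{1}^{+}$ or ${\cal S}_{0}^{-}\cup{\cal S}_{1}^{-}$ of length $p-1$.

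Next comes the counting. Because ${\cal S}_{\xi}={\cal S}_{1-\xi}$ and $(1-\xi)^{-1}-1=(\xi^{-1}-1)^{-1}$, the conditions $\xi^{-1}-1\in N^{*}$ and $\xi^{-1}-1\in S^{*}$ are each invariant under $\xi\leftrightarrow 1-\xi$; since $\xi\mapsto\xi^{-1}-1$ is a bijection $F\setminus\{0,1\}\to F\setminus\{-1,0\}$ and $-1\in S^{*}$, there are $|N^{*}|=\tfrac{p-1}{2}$ values of the first kind, hence $\tfrac{p-1}{4}$ orbits ${\cal S}_{\xi}$, and adding the two orbits ${\cal S}_{0}^{\pm}\cup{\cal S}_{1}^{\pm}$ gives the $\tfrac{p+7}{4}$ suborbits of length $p-1$. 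The analogous count for the second kind (omitting $\xi=\tfrac12$, for which $\xi^{-1}-1=1\in S^{*}$) gives $\tfrac{p-5}{2}$ admissible values of $\xi$, hence $\tfrac{p-5}{4}$ pairs $\{{\cal S}_{\xi}^{+},{\cal S}_{\xi}^{-}\}$, i.e.\ the $\tfrac{p-5}{2}$ suborbits of length $\tfrac{p-1}{2}$; together with ${\cal S}_{\tfrac12}^{\pm}$ this exhausts $\H$, which I would confirm via $1+\tfrac{p+7}{4}(p-1)+\tfrac{p-5}{2}\cdot\tfrac{p-1}{2}+2\cdot\tfrac{p-1}{4}=\tfrac{p(p+1)}{2}=[G:H]$.

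For the self-pairing assertions, the suborbit through $Hg$ is self-paired iff its $H$-orbit also contains $Hg^{-1}$; from $g=\mm{a}{b}{c}{d}$ one gets $g^{-1}=\mm{d}{-b}{-c}{a}$ with character $(\xi,-a^{2}\xi^{-1}\eta)$, i.e.\ $(\xi,-\xi^{-1}\eta)$ up to a factor in $S^{*}$. In case~(i) this clearly stays in ${\cal S}_{\xi}$ when $\xi\neq 0,1$, and a direct computation on a standard representative settles the two suborbits with $\xi\in\{0,1\}$, so all of them are self-paired. In case~(ii), since $-1\in S^{*}$, self-pairing holds iff $\xi\in S^{*}$, which together with $\xi^{-1}-1\in S^{*}$ is equivalent to $\xi,\xi-1\in S^{*}$, i.e.\ $\xi\in S^{*}\cap(S^{*}+1)$, a set of size $\tfrac{p-5}{4}$ by Proposition~\ref{pro:preseki}; by Proposition~\ref{pro:2} the value $\tfrac12$ belongs to $S^{*}\cap(S^{*}+1)$ exactly when $p\equiv 1\pmod 8$, and since the pairing $\xi\leftrightarrow 1-\xi$ and the doubling over the two halves cancel, the count of self-paired suborbits is $\tfrac{p-9}{4}$ if $p\equiv 1\pmod 8$ and $\tfrac{p-5}{4}$ if $p\equiv 5\pmod 8$. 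Finally ${\cal S}_{\tfrac12}^{\pm}$ is self-paired iff $-2\in S^{*}$, i.e.\ iff $2\in S^{*}$, i.e.\ iff $p\equiv 1\pmod 8$, again by Proposition~\ref{pro:2}.

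I expect the main obstacle to be keeping the equivalence~(\ref{eq:relation}) and the $S^{*}$-scaling from Proposition~\ref{pro:characters}(ii) under control at the same time: one must verify that the dichotomy $\xi^{-1}-1\in S^{*}$ versus $N^{*}$ is well defined on $\approx$-classes and genuinely controls both the orbit lengths and, via the sign of $-\xi^{-1}$, the self-pairing, and one must track the off-by-one effects of the degenerate characters $\xi\in\{0,1,\tfrac12\}$ --- which is precisely where Propositions~\ref{pro:-1},~\ref{pro:2} and~\ref{pro:preseki} enter.
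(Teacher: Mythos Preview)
The paper does not actually give a proof of this theorem: it is quoted verbatim from \cite{MS3} and only the surrounding character machinery (Proposition~\ref{pro:characters} and the equivalence~(\ref{eq:relation})) is reproduced. Your sketch is correct and is precisely the argument that this machinery is designed to support: reading off the $H$-orbits on characters via Proposition~\ref{pro:characters}(ii), reducing the orbit length dichotomy to whether $\xi^{-1}-1$ is a square, and reading off self-pairing from the character of $g^{-1}$. The counts all check, including the off-by-one at $\xi=\tfrac12$ via Proposition~\ref{pro:2} and the use of Proposition~\ref{pro:preseki} for $|S^*\cap(S^*+1)|$; your remark that ``the pairing $\xi\leftrightarrow 1-\xi$ and the doubling over the two halves cancel'' is the right bookkeeping, since for $\xi\in S^*\cap(S^*+1)$ one has ${\cal S}_\xi^{+}={\cal S}_{1-\xi}^{+}$, so each $\approx$-class $\{\xi,1-\xi\}$ contributes exactly the two self-paired suborbits ${\cal S}_\xi^{+},{\cal S}_\xi^{-}$.
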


\begin{example}
\label{ex:smallest}
{\rm The smallest admissible pair of primes  $p = 13$ and $q = 7$
gives rise to the action of $G=\PSL(2,13)$ on cosets of $H=D_{12}$
with the following suborbits:
\begin{enumerate}[(i)]
\itemsep=0pt
\item ${\cal S}_0 ^+ \cup {\cal S}_1 ^+$,
${\cal S}_0 ^- \cup {\cal S} _1 ^-$, ${\cal S}_2$, ${\cal S}_3$,
${\cal S}_5$ of size $12$, all of them self-paired,
\item ${\cal S}_4^+$, ${\cal S}_4^-$
of size $6$, all of them self-paired,
\item ${\cal S}_6^+$,
${\cal S}_6^-$ of size $6$, which are not self-paired,  and
\item ${\cal S}_7^+$,  ${\cal S}_7^-$
of size $3$ which are not self-paired.
\end{enumerate}
Therefore each of the corresponding generalized orbital graphs
is a union of the graphs
$X(G,H,{\cal W})$ with
${\cal W} \in \{{\cal S}_0 ^+ \cup {\cal S}_1 ^+,
{\cal S}_0 ^- \cup {\cal S} _1 ^-, {\cal S}_2, {\cal S}_3,
{\cal S}_4^+, {\cal S}_4^-,
{\cal S}_5, {\cal S}_6, {\cal S}_7 \}$.
}
\end{example}
\medskip

{\small
\begin{table}[h!]
$$
\begin{array}{|c|c|c|c|c|c|c|}
\hline
\textrm{Row} & \W &\textrm{Conditions on } \xi&  val(X) &
\textrm{Conditions on } p & d(V_\infty) & d(V_\infty, V_x)\\
& & &  &  &   &  x\in F^*\\
\hline
\hline
1&\S_\xi &   \xi\ne \frac{1}{2},1  & p-1 & & 0 & 2\\
 &  & \xi \in S^*\cap N^*+1 \textrm{ or }& &   &&\\
& &  \xi \in N^*\cap S^*+1 &  & & & \\[1ex]
\hline
2&\S_\xi &  \xi\ne \frac{1}{2} &  p-1 & & 0 & 2\\
&  &  \xi \in N^* \cap N^*+1 &   && &\\[1ex]
\hline
3&\S_0^+\cup \S_1^+ & & p-1& & (p-1)/2 & 2 \textrm{ if } x\in S^*\\
& & && & & 0 \textrm{ if } x\in N^*\\[1ex]
\hline
4&\S_0^-\cup \S_1^- & & p-1& & (p-1)/2 & 0 \textrm{ if } x\in S^*\\
& & && & & 2 \textrm{ if } x\in N^*\\[1ex]
\hline
5&\S_{\frac{1}{2}}&  \xi=\frac{1}{2} & (p-1)/2 & p\equiv {5\pmod 8} & 0 & 1\\[1.2ex]
\hline
6&\S_{\frac{1}{2}}^+&  \xi=\frac{1}{2} & (p-1)/4 & p\equiv {1\pmod 8}
& 0 & 1 \textrm{ if } x\in S^*\\
& & && & & 0 \textrm{ if } x\in N^*\\[1ex]
\hline
7&\S_{\frac{1}{2}}^-&  \xi=\frac{1}{2} & (p-1)/4 & p\equiv {1\pmod 8}
& 0 & 0 \textrm{ if } x\in S^*\\
& & && & & 1 \textrm{ if } x\in N^*\\[1ex]
\hline
8&\S_\xi^+ &  \xi\ne \frac{1}{2},1  & (p-1)/2 && 0 & 2 \textrm{ if } x\in S^*\\
&  & \xi \in S^* \cap S^*+1&   & & & 0 \textrm{ if } x\in N^*\\[1ex]
\hline
9&\S_\xi^- &  \xi\ne \frac{1}{2},1  & (p-1)/2 && 0 & 0 \textrm{ if } x\in S^*\\
&  & \xi \in S^* \cap S^*+1 &   & & & 2 \textrm{ if } x\in N^*\\[1ex]
\hline
\end{array}
$$
\caption{\label{tab:cases}
\footnotesize The list of all basic orbital graphs
$X=X(G,H,{\cal W})$, where ${\cal W}$ is a self-paired union
of suborbits described in Theorem~\ref{th:dihed}.
In the last two columns valencies
of $d(V_\infty)$ and $d(V_\infty, V_x)$, $x\in F^*$, are
listed.  By Proposition~\ref{pro:isomorphic} graphs arising
from suborbits in Rows 6 and  7 are pairwise isomorphic. Also,
graphs arising from suborbits in Rows 8 and  9 are pairwise isomorphic.
}
\end{table}
}

The following proposition follows from \cite{PX}.
Consequently, only seven different types of basic orbital graphs
arising from the action of $\PSL(2,p)$ on cosets of $D_{p-1}$
need to be considered.

\begin{proposition}
\label{pro:isomorphic}
{\rm \cite[Table VI]{PX}}
The basic orbital graphs arising from Rows 6 and 7 of Table~\ref{tab:cases} are isomorphic, that is, $X(G,H,\S_{\frac{1}{2}}^+) \cong X(G,H,\S_{\frac{1}{2}}^-)$.
Also, the basic orbital graphs arising from Rows 8 and 9 of Table~\ref{tab:cases}
are isomorphic, that is, $X(G,H,\S_\xi^+) \cong X(G,H,\S_\xi^-)$,
where $\xi \in S^*\cap S^*+1$ and $\xi\ne\frac{1}{2},1$.
\end{proposition}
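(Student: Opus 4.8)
The statement is quoted from \cite[Table~VI]{PX}; the plan is to prove it directly by producing the isomorphism explicitly as the permutation of $\H$ induced by the diagonal (outer) automorphism of $G=\PSL(2,p)$. Fix once and for all a non-square $t\in N^*$ and let $\tau=\mathrm{diag}(t,1)\in\PGL(2,p)\setminus\PSL(2,p)$. The first step is to check that $\tau$ normalizes $H$: conjugation by $\tau$ fixes every diagonal element $\mm{x}{0}{0}{x^{-1}}$ and sends the reflection $\mm{0}{-1}{1}{0}$ to $\mm{0}{-t}{t^{-1}}{0}$, which is again of the form $\mm{0}{-x}{x^{-1}}{0}$ and hence lies in $H$; thus $\tau H\tau^{-1}=H$. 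Since $G\trianglelefteq\PGL(2,p)$, conjugation by $\tau$ is an automorphism of $G$, so the map $\sigma\colon\H\to\H$ defined by $\sigma(Hg)=H(\tau^{-1}g\tau)$ is well defined: the set $\tau^{-1}(Hg)\tau$ equals $H(\tau^{-1}g\tau)$ because $\tau$ normalizes $H$, and $\tau^{-1}g\tau\in G$ because $G$ is normal in $\PGL(2,p)$. Replacing $\tau$ by $\tau^{-1}$ gives the inverse of $\sigma$, one has $\sigma(H)=H$, and $\sigma(Hg\cdot x)=\sigma(Hg)\cdot(\tau^{-1}x\tau)$ for all $x\in G$. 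It follows that $\sigma$ permutes the suborbits of $G$ on $\H$, carrying the $G$-orbit of $(H,Hg)$ in $\H\times\H$ to that of $(H,\sigma(Hg))$, and therefore for every suborbit (or self-paired union of suborbits) $\W$ it restricts to a graph isomorphism $X(G,H,\W)\cong X(G,H,\sigma(\W))$.

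The second step is to read off the effect of $\sigma$ on the character coordinates $\chi(g)=(ad,a^{-1}b)=(\xi,\eta)$ of \cite{MS3}. For $g=\mm{a}{b}{c}{d}$ with $ab\ne0$ one computes $\tau^{-1}g\tau=\mm{a}{t^{-1}b}{tc}{d}$, which still has determinant $ad-bc=1$ and character $(\xi,t^{-1}\eta)$, while $\sigma$ fixes $\infty$ and the classes in $\{0,1\}\times F^*$. Hence, under the identification of $\H$ with $(F\times F^*)/_{\approx}\cup\{\infty\}$, the map $\sigma$ is simply $(\xi,\eta)\mapsto(\xi,t^{-1}\eta)$; this is manifestly compatible with the relation $\approx$ of (\ref{eq:relation}), and also with the extra identification $(\tfrac12,\eta)\approx(\tfrac12,-\eta)$ since $-1\in S^*$ (as $p\equiv1\pmod4$). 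Because $t^{-1}\in N^*$, multiplying the second coordinate by $t^{-1}$ interchanges $S^*$ and $N^*$, so $\sigma(\S_\xi^+)=\S_\xi^-$ and $\sigma(\S_\xi^-)=\S_\xi^+$ for every admissible $\xi$; in particular this holds for $\xi=\tfrac12$ (Rows~6 and~7) and for $\xi\in S^*\cap S^*+1$ with $\xi\ne\tfrac12,1$ (Rows~8 and~9). For all these $\xi$ both suborbits are self-paired by Theorem~\ref{th:dihed}, so the orbital graphs are undirected and $\sigma$ is the required isomorphism $X(G,H,\S_{\frac{1}{2}}^+)\cong X(G,H,\S_{\frac{1}{2}}^-)$, respectively $X(G,H,\S_\xi^+)\cong X(G,H,\S_\xi^-)$.

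The main obstacle is the bookkeeping in the first two steps rather than any genuine difficulty: one must verify that $\sigma$, a priori only a bijection among cosets in $\PGL(2,p)$, really maps $\H$ into itself (this is exactly where $\tau H\tau^{-1}=H$ is used, and it is also why $\tau$ must be taken outside $\PSL(2,p)$ — an inner conjugation fixing $H$ would give back the $G$-action and hence no new isomorphism), and that transporting $\sigma$ to the character model of \cite{MS3} respects both the equivalence $\approx$ and the anomalous value $\xi=\tfrac12$. Once these points are settled, the quadratic-residue computation producing the swap $\S_\xi^+\leftrightarrow\S_\xi^-$ is immediate.
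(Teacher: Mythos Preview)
The paper does not supply its own proof of this proposition; it simply cites \cite[Table~VI]{PX} with the remark that the result ``follows from \cite{PX}''. Your argument is therefore not competing with a proof in the paper but filling in what the paper omits, and it does so correctly: conjugation by a diagonal element $\tau=\mathrm{diag}(t,1)\in\PGL(2,p)\setminus\PSL(2,p)$ normalises $H$, induces a permutation $\sigma$ of $\H$ commuting with the (twisted) $G$-action, and on characters acts by $(\xi,\eta)\mapsto(\xi,t^{-1}\eta)$, swapping $\S_\xi^+$ and $\S_\xi^-$. Your checks of compatibility with the equivalence $\approx$ and with the special case $\xi=\tfrac12$ are the right ones.

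One tiny slip worth fixing: with $\sigma(Hg)=H(\tau^{-1}g\tau)$ and $\tau=\mathrm{diag}(t,1)$, the conjugate of $\mm{0}{-1}{1}{0}$ is $\mm{0}{-t^{-1}}{t}{0}$, not $\mm{0}{-t}{t^{-1}}{0}$ (you seem to have computed $\tau g\tau^{-1}$ at that one spot). Either matrix lies in $H$, so the conclusion $\tau H\tau^{-1}=H$ is unaffected, and your subsequent character computation $\tau^{-1}\mm{a}{b}{c}{d}\tau=\mm{a}{t^{-1}b}{tc}{d}$ is consistent with the stated convention. Also, your phrase ``fixes \ldots\ the classes in $\{0,1\}\times F^*$'' should be read as ``preserves the set $\{0,1\}\times F^*$'' rather than fixing each class pointwise; this is harmless for the argument but could be stated more precisely.
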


With the explicit description of the suborbits
of $G$ on ${\cal H}$ the construction of the
corresponding generalized orbital graphs
$X = X(G,H,{\cal W})$, where ${\cal W}$ is a self-paired union of suborbits
of $G$, is relatively simple.
Namely, the edge set of $X$ is precisely the set
$\{\{Hg,Hwg\}\colon g\in G, w\in {\cal W}\}$.

The description of these graphs
$X = X(G,H,{\cal W})$, where ${\cal W}$ is a self-paired union of suborbits
of $G$ is best done via a `factorization modulo'
the Sylow $p$-subgroup
$$
P=\la \mm{1}{1}{0}{1}\ra =\{\mm{1}{a}{0}{1}\colon a\in F\}.
$$
Observe that $P$ has $(p+1)/2$ orbits on $\H$. These are
\begin{eqnarray*}
V_\infty &=&\{H\mm{1}{a}{0}{1} \colon a\in F\} \ \textrm{ and }\\
V_x  &=&\{H\mm{1}{x}{-x^{-1}}{0}\mm{1}{a}{0}{1} \colon a\in F\}
= V_{-x}, \ x\in F^*.
\end{eqnarray*}
In the proofs below we will often use the fact that
$V_x$, $x\in F^*$, contains both
$$
H\mm{1}{x}{-x^{-1}}{0} \ \textrm{ and } \
H\mm{1}{-x}{x^{-1}}{0}.
$$
Note also that, using the above mentioned identification, we have
\begin{eqnarray*}
V_\infty&=&\{\infty, (1,1),(1,2),\ldots,(1,p-1)\} \ \textrm{ and }\\
V_x &=&\{(0,x),(0,-x)\}\cup\{(\xi,x), (\xi,-x) \colon \xi\in F^*\setminus\{1\}\}, x\in F^*.
\end{eqnarray*}

The generator
$$
\rho=\mm{1}{1}{0}{1}\in P
$$
is a $((p+1)/2,p)$-semiregular automorphism of $X$.
Let $\P=\{V_\infty\}\cup\{V_x\colon x\in F^*\}$ be the set of
orbits of $\rho$, and
consider the corresponding quotient graph $X_\P$
and quotient multigraph $X_\rho$ (see Subsection~\ref{ssec:semiregular}).
The following proposition gives the values of
$d(V_\infty)$ and $d(V_\infty,V_x)$, $x\in F^*$, for the basic orbital
graphs $X(G,H,\W)$ (see Theorem~\ref{th:dihed}).

\begin{proposition}
\label{lem:infty}
The valencies $d(V_\infty)$ and $d(V_\infty, V_x)$, $x\in F^*$,
for a basic orbital graph $X=X(G,H,\W)$
are as given in Table~\ref{tab:cases}.
\end{proposition}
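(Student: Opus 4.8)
The plan is to exploit the fact that, since $\W$ is a self-paired union of suborbits of $G$ acting by right multiplication on $\H$, the neighbours of the vertex $\infty$ (that is, of the coset $H$) in $X=X(G,H,\W)$ are exactly the cosets lying in $\W$, viewed as a subset of $(F\times F^*)/_\approx\cup\{\infty\}$. Because $\rho$ fixes $V_\infty$ setwise and permutes it transitively, and likewise permutes each $V_x$ transitively, the bipartite graphs $X[V_\infty,V_x]$ and the induced graph $X\la V_\infty\ra$ are regular, so $d(V_\infty)=|\W\cap V_\infty|$ and $d(V_\infty,V_x)=|\W\cap V_x|$. Thus everything reduces to computing $\W\cap V_\infty$ and $\W\cap V_x$ for each of the nine rows of Table~\ref{tab:cases}, working throughout with the identification of $\H$ and with the suborbit description of Theorem~\ref{th:dihed} (and using Proposition~\ref{pro:characters} for the bookkeeping modulo $\approx$).

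First I would deal with $V_\infty$. Reading off the coset representatives, $V_\infty\setminus\{\infty\}$ is precisely the set of cosets of character $(1,\eta)$, $\eta\in F^*$, i.e.\ the classes forming $\S_1^+\cup\S_1^-$ (the relation $\approx$ is inapplicable at $\xi=1$, so these are $p-1$ distinct cosets). Hence $\W\cap V_\infty=\W\cap(\S_1^+\cup\S_1^-)$, which equals $\S_1^+$ when $\W=\S_0^+\cup\S_1^+$, equals $\S_1^-$ when $\W=\S_0^-\cup\S_1^-$, and is empty for every $\W$ of the form $\S_\xi$ or $\S_\xi^\pm$ with $\xi\ne 0,1$ (such a $\W$ meets only cosets of character $(\xi,\cdot)$ or $(1-\xi,\cdot)$, and neither $\xi$ nor $1-\xi$ is $1$). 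This yields the $d(V_\infty)$ column of Table~\ref{tab:cases}: $(p-1)/2$ in Rows~3 and~4, and $0$ in all other rows.

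Next I would parametrise the orbit $V_x$. A short computation shows that $H\mm{1}{x}{-x^{-1}}{0}\rho^a$ contains the matrix $\mm{1}{a+x}{-x^{-1}}{-ax^{-1}}$, so for $a\ne -x$ it has character $(-ax^{-1},a+x)$; putting $t=-ax^{-1}$ this is $(t,x(1-t))$ with $t\in F\setminus\{1\}$, while $a=-x$ gives the coset of character $(0,-x)$. Passing to $\approx$-classes one reads off that $V_x\cap\S_\xi$ equals $\{(\xi,x(1-\xi)),(\xi,-x(1-\xi))\}$ when $\xi\notin\{0,1,\frac{1}{2}\}$, equals $\{(0,x),(0,-x)\}$ when $\xi=0$, equals the single class $(\frac{1}{2},x/2)$ when $\xi=\frac{1}{2}$ (note $(\frac{1}{2},x/2)\approx(\frac{1}{2},-x/2)$), and is empty when $\xi=1$. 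With this the last column of Table~\ref{tab:cases} follows row by row. For $\W=\S_\xi$ with $\xi\ne 0,1,\frac{1}{2}$ we get $d(V_\infty,V_x)=|V_x\cap\S_\xi|=2$ (Rows~1,~2). For $\W=\S_0^+\cup\S_1^+$ we get $|V_x\cap\S_0^+|$, which is $2$ if $x\in S^*$ and $0$ if $x\in N^*$, using $-1\in S^*$ (Proposition~\ref{pro:-1}); Row~4 is the mirror image. For $\W=\S_\xi^+$ with $\xi\in S^*\cap S^*+1$ one has $1-\xi\in S^*$ (again since $-1\in S^*$), hence $x(1-\xi)\in S^*\iff x\in S^*$, giving $d(V_\infty,V_x)=2$ for $x\in S^*$ and $0$ for $x\in N^*$; Row~9 is the mirror image. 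Finally, for the $\xi=\frac{1}{2}$ rows the single class $(\frac{1}{2},x/2)$ always lies in $\S_{\frac{1}{2}}$ (Row~5, giving $d=1$), and lies in $\S_{\frac{1}{2}}^+$ exactly when $x/2\in S^*$, which under the standing hypothesis $p\equiv 1\pmod 8$ means $x\in S^*$ by Proposition~\ref{pro:2} (Rows~6,~7, giving $d=1$ or $0$ accordingly).

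I expect the main obstacle to be purely bookkeeping rather than conceptual: one must consistently count $\approx$-classes of characters rather than ordered pairs, which forces a separate treatment of the degenerate values $\xi\in\{0,1,\frac{1}{2}\}$ and of $\infty$ (where $\approx$ is either inapplicable or has a fixed point), and one must check that the parametrisation of the $\rho$-orbit $V_x$ meshes correctly with the suborbit decomposition of Theorem~\ref{th:dihed}. Once those degenerate cases are pinned down, each of the nine rows becomes a one-line verification relying only on $-1\in S^*$ (Proposition~\ref{pro:-1}) and, for Rows~5--7, on $2\in S^*$ (Proposition~\ref{pro:2}).
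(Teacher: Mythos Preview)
Your proposal is correct and follows essentially the same strategy as the paper: both arguments amount to computing $|\W\cap V_\infty|$ and $|\W\cap V_x|$ using the character description of cosets. The only difference is in execution. The paper parametrises the neighbours of $H$ (i.e., the elements of $\W$) by matrices $\mm{1}{z}{(y-1)/z}{y}$ and then solves a matrix equation to decide membership in $V_x$, obtaining two solutions which collapse into one orbit because $V_x=V_{-x}$. You instead parametrise $V_x$ itself by characters $(t,x(1-t))$ and read off $V_x\cap\S_\xi$ directly; this is slightly cleaner since it avoids solving the equation and makes the role of the $\approx$-collapse at $\xi=\tfrac12$ more transparent. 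One tiny inaccuracy in your closing paragraph: Row~5 does not actually require Proposition~\ref{pro:2}, since there $\W=\S_{1/2}$ is the full suborbit and the square/non-square status of $x/2$ is irrelevant; only Rows~6 and~7 need $2\in S^*$.
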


\begin{proof}
Note that  $H \in V_\infty$ is adjacent to all the vertices
of the form $Hw$, where $w\in \W$,
which belong to $V_\infty$ if and only if its
character is of the form $(1,\eta)$, $\eta\in F^*$.
Since, by Theorem~\ref{th:dihed},
$\S_0 ^+ \cup \S_1 ^+$ and $\S_0 ^- \cup \S_1 ^-$
are the only two suborbits with nontrivial intersection with $\S_1$
we obtain the value of $d(V_\infty)$ as given in Table~\ref{tab:cases}.

To determine the values for $d(V_\infty, V_x)$ let us consider
the neighbors of $H$. Note that a representative of a coset
$Hg$ outside $V_\infty$
adjacent to $H$ is of the form
$$
\mm{1}{z}{\frac{y-1}{z}}{y} \ \textrm{ for a suitable $z\in F^*$}.
$$
Now, if this neighbor is inside the orbit $V_x$ then there exists $j\in F$ such that
$$
\mm{1}{z}{\frac{y-1}{z}}{y}=\mm{1}{x}{-x^{-1}}{0}\mm{1}{j}{0}{1}=
\mm{1}{j+x}{-x^{-1}}{-\frac{j}{x}}.
$$
Hence, recalling the equivalence
relation (\ref{eq:relation}) used
for identification of cosets and characters,
 either $(y,z)$ or $(1-y,\frac{yz}{y-1})$ is equal to $(-\frac{j}{x},j+x)$.
Now, the two equations $(y,z)=(-\frac{j}{x},j+x)$ and
$(1-y,\frac{yz}{y-1})=(-\frac{j}{x},j+x)$ give
solutions
$$
j=\frac{yz}{y-1}, x=\frac{z}{1-y} \ \textrm{ and } \
j=z, x=\frac{z}{y-1}=-\frac{z}{1-y}.
$$
Since $V_x=V_{-x}$ and $\frac{yz}{y-1}\not=y$ if $y\ne\frac{1}{2}$
we can conclude that for
$\W\in \{\S_0 ^+ \cup \S_1 ^+,\S_0 ^- \cup \S_1 ^-, \S_\xi,\S_\xi^+,
\S_\xi^-\}$ either
$V_\infty$ and $V_x$ are adjacent in $X_\rho$ with a double edge
or there is no edge between them, whereas for
$\W\in\{\S_{\frac{1}{2}},\S_{\frac{1}{2}}^+,\S_{\frac{1}{2}}^-\}$
all the edges in $X_\rho$ containing the vertex $V_\infty$ are simple edges.
Applying the conditions for suborbits given in Theorem~\ref{th:dihed}
one can now obtain the values of $d(V_\infty,V_x)$ for all possible
basic suborbits $\W$.
\end{proof}

A quasi-semiregular action is a natural
generalization of a semiregular action
(see Subsection~\ref{ssec:semiregular}).
Following~\cite{KMMM}, we say that
a group $G$ acts {\em quasi-semiregularly} on a set $V$ if there exists an element $v$ in $V$ such that $v$ is fixed by any element of $G$, and $G$ acts semiregularly on $V \setminus \{v\}$. If $G$ is nontrivial,   then $v$ is uniquely determined, and is referred to as the {\em fixed point} of $G$. A nontrivial automorphism $g$ of a graph $X$ is called {\em quasi-semiregular} if the group $\la g\ra$ acts quasi-semiregularly on $V(X)$. Equivalently, $g$ fixes a vertex and the only power $g^i$ fixing another vertex is the identity mapping.
If a group $G$ is quasi-semiregular on the vertex set of the graph with $m+1$ orbits, then the graph is called a {\em quasi $m$-Cayley graph} on $G$.
If $G$ is cyclic and quasi-semiregular with two nontrivial orbits
then the graph is said to be a {\em quasi-bicirculant}.

In the next proposition we prove that the quotient
graph $X_\P$ of a generalized orbital graph $X$ is a quasi-bicirculant.
The corresponding group automorphism is given by
a diagonal matrix whose diagonal consists of an appropriate pair
of generators of $S^*$. More precisely, let
\begin{eqnarray}
\label{eq:sigma}
\sigma=\mm{z}{0}{0}{z^{-1}}, \textrm{ where } \la z\ra=S^*.
\end{eqnarray}

\begin{proposition}
\label{pro:quasibicirculant}
Let $X=X(G,H,\W)$. Then $X_\P$ is a quasi-bicirculant.
The corresponding
quasi-semiregular action on $X_\P$ is given by the subgroup
generated by $\sigma$.
Moreover, the fixed point of this  action
is $V_\infty$ and the two nontrivial orbits are
$\OO(S^*)=\{V_x\colon x \in S^*\}$ and
$\OO(N^*)=\{V_x\colon x \in N^*\}$.
\end{proposition}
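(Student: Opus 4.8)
The plan is to verify directly that right multiplication by $\sigma$ realizes the asserted quasi-semiregular action on $X_\P$. Since $X=X(G,H,\W)$ is an orbital graph of $G$, right multiplication by any element of $G$ is an automorphism of $X$; and a one-line computation gives $\sigma\rho\sigma^{-1}=\rho^{z^{2}}\in P$, so $\sigma$ normalizes the Sylow $p$-subgroup $P=\la\rho\ra$. Consequently right multiplication by $\sigma$ maps $P$-orbits on $\H$ to $P$-orbits, hence permutes the vertex set $\P$ of $X_\P$, and, being an automorphism of $X$, it induces an automorphism $\bar\sigma$ of $X_\P$. Thus $\la\bar\sigma\ra\le\Aut X_\P$, and the task reduces to determining its orbits. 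First, $\bar\sigma$ fixes $V_\infty$: the matrix $\sigma=\mm{z}{0}{0}{z^{-1}}$ lies in $H$, so $H\sigma=H$, and the $P$-orbit $V_\infty$ of the coset $H$ is therefore fixed.

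Next I would compute the action of $\bar\sigma$ on the remaining vertices $V_x$, $x\in F^*$. Taking the representative $g_x=\mm{1}{x}{-x^{-1}}{0}\in V_x$ and left-multiplying $g_x\sigma$ by the diagonal element $\mm{z^{-1}}{0}{0}{z}\in H$ produces $\mm{1}{xz^{-2}}{-x^{-1}z^{2}}{0}=g_{xz^{-2}}$, so that $Hg_x\sigma=Hg_{xz^{-2}}$ and hence $\bar\sigma(V_x)=V_{xz^{-2}}$. In other words, identifying $\{V_x\colon x\in F^*\}$ with $F^*/\{\pm1\}$ via $x\mapsto V_x$ (recall $V_x=V_{-x}$), the automorphism $\bar\sigma$ is nothing but multiplication by the nonzero square $z^{-2}$. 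As $z^{-2}\in S^*$, this multiplication preserves both $S^*$ and $N^*$, so $\bar\sigma$ leaves invariant each of $V_\infty$, $\OO(S^*)=\{V_x\colon x\in S^*\}$ and $\OO(N^*)=\{V_x\colon x\in N^*\}$, which already yields the claimed partition of $V(X_\P)$ into three $\la\bar\sigma\ra$-invariant pieces.

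It remains to show that $\la\sigma\ra$ is transitive and fixed-point-free on each of $\OO(S^*)$ and $\OO(N^*)$. Here I would invoke $-1\in S^*$ (valid since $p\equiv 1\pmod 4$): combined with the choice of $z$ this gives $\la z^{-2},-1\ra=S^*$, whence multiplication by $z^{-2}$ on $F^*/\{\pm1\}$ has exactly the two orbits $S^*/\{\pm1\}$ and $N^*/\{\pm1\}$ and acts regularly on each; in particular both orbits have length $(p-1)/4$, which is also the order of $\bar\sigma$, so $\la\sigma\ra$ acts semiregularly on $V(X_\P)\setminus\{V_\infty\}$. Together with the fixed point $V_\infty$ this is precisely a quasi-semiregular action of the cyclic group $\la\sigma\ra$, with fixed point $V_\infty$ and nontrivial orbits $\OO(S^*)$ and $\OO(N^*)$; by definition $X_\P$ is then a quasi-bicirculant. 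The step demanding the most care is the middle one — tracking $Hg_x\sigma$ through the coset identification so as to recognize it as a vertex in $\OO(S^*)$ or $\OO(N^*)$ — together with the closing arithmetic fact that $\la z^{-2},-1\ra$ exhausts $S^*$, which is exactly what forces $\OO(S^*)$ and $\OO(N^*)$ to be single orbits rather than unions of shorter ones.
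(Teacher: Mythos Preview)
Your approach is the same as the paper's: show that $\sigma$ normalizes $P$, hence permutes the $P$-orbits and induces an automorphism $\bar\sigma$ of $X_\P$; check that $V_\infty$ is fixed because $\sigma\in H$; compute $\bar\sigma(V_x)$ explicitly; and read off the orbit structure. Your calculation $\bar\sigma(V_x)=V_{xz^{-2}}$ is in fact the correct one (the paper records $V_{xz}$, which is a slip: left-multiplying $g_x\sigma=\mm{z}{xz^{-1}}{-x^{-1}z}{0}$ by $\mm{z^{-1}}{0}{0}{z}\in H$ gives $g_{xz^{-2}}$).

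There is, however, a genuine gap in your final step. You claim that $-1\in S^*$ together with $\la z\ra=S^*$ forces $\la z^{-2},-1\ra=S^*$. This fails precisely when $p\equiv 1\pmod 8$: then $(p-1)/4$ is even, so $-1=z^{(p-1)/4}\in\la z^2\ra$, and $\la z^{-2},-1\ra=\la z^2\ra$ has index $2$ in $S^*$. Concretely, $z^{-2}$ then has order only $(p-1)/8$ in $F^*/\{\pm1\}$, so each of $\OO(S^*)$ and $\OO(N^*)$ breaks into \emph{two} $\la\bar\sigma\ra$-orbits, and the action is not quasi-semiregular with the stated orbits. (For instance, with $p=73$ one has $(p+1)/2=37$ prime and $p\equiv 1\pmod 8$, so this case is genuinely relevant.)

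The fix is to choose $z$ to be a primitive root of $F^*$ rather than a generator of $S^*$. Then $z^{-2}$ generates $S^*$, its image in $F^*/\{\pm1\}$ has order exactly $(p-1)/4=|\OO(S^*)|=|\OO(N^*)|$, and your argument goes through unchanged to give the quasi-bicirculant structure. In fairness, this subtlety is not visible in the paper's own proof either, since the (erroneous) formula $\bar\sigma(V_x)=V_{xz}$ with $\la z\ra=S^*$ would give the right orbit lengths; but once one has the correct formula $V_{xz^{-2}}$, the hypothesis on $z$ must be strengthened.
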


\begin{proof}
Note first that $\sigma\in H$.
Observe that for an element $\mm{1}{a}{0}{1} \in P$, $a\in F$, we have
$$
\sigma^{-1}\mm{1}{a}{0}{1}\sigma=
\mm{1}{az^{-2}}{0}{1} \in P,
$$
and consequently $\la \sigma\ra \le N_G(P)$.
It follows that for every $a\in F$ we have
$$
H\mm{1}{a}{0}{1} \sigma= H\sigma \mm{1}{az^{-2}}{0}{1} =
H\mm{1}{az^{-2}}{0}{1}\in V_\infty.
$$
We can conclude that $V_\infty$ is fixed by $\la \sigma\ra$.
Now let us look at the action of $\sigma$ on
elements from an orbit $V_x$, $x\in F^*$.
Recall that elements of $V_x$ are of the form
$$
H\mm{1}{x}{-x^{-1}}{0}\mm{1}{a}{0}{1}, \textrm{ where $a\in F$}.
$$
Applying the action of $\sigma$ on any of these elements gives
\begin{eqnarray*}
H\mm{1}{x}{-x^{-1}}{0}\mm{1}{a}{0}{1}\sigma&=&
H\mm{1}{x}{-x^{-1}}{0}\sigma\mm{1}{az^{-2}}{0}{1} \\
&=&H\mm{z}{xz^{-1}}{-x^{-1}z}{0}\mm{1}{az^{-2}}{0}{1} \in V_{xz}.
\end{eqnarray*}
It follows that $\la \sigma\ra$ cyclically permutes the orbits of $P$,
and we can represent $\sigma$ as
a permutation of the vertex set of $X_\P$ in the following way
\begin{eqnarray*}
\sigma(V_\infty)=V_\infty \ \textrm{ and }
\sigma(V_x)=V_{xz}.
\end{eqnarray*}
Since $z \in S^*$ it is now clear
that $\la\sigma\ra$ is a quasi-semiregular group of automorphisms
of $X_\P$ (as well as of $X_\rho$) with one of the nontrivial orbits consisting of $V_x$, $x \in S^*$, and the other consisting
$V_x$, $x \in N^*$.
\end{proof}

In subsequent lemmas and propositions the following observations on characters of adjacent vertices in $X_\P$ will be frequently used.
Let $X=X(G,H,\W)$,
where $\W$ is one of the basic self-paired union of suborbits given
in Rows 1, 2, 5, 6, 8 and 9 of Table~\ref{tab:cases},
 and let $V_y \in V(X_\rho)$, $y\in F^*$.
Then a representative of a coset adjacent to a coset in $V_y$
is of the form
$$
\mm{1}{\eta}{\frac{\xi-1}{\eta}}{\xi}
\mm{1}{y}{-y^{-1}}{0}=
\mm{1-\eta y^{-1}}{y}{\frac{\xi-1}{\eta}-\xi y^{-1}}{\frac{y(\xi-1)}{\eta}}, \ \textrm{for some $\eta \in F^*$},
$$
where $\xi$ determines the suborbit $\W$ (see Table~\ref{tab:cases}).
If this neighbor is inside an orbit $V_x$, $x\in F^*$, then there exists $j\in F$
such that
$$
\mm{1-\eta y^{-1}}{y}{\frac{\xi-1}{\eta}-\xi y^{-1}}{\frac{y(\xi-1)}{\eta}}\equiv
\mm{1}{x}{-x^{-1}}{0}\mm{1}{j}{0}{1}=\mm{1}{j+x}{-x^{-1}}{-jx^{-1}}.
$$
Hence, in view of the equivalence relation (\ref{eq:relation}), one can see that either
\begin{eqnarray}
\label{eq:equation}
(\frac{(\xi-1)(y-\eta)}{\eta}, \frac{y^2}{y-\eta})=(-\frac{j}{x},j+x)
 \textrm{ or }
 (\frac{(\xi-1)(y-\eta)}{\eta}, \frac{y^2}{y-\eta})= (1+\frac{j}{x}, j).
\end{eqnarray}
This gives us that
\begin{eqnarray}
\label{eq:equation-j}
j_{1,2}=\frac{1}{2}(y-x\pm\sqrt{(x+y)^2- 4xy\xi}), \,
j_{3,4}=\frac{1}{2}(y-x\pm\sqrt{(x-y)^2+4xy\xi}),
\end{eqnarray}
and that
\begin{eqnarray}
\label{eq:equation-eta}
\eta_{1,2}=y - \frac{2y^2}{y+x\pm\sqrt{(x+y)^2-4xy\xi}}, \,
\eta_{3,4}=y - \frac{2y^2}{y+x\pm\sqrt{(x-y)^2+4xy\xi}},
\end{eqnarray}

Whenever we need to compare the values of $\eta_i$
for different pairs of orbits $V_x,V_y$ and $V_z,V_w$ we
will write $\eta_i(x,y)$ and $\eta_i(z,w)$.

\begin{proposition}
\label{pro:inside}
Let $X=X(G,H,\W)$,
where $\W$ is one of the basic self-paired union of suborbits given
in  Rows 1, 2, 5, 6, 7, 8 and 9 of Table~\ref{tab:cases}.
Then  for $y \in F^*$ we have
$$
d(V_y)=\left\{
\begin{array}{cl}
0, & \xi\in N^*\cap N^*+1 \\
2, & \xi\in S^*\cap N^*+1 \textrm{ or } \xi\in N^*\cap S^*+1\\
4, & \xi\in S^*\cap S^*+1 \\
0, & \xi=1/2 \textrm{ and } p\equiv {5\pmod 8}\\
2, & \xi=1/2 \textrm{ and } p\equiv {1\pmod 8}\\
\end{array}\right.,
$$
where  $\xi$ indicates the subscript
$\xi$ at $\S_\xi^\epsilon$,
$\epsilon=\pm 1$, in self-paired union  $\W$, as given
in Table~\ref{tab:cases}.
\end{proposition}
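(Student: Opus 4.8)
The plan is to count, for a fixed orbit $V_y$ with $y\in F^*$, the number of cosets in $V_y$ that are adjacent in $X$ to a fixed coset of $V_y$, using the explicit parametrisation of neighbours developed just before the statement. Concretely, a representative of a coset adjacent to the coset in $V_y$ with character involving the parameter $\eta$ lies inside $V_x$ precisely when one of the four values $\eta_{1,2},\eta_{3,4}$ from \eqref{eq:equation-eta} is realised; the number of edges $d(V_y)$ inside the block $V_y$ is therefore obtained by specialising $x=y$ in these formulas and counting how many of the resulting values of $\eta$ (equivalently $j$) are actually admissible, i.e.\ lie in $F^*$ and, when $\W$ is one of the ``half'' suborbits $\S_\xi^\epsilon$, satisfy the corresponding square/non-square condition on $\eta$.

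First I would set $x=y$ in \eqref{eq:equation-j}, obtaining $j_{1,2}=\tfrac12(\pm\sqrt{4y^2-4y^2\xi})=\pm y\sqrt{1-\xi}$ and $j_{3,4}=\tfrac12(\pm\sqrt{4y^2\xi})=\pm y\sqrt{\xi}$ (after simplifying $(x-y)^2+4xy\xi=4y^2\xi$ and $(x+y)^2-4xy\xi=4y^2(1-\xi)$). Thus the first pair of solutions exists in $F$ iff $1-\xi$ is a square, and the second pair exists iff $\xi$ is a square; and since $\W$ is self-paired the relevant parameter (by Theorem~\ref{th:dihed}(ii) and the description in Table~\ref{tab:cases}) satisfies $\xi\in\{S^*,N^*\}\cap\{S^*+1,N^*+1\}$, so ``$1-\xi$ is a (non-)square'' translates directly into the membership conditions $\xi\in S^*+1$ versus $\xi\in N^*+1$. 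Each existing pair $\{j,-j\}$ with $j\ne 0$ gives two distinct solutions, hence two edges; one must check $j\ne 0$, which follows from $\xi\ne 0,1$. This immediately yields the values $0,2,2,4$ in the first four lines: $d(V_y)=0$ when both $\xi$ and $\xi-1$ are non-squares; $d(V_y)=2$ when exactly one of $\xi,\xi-1$ is a square; $d(V_y)=4$ when both are squares. For the ``half'' suborbits (Rows 6--9) one must additionally verify that when $\xi\in S^*\cap S^*+1$ the corresponding $\eta$-values split correctly between $S^*$ and $N^*$ so that each of $\S_\xi^+$ and $\S_\xi^-$ individually contributes the count asserted; since Proposition~\ref{pro:isomorphic} identifies $\S_\xi^+$-graphs with $\S_\xi^-$-graphs this consistency is what one expects, and it is handled by tracking whether $\eta_{i}(y,y)\in S^*$ via \eqref{eq:equation-eta}.

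For the last two lines one treats $\xi=1/2$ separately, because the equivalence $(\tfrac12,\eta)\approx(\tfrac12,-\eta)$ collapses pairs of characters. Here $1-\xi=\xi=1/2$, and $1/2\in S^*$ iff $2\in S^*$ iff $p\equiv\pm1\pmod 8$ (Proposition~\ref{pro:2}); combined with $p\equiv 1\pmod 4$ this gives $1/2\in S^*$ exactly when $p\equiv 1\pmod 8$ and $1/2\in N^*$ exactly when $p\equiv 5\pmod 8$. When $1/2\in N^*$ none of the four $j$-values exists, so $d(V_y)=0$; when $1/2\in S^*$ all four exist, but the $\approx$-identification halves the count, leaving $d(V_y)=2$. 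The main obstacle I anticipate is not the algebra — the simplification of the discriminants is short — but bookkeeping the $\approx$-collapse and the square/non-square bookkeeping for the half-suborbits carefully enough that one is genuinely counting distinct cosets in $V_y$ rather than distinct matrix representatives; a clean way to do this is to note that $V_y$ has exactly $p$ cosets, each represented uniquely modulo $\rho$ by the parameter $j\in F$, so counting admissible nonzero $j$ (up to the sign identification when $\xi=1/2$) is exactly counting edges, with no overcounting.
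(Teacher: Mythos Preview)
Your approach is exactly the paper's: the paper's entire proof is the single sentence ``Observe that $d(V_y)$ is given by the number of solutions of the equations given in (\ref{eq:equation-j}) and (\ref{eq:equation-eta}) for $x=y$,'' and you simply carry out that substitution explicitly, obtaining $j_{1,2}=\pm y\sqrt{1-\xi}$ and $j_{3,4}=\pm y\sqrt{\xi}$ and reading off the quadratic-character conditions. Your treatment of the $\xi=\tfrac12$ collapse via the $\approx$-identification is the correct mechanism (it is precisely why $j_{1,2}$ and $j_{3,4}$ coincide there), and your caveat about tracking $\eta_i(y,y)\in S^*$ for the half-suborbits in Rows~6--9 is a point the paper's one-line proof glosses over here but addresses in the later case analyses (see e.g.\ the proof of Proposition~\ref{pro:row-6-7}, where the condition $\eta_{1,2}=\pm x(\sqrt 2-1)\in S^*$ is invoked).
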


\begin{proof}
Observe that $d(V_y)$ is given by the number of solutions of the equations
given in (\ref{eq:equation-j}) and (\ref{eq:equation-eta}) for $x=y$.
\end{proof}

In the next five subsections we prove the existence
of Hamilton cycles for the basic orbital graphs
arising from group actions given in Table~\ref{tab:cases}.


\subsection{Case  $\S_\xi$ with $\xi\ne\frac{1}{2},1$ (Rows 1 and 2
of Table~\ref{tab:cases})}
\label{subsec:12}
\noindent

\begin{proposition}
\label{pro:row-1-2}
Let $X=X(G,H,\W)$, where $\W$ is one of the basic self-paired unions of suborbits given in Rows 1 and 2 of Table~\ref{tab:cases}. Then $X$ is hamiltonian.
\end{proposition}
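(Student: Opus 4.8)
The plan is to apply the lifting cycle technique to the $((p+1)/2,p)$-semiregular automorphism $\rho$ and its quotient graph $X_\P$, exactly as in the general strategy outlined in Section~\ref{sec:strategy}. Since $p$ is a prime, by Lemma~\ref{lem:cyclelift} it suffices to produce a Hamilton cycle in the quotient multigraph $X_\rho$ that uses at least one double edge (so that the lift is a single $p|V(X_\P)|$-cycle rather than $p$ disjoint copies). In both Rows~1 and 2 we have $d(V_\infty)=0$ and $d(V_\infty,V_x)\in\{0,2\}$ for each $x\in F^*$, so every edge of $X_\rho$ incident with $V_\infty$ is automatically a double edge; more generally, by the analysis preceding Proposition~\ref{pro:inside}, for the suborbits $\S_\xi$ all edges of $X_\rho$ between distinct orbits $V_x, V_y$ are double edges. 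Thus \emph{any} Hamilton cycle in $X_\P$ automatically lifts. The problem therefore reduces to: \emph{show $X_\P$ is hamiltonian}, where $X_\P$ has vertex set $\{V_\infty\}\cup\{V_x\colon x\in F^*\}$, i.e.\ $1+\tfrac{p-1}{2}$ vertices, and is a quasi-bicirculant under $\la\sigma\ra$ by Proposition~\ref{pro:quasibicirculant}.

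First I would pin down the valencies in $X_\P$. By Proposition~\ref{lem:infty}, $V_\infty$ has $X_\P$-degree equal to the number of $x$ with $d(V_\infty,V_x)=2$; using the character conditions in Table~\ref{tab:cases} together with Proposition~\ref{pro:preseki}, this is on the order of $p/4$. By Proposition~\ref{pro:inside}, the internal degree $d(V_x)$ is $0$ (Row~2, $\xi\in N^*\cap N^*{+}1$) or $2$ (Row~1, $\xi\in S^*\cap N^*{+}1$ or $N^*\cap S^*{+}1$). The key computation is the degree of a generic $V_x$ in $X_\P$: counting, via \eqref{eq:equation-eta}, for how many $y\in F^*\setminus\{x\}$ at least one of $\eta_1,\dots,\eta_4$ gives a valid neighbour lying in the prescribed suborbit $\S_\xi$. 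Since $X$ is regular of valency $p-1$ and each orbit $V_x$ has size $p$, and $d(V_\infty,V_x)\le 2$, $d(V_y)\le 2$, a counting argument (total edge count $p(p-1)/2$ distributed over the $\binom{(p-1)/2}{2}$ pairs and the $V_\infty$-incident pairs) should force $d_{X_\P}(V_x)\ge \tfrac13|V(X_\P)|$ for $p$ large enough, with the finitely many small primes ($p=13,17,\dots$) checked directly. Combined with $2$-connectivity of $X_\P$ — which follows from vertex-transitivity of $X\la\mathcal{L}\ra$-type arguments, or more simply here from the fact that $\la\sigma,\rho\ra$-type symmetry makes the induced graph on $\{V_x\}$ arc-transitive and connected (connectedness because the basic orbital graph of a primitive action is connected, hence so is its quotient), plus a separate check that $V_\infty$ is not a cutvertex — Jackson's Theorem (Proposition~\ref{pro:jack}) would finish it, \emph{provided} $X_\P$ is regular.

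The main obstacle is that $X_\P$ is \emph{not} regular: $V_\infty$ has a different degree from the $V_x$'s, and the $V_x$'s themselves need not all have equal degree in $X_\P$ (only the two $\la\sigma\ra$-orbits $\OO(S^*)$ and $\OO(N^*)$ are each internally homogeneous). So Jackson's theorem does not apply directly to $X_\P$. The fix I would use — paralleling Propositions~\ref{pro:0} and~\ref{pro:ne0} in the $\PSL(2,q^2)$ case — is to work instead with the induced subgraph $X\la\mathcal{L}\ra$ on the ``large'' part: here, let $Y = X_\P - V_\infty$, the graph induced on $\{V_x : x\in F^*\}$. This graph \emph{is} vertex-transitive (the stabilizer of the action acts transitively on the $V_x$ via the combined action of diagonal and anti-diagonal elements of $H$, not just $\la\sigma\ra$), hence regular; I would show $Y$ is $2$-connected and has valency $\ge |V(Y)|/3 = (p-1)/6$ using the counting above, apply Jackson's theorem to get a Hamilton cycle $C$ in $Y$, and then insert $V_\infty$ into $C$ by replacing one edge $V_xV_y$ of $C$ (chosen so that $V_\infty$ is adjacent to both $V_x$ and $V_y$ in $X_\P$, which is possible since $V_\infty$ has $\sim p/4$ neighbours, far more than enough to find two consecutive ones on $C$) with the path $V_x V_\infty V_y$. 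This yields a Hamilton cycle in $X_\P$; since all edges in $X_\rho$ between distinct orbits are double, Lemma~\ref{lem:cyclelift} lifts it to a Hamilton cycle in $X$. The small-prime cases where the valency bound $(p-1)/6$ or $2$-connectivity is not immediate would be dispatched by direct inspection (the relevant $p$ being $13, 17, 29, \dots$, a short finite list given the inequalities involved).
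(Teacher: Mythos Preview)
Your overall strategy is right, but there are two concrete errors that break the argument.

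First, you misread Table~\ref{tab:cases}: for Rows~1 and~2 we have $d(V_\infty,V_x)=2$ for \emph{every} $x\in F^*$, not just for some character-restricted subset. Hence $V_\infty$ is adjacent in $X_\P$ to all $(p-1)/2$ vertices $V_x$, not ``on the order of $p/4$''. This is important because it means that once you have a Hamilton cycle in $Y=X_\P-\{V_\infty\}$, inserting $V_\infty$ is completely free (replace any edge of the cycle by a $2$-path through $V_\infty$), and every such edge is already double in $X_\rho$.

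Second, and more seriously, your claim that $Y$ is vertex-transitive is false. You propose that the anti-diagonal elements of $H$ swap $\OO(S^*)$ and $\OO(N^*)$, but an anti-diagonal matrix $\bigl(\begin{smallmatrix}0 & -x\\ x^{-1} & 0\end{smallmatrix}\bigr)$ does \emph{not} normalize $P$ (a direct computation conjugates $\rho$ to a lower-triangular unipotent), so it does not act on the set $\P$ of $P$-orbits at all. In fact $N_G(P)$ is the Borel subgroup of upper-triangular matrices, and $N_G(P)/P\cong\la\sigma\ra$ is cyclic of order $(p-1)/2$; this is exactly the quasi-semiregular action of Proposition~\ref{pro:quasibicirculant}, with the two orbits $\OO(S^*)$ and $\OO(N^*)$. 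There is no extra symmetry merging them, and indeed the paper explicitly treats the case where $Y$ is not regular. So Jackson's theorem cannot be applied to $Y$ in general.

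The paper's fix is to abandon Jackson in the non-regular case and apply Chv\'atal's theorem (Proposition~\ref{pro:chvatal}) to $X_\P$ directly. One bounds the $X_\P$-valency of each $V_x$ from below separately on $\OO(S^*)$ and on $\OO(N^*)$: since $\W=\S_\xi=\S_\xi^+\cup\S_\xi^-$, the value of $d(V_x)$ depends only on $\xi$ (it is $0$ in Row~2 and $2$ in Row~1, by Proposition~\ref{pro:inside}), and since $d(V_x,V_y)\le 4$, the valency $p-1$ forces $val_{X_\P}(V_x)\ge (p-1)/4$ in the worst case. Combined with $val_{X_\P}(V_\infty)=(p-1)/2$, the Chv\'atal degree condition is checked to hold for $X_\P$ (with $n=(p+1)/2$), giving the Hamilton cycle there; since it necessarily passes through $V_\infty$ and those edges are double, Lemma~\ref{lem:cyclelift} lifts it.
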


\begin{proof}
Observe first that Proposition~\ref{lem:infty} implies that
$d(V_\infty)=(p-1)/2$ and $d(V_\infty,V_x)=2$ for every $x\in F^*$.
Note also that $d(V_x,V_y)\le 4$ for every pair $x,y\in F^*$,
and that, by Proposition~\ref{pro:inside},  $d(V_x)\le 4$, $x\in F^*$.
It follows that each $V_x$ is joined to at least $(p-5)/4$ vertices
in $X_\P-\{V_\infty\}$. Namely, subtracting from the valency
$p-1$ of $X$ the valency $d(V_x,V_y)$ and the maximal possible
valency $d(V_x)\le 4$ of $V_x$
we are left with at least $p-7$ additional edges in $X_\rho$
incident with $V_x$. But  $p\equiv {1\pmod 4}$, and so
every vertex in $X_\P-\{V_\infty\}$ is of valency at least $(p-5)/4$.

If $X_\P-\{V_\infty\}$ is regular then,
since $p\ge 13$, we apply Proposition~\ref{pro:jack}
to get a Hamilton cycle in   $X_\P-\{V_\infty\}$.
Since  $d(V_\infty,V_x)=2$ for every $x\in F^*$, we
can extend this Hamilton cycle in $X_\P-\{V_\infty\}$
to a Hamilton cycle in $X_\P$, and then apply Lemma~\ref{lem:cyclelift}
to conclude that $X$ is hamiltonian.

We may therefore assume that $X_P-\{V_\infty\}$ is not regular.
Without loss of generality we may assume that
$val_{X_\P}(V_x) > val_{X_\P}(V_y)$ for $x\in S^*$ and $y\in N^*$.
(Recall that, by Proposition~\ref{pro:quasibicirculant},  there exists
an automorphism of $X_\rho$ which cyclically permutes
vertices in the set $\OO(S^*)=\{V_x\colon x\in S^*\}$  and
vertices in the set $\OO(N^*)=\{V_y\colon y\in N^*\}$. Consequently,
vertices inside each of these two sets
are of the same valency.)

Since we are in the case $\S_\xi=\S_\xi^+\cup\S_\xi^-$
the solutions of (\ref{eq:equation-j}) and (\ref{eq:equation-eta})
for $x=y$ depends solely on $\xi$. Consequently,
$d(V_x)=d(V_y)$ for all $x,y\in F^*$.
Suppose first that  $d(V_x)=0$, $x\in F^*$.
Note that this happens in Row 2 of Table~\ref{tab:cases}.
Combining together the additional
facts that $d(V_\infty,V_x)=2$ for every $x\in F^*$,
 that $p\equiv {1\pmod 4}$ and that the valency is an integer,
we may assume that the valency of a vertex $V_x$, $x\in F^*$,
in $X_\P$ satisfies
$$
val_{X_\P}(V_x)\ge\left\{
\begin{array}{cl}
(p+3)/4,& x\in N^*  \\
(p+7)/4, & x\in S^*
\end{array}\right.,
$$
and so the existence of a Hamilton cycle
in $X_\P$ follows by Proposition~\ref{pro:chvatal}.
Namely, the conditions of Proposition~\ref{pro:chvatal} are vacuously
satisfied since $S_i=\emptyset$ for every $i < |V(X_\P)|/2$.
Clearly this Hamilton cycle  contains at least one double edge
in $X_\rho$ (for example, all edges incident with $V_\infty$ are such
double edges), Lemma~\ref{lem:cyclelift} implies that $X$ is hamiltonian.

In the remaining case of Row~1 of Table~\ref{tab:cases}
we have, in view of (\ref{eq:equation-j}), that $d(V_x)=2$, $x\in F^*$. Then
using the same arguments as in the previous
case, we may assume that the valency of a vertex $V_x$, $x\in F^*$,
in $X_\P$ satisfies
$$
val_{X_\P}(V_x)\ge \left\{
\begin{array}{cl}
(p-1)/4,& x\in N^*  \\
(p+3)/4, & x\in S^*
\end{array}\right.,
$$
and so the existence of a Hamilton cycle
in $X_\P$ now follows by Proposition~\ref{pro:chvatal}.
Namely,  $i=(p-1)/4$ is the only $i < |V(X_\P)|/2=(p+1)/4$
for which we have to check whether the conditions of
Proposition~\ref{pro:chvatal} are satisfied.
Clearly $|S_{(p-1)/4}|\not\le (p-1)/4-1$.
But $|S_{(p+1)/2-(p-1)/4-1}|=|S_{(p-1)/4}|\le (p-1)/4$,
and so Proposition~\ref{pro:chvatal} indeed applies.
As in the previous paragraph
this Hamilton cycle clearly contains at least one double edge
in $X_\rho$  and so
Lemma~\ref{lem:cyclelift} implies that $X$ is hamiltonian.
\end{proof}


\subsection{Cases  $\S_0^+\cup S_1^+$ and $\S_0^-\cup S_1^-$ (Rows 3 and 4
of Table~\ref{tab:cases})}
\label{subsec:34}
\noindent

\begin{proposition}
\label{pro:row-3-4}
Let $X=X(G,H,\W)$, where $\W\in\{\S_0^+\cup\S_1^+, \S_0^-\cup\S_1^-)$,
be  one of the graphs arising from Rows 3 or 4 of Table~\ref{tab:cases}.
Then $X$ is hamiltonian.
\end{proposition}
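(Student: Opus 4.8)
The plan is to mirror the proof of Proposition~\ref{pro:row-1-2}: reduce to one canonical suborbit, pass to the $\rho$-quotient, show that quotient is hamiltonian by Chv\'atal's theorem, and lift. First I would reduce to the single case $\W=\S_0^+\cup\S_1^+$ (Row~3): conjugation by the diagonal matrix with entries $(n,1)$, $n\in N^*$, regarded inside $\PGL(2,p)$, normalizes $H=D_{p-1}$ and sends a coset of character $(\xi,\eta)$ to one of character $(\xi,n\eta)$, hence induces a graph isomorphism $X(G,H,\S_0^-\cup\S_1^-)\cong X(G,H,\S_0^+\cup\S_1^+)$; so it suffices to treat $\W=\S_0^+\cup\S_1^+$. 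Keeping the notation of Section~\ref{sec:PSL} (the $((p+1)/2,p)$-semiregular automorphism $\rho$, its orbit set $\P=\{V_\infty\}\cup\{V_x:x\in F^*\}$, the quotient graph $X_\P$ on $n:=(p+1)/2$ vertices, and the quotient multigraph $X_\rho$), Proposition~\ref{lem:infty} tells us that $V_\infty$ is adjacent in $X_\P$ precisely to the vertices of $\OO(S^*)$ and that each such edge is a \emph{double} edge of $X_\rho$; in particular $\deg_{X_\P}(V_\infty)=(p-1)/4$. Since $\rho$ has prime order $p$, any Hamilton cycle of $X_\P$ — which necessarily passes through $V_\infty$, using two double edges of $X_\rho$ there — therefore lifts to a Hamilton cycle of $X$ by Lemma~\ref{lem:cyclelift}. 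Thus the whole task reduces to showing that $X_\P$ is hamiltonian.

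To do this I would prove that $V_\infty$ is the unique vertex of $X_\P$ of minimum degree $(p-1)/4$, all other vertices having strictly larger degree; then Chv\'atal's theorem (Proposition~\ref{pro:chvatal}) finishes exactly as in Proposition~\ref{pro:row-1-2}, since $p\equiv 1\pmod 4$ and the only positive integer $i<n/2=(p+1)/4$ with $S_i\neq\emptyset$ is $i=(p-1)/4$, where $S_i=\{V_\infty\}$ and $|S_i|=1\le(p-1)/4-1$ because $p\ge 13$. For the degree claim I would run the adjacency computation preceding Proposition~\ref{pro:inside}, now for the \emph{two} pieces $\xi=0$ and $\xi=1$ that make up $\W$. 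The crucial observation is that both pieces carry the neighbours of the base coset of $V_y$ into the \emph{same} family of orbits $\{V_{\,y^2/(y-\eta)}:\eta\in S^*\}$ — for the $\xi=0$ piece because $\{y\eta/(y-\eta):\eta\in S^*\}=\{-\,y^2/(y-\eta):\eta\in S^*\}$ and $V_z=V_{-z}$ — and that $V_\infty$ occurs among these orbits only when $y\in S^*$. For $y\in S^*$ this already gives $\deg_{X_\P}(V_y)\ge 1+(p-3)/4=(p+1)/4$ crudely, using $d(V_y,V_\infty)=2$ and $d(V_y,V_z)\le 4$. For $y\in N^*$ one counts the distinct orbits met, namely $|\{y^2/(y-\eta):\eta\in S^*\}|$ modulo the involution $z\mapsto-z$: the number $\ell_y$ of complementary pairs equals $\tfrac12\,|S^*\cap(2y-S^*)|$, since $-z$ lies in the set iff $2y-\eta\in S^*$, and by Proposition~\ref{num} this cardinality is $(p-1-4[\,2y\in S^*\,])/8\le(p-1)/8$; hence $\deg_{X_\P}(V_y)=(p-1)/2-\ell_y\ge 3(p-1)/8>(p-1)/4$, as needed. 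Once $X_\P$ is known hamiltonian we lift, and the case $\W=\S_0^-\cup\S_1^-$ follows from the initial isomorphism.

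The main obstacle is this degree computation: unlike the single-$\xi$ suborbits treated in Proposition~\ref{pro:inside}, the mixed suborbit $\S_0^+\cup\S_1^+$ forces one to combine the $\xi=0$ and $\xi=1$ contributions and then to invoke a quadratic-residue count (Proposition~\ref{num}) to bound $\ell_y$. Once it is established that $V_\infty$, of degree exactly $(p-1)/4$, is the only near-minimal vertex of $X_\P$, the remaining Chv\'atal verification is immediate.
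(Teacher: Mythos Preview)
Your overall strategy matches the paper's: pass to the $\rho$-quotient $X_\P$, establish sufficiently large degrees away from $V_\infty$, apply Chv\'atal, and lift via Lemma~\ref{lem:cyclelift}. The reduction to $\W=\S_0^+\cup\S_1^+$ via the $\PGL(2,p)$-automorphism is cleaner than the paper's ``analogous argument''; and your direct count for $y\in N^*$ using Proposition~\ref{num} is a nice alternative to the paper's use of Propositions~\ref{pro:preseki}, \ref{pro:AB}, \ref{pro:ABn}.

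There is, however, a genuine gap in your crude bound for $y\in S^*$. The estimate $\deg_{X_\P}(V_y)\ge 1+(p-3)/4$ tacitly assumes that all $p-3$ edges from $V_y$ not going to $V_\infty$ land in orbits $V_z$ with $z\ne\pm y$. But the orbit $V_y$ itself may receive some of them: in your own parametrisation the value $\eta=2y$ yields $z=-y$, i.e.\ a loop at $V_y$, and $2y\in S^*$ precisely when $p\equiv 1\pmod 8$ (so that $2\in S^*$). In that case $d(V_y)=2$, and the honest crude bound becomes $1+(p-3-d(V_y))/4=(p-1)/4$, which is only \emph{equal} to $\deg_{X_\P}(V_\infty)$. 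You would then have no control on $|S_{(p-1)/4}|$ beyond $(p-1)/4+1$, and the Chv\'atal verification collapses.

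The fix is easy and you already have the tool: run the same careful count you did for $y\in N^*$ also for $y\in S^*$, now excluding the two special parameters $\eta=y$ (giving $V_\infty$) and, when $p\equiv 1\pmod 8$, $\eta=2y$ (giving the loop). The pairing $\eta\leftrightarrow 2y-\eta$ together with Proposition~\ref{num} then yields $\deg_{X_\P}(V_y)\ge (p+3)/4$ in all cases, exactly as the paper obtains by its intersection-counting route. With that correction your argument goes through.
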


\begin{proof}
Suppose  that $\W=\S_0^+\cup\S_1^+$.
By Proposition~\ref{lem:infty} we have
$d(V_\infty)=(p-1)/2$ and
$$
d(V_\infty,V_x)=\left\{
\begin{array}{cl}
2, & x\in S^*\\
0, & x\in N^*
\end{array}\right..
$$

We now need to compute the valency $val_{X_\P-\{V_\infty\}}(V_y)$
of $V_y$ in the graph $X_\P-\{V_\infty\}$.
The character of a coset in $\S_0^+\cup\S_1^+$
is either of the form $(0,\eta)$ or $(1,\eta)$, $\eta\in S^*$,
with respective representatives
$$
E_1=\mm{1}{\eta}{-\eta^{-1}}{0} \textrm{ and } E_2=\mm{1}{\eta}{0}{1}.
$$
Then a representative of a coset adjacent to a coset in $V_y$
is either of the form
$$
E_1\cdot
\mm{1}{y}{-y^{-1}}{0} \textrm{ or of the form }
E_2\cdot
\mm{1}{y}{-y^{-1}}{0}.
$$
Further, if this neighbor is inside an orbit $V_x$, $x\in F^*$,
then there exists $j\in F$
such that either
$$
E_1\cdot
\mm{1}{y}{-y^{-1}}{0}\equiv
\mm{1}{x}{-x^{-1}}{0}\mm{1}{j}{0}{1}
\textrm{ or }
E_2\cdot
\mm{1}{y}{-y^{-1}}{0}\equiv
\mm{1}{x}{-x^{-1}}{0}\mm{1}{j}{0}{1}.
$$
With the equivalence relation (\ref{eq:relation}) in mind, one can see
that the four cases given in Table~\ref{tab:01+} arise.
Let $\eta_i$ be the possible solution for $\eta$ given in the $i$-th row  of
Table~\ref{tab:01+}.
Then $\eta_1\eta_4=\eta_2\eta_3=y^2$. This implies that
either $(\eta_1,\eta_4)\in S^*\times S^*$ or
$(\eta_1,\eta_4)\in N^*\times N^*$, and that
either $(\eta_2,\eta_3)\in S^*\times S^*$ or
$(\eta_2,\eta_3)\in N^*\times N^*$.

\begin{table}[h!]
$$
\begin{array}{|c|c|c|}
\hline
\textrm{Row} & j & \eta \\
\hline
\hline
1&y-x & xy/(x-y)\\[1ex]
\hline
2&y & xy/(x+y)\\[1ex]
\hline
3&-x & y(x+y)/x\\[1ex]
\hline
4&0 & y(x-y)/x\\[1ex]
\hline
\end{array}
$$
\caption{\label{tab:01+}  Conditions on $j$ and $\eta$ for existence
of an edge between $V_x$ and $V_y$, $x,y\in F^*$, in
$X=X(G,H,\W)$, where $\W=\S_0^+\cup\S_1^+$.}
\end{table}

For $x=y$ Rows 1 and 4 of Table~\ref{tab:01+}
give no solution. Hence
$\eta_2=x/2$ and $\eta_3=2x$ are the only solutions,
and thus for $x\in S^*$ we have
$$
d(V_x)= 2 \Leftrightarrow 2\in S^* \Leftrightarrow p\equiv {1\pmod 8},
$$
and for $x\in N^*$ we have
$$
d(V_x)= 2 \Leftrightarrow 2\in N^* \Leftrightarrow p\equiv {5\pmod 8}.
$$
We now compute the valencies of the vertices in the quotient graph $X_\P$.
Since $X_\P$ is a quasi-bicirculant, in order to
compute the valency of an arbitrary vertex
$V_x\in\OO(S^*)$ it suffices to compute the valency $val_{X_\P}(V_1)$
of the vertex $V_1$
instead of computing the valency of an arbitrary vertex
$V_x\in\OO(S^*)$
 (in short, we may assume that $x=1$).
Since $\W=S_0^+\cup S_1^+$ we must have $\eta \in S^*$.
It follows from Table~\ref{tab:01+} that if $V_y\in\OO(S^*)$
is adjacent to $V_1$ in $X_\P$ then $d(V_1,V_y)\in\{2,4\}$, and so
either $y\in S^* \cap S^*+1$ or $y\in S^* \cap S^*-1$.
Note that for $p\equiv {1\pmod 8}$ we have $\pm 1\in
(S^* \cap S^*+1) \cup (S^* \cap S^*-1)$, and for
$p\equiv {5\pmod 8}$ this union does not contain $\pm 1$.
Therefore, combining together
Propositions~\ref{pro:preseki} and~\ref{pro:AB}
we conclude that
there exist at least
$(|S^* \cap S^*+1|+2-2)/2 = (p-5)/8$ vertices in $\OO(S^*)$
adjacent to $V_1$ if $p\equiv {1\pmod 8}$, and that there are at least
$(|S^* \cap S^*+1|+2)/2 = (p+3)/8$ vertices in $\OO(S^*)$
adjacent to $V_1$ if $p\equiv {5\pmod 8}$.
(Namely, the number of possible
number-theoretic solutions
in Propositions~\ref{pro:preseki} and~\ref{pro:AB}
needs to be divided by $2$ because of the fact that the
equivalence relation (\ref{eq:relation}) implies $V_y=V_{-y}$.)
Moreover, $(p-5)/8$ is not an integer if $p\equiv {1\pmod 8}$
and so  there are at least $(p-1)/8$ vertices in $\OO(S^*)$
adjacent to $V_1$ when $p\equiv {1\pmod 8}$. In short,
$$
val(V_1)_{X_\P\la \OO(S^*)\ra}=
\left\{
\begin{array}{cr}
(p-1)/8, &  p\equiv {1\pmod 8}\\
(p+3)/8, & p\equiv {5\pmod 8}
\end{array}\right..
$$
Similarly, using Propositions~\ref{pro:preseki} and~\ref{pro:AB} for
neighbors of $V_1$ in $\OO(N^*)$ we can see that their number is
$((p-1)/4+2)/2=(p+7)/8$ which is, after integer correction, equal to
$$
val(V_1)_{X_\P-\{V_\infty\}} - val(V_1)_{X_\P\la \OO(S^*)\ra} =
\left\{
\begin{array}{cr}
(p+7)/8, &  p\equiv {1\pmod 8}\\
(p+11)/8, & p\equiv {5\pmod 8}
\end{array}\right..
$$
It remains to calculate the valency of the subgraph
$X_\P\la \OO(N^*)\ra$ of $X_\P$ induced
on $\OO(N^*)$. Let $V_x\in\OO(N^*)$ be a fixed
vertex. If $V_y\in\OO(N^*)$ is adjacent to this vertex $V_x$ then
we must have $x\pm y \in S^*$, and thus the number of vertices
in $\OO(N^*)$ adjacent to $V_x$ depends on
the cardinality of the set
$$
(S^*\cap N^*+x) \cup (S^*\cap N^*-x),
$$
which is equal to the cardinality of the set
$
(N^*\cap S^*+1) \cup (N^*\cap S^*-1)$, and  also of the set
$(N^*-1\cap S^*) \cup (N^*+1\cap S^*)$.
Thus, since $\pm 1\in N^*-1\cap S^*$
for $p\equiv {5\pmod 8}$,
Propositions~\ref{pro:preseki} and~\ref{pro:ABn}
combined together imply that
$$
val(V_x)_{X_\P\la \OO(N^*)\ra}=
\left\{
\begin{array}{cr}
(p+7)/8, &  p\equiv {1\pmod 8}\\
(p-5)/8, & p\equiv {5\pmod 8}
\end{array}\right..
$$
If follows that, with the exception of $V_\infty$
which is of valency $(p-1)/4$, all other vertices in $X_\P$
are of valency at least $(p+3)/4$
and so more than half of the order of $X_\P$.
Now Proposition~\ref{pro:chvatal}
implies the existence of a Hamilton cycle in $X_\P$.
Namely, with the corresponding notation  for the sets $S_i$ we have
 $|S_{(p-1)/4}|=1\le (p-1)/4-1$.
This Hamilton cycle in $X_\rho$
clearly has double edges,  and so,
by Lemma~\ref{lem:cyclelift}, it lifts
to a Hamilton cycle in $X$.

The hamiltonicity of the graph $X$ for $\W=\S_0^-\cup\S_1^-$
is determined in  an analogous way. We omit the details.
\end{proof}


\subsection{Case $\S_\frac{1}{2}$ (Row 5 of Table~\ref{tab:cases})}
\label{subsec:5}
\noindent

\begin{proposition}
\label{pro:row-5}
Let $p\equiv{5\pmod 8}$
and let $X=X(G,H,\S_{\frac{1}{2}})$
be the graph  arising from
Row 5 of Table~\ref{tab:cases}.
Then $X$ is hamiltonian.
\end{proposition}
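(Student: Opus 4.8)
The plan is to quotient by the semiregular automorphism $\rho=\mm{1}{1}{0}{1}$, analyse the quotient, and lift. Write $\P=\{V_\infty\}\cup\{V_x\colon x\in F^*\}$ for the set of orbits of $\rho$ (recall $V_x=V_{-x}$, so $|\P|=(p+1)/2$), and let $Y$ be the subgraph of $X_\P$ induced on $\{V_x\colon x\in F^*\}$, a graph on $(p-1)/2$ vertices. By Proposition~\ref{lem:infty} (Row~5 of Table~\ref{tab:cases}) we have $d(V_\infty)=0$ and $d(V_\infty,V_x)=1$ for every $x\in F^*$, and by Proposition~\ref{pro:inside} we have $d(V_x)=0$ for every $x\in F^*$ (here $p\equiv 5\pmod 8$, hence $2\in N^*$ by Proposition~\ref{pro:2}). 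The idea is to produce a Hamilton cycle of $Y$, splice $V_\infty$ into it to get a Hamilton cycle of $X_\P$ that still carries a double edge of the quotient multigraph $X_\rho$, and finish with Lemma~\ref{lem:cyclelift}; note that such a double edge must live inside $Y$, since every edge of $X_\P$ at $V_\infty$ is a single edge of $X_\rho$.

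The structural step is to describe $Y$. Putting $\xi=\frac12$ in the adjacency equations (\ref{eq:equation-j})--(\ref{eq:equation-eta}), which cover Row~5, the term $4xy\xi$ becomes $2xy$ and both discriminants $(x+y)^2-4xy\xi$ and $(x-y)^2+4xy\xi$ collapse to $x^2+y^2$; hence the four candidate parameters $j_{1,2},j_{3,4}$ reduce to the two values $\frac12\bigl(y-x\pm\sqrt{x^2+y^2}\bigr)$, each with a nonzero value of $\eta$, and since $\W=\S_{1/2}=\S_{1/2}^+\cup\S_{1/2}^-$ imposes no quadratic‑residue condition on $\eta$, we get
$$
d(V_x,V_y)=\left\{\begin{array}{cl}
2,& x^2+y^2\in S^*,\\
1,& x^2+y^2=0,\\
0,& x^2+y^2\in N^*.
\end{array}\right.
$$
Since $p\equiv 1\pmod 4$ gives $-1\in S^*$ (Proposition~\ref{pro:-1}), the case $x^2+y^2=0$ occurs exactly for $V_y=V_{ix}$ with $i=\sqrt{-1}$, and the corresponding single edges form a perfect matching $M$ of $Y$ (the involution $V_x\mapsto V_{ix}$ is fixed‑point‑free on the $(p-1)/2$ vertices). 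For the squares, $|\{y\in F^*\colon x^2+y^2\in S^*\}|=2\,|S^*\cap(S^*-x^2)|=2\,|S^*\cap(S^*-1)|=2\cdot\frac{p-5}{4}$ by Proposition~\ref{pro:preseki} (using $x^2\in S^*$ and $-1\in S^*$), so dividing by $2$ because $V_y=V_{-y}$ leaves $(p-5)/4$ orbits $V_y$ double‑adjacent to $V_x$. Equivalently, identifying $V_x$ with the square $x^2$, $Y$ is the graph on $S^*$ in which $s\sim s'$ iff $s+s'\in S^*\cup\{0\}$. In all cases $Y$ is regular of valency $(p-5)/4+1=(p-1)/4$, that is, of valency exactly $\tfrac12\,|V(Y)|$, with all of its edges double edges of $X_\rho$ except those of $M$.

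To conclude: since $p\ge 13$ the valency $(p-1)/4$ is at least $3$ and equals $|V(Y)|/2$, so Chv\'atal's theorem (Proposition~\ref{pro:chvatal}) applies — its hypothesis is vacuously satisfied since $S_i=\emptyset$ for every positive integer $i<|V(Y)|/2$ — and yields a Hamilton cycle $C_Y$ of $Y$. As $C_Y$ has $(p-1)/2$ edges and meets the matching $M$ in at most $(p-1)/4$ of them, at least $(p-1)/4\ge 3$ edges of $C_Y$ are double edges of $X_\rho$. Pick any edge $V_aV_b$ of $C_Y$ and replace it by the path $V_aV_\infty V_b$ (legitimate since $d(V_\infty,V_a)=d(V_\infty,V_b)=1$): the result is a Hamilton cycle $C$ of $X_\P$ that still contains at least $(p-1)/4-1\ge 2$ double edges of $X_\rho$. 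Since $p$ is prime and $C$ has an edge $SS'$ with $d(S,S')\ge 2$, Lemma~\ref{lem:cyclelift} forces the lift of $C$ to be a single cycle, of length $\frac{p+1}{2}\cdot p=|V(X)|$; that is, a Hamilton cycle of $X$.

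The only nonroutine step is the middle paragraph: verifying that $\xi=\frac12$ collapses the discriminant of (\ref{eq:equation-j}) to $x^2+y^2$, and then extracting the exact valency $(p-1)/4$ of $Y$ from Proposition~\ref{pro:preseki}. Everything after that — invoking Chv\'atal's theorem for a graph regular of valency half its order, inserting $V_\infty$, and applying the lifting lemma — is immediate, and in particular no exceptional small primes arise.
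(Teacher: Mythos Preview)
Your proof is correct and follows essentially the same route as the paper: quotient by $\rho$, compute that for $\xi=\tfrac12$ the discriminants in (\ref{eq:equation-j})--(\ref{eq:equation-eta}) collapse to $x^2+y^2$, read off that each $V_x$ has $(p-5)/4$ double-edge neighbours and one single-edge neighbour $V_{ix}$, apply Chv\'atal, and lift via Lemma~\ref{lem:cyclelift}. The only tactical difference is that the paper applies Chv\'atal directly to the full quotient $X_\P$ (where $V_\infty$ has valency $(p-1)/2$ and every $V_x$ has valency $(p+3)/4>\tfrac12|V(X_\P)|$), so the Hamilton cycle already contains $V_\infty$ and no splicing step is needed; your detour through $Y=X_\P-\{V_\infty\}$ followed by inserting $V_\infty$ works just as well but is slightly longer.
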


\begin{proof}
Note that $X$ is of valency $(p-1)/2$.
By Proposition~\ref{lem:infty} we have
$d(V_\infty)=0$ and
$d(V_\infty,V_y)=1$ for every $y\in F^*$.

We now need to compute the valency $val_{X_\P-\{V_\infty\}}(V_y)$,
$y\in F^*$. Let $x\in F^*$.
The number of edges $d(V_y,V_x)$ between $V_y$ and $V_x$ in $X_\rho$
is obtained from  (\ref{eq:equation-j}) and (\ref{eq:equation-eta}) by
letting  $\xi=1/2$. We obtain
$$
j_{1,2}=j_{3,4}=\frac{1}{2}(y-x\pm\sqrt{x^2+y^2})
$$
and
$$
\eta_{1,2}=\eta_{3,4}=y-\frac{2y^2}{y+x\pm\sqrt{x^2+y^2}}
=\frac{y}{x}(\pm \sqrt{x^2+y^2}-y).
$$
By Proposition~\ref{pro:2} it follows that
$2\in N^*$, and so we have $d(V_x)=0$ for every $x\in F^*$.
Further, if $x^2+y^2=0$ then $y^2=-x^2$, and so
$y=\pm\sqrt{-1}x$. Since $\sqrt{-1} \in N^*$ when $p\equiv {5\pmod 8}$
it follows that for every $x\in S^*$ there exists a unique $y\in N^*$ such that
$d(V_x,V_y)=1$, whereas
all other edges in $X_\rho-\{V_\infty\}$ containing $V_x$
are double edges. It follows that for every $x\in F^*$ we have
$$
val(V_x)=(\frac{p-1}{2}-2)/2+2=\frac{p-5}{4}+2=\frac{p+3}{4}.
$$
Since $val_{X_\P}(V_\infty)=(p-1)/2$ it follows that
all the vertices in $X_\P$ are of valency more than half of the order
of $X_\P$, and so Proposition~\ref{pro:chvatal} implies the existence of a
Hamilton cycle in $X_\P$. Since  this cycle clearly
contains a double edge,
Lemma~\ref{lem:cyclelift} implies that $X$ is hamiltonian.
\end{proof}


\subsection{Cases $\S_\frac{1}{2}^+$ and $\S_\frac{1}{2}^-$ (Rows 6 and 7
of Table~\ref{tab:cases})}
\label{subsec:67}
\noindent

In this and the next subsection Hamilton cycles are constructed
using the results  from Section~\ref{sec:polynomials} about polynomials
of degree $4$ that represent quadratic residues  at primitive roots.

\begin{proposition}
\label{pro:row-6-7}
Let $p\equiv{1\pmod 8}$
and let $X=X(G,H,\W)$, where $\W\in\{S_\frac{1}{2}^+,S_\frac{1}{2}^-\}$,
be  one of the graphs arising from  Row 6 or 7
of Table~\ref{tab:cases}.
Then $X$ is hamiltonian.
\end{proposition}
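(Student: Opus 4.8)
The plan is to follow the same quotient-graph strategy used in Propositions~\ref{pro:row-1-2}, \ref{pro:row-3-4} and~\ref{pro:row-5}: find a Hamilton cycle in the quotient $X_\P$ that uses at least one double edge of $X_\rho$, and then lift it via Lemma~\ref{lem:cyclelift}. By Proposition~\ref{pro:isomorphic} the graphs for $\W=\S_\frac12^+$ and $\W=\S_\frac12^-$ are isomorphic, so it suffices to treat $\W=\S_\frac12^+$. From Proposition~\ref{lem:infty} (Row~6 of Table~\ref{tab:cases}) we have $d(V_\infty)=0$ and $d(V_\infty,V_x)=1$ if $x\in S^*$, $0$ if $x\in N^*$; so $V_\infty$ has exactly $(p-1)/4$ neighbours in $X_\P$, all in $\OO(S^*)$, and all edges at $V_\infty$ are simple. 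Note this is the crucial structural difference from the earlier cases: here $V_\infty$ has \emph{only} simple edges, so a lifting Hamilton cycle must avoid passing through $V_\infty$ via a simple edge in a way that blocks the lift — we need the double edge to occur elsewhere on the cycle.

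First I would compute $d(V_y)$ and the valencies in $X_\P$. By Proposition~\ref{pro:inside} with $\xi=1/2$ and $p\equiv 1\pmod 8$ we get $d(V_y)=2$ for every $y\in F^*$. Next, the edges between $V_x$ and $V_y$ are governed by~(\ref{eq:equation-j})--(\ref{eq:equation-eta}) with $\xi=1/2$, which collapse (as in the proof of Proposition~\ref{pro:row-5}) to $j_{1,2}=j_{3,4}=\frac12(y-x\pm\sqrt{x^2+y^2})$ and the corresponding $\eta$'s, but now with the extra restriction $\eta\in S^*$ coming from $\W=\S_\frac12^+$. Here is where the polynomial results enter: whether a given pair $V_x,V_y$ is joined depends on whether certain expressions of the shape $y^2+k$ (after scaling, $\beta^4+k\beta^2+1$ type polynomials) are squares, and running over the quadratic-residue structure we should obtain, after the usual division by $2$ because $V_y=V_{-y}$, a lower bound on $val_{X_\P}(V_x)$ of the form $(p+c)/8$ for small constants $c$, uniformly for $x\in S^*\cup N^*$. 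The aim is to show that, with finitely many exceptional primes handled by Proposition~\ref{pro:small}, every vertex other than $V_\infty$ has $X_\P$-valency at least $(p+3)/4$, i.e.\ more than half the order $|V(X_\P)|=(p+1)/2$, while $V_\infty$ has valency $(p-1)/4$.

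With those valency bounds in hand, Chv\'atal's theorem (Proposition~\ref{pro:chvatal}) applies exactly as in Proposition~\ref{pro:row-3-4}: taking $S_i=\{V\in V(X_\P):\deg V\le i\}$, the only index $i<|V(X_\P)|/2=(p+1)/4$ that needs checking is $i=(p-1)/4$, where $|S_{(p-1)/4}|=1$ (only $V_\infty$), and $1\le (p-1)/4-1$ since $p\ge 13$. Hence $X_\P$ has a Hamilton cycle. Because $d(V_\infty)=0$, the two cycle-edges at $V_\infty$ go to two vertices $V_a,V_b\in\OO(S^*)$, and the remaining $(p-1)/2-2$ edges of the cycle lie among the $V_x$'s, where multiplicities are $2$ or $4$; in particular at least one of them is a double edge of $X_\rho$, so Lemma~\ref{lem:cyclelift} yields a Hamilton cycle in $X$. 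For the finitely many primes in the last columns of Tables~\ref{tab:exceptions1} and~\ref{tab:exceptions2} — where Theorem~\ref{the:polynomial-degree-4} may fail — I would instead invoke Proposition~\ref{pro:small}, using the substitution $k=\pm 2(1-2\xi)=0$ at $\xi=1/2$ together with the explicit $\xi,\bar\xi$ data there, to verify directly that the needed squares exist, or else fall back on an explicit Magma-style check that $X_\P$ is hamiltonian for those small $p$.

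I expect the main obstacle to be the valency count, i.e.\ translating the square-or-non-square conditions on $\eta_{1,2},\eta_{3,4}$ into a clean lower bound on $val_{X_\P}(V_x)$ that is uniform over $\OO(S^*)$ and $\OO(N^*)$; this is precisely the point where the degree-$4$ polynomial representing a square at a primitive root (Theorem~\ref{the:polynomial-degree-4}) is invoked to guarantee that ``enough'' of the candidate neighbours actually occur. Keeping careful track of the factor-of-$2$ reductions from $V_y=V_{-y}$, of the $p\equiv 1\pmod 8$ hypothesis (which forces $-1,2\in S^*$ and so $\sqrt{-1}\in S^*$, affecting whether $x^2+y^2=0$ has solutions), and of which pairs $(p,k)$ genuinely need the ad hoc Proposition~\ref{pro:small}, will be the delicate bookkeeping; the Chv\'atal/lift conclusion itself is then routine and mirrors the earlier subsections.
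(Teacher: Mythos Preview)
Your approach has a fundamental obstruction that cannot be repaired: the valency of $X$ here is $(p-1)/4$, not $(p-1)/2$. Consequently every vertex $V_x$ in $X_\P$ has valency at most $(p-1)/4$, while $|V(X_\P)|=(p+1)/2$, so no vertex reaches even half the order. Your target bound $val_{X_\P}(V_x)\ge (p+3)/4$ is therefore impossible, and Chv\'atal's theorem simply does not apply. This is exactly why the paper abandons the Chv\'atal route for Rows~6--9 and uses a completely different method.

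What the paper actually does is this. It uses Theorem~\ref{the:polynomial-degree-4} together with Proposition~\ref{pro:small} to find a primitive root $g$ with $1+g^4\in S^*$; then with $s=g^2$ generating $S^*$ it shows (via the ratio $\eta_i(g,gs)/\eta_i(1,s)=g\in N^*$) that one of $V_1V_s$ or $V_gV_{gs}$ is an edge, hence $\sigma$ generates a full $(p-1)/4$-cycle inside one of $\OO(S^*)$ or $\OO(N^*)$. Connectivity forces a $\sigma$-invariant perfect matching between $\OO(S^*)$ and $\OO(N^*)$, and a separate argument (using Proposition~\ref{pro:edges} in the one delicate subcase) produces a $\sigma$-invariant $2$-factor in the other orbit. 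Thus $X_\P-\{V_\infty\}$ contains a generalized Petersen graph $GP((p-1)/4,k)$; since $(p-1)/4$ is even, Proposition~\ref{pro:GPG} gives a Hamilton cycle there. An edge $V_xV_y$ with $x,y\in S^*$ on this cycle is replaced by the $2$-path $V_xV_\infty V_y$, yielding a Hamilton cycle in $X_\P$ containing a double edge, which lifts by Lemma~\ref{lem:cyclelift}. So the polynomial theorem is used not to count neighbours for a degree bound, but to force a single $\sigma$-propagating edge that produces a spanning cycle in one orbit.
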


\begin{proof}
By Proposition~\ref{pro:isomorphic} the two graphs are isomorphic, and so
we may assume that $\W=S_{\frac{1}{2}}^+$.
Note that $X$ is of valency $(p-1)/4$.
Since $(p+1)/2$ is not a prime for $p=17$
we may also assume that $p>17$.

By Proposition~\ref{lem:infty} we have
$d(V_\infty)=0$ and
$d(V_\infty,V_x)=1$ for every $x\in S^*$.
We now need to compute the valency $val_{X_\P-\{V_\infty\}}(V_y)$,
$y\in F^*$. Let $x\in F^*$.
The number of edges $d(V_y,V_x)$ between $V_y$ and $V_x$ in $X_\rho$
is obtained from  (\ref{eq:equation-j}) and (\ref{eq:equation-eta}) by
letting  $\xi=1/2$. We obtain
\begin{eqnarray}
\label{eq:new}
j_{1,2}=j_{3,4}=\frac{1}{2}(y-x\pm\sqrt{x^2+y^2})
\textrm{ and }
\eta_{1,2}=\eta_{3,4}=\frac{y}{x}(\pm \sqrt{x^2+y^2}-y).
\end{eqnarray}
By Proposition~\ref{pro:2},
$2\in S^*$, and so $d(V_x)=2$ for every $x\in F^*$ for which
$\eta_{1,2}=\eta_{3,4}= \pm x(\sqrt{2}-1)\in S^*$.
Further, if $x^2+y^2=0$ then $y^2=-x^2$, and so
$y=\pm\sqrt{-1}x$ and $\eta_{1,2}=\mp\sqrt{-1}x$.
Since $\sqrt{-1} \in S^*$ for $p\equiv {1\pmod 8}$,
it follows that for every $x\in S^*$ there exits a unique
$V_y\in X_\rho\la \OO(S^*)\ra$ such that
$d(V_x,V_y)=1$.
All other edges in $X_\rho-\{V_\infty\}$ incident with $V_x$
are double edges.
Furthermore, also all  of the edges
in $X_\rho-\{V_\infty\}$ incident with $V_x$,
$x\in N^*$, are double edges.

Suppose that $x=1$. Then we
get from (\ref{eq:new}) that
$$
j_{1,2}=j_{3,4}=\frac{1}{2}(y-1\pm\sqrt{1+y^2})
\textrm{ and }
\eta_{1,2}=\eta_{3,4}
=y(\pm \sqrt{1+y^2}-y).
$$
Let us now consider elements of the form $1+g^4\in F^*$, where
$g\in F^*$ is a generator of $F^*$, that is, $F^*=\la g\ra$.
Since $p\equiv {1\pmod 8}$, combining together
Theorem~\ref{the:polynomial-degree-4} and Proposition~\ref{pro:small}
we have
that there always exists
$g\in F^*$ such that $F^*=\la g\ra$ and $1+g^4\in S^*$.

The element $s=g^2$ generates $S^*$.
We claim that either  $V_1$ is adjacent to $V_s$
or $V_g$ is adjacent to $V_{sg}$. This will in turn imply that
there is a full cycle either in the induced graph on
$\OO(S^*)$
or in the induced graph on $\OO(N^*)$.
The corresponding values $\eta_1$ and $\eta_2$ for the pairs
$V_1,V_s$ and $V_g, V_{gs}$ are, respectively,
$$
\eta_{1,2}(1,s)=s(\pm \sqrt{1+s^2} - s) \textrm{ and }
\eta_{1,2}(g,gs)=gs(\pm \sqrt{1+s^2} - s).
$$
Therefore
$$
\frac{\eta_{i}(g,gs)}{\eta_{i}(1,s)}=g\in N^*, i\in\{1,2\}.
$$
And consequently, the bicirculant $X_\rho-\{V_\infty\}$ indeed has
a full induced cycle $C$ either on the orbit $\OO(S^*)$ or
on the orbit $\OO(N^*)$.
More precisely, this cycle is induced by an edge inside
one of the two orbits of $\sigma$ and the action of $\sigma$
on this edge.
(Recall that $\OO(S^*)$ and $\OO(N^*)$
are  the two orbits of the quasi-semiregular automorphism $\sigma$
from Proposition~\ref{pro:quasibicirculant}).

We claim that $X_\rho-\{V_\infty\}$ contains a subgraph
isomorphic to a generalized Petersen graph
$GP((p-1)/4,k)$ for some $k\in\ZZ_{(p-1)/4}$.
In order to prove this we need to show first that the orbit that does not
contain $C$ contains a cycle or a union of cycles induced
by the action of $\sigma$ on an edge in this orbit and second
that the bipartite graph between these two orbits contains a matching
preserved by $\sigma$.
The latter holds because, by Proposition~\ref{lem:infty},
vertex $V_\infty$ is adjacent to all vertices in
$\OO(S^*)$ and no vertex in $\OO(N^*)$, and consequently
the connectedness of $X$ implies the existence of
at least one edge with one endvertex in $\OO(S^*)$
and one endvertex in $\OO(N^*)$. The action of $\sigma$
on this edge gives us the desired matching.
For the former, there are four possibilities
depending on whether  the full cycle is in
$\OO(S^*)$  or in  $\OO(N^*)$ and on whether
 $d(V_x)=2$ for each $x\in S^*$
or for each $x\in N^*$. A quick analysis based on
the valency conditions in the graph $X$ shows that
such a collection of cycles always exists with one exception only.
This exception occurs when
the full cycle is in  $\OO(N^*)$ and $d(V_x)=2$ for $x\in S^*$.
In this case, however, we can apply Proposition~\ref{pro:edges} to see
that $V_1$ is adjacent to $V_x$ for some $\sqrt{-1}\ne x\in S^*$,
and so the corresponding union of cycles induced by the action of $\sigma$
is the collection of cycles we were aiming for. Consequently,
$X_\rho-\{V_\infty\}$ contains the desired generalized
Petersen graph as a subgraph also in this case.
Since $\frac{p-1}{4}$ is even, Proposition~\ref{pro:GPG} implies that
$X_\rho-\{V_\infty\}$ contains a Hamilton cycle.
Of course, this cycle contains an edge of the form
$V_xV_y$, $x,y\in S^*$.
Replacing this edge with a path $V_xV_\infty V_y$ gives
a Hamilton cycle in $X_\rho$.
Obviously this Hamilton cycle contains double edges in $X_\rho$,
and so, by Lemma~\ref{lem:cyclelift},
lifts to a Hamilton cycle in $X$.
\end{proof}


\subsection{Cases  $\S_\xi^+$ and $\S_\xi^-$
with $\xi\ne \frac{1}{2},1$ (Rows 8 and 9 of Table~\ref{tab:cases})}
\label{subsec:89}
\noindent

In Proposition~\ref{pro:row-8-9} graphs arising from
Rows~8 and 9 of Table~\ref{tab:cases} are considered.
Before stating this proposition we cover three exceptional cases
for which the results about polynomials from Section~\ref{sec:polynomials}
cannot be fully applied. The three  exceptional pairs
$(p,\xi)$ are $(13,10)$, $(37,12)$, and $(61,57)$,
see Proposition~\ref{pro:small}.

\begin{example}
\label{ex:13}
{\rm
Let $X$ be a basic orbital graph
arising   from the action of $G=\PSL(2,13)$ on the cosets
of $D_{p-1}=D_{12}$ from Row~8  or 9 of Table~\ref{tab:cases}.
Because of the isomorphism given in Proposition~\ref{pro:isomorphic}
we may assume that $X$ arises from Row~8  of Table~\ref{tab:cases},
that is, it is associated with a suborbit
$\S_\xi^+$,
where $\xi \in S^*\cap S^*+1$, $\xi\ne \frac{1}{2}$ and $\xi\ne 1$.

For $p=13$ we have
\begin{eqnarray*}
(S^*\cup\{0\})\cap (S^*\cup\{0\})+1 &=& \{0, 1, 4, 10 \} =\{0, 1 \} \cup \{a, a^{-1}\colon a=4\}.
\end{eqnarray*}
There are two self-paired suborbits of length $6$,
giving, up to isomorphism, one vertex-transitive graph
of order $13\cdot 7=91$ and of valency $6$.
(For example, this can be checked  using Magma \cite{Mag}.)
The below matrix gives the symbol of this graph with respect to
the orbits $S_i=\{v_i^j\colon j\in\mathbb{Z}_{13}\}$, $i\in\mathbb{Z}_7$,
of a $(7,13)$-semiregular automorphism:
$$
\left[
\begin{array}{ccccccc}
\emptyset & \{0,2\} & \{6,12\} & \{1,9\} & \emptyset & \emptyset & \emptyset\\
\{0,11\} & \{\pm 2\} & \emptyset & \emptyset & \{0,3\} & \emptyset & \emptyset \\
\{1,7\} & \emptyset & \{\pm 6\} & \emptyset & \emptyset & \{0,4\} & \emptyset\\
\{4,12\} & \emptyset & \emptyset  & \{\pm 5\} & \emptyset & \emptyset  & \{7,8\}\\
\emptyset  & \{0,10\} & \emptyset & \emptyset  & \{\pm 3\} & \{5\} & \{10\}\\
\emptyset  & \emptyset & \{0,9\} & \emptyset & \{8\} & \{\pm 4\} & \{8\} \\
\emptyset & \emptyset & \emptyset  & \{5,6\} & \{3\} & \{5\} &\{\pm 1\}
\end{array}\right]
$$
Note that there is no Hamilton cycle in $X_\rho$ (see also Figure~\ref{fig:13}).
There however exists a cycle $S_0S_2S_5S_6S_3S_0$ in $X_\rho$
that lifts to a
$65$-cycle in $X$ containing the edge $v_5^5v_6^0$, and
there exists a $26$-cycle containing all
the vertices in the orbits $S_1$ and $S_4$ and containing the edge
$v_4^0v_4^3$. Replacing the edges $v_5^5v_6^0$ and $v_4^0v_4^3$
in these two cycles
with the edges $v_4^0v_5^5$ and $v_4^3v_6^0$ gives a Hamilton cycle in $X$.
}
\end{example}

\begin{figure}[h!]
\begin{footnotesize}
\begin{center}
 \includegraphics[width=0.4\hsize]{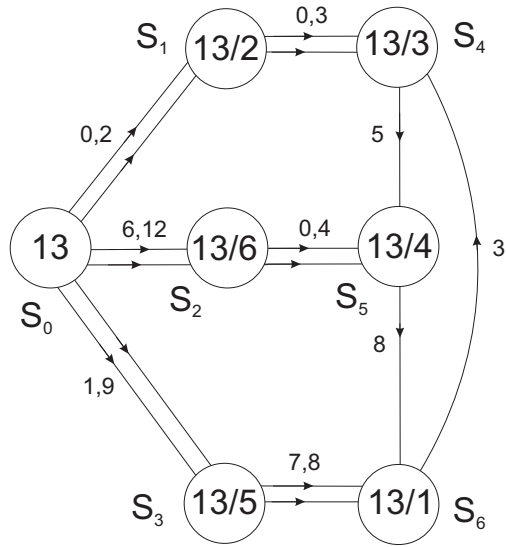}
\caption{\label{fig:13}\footnotesize
The orbital graph arising from the action of $G=\PSL(2,13)$ on the cosets
of $D_{p-1}=D_{12}$ with respect to a suborbit $\S_\xi^+$
where $\xi \in S^*\cap S^*+1$, $\xi\ne \frac{1}{2}, 1$, given in Frucht's notation with respect to
a $(7,13)$-semiregular automorphism $\rho$.}
\end{center}
\end{footnotesize}
\end{figure}

\begin{example}
\label{ex:37}
{\rm
Let $X$ be a basic orbital graph
arising   from the action of $G=\PSL(2,37)$ on the cosets
of $D_{p-1}=D_{36}$
from Row~8  or 9 of Table~\ref{tab:cases}.
Because of the isomorphism given in Proposition~\ref{pro:isomorphic}
we may assume that $X$ arises from Row~8  of Table~\ref{tab:cases},
that is, it is associated with a suborbit
$\S_\xi^+$,
where $\xi \in S^*\cap S^*+1$, $\xi\ne \frac{1}{2}$ and $\xi\ne 1$.

There are $8$ self-paired suborbits of length $(p-1)/2=18$,
giving $4$ non-isomorphic vertex-transitive graphs
of order $37\cdot 19=703$ and of valency $18$.
(For example, this can be checked  using Magma \cite{Mag}.)
We may assume that $X$ is one of these graphs.

For $p=37$ we have
\begin{eqnarray*}
(S^*\cup\{0\})\cap (S^*\cup\{0\})+1
&=& \{0, 1, 4, 10, 11, 12, 26,27,28,34 \} \\
&=&\{0, 1 \} \cup \{a, a^{-1}\colon a\in\{4,10,11,12\}\}.
\end{eqnarray*}
The set of primitive roots in $F_{37}$  equals
$\mathcal{R}=\{2,5,13,15,17,18,19,20,22,24,32, 35\}$.
It follows by (\ref{eq:equation-j}) and (\ref{eq:equation-eta})
that vertices $V_1$ and $V_x$, $x\in F_{37}^*$, are adjacent
in $X_\P$ if and only if
$$
x^2+2(1-2\xi)x+1 \in S^*\cup\{0\} \textrm{ and }
\eta_{1,2}(1,x)=1 - \frac{2}{1+x\pm\sqrt{x^2+2(1-2\xi)x+1}} \in S^*
$$
or
$$
x^2-2(1-2\xi)x+1 \in S^*\cup\{0\} \textrm{ and }
\eta_{3,4}(1,x)=1 - \frac{2}{1+x\pm\sqrt{x^2-2(1-2\xi)x+1}} \in S^*.
$$
Given an arbitrary $\tau\in \mathcal{R}$, it follows that
$V_1$ is adjacent to $V_{\tau^2}$ if and only if
either
$$
\tau^4+2(1-2\xi)\tau^2+1 \in S^*\cup\{0\}
\textrm{ and }  \eta_{1,2}(1,\tau^2)\in S^*
$$
or
$$
\tau^4-2(1-2\xi)\tau^2+1 \in S^*\cup\{0\} \textrm{ and }
\eta_{3,4}(1,\tau^2)\in S^*.
$$
We apply Proposition~\ref{pro:small}
to conclude that for
polynomials $f_{1,2}(x)=x^4\pm 2(1-2\xi)x^2+1$
there exists $\tau\in \mathcal{R}$ such that
$f_j(\tau)\in S^*\cup\{0\}$ either for $j=1$ or for $j=2$.

Let $\mathcal{T}^+$ be the subset
of $\mathcal{R}$ consisting of all those primitive roots $\tau$
for which $f_1(\tau)\in S^*\cup\{0\}$, and let
$\mathcal{T}^-$ be the subset
of $\mathcal{R}$ consisting of all those primitive roots $\tau$
for which $f_2(\tau)\in S^*\cup\{0\}$.
Table~\ref{tab:37} gives the list of elements in
$\mathcal{T}^+$ and $\mathcal{T}^-$
for each   $\xi$. Further, for each
element in $\mathcal{T}^+ \cup \mathcal{T}^-$
this table also gives information on whether $\eta_i(1,\tau^2)$ belongs
to $S^*$ or not.
Checking Table~\ref{tab:37} one can see that for every $\xi\notin\{10,28\}$
there is at least one $\tau\in \mathcal{R}$ such that $V_1$ is adjacent to
$V_{\tau^2}$. We conclude that there is a full
cycle in $X\la\OO(S^*)\ra$ preserved by the automorphism $\sigma$.
Note that $X_\P$ is a quasi-bicirculant with  $V_\infty$
as the fixed vertex,
and that by Table~\ref{tab:cases}, vertex $V_\infty$ is
adjacent only to vertices in $\OO(S^*)$.
Therefore, connectedness of $X$ implies that
the bipartite graph induced by the edges with one endvertex
in
$\OO(S^*)$ and the other in $\OO(N^*)$
contains a matching preserved by the $\sigma$.
Since, by  (\ref{eq:equation-eta}), we have
$$
\frac{\eta_{i}(\tau,\tau^3)}{\eta_{i}(1,\tau^2)}=\tau\in N^*, i\in\{1,2,3,4\},
$$
Table~\ref{tab:37} also implies that
vertices in $X\la\OO(N^*)\ra$ are of
valency at least $2$. Namely, for every $\xi$
there exist at least one $\tau\in \mathcal{T}^+\cup\mathcal{T}^-$
such that $\eta_i(1,\tau)\notin S^*$ which implies that
$\eta_i(\tau,\tau^3)\in S^*$ and thus $V_\tau$ is adjacent to $V_{\tau^3}$.
Therefore, for $\xi\notin\{10,28\}$
the bicirculant $X_\rho-\{V_\infty\}$
contains a generalized Petersen graph as a subgraph.
Hence, combining together Propositions~\ref{pro:GPG} and \ref{pro:GPG2}
we have that
$X_\rho-\{V_\infty\}$ contains a Hamilton cycle or at least a
Hamilton path with both endvertices in $\OO(S^*)$.
Since $V_\infty$ is adjacent to all  vertices in
$\OO(S^*)$ we can clearly extend this cycle/path to
a Hamilton cycle in $X_\P$ containing at least one double edge
in $X_\rho$. By Lemma~\ref{lem:cyclelift}
this cycle lifts to a Hamilton cycle in $X$.

We are left with the last two cases $\xi\in\{10,28\}$.
In both cases Table~\ref{tab:37} implies that there is a full cycle in
$\OO(N^*)$.  Namely,  $\eta_{1,2}(1,\tau^2)\not\in S^*$
implies that  $\eta_{1,2}(\tau,\tau^3)\in S^*$, and so $V_\tau$
is adjacent to $V_{\tau^3}.$
Further, for $\xi=10$  we have
$10^2-2(1-2\xi)\cdot 10+1=0$ and $\eta_{3,4}(1,10)=21\in S^*$, and so
 $V_1$ is adjacent to $V_{10}$ with a single edge.
Similarly, for  $\xi=28$ we have
$11^2-2(1-2\xi)\cdot 11+1=0$ and $\eta_{3,4}(1,11)=7\in S^*$, and so
 $V_1$ is adjacent to $V_{11}$
with a single edge. Since $10,11\in S^*$ and their squares
are not $\pm 1$, we can conclude that
 vertices in
$X\la\OO(S^*)\ra$ are of valency
at least $2$ also in these  two cases.
Therefore, combining together Propositions~\ref{pro:GPG} and \ref{pro:GPG2},
there exists a Hamilton cycle/path in $X_\rho-\{V_\infty\}$
also for $\xi\in\{10,28\}$.
As before this cycle/path can be extended to a Hamilton
cycle in $X_\rho$ which, by Lemma~\ref{lem:cyclelift},
lifts to a Hamilton cycle in $X$.
}
\end{example}

\begin{table}[h!]
{\footnotesize
$$
\begin{array}{|c|c|c|c|c|}
\hline
\xi & \tau\in\mathcal{T}^+ &
\tau\in\mathcal{T}^+ \textrm{ s.t. } &
\tau\in\mathcal{T}^- & \tau\in\mathcal{T}^- \textrm{ s.t. } \\
 & & \eta_{1,2}(1,\tau^2) \in S^*& & \eta_{3,4}(1,\tau^2)\in S^* \\
\hline\hline
4 & \pm 13,\pm 17 & \pm 13,\pm 17 & \pm 2,\pm13,\pm17,\pm18& \pm 2,\pm 18\\
10 & \pm 5,\pm 15 & - & \pm 2,\pm 18& -\\
11& \pm 2, \pm 13, \pm 17\pm 18&  \pm 13,\pm 17 &\pm 2,\pm 5, \pm 15, \pm 18& \pm 5,\pm 15\\
12 & - & - & \pm 2, \pm 5, \pm 15,\pm 18& \pm 2,\pm 18\\
26 & \pm 2,\pm 5, \pm 15,\pm 18& \pm 2, \pm 18 &-& - \\
27 & \pm 2,\pm 5, \pm 15,\pm 18& \pm 5, \pm 15 &
\pm 2,\pm 13,\pm 17,\pm 18& \pm 13,\pm 17\\
28 & \pm 2,\pm 18& - & \pm 5,\pm 15& -\\
34 &\pm 2,\pm 13, \pm 17,\pm 18& \pm 2, \pm 18 & \pm 13,\pm 17& \pm 13, \pm 17\\
\hline
\end{array}
$$
}
\caption{\label{tab:37} \footnotesize Information about existence of edges in
$X(\PSL(2,37),D_{36},\S_\xi^+)$ where $\xi \in S^*\cap S^*+1$, $\xi\ne \frac{1}{2}$ and $\xi\ne 1$.}
\end{table}

\begin{example}
\label{ex:61}
{\rm
Let $X$ be a basic orbital graph
arising   from the action of $G=\PSL(2,61)$ on the cosets
of $D_{p-1}=D_{60}$ from Row~8  or 9 of Table~\ref{tab:cases}.
Because of the isomorphism given in Proposition~\ref{pro:isomorphic}
we may assume that $X$ arises from Row~8  of Table~\ref{tab:cases},
that is, it is associated with a suborbit
$\S_\xi^+$,
where $\xi \in S^*\cap S^*+1$, $\xi\ne \frac{1}{2}$ and $\xi\ne 1$.
There are $14$ self-paired suborbits of length $30$,
giving $7$ non-isomorphic vertex-transitive graphs
of order $61\cdot 31=1891$ and of valency $30$.
(For example, this can be checked  using Magma \cite{Mag}.)
We may assume that $X$ is one of these $7$ graphs.

For $p=61$ we have
\begin{eqnarray*}
(S^*\cup\{0\})\cap (S^*\cup\{0\})+1 &=& \{0, 1, 4, 5, 13, 14, 15, 16, 20,  42, 46, 47, 48,  49,57, 58 \}\\
&=&\{0, 1 \} \cup \{a, a^{-1}\colon a\in\{4,5,13,14,15,16,20\}\}.
\end{eqnarray*}
The set of primitive roots in $F_{61}$ equals
$\mathcal{R}=\{\pm 2, \pm 6, \pm 7, \pm 10, \pm 17, \pm 18, \pm 26, \pm 30\}$.

By Table~\ref{tab:cases},
$V_\infty$ is adjacent to exactly one of the two
nontrivial orbits of a quasi $(2,15)$-semiregular automorphism $\sigma$.
Existence of a Hamilton cycle in each of these
graphs can be proved using the same arguments as in
Example~\ref{ex:37}. In particular, with the terminology of
Example~\ref{ex:37} one can see from Table~\ref{tab:61}
that for every $\xi$
there is at least one $\tau\in \mathcal{R}$ such that $V_1$ is adjacent to
$V_{\tau^2}$. We  conclude that there is a full
cycle in $X\la\OO(S^*)\ra$ preserved by the automorphism $\sigma$.
Since, by Table~\ref{tab:cases}, vertex $V_\infty$ is
adjacent only to vertices in $\OO(S^*)$
and since $X_\P$ is a quasi-bicirculant with  $V_\infty$
as the fixed vertex,
we deduce that the bipartite graph induced by the edges
with one endvertex in
$\OO(S^*)$ and the other in $\OO(N^*)$
contains a matching preserved by the automorphism $\sigma$.
Next, by  (\ref{eq:equation-eta}), we have
$$
\frac{\eta_{i}(\tau,\tau^3)}{\eta_{i}(1,\tau^2)}=\tau\in N^*, i\in\{1,2,3,4\},
$$
and so Table~\ref{tab:61} implies that
vertices in $X\la\OO(N^*)\ra$ are of
valency at least $2$. Namely, for every $\xi$
there exists at least one $\tau\in \mathcal{T}^+\cup\mathcal{T}^-$
such that $\eta_i(1,\tau)\notin S^*$ which implies that
$\eta_i(\tau,\tau^3)\in S^*$ and thus $V_\tau$ is adjacent to $V_{\tau^3}$.
We have thus proved that
the bicirculant $X_\rho-\{V_\infty\}$
contains a generalized Petersen graph as a subgraph
for every $\xi$.
Hence, combining together
Propositions~\ref{pro:GPG} and \ref{pro:GPG2}, we have that
$X_\rho-\{V_\infty\}$ contains a Hamilton cycle or (at least) a
Hamilton path with both endvertices in $\OO(S^*)$.
Since $V_\infty$ is adjacent to all of  vertices in
$\OO(S^*)$  this cycle/path can clearly be extend to
a Hamilton cycle in $X_\P$ which contains at least one double edge
in $X_\rho$. By Lemma~\ref{lem:cyclelift},
$X$ is hamiltonian.
}
\end{example}

\begin{table}[h!]
{\footnotesize
$$
\begin{array}{|c|c|c|c|c|}
\hline
\xi & \tau\in\mathcal{T}^+ &
\tau\in\mathcal{T}^+ \textrm{ s.t. } &
\tau\in\mathcal{T}^- & \tau\in\mathcal{T}^- \textrm{ s.t. } \\
 & & \eta_{1,2}(1,\tau^2) \in S^*& & \eta_{3,4}(1,\tau^2)\in S^* \\
\hline\hline
4 & \pm 2,\pm 6,\pm 10,\pm 30 & \pm 6,\pm 10 & \pm 2,\pm 30 & -\\
5 & \pm 6,\pm 7,\pm 10,\pm 17,\pm 18, \pm 26& \pm 6,\pm 7, \pm 10, \pm 26 & - & -\\
13 & \pm 2, \pm 6,\pm 7,\pm 10, & \pm 2,\pm 7, \pm 17, \pm 18, \pm 26
& \pm 2,\pm 6, \pm 10, \pm 30  & -\\
 & \pm 17,\pm 18, \pm 26, \pm 30 & && \\
14 &\pm 2,\pm 7, \pm 26,\pm 30& \pm 2,\pm 31
& \pm 7,\pm 17, \pm 18,\pm 26 & -\\
15 &\pm 7,\pm 26& - & \pm 2,\pm 17,\pm 18,\pm 30 & \pm 2,\pm 30\\
16 &\pm 2, \pm 6, \pm 10, \pm 30& \pm 6,\pm 10 &
\pm 6,\pm 10,\pm 17,\pm 18 & -\\
20 &\pm 6,\pm 10, \pm 17, \pm 18& \pm 6,\pm 10,\pm 17,\pm 18 & \pm 17,\pm 18 & \pm 17,\pm 18\\
42 &\pm 17,\pm 18 & \pm 17,\pm 18 & \pm 6,\pm 10,\pm 17,\pm 18 &
\pm 6, \pm 10, \pm 17,\pm 18\\
46 &\pm 6, \pm 10, \pm 17, \pm 18& - &
\pm 2, \pm 6, \pm 10, \pm 30 & \pm 6,\pm 10\\\
47 &\pm 2, \pm 17,\pm 18, \pm 30& \pm 2, \pm 30 & \pm 7,\pm 26 & -\\
48 &\pm 7, \pm 17,\pm 18,\pm 26& - & \pm 2,\pm 7, \pm 26, \pm 30 &
\pm 2, \pm 30\\
49 &\pm 2,\pm 6, \pm 10, \pm 30& - &
\pm 2, \pm 6,\pm 7, \pm 10 & \pm 2,\pm 7, \pm 17,\pm 18, \\
 & &   &
 \pm 17,\pm 18,\pm 26, \pm 30 & \pm 26, \pm 30\\
57 &-& - & \pm 6,\pm 7, \pm 10, \pm 17,\pm 18, \pm 26 &
\pm 6, \pm 7, \pm 10, \pm 26\\
58 &\pm 2, \pm 30& - & \pm 2, \pm 6, \pm 10, \pm 30 & \pm 6, \pm 10\\
\hline
\end{array}
$$
}
\caption{\label{tab:61} \footnotesize Information about existence of edges in
$X(\PSL(2,61),D_{60},\S_\xi^+)$ where $\xi \in S^*\cap S^*+1$, $\xi\ne \frac{1}{2}, 1$.}
\end{table}

With the approach used in Examples~\ref{ex:37} and~\ref{ex:61}
we will now prove  existence of Hamilton cycles in
any basic orbital graph arising from Rows 8 and 9
of Table~\ref{tab:cases}.

\begin{proposition}
\label{pro:row-8-9}
Let  $X=X(G,H,\W)$, where $\W\in\{S_{\xi}^+,S_{\xi}^-\}$,
$\xi\in S^*\cap S^*+1$ and $\xi\ne \frac{1}{2},1$,
be  one of the graphs arising from
Row 8 or 9 of Table~\ref{tab:cases}.
Then $X$ is hamiltonian.
\end{proposition}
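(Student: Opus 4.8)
The plan is to follow the template established in Examples~\ref{ex:37} and~\ref{ex:61}, treating the three small exceptional primes $p\in\{13,37,61\}$ separately (they are dispatched in those examples together with Example~\ref{ex:13}) and handling all remaining primes $p\equiv 1\pmod 4$, $p\ge 13$, with $q=(p+1)/2$ prime, in a uniform manner. By Proposition~\ref{pro:isomorphic} we may assume $\W=\S_\xi^+$ with $\xi\in S^*\cap S^*+1$, $\xi\ne\frac12,1$, so $X$ has valency $(p-1)/2$. The semiregular automorphism $\rho$ gives the quotient $X_\P$ on $(p+1)/2$ vertices, which by Proposition~\ref{pro:quasibicirculant} is a quasi-bicirculant with fixed vertex $V_\infty$ and orbits $\OO(S^*),\OO(N^*)$ under $\sigma$, and by Proposition~\ref{lem:infty} (Row~8 of Table~\ref{tab:cases}) $V_\infty$ is adjacent, by double edges in $X_\rho$, to every $V_x$ with $x\in S^*$ and to no $V_x$ with $x\in N^*$. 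By Lemma~\ref{lem:cyclelift} it suffices to produce a Hamilton cycle in $X_\P$ using at least one double edge of $X_\rho$.

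First I would establish that $X_\rho-\{V_\infty\}$ contains a spanning generalized Petersen subgraph $GP((p-1)/4,k)$ for a suitable $k$. This needs three ingredients. (1) A full $\sigma$-invariant cycle inside one of the two orbits: using $\xi\in S^*\cap S^*+1$, combine Theorem~\ref{the:polynomial-degree-4} (valid since $p$ is not in Tables~\ref{tab:exceptions1}--\ref{tab:exceptions2}, the listed primes being exactly the small cases and the three genuine exceptions handled in the examples) applied to the degree-$4$ polynomials $f_{1,2}(x)=x^4\mp 2(1-2\xi)x^2+1$ to find a primitive root $\tau$ with $f_j(\tau)\in S^*\cup\{0\}$ and with the corresponding $\eta_i(1,\tau^2)$ lying in $S^*$; as in Example~\ref{ex:37}, $V_1\sim V_{\tau^2}$ then forces, via $\sigma$, a spanning cycle on $\OO(S^*)$ or on $\OO(N^*)$ (using $\eta_i(\tau,\tau^3)/\eta_i(1,\tau^2)=\tau\in N^*$ to switch orbits if needed). (2) A $\sigma$-invariant cycle (or disjoint union of cycles) in the other orbit: as in the examples, a valency count inside $X$ shows each $V_x$ in that orbit has $X_\P$-degree at least $2$ toward its own orbit — for every $\xi$ there is some $\tau$ with $\eta_i(1,\tau)\notin S^*$, whence $\eta_i(\tau,\tau^3)\in S^*$ and $V_\tau\sim V_{\tau^3}$ — and in the single borderline configuration (full cycle in $\OO(N^*)$ while $d(V_x)=2$ for $x\in S^*$) Proposition~\ref{pro:edges}, applicable since $p\equiv 1\pmod 8$, supplies the extra adjacency $V_1\sim V_x$ for some $x\ne\sqrt{-1}$, $x\in S^*$. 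When $p\equiv 5\pmod 8$ one checks $\sqrt{-1}\in N^*$ so the matching between orbits and the $d(V_x)$ computation from (\ref{eq:new}) still give the required structure. (3) A $\sigma$-invariant matching between $\OO(S^*)$ and $\OO(N^*)$: connectedness of $X$ forces at least one edge across the two orbits (since $V_\infty$ reaches only $\OO(S^*)$), and applying $\sigma$ to it yields the matching.

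Having the subgraph $GP((p-1)/4,k)$, I would invoke Proposition~\ref{pro:GPG} together with Proposition~\ref{pro:GPG2}: since $(p-1)/4$ may or may not be coprime to $k$ and may fall into the exceptional residues mod $6$, we cannot always claim a Hamilton cycle in the generalized Petersen subgraph itself, but Proposition~\ref{pro:GPG2} guarantees at least a Hamilton path of the subgraph whose two endvertices both lie in $\OO(S^*)$ — one arranges the bad endpoint pairs to be avoided, exactly as in Example~\ref{ex:61}, and the borderline $GP(n,2)$ cases are covered by the detailed clauses (i)--(v) of Proposition~\ref{pro:GPG2}. Because $V_\infty$ is joined to \emph{every} vertex of $\OO(S^*)$ by a double edge, such a Hamilton path (or cycle, with an edge $V_xV_y$, $x,y\in S^*$, replaced by the $2$-path $V_xV_\infty V_y$) extends to a Hamilton cycle of $X_\P$ that uses at least one double edge of $X_\rho$. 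Finally Lemma~\ref{lem:cyclelift} lifts this to a Hamilton cycle of $X$.

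The main obstacle, as in the motivating examples, is ingredient (1) combined with the careful bookkeeping of which orbit carries the full cycle and which carries the auxiliary cycle-collection: one must verify that across \emph{all} $\xi\in S^*\cap S^*+1$ and both congruence classes of $p$ modulo $8$, the polynomial result of Section~\ref{sec:polynomials} actually delivers a primitive root whose associated $\eta_i$ lands on the correct side, and that the lone exceptional configuration is always repaired by Proposition~\ref{pro:edges}. The degree-$4$ polynomials involved are genuinely of the shape $x^4+kx^2+1$ with $k=\mp 2(1-2\xi)$, which is never of the form $\alpha g(x)^2$ once $\xi\ne\frac12,1$ and $\xi\in S^*+1$, so Theorem~\ref{the:polynomial-degree-4} applies; the remaining work is to confirm that the $\eta$-in-$S^*$ refinement encoded in Proposition~\ref{pro:small} (the three genuine $(p,\xi)$ exceptions) is exhausted by Examples~\ref{ex:13},~\ref{ex:37},~\ref{ex:61}, after which the uniform argument above closes the proof.
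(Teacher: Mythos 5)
Your overall architecture coincides with the paper's: reduce to $\W=\S_\xi^+$ via Proposition~\ref{pro:isomorphic}, use Theorem~\ref{the:polynomial-degree-4} and Proposition~\ref{pro:small} on the quartics $z^4\pm 2(1-2\xi)z^2+1$ to force a full $\sigma$-invariant cycle in one of $\OO(S^*)$, $\OO(N^*)$, extract a $\sigma$-invariant matching from connectedness, assemble a generalized Petersen subgraph of $X_\rho-\{V_\infty\}$, and finish with Propositions~\ref{pro:GPG}/\ref{pro:GPG2}, the double edges at $V_\infty$, and Lemma~\ref{lem:cyclelift}. However, your ingredient (2) — showing that the orbit \emph{not} carrying the full cycle still contains a $\sigma$-invariant union of cycles — has a genuine gap. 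The repair you propose there is transplanted from the $\xi=\frac{1}{2}$ case of Rows 6--7: Proposition~\ref{pro:edges} concerns the set $\{x\colon 1+x^2\in S^*\}$ and the display (\ref{eq:new}) is the specialization of (\ref{eq:equation-eta}) to $\xi=\frac{1}{2}$, so neither applies when $\xi\ne\frac{1}{2}$. Likewise, your claim that ``for every $\xi$ there is some $\tau$ with $\eta_i(1,\tau)\notin S^*$'' is only verified by the explicit tables in Examples~\ref{ex:37} and~\ref{ex:61}; you give no argument for it for general $p$.

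The paper closes this step differently, by contradiction: applying Theorem~\ref{the:polynomial-degree-4} to \emph{both} quartics yields two generators $g,g'$ and hence two candidate neighbours $V_s$, $V_{s'}$ of $V_1$. If $\OO(N^*)$ had internal valency $\le 1$, then $V_1$ is adjacent to both $V_s$ and $V_{s'}$, and comparing the expansion of $val(X)=\frac{p-1}{2}$ at $V_1$ with the expansion at some $V_x$, $x\in N^*$, forces $d(V_x)>4$, contradicting $d(V_x)\in\{0,2,4\}$ from (\ref{eq:inside1})--(\ref{eq:inside4}). If instead $\OO(S^*)$ had internal valency $\le 1$, the same comparison forces $d(V_1)\ge 2-\epsilon+d(V_x)$, and the explicit formulas for $\eta_{i}(x,x)$ rule out both resulting cases $(d(V_1),d(V_x))=(4,2)$ and $(2,0)$, since squareness of the quantities $\frac{1}{\xi}(\xi-1\pm\sqrt{-\xi})$ and $\frac{1}{1-\xi}(-\xi\pm\sqrt{\xi})$ ties $d(V_1)$ and $d(V_x)$ together. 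You would need to supply this (or an equivalent) counting argument; as written, your step (2) does not go through for $\xi\ne\frac{1}{2}$.
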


\begin{proof}
Because of the isomorphism given in Proposition~\ref{pro:isomorphic}
we can assume that $\W=S_{\xi}^+$.
Note that $X$ is of valency $(p-1)/2$.

By Proposition~\ref{lem:infty} we have
$d(V_\infty)=0$,
$d(V_\infty,V_x)=2$ for every $x\in S^*$,
and $d(V_\infty,V_x)=0$ for every $x\in N^*$.
The number of edges $d(V_y,V_x)$, $x,y\in F^*$,
between $V_y$ and $V_x$ in $X_\rho$
is obtained from  (\ref{eq:equation-j}) and (\ref{eq:equation-eta}):
\begin{eqnarray}
\label{eq:jeta1}
j_{1,2}&=&\frac{1}{2}(y-x\pm\sqrt{x^2+2(1-2\xi)xy+y^2}), \\
\label{eq:jeta2}
j_{3,4}&=&\frac{1}{2}(y-x\pm\sqrt{x^2-2(1-2\xi)xy+y^2}), \\
\label{eq:jeta3}
\eta_{1,2}&=&\frac{y}{2\xi x}((2\xi-1)x-y \pm \sqrt{x^2+2(1-2\xi)xy+y^2}),\\
\label{eq:jeta4}
\eta_{3,4}&=&\frac{y}{2(1-\xi)x}((1-2\xi)x-y \pm \sqrt{x^2-2(1-2\xi)xy+y^2}).\end{eqnarray}
Note that the values of $j_{1,2,3,4}$ and $\eta_{1,2,3,4}$ for edges inside
$V_x$ are
\begin{eqnarray}
\label{eq:inside1}
j_{1,2}(x,x)=\pm x\sqrt{1-\xi}, \\
\label{eq:inside2}
j_{3,4}(x,x)=\pm x\sqrt{\xi},\\
\label{eq:inside3}
\eta_{1,2}(x,x)= \frac{x}{\xi}(\xi-1\pm\sqrt{-\xi}),\\
\label{eq:inside4}
\eta_{3,4}(x,x)=\frac{x}{1-\xi}(-\xi\pm\sqrt{\xi}).
\end{eqnarray}
Therefore it depends solely on $\xi$ whether
there are edges inside  the orbit $V_x$ or not.
In particular, we have that
\begin{eqnarray}
\label{eq:dVx}
d(V_x)\in\{0,2,4\} \textrm{ for } x\in F^*.
\end{eqnarray}
Suppose that $x=1$. Then we get from (\ref{eq:jeta1}) - (\ref{eq:jeta4})
the following values for the above quantities $j$ and $\eta$:
\begin{eqnarray*}
j_{1,2}&=&\frac{1}{2}(y-1\pm\sqrt{1+2(1-2\xi)y+y^2}), \\
j_{3,4}&=&\frac{1}{2}(y-1\pm\sqrt{1-2(1-2\xi)y+y^2}), \\
\eta_{1,2}&=&\frac{y}{2\xi}((2\xi-1)-y \pm \sqrt{1+2(1-2\xi)y+y^2}),\\
\eta_{3,4}&=&\frac{y}{2(1-\xi)}((1-2\xi)-y \pm \sqrt{1-2(1-2\xi)y+y^2}).
\end{eqnarray*}

Let us now consider elements of the form
$1+2(1-2\xi)g^2+g^4$ and $1-2(1-2\xi)g^2+g^4$, where
$g$ is a generator of $F^*$.
Since existence of Hamilton cycles in $X$ for $p\in\{13,37,61\}$
is proved in Examples~\ref{ex:13},~\ref{ex:37} and \ref{ex:61},
we may assume that $p\not\in\{13,37,61\}$.
Consequently, Theorem~\ref{the:polynomial-degree-4} and Proposition~\ref{pro:small} combined together imply
that for the two polynomials
$$
f(z)=1+2(1-2\xi)z^2+z^4 \textrm{ and } h(z)=1-2(1-2\xi)z^2+z^4
$$
there exist $g,g'\in F^*$ such that
$F^*=\la g\ra=\la g'\ra$ and $f(g),h(g')\in S^*\cup \{0\}$.
It follows that $V_1$ is adjacent to $V_{g^2}$ and to
$V_{(g')^2}$ in $X_\P$
depending on whether the corresponding values
$\eta_{1,2}$ and $\eta_{3,4}$
are squares or not.
Let $s=g^2$ and $s'=(g')^2$.
Then $s$ and $s'$ both generate $S^*$.
We claim that either  $V_1$ is adjacent to $V_s$
or $V_g$ is adjacent to $V_{sg}$, and moreover
either  $V_1$ is adjacent to $V_{s'}$
or $V_{g'}$ is adjacent to $V_{s'g'}$.
For this purpose we need to calculated the corresponding values
of
$\eta_1$, $\eta_2$, $\eta_3$, and $\eta_4$ for the following pairs of vertices
$(V_1,V_s)$, $(V_g, V_{gs})$,  $(V_1,V_{s'})$, $(V_{g'}, V_{g's'})$:
\begin{eqnarray*}
\eta_{1,2}(1,\bar{s})&=&\frac{\bar{s}}{2\xi}((2\xi-1)-\bar{s} \pm \sqrt{1+2(1-2\xi)\bar{s}+\bar{s}^2}),\\
\eta_{3,4}(1,\bar{s})&=&
\frac{\bar{s}}{2(1-\xi)}((1-2\xi)-\bar{s} \pm \sqrt{1-2(1-2\xi)\bar{s}+\bar{s}^2}),\\
\eta_{1,2}(\bar{g},\bar{g}\bar{s})&=&\bar{g}\eta_{1,2}(1,\bar{s}),\\
\eta_{3,4}(\bar{g},\bar{g}\bar{s})&=&\bar{g}\eta_{3,4}(1,\bar{s}),
\end{eqnarray*}
where $\bar{s}\in\{s,s'\}$ and $\bar{g}\in\{g,g'\}$.
Hence,
$$
\frac{\eta_{i}(\bar{g},\bar{g}\bar{s})}{\eta_{i}(1,\bar{s})}=\bar{g}\in N^*, i\in\{1,2,3,4\}.
$$
Consequently, for each $i$ exactly one of
$\eta_{i}(\bar{g},\bar{g}\bar{s})$ and
$\eta_{i}(1,\bar{s})$ belongs to $S^*$, implying
that in the bicirculant $X_\rho-\{V_\infty\}$
we have a full induced cycle preserved by the automorphism $\sigma$
either in the orbit
$\OO(S^*)$  or in the orbit $\OO(N^*)$.
We claim that $X_\rho-\{V_\infty\}$ contains a generalized Petersen graph.
In order to prove this claim
we only need to show that if there is an orbit that does not
contain a full cycle preserved by $\sigma$
then it contains a union of cycles (preserved by $\sigma$),
and that the bipartite graph between the two orbits
contains a matching preserved by $\sigma$.
The latter holds since $X$ is connected and since
$V_\infty$ is adjacent only to vertices in $\OO(S^*)$
 (see Table~\ref{tab:cases}).
Suppose therefore that one of the two orbits
$\OO(S^*)$  and $\OO(N^*)$ does not contain a full cycle.

Suppose first that  $\OO(N^*)$ does not contain the above
mentioned  full cycle. Then
$V_1$ is adjacent to both $V_s$ and $V_{s'}$,
implying that
\begin{eqnarray*}
val(X)=\frac{p-1}{2}&\ge& d(V_\infty,V_1)+ 4 + d(V_1) + |\sum d(V_1,V_x) \colon x\in N^*|.\\
&=&6+d(V_1)+|\sum d(V_1,V_x) \colon x\in N^*|.
\end{eqnarray*}
On the other hand, since, by assumption
the valency of the graph induced on $\OO(N^*)$ is either $0$ or $1$,
we have by calculating valency $val(X)$ at $V_x$, $x\in N^*$:
\begin{eqnarray*}
val(X)=\frac{p-1}{2}&=& \epsilon + d(V_x) +
 |\sum d(V_x,V_y) \colon y\in S^*|.
\end{eqnarray*}
where $\epsilon\in\{0,1\}$.
Since
$|\sum d(V_1,V_x) \colon x\in N^*|=|\sum d(V_x,V_y) \colon y\in S^*|$
it follows that $d(V_x)>4$, contradicting (\ref{eq:dVx}).

Suppose now that $\OO(S^*)$ does not contain the above
mentioned  full cycle. Similarly as above
it follows that each $V_x$, $x\in N^*$, has at least
$4$ neighbors in  $X\la \OO(N^*)\ra$.
This implies that
\begin{eqnarray*}
val(X)=\frac{p-1}{2}&\ge& 4 + d(V_x) + |\sum d(V_x,V_y) \colon y\in S^*|.
\end{eqnarray*}
On the other hand, since, by assumption
the valency of the graph induced on $\OO(S^*)$ is either $0$ or $1$,
we have by calculating valency $val(X)$ at $V_x$, $x\in S^*$:
\begin{eqnarray*}
val(X)=\frac{p-1}{2}&=&d(V_\infty,V_1) + \epsilon + d(V_1) + |\sum d(V_1,V_y) \colon y\in N^*|\\
&=&2+\epsilon + d(V_1) + |\sum d(V_1,V_y) \colon y\in N^*|,
\end{eqnarray*}
where $\epsilon\in\{0,1\}$.
It follows that $2+\epsilon+d(V_1)\ge 4+d(V_x)$,
and so $d(V_1)\ge 2-\epsilon+d(V_x)$.
Since, by (\ref{eq:dVx}),
$d(V_1)$ and $d(V_x)$ are both even numbers smaller than
or equal to $4$,
it follows that either  $d(V_1)=4$ and $d(V_x)=2$
or $d(V_1)=2$ and $d(V_x)=0$.
None of these is possible. In particular,
if $d(V_1)=4$ then, by (\ref{eq:inside1}) - (\ref{eq:inside4}),
we have
$$
\frac{1}{\xi}(\xi-1\pm\sqrt{-\xi})\in S^* \textrm{ and }
\frac{1}{1-\xi}(-\xi\pm\sqrt{-\xi}) \in S^*,
$$
and thus
$$\eta_{1,2}(x,x)=\frac{x}{\xi}(\xi-1\pm\sqrt{-\xi})\in N^*
\textrm{ and }
\eta_{3,4}(x,x)=\frac{1}{1-\xi}(-\xi\pm\sqrt{-\xi}) \in N^*,
$$
implying that $d(V_x)=0$.
Similarly, if $d(V_1)=2$ then  two of the above expressions
for $\eta_{i}(1,1)$, $i\in\{1,2,3,4\}$, are squares and the other two are non-squares,
which implies that either $\eta_{1,2}(x,x)$ or $\eta_{3,4}(x,x)$ is a square,
and consequently $d(V_x)=2$.

These contradictions
show that the valencies of both $X\la \OO(S^*)\ra$ and
$X\la \OO(N^*)\ra$ are at least $2$, which proves that
$X_\P-\{V_\infty\}$ contains a generalized Petersen graph
$GP((p-1)/4,k)$
as claimed. If $GP((p-1)/4,k)$ is not isomorphic to $GP(n,2)$
with $n=(p-1)/4\equiv{5\pmod 6}$, then
Proposition~\ref{pro:GPG} implies the existence
of a Hamilton cycle in $X_\P-\{V_\infty\}$.
Of course, this cycle contains an edge of the form
$V_xV_y$, $x,y\in S^*$.
Replacing this edge with a path $V_xV_\infty V_y$ gives
a Hamilton cycle in $X_\rho$.
Obviously this Hamilton cycle contains double edges,
and so, by Lemma~\ref{lem:cyclelift}, it lifts to a Hamilton cycle in $X$.
 We may therefore assume that $(p-1)/4\equiv {5\pmod 6}$
and that the generalized Petersen graph in $X_\P-\{V_\infty\}$
is isomorphic to $GP((p-1)/4,2)$.
In this case Proposition~\ref{pro:GPG2} implies
that there exists a Hamilton path in
$X_\P-\{V_\infty\}$  with both endvertices in
$\OO(S^*)$. By joining these two endvertices
with $V_\infty$ we can then
extend this path to  a Hamilton cycle in $X_\rho$
which lifts to a Hamilton cycle in $X$.
This completes the proof of Proposition~\ref{pro:row-8-9}.
\end{proof}



\section{Proof of Theorem~\ref{the:main}}
\label{sec:proof}
\indent

\begin{proofTT}
Let $X$ be a connected vertex-transitive graph of order $pq$,
where $p$ and $q$ are primes and $p\ge q$, other than the
Petersen graph.
If $q\in\{2,p\}$ then $X$ admits a Hamilton cycle
by Proposition~\ref{thm:p2,2p}.
We may therefore assume that $q\not\in\{2,p\}$.
Then $X$ is a generalized orbital graph arising from
one of the actions given in Theorem~\ref{the:main1}.
If $X$ is imprimitive then it admits a Hamilton cycle
by Proposition~\ref{thm:ham-imprimitive}.
We may therefore assume that $X$ is primitive, and so
$X$ is a generalized orbital graph arising from
one of the group actions given in Table~\ref{tab:groups}.
In fact, as explained in Section~\ref{sec:strategy},
we can assume that $X$ is a basic orbital graph arising
from a group action given in Table~\ref{tab:groups}.

If $X$ arises from one of  Rows 1, 2 and 3
of Table~\ref{tab:groups} then it admits a Hamilton cycle
by Proposition~\ref{the:small}.
If $X$ arises from the group action given in Row  4
of Table~\ref{tab:groups} then it admits a Hamilton cycle
by Proposition~\ref{pro:conder}.
If $X$ arises from the group action given in Row  5
of Table~\ref{tab:groups} then it admits a Hamilton cycle
by Propositions~\ref{pro:0} and~\ref{pro:ne0}.
If $X$ arises from the group action given in Row  6
of Table~\ref{tab:groups} then the existence
of a Hamilton cycle follows from Propositions~\ref{pro:row-1-2},
~\ref{pro:row-3-4},~\ref{pro:row-5},
~\ref{pro:row-6-7} and~\ref{pro:row-8-9}.
Finally, if $X$ arises from the group action given in Row  7
of Table~\ref{tab:groups} then the existence
of a Hamilton cycle follows from Proposition~\ref{pro:psl213}.
\end{proofTT}


\subsection*{Acknowledgements}

The authors wish to thank  Marston Conder and
Ademir Hujdurovi\'c
for helpful conversations about the material of this paper.



{\footnotesize

}
\end{document}